\colorlet{ColorPink}{red!30}
\newcommand{\R}{\mathbb R}
\newcommand{\N}{\mathbb N}
\providecommand{\degPol}{\deg_{\Pol}}
\newcommand{\dista}{\operatorname{dist}}
\DeclareMathOperator{\dist}{dist}
\newcommand{\dif}{\operatorname{d}\!}
\newcommand{\lebe}{\operatorname{L}}
\newcommand{\sobo}{\operatorname{W}}
\newcommand{\imag}{\operatorname{i}}
\newcommand{\locc}{\operatorname{loc}}
\newcommand{\hold}{\operatorname{C}}
\newcommand\norm[1]{\left\lVert#1\right\rVert}
\newcommand{\bv}{\operatorname{BV}}
\newcommand{\ball}{B}
\newcommand{\A}{\mathbb{A}}
\newcommand{\besov}{\operatorname{B}}
\newcommand{\Rn}{\mathbb{R}^{n}}
\newcommand{\diam}{\operatorname{diam}}
\newcommand{\spt}{\operatorname{spt}}
\newcommand{\trace}{\operatorname{Tr}}
\newcommand{\osc}{\operatorname{osc}}
\newcommand{\ch}{\operatorname{ch}}
\providecommand{\osc}{\operatorname{osc}}
\numberwithin{equation}{section}
\theoremstyle{theorem}
\newtheorem{theorem}{Theorem}[section]
\newtheorem{lemma}[theorem]{Lemma}
\newtheorem{proposition}[theorem]{Proposition}
\newtheorem{corollary}[theorem]{Corollary}
\newtheorem{definition}[theorem]{Definition}
\theoremstyle{remark}
\newtheorem{remark}[theorem]{Remark}
\newtheorem{example}[theorem]{Example}
\providecommand{\Pol}{\mathscr{P}}
\providecommand{\opA}{\A}
\providecommand{\opT}{\mathbb{T}}
\providecommand{\T}{\mathbb{T}}
\providecommand{\W}{\mathbb{W}}
\def\cigar{{\rm cig}\,}
\renewcommand{\dashint}{\fint}
\begin{document}

\renewcommand{\baselinestretch}{1.05}

\title[Sharp trace and Korn inequalities]{Sharp Trace and Korn inequalities \\ for differential operators}
\author[L.~Diening]{Lars Diening}
\address[L.~Diening]{Faculty of Mathematics, University of Bielefeld, Universit\"{a}tsstra\ss e 25, 33615 Bielefeld, Germany}
\email{lars.diening@uni-bielefeld.de}
\author[F.~Gmeineder]{Franz Gmeineder}
\address[F.~Gmeineder]{Mathematical Institute, University of Bonn, Endenicher Allee 60, 53115 Bonn, Germany (\emph{expiring address})\\ \newline Department of Mathematics and Statistics, University of Konstanz, Universit\"{a}tsstrasse 10, 78457 Konstanz, Germany}
\email{fgmeined@math.uni-bonn.de, franz.gmeineder@uni-konstanz.de}
\thanks{\emph{Keywords:} Trace theorem, Uspenski\u{\i}'s theorem, $\lebe^{1}$-estimates, Korn inequality, $\mathbb{C}$-ellipticity, Riesz potentials, inverse estimates, NTA domains, John domains, weighted inequalities}
\thanks{\emph{Acknowledgments.} The authors are grateful to Herbert Koch for general discussions on the theme of the paper, and  to David Cruz-Uribe for pointing out the reference \cite{CruMarPer11} towards extrapolation in Lorentz spaces. F.G. acknowledges financial support by the Hausdorff Centre for Mathematics, Bonn, and the University of Bielefeld. The research of L.D. was supported by the DFG through the CRC 1283.}

\maketitle
\begin{abstract}
We establish sharp trace- and Korn-type inequalities that involve vectorial differential operators, the focus being on situations where global singular integral estimates are not available. Starting from a novel approach to sharp Besov boundary traces by Riesz potentials and oscillations that equally applies to $p=1$, a case difficult to be handled by  harmonic analysis techniques, we then classify  boundary trace- and Korn-type inequalities. For $p=1$ and so despite the failure of the Calder\'{o}n-Zygmund theory, we prove that sharp trace estimates can be systematically reduced to full $k$-th order gradient estimates. Moreover, for $1<p<\infty$, where sharp trace- yield Korn-type inequalities on smooth domains, we show for the basically optimal class of John domains that Korn-type inequalities persist -- even though the reduction to global Calder\'{o}n-Zygmund estimates by extension operators might not be possible.
\end{abstract}
\setcounter{tocdepth}{1}
\tableofcontents
\section{Introduction}
\subsection{Calder\'{o}n-Zygmund and $\lebe^{1}$-estimates}
A variety of partial differential equations or problems from the calculus of variations require to bound the $\lebe^{p}$-norms of full higher order gradients by those of given differential operators. To be more precise, let $k\in\mathbb{N}$, $k\geq 1$  and suppose that $\A$ is a homogeneous, $k$-th order linear, constant-coefficient differential operator on $\R^{n}$ with $n\geq 2$ between the two finite dimensional real vector spaces $V$ and $W$. Then $\A$ has a representation 
\begin{align}\label{eq:form}
\A=\sum_{\substack{\alpha\in\mathbb{N}_{0}^{n}\\|\alpha|=k}}\A_{\alpha}\partial^{\alpha}, 
\end{align}
where for each $\alpha\in\mathbb{N}_{0}^{n}$ with $|\alpha|=k$, $\A_{\alpha}\colon V\to W$ is a fixed linear map. Following the foundational work of \textsc{Calder\'{o}n \& Zygmund} \cite{CZ56}, if $1<p<\infty$, there exists a constant $c=c(\A,p)>0$ such that there holds
\begin{align}\label{eq:CZ}
\|D^{k}u\|_{\lebe^{p}(\R^{n})}\leq c\|\A u\|_{\lebe^{p}(\R^{n})}\qquad\text{for all $u\in\hold_{c}^{\infty}(\R^{n};V)$}
\end{align}
if and only if $\A$ is elliptic; \eqref{eq:CZ} is also referred to as \emph{Korn-type inequality} \cite{Korn09}. Here, in the most general sense, we call $\A$ \emph{elliptic} provided the Fourier symbol 
\begin{align}\label{eq:ellipticIntro}
\A[\xi] = \sum_{|\alpha|=k}\xi^{\alpha}\A_{\alpha}\colon V \to W\qquad\text{is injective for any $\xi\in\R^{n}\setminus\{0\}$}, 
\end{align}
cf. \textsc{H\"{o}rmander} \cite{Hoermander1} and \textsc{Spencer} \cite{Spencer}.  Due to a celebrated counterexample of \textsc{Ornstein} \cite{Ornstein} -- colloquially termed \emph{Ornstein's Non-Inequality} -- inequality \eqref{eq:CZ} does not persist for $p=1$ in general. Despite the failure of \eqref{eq:CZ} for $p=1$, one might still hope to find conditions on $\A$ such that lower order quantities depending on $u$ can be controlled in terms of $\|\A u\|_{\lebe^{1}}$. In the present paper we shall give a resolution of this matter for the boundary traces in the case of $k$-th order differential operators and \emph{all} $1\leq p<\infty$, cf. Theorem~\ref{thm:main} for $p=1$ (and Theorem \ref{thm:mainLp} for $1<p<\infty$). We then classify this result within situations where \emph{global} Calder\'{o}n-Zygmund estimates are not available, might it be because of $p=1$ (Theorem~\ref{thm:main1}) or the irregularity of the underlying domains for $1<p<\infty$ (Theorem~\ref{thm:Korn}).

\subsection{Limiting $\lebe^{1}$-trace estimates via Riesz potentials}\label{sec:IntroLimitingRiesz}
The general objective of limiting $\lebe^{1}$-estimates is to classify those operators $\A$ such that well-known inequalities for the full $k$-th order gradients hold with $D^{k}$ replaced by $\A$, too. Starting with \textsc{Bourgain \& Brezis} \cite{BouBre04,BouBre07}, the problem of bounding lower order norms against $\lebe^{1}$-norms of differential expressions $\A u$ has been intesively studied from various perspectives in recent years, cf. \textsc{Van Schaftingen} et al. \cite{VS04,Van13,VS14a,VS14b,GRVS} for a systematic treatment of Sobolev-type inequalities; also see \cite{Raita19}. By \textsc{Ornstein}'s Non-Inequality  \cite{Ornstein,KiKr16,KSW17}, none of these results can be directly inferred from the full $k$-th order gradient estimates. This equally applies to trace estimates, dealt with in the first order case by \textsc{Breit} and the authors \cite{BreDieGme20},  and in turn gives rise to Sobolev embeddings on domains, cf. \textsc{Raita} and the second author \cite{GR19}. 

The instrumental condition making boundary trace inequalities work is that of $\setC$\emph{-ellipticity}. Following \cite{Smith70,BreDieGme20,GR19}, we say that $\A$ of the form \eqref{eq:form} is \emph{$\mathbb{C}$-elliptic} provided 
\begin{align}\label{eq:Celliptic}
\text{$\A[\xi]\colon V+\imag\!V\to W+\imag\!W$ is injective for each}\;\xi\in\mathbb{C}^{n}\setminus\{0\}.
\end{align}
$\setC$-ellipticity is a stronger condition than \textsc{Van
  Schaftingen}'s cancellation condition \cite{Van13}.
So for instance the generalisation of \textsc{Gagliardo}'s trace inequality, 
\begin{align}\label{eq:firstordertraces}
  \|u\|_{\lebe^{1}(\partial\Omega)}\leq c\,(\|u\|_{\lebe^{1}(\Omega)}+\|\A u\|_{\lebe^{1}(\Omega)})\qquad\text{for all}\;u\in\hold^{\infty}(\overline{\Omega};V)
\end{align}
for first order $\setC$-elliptic operators holds if and only if $\A$ is $\mathbb{C}$-elliptic, see~\cite{BreDieGme20}; here $\Omega\subset\R^{n}$ is a bounded smooth domain. Since the
trace operator
$\trace_{\partial\Omega}\colon\sobo^{1,1}(\Omega;V)\to\lebe^{1}(\Omega;V)$
is surjective, \eqref{eq:firstordertraces} is optimal. However, if $k\geq 2$, by a result of \textsc{Uspenski\u{\i}} \cite{Uspenskii} there exists $c=c(n,k,V)>0$ such that for any open halfspace $\bbH\subset\R^{n}$  there holds 
\begin{align}\label{eq:Uspenskiimain}
\|u\|_{{\dot\besov}{_{1,1}^{k-1}}(\partial\bbH)}\leq c\|D^{k}u\|_{\lebe^{1}(\bbH)}\qquad\text{for all}\;u\in\hold_{c}^{\infty}(\overline{\mathbb{H}};V)
\end{align}
with the homogeneous Besov space ${\dot\besov}{_{1,1}^{k-1}}(\partial\bbH;V)(\subsetneq {\dot\sobo}{^{1,1}}(\partial\mathbb{H};V))$ (cf. Section~\ref{sec:Besov} for the definition), resulting in a \emph{surjective trace operator}. One may thus wonder whether $\mathbb{C}$-ellipticity is equally necessary and sufficient to yield the corresponding variant of the trace estimate \eqref{eq:Uspenskiimain}, also see \cite[Open Problem 4.8]{GR19}. As our first result, we answer this by
\begin{theorem}[Higher order traces I]\label{thm:main}
Let $k\in\mathbb{N}$, $k\geq 2$. Then the following are equivalent for a $k$-th order differential operator $\A$ of the form \eqref{eq:form}: 
\begin{enumerate}
\item\label{item:mainA} $\A$ is $\mathbb{C}$-elliptic in the sense of \eqref{eq:Celliptic}. 
\item\label{item:mainB} There exists $c=c(\A)>0$ such that for any open halfspace $\mathbb{H}$ there holds 
\begin{align}\label{eq:tracemaininequality0}
\|u\|_{{\dot\besov}{_{1,1}^{k-1}}(\partial\bbH)}\leq c\|\A u\|_{\lebe^{1}(\bbH)}\qquad\text{for all}\;u\in\hold_{c}^{\infty}(\overline{\bbH};V).
\end{align}
\end{enumerate}
\end{theorem}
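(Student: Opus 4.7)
My plan is to prove (\ref{item:mainA})$\Leftrightarrow$(\ref{item:mainB}) as two separate implications. For (\ref{item:mainA})$\Rightarrow$(\ref{item:mainB}) the strategy is to combine a pointwise Riesz-potential representation of $D^{k-1}u$ in terms of $\A u$, provided by $\mathbb{C}$-ellipticity, with the paper's new Besov trace characterisation via Riesz potentials and oscillations -- the essential device that keeps the endpoint $p=1$ admissible. For (\ref{item:mainB})$\Rightarrow$(\ref{item:mainA}) I construct oscillatory plane-wave test functions that falsify \eqref{eq:tracemaininequality0} whenever $\mathbb{C}$-ellipticity fails.

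\emph{Sufficiency.} By $\mathbb{C}$-ellipticity and Smith \cite{Smith70}, the distributional kernel of $\A$ is a finite-dimensional space of $V$-valued polynomials and $\A^{*}\A$ is elliptic of order $2k$ on $V\to V$. The pseudodifferential left inverse $T := (\A^{*}\A)^{-1}\A^{*}$ is then a matrix Fourier multiplier of order $-k$ whose Schwartz kernel is, off the diagonal, homogeneous of degree $k-n$ (up to logarithmic corrections). For $u \in \hold_{c}^{\infty}(\overline{\bbH};V)$, extending $\A u$ by zero to $\R^{n}$ furnishes a representation
\[
D^{k-1}u(x) = \int_{\bbH} K(x-y)\,\A u(y)\,\dd y + P(x),\qquad x \in \bbH,
\]
with $K := D^{k-1}T(\delta)$ a Riesz-type kernel of homogeneity $1-n$, and $P$ a polynomial in the finite-dimensional $\A$-kernel, which is annihilated by the homogeneous Besov seminorm. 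I then plug this representation into the paper's sharp Besov trace estimate, which bounds the ${\dot\besov}{_{1,1}^{k-1}}(\partial\bbH)$ seminorm of a trace by an oscillation-average of Riesz potentials of $k$-th order derivatives on $\bbH$ and is proved without Calderón--Zygmund singular integrals. Composing the two inequalities yields \eqref{eq:tracemaininequality0}.

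\emph{Necessity.} Assume $\A$ fails $\mathbb{C}$-ellipticity and pick $\zeta = \xi+\imag\eta \in \mathbb{C}^{n}\setminus\{0\}$ together with $v_{0} \in (V+\imag V)\setminus\{0\}$ such that $\A[\zeta]v_{0}=0$. If $\eta\neq 0$, exploiting the uniformity of (b) over all halfspaces, rotate so that $\bbH = \{x_{n}>0\}$ and $\eta\cdot e_{n} > 0$; then $|e^{\imag\lambda\zeta\cdot x}| = e^{-\lambda\eta\cdot x}$ decays exponentially into $\bbH$. Fix cutoffs $\chi \in \hold_{c}^{\infty}(\R^{n-1})\setminus\{0\}$ and $\rho\in\hold_{c}^{\infty}([0,\infty))$ with $\rho(0)=1$, and consider
\[
u_{\lambda}(x) := \chi(x')\,\rho(x_{n})\,v_{0}\,e^{\imag\lambda\zeta\cdot x},\qquad \lambda > 0.
\]
The leading $\lambda^{k}$-term of $\A u_{\lambda}$ vanishes by $\A[\zeta]v_{0}=0$, so every surviving term in the Leibniz expansion involves at least one derivative on $\chi\rho$; combined with $\int_{0}^{\infty} e^{-\lambda(\eta\cdot e_{n})x_{n}}\,\dd x_{n} \sim \lambda^{-1}$ this yields $\|\A u_{\lambda}\|_{\lebe^{1}(\bbH)} = O(\lambda^{k-2})$. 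The boundary restriction $u_{\lambda}(x',0) = \chi(x')\rho(0)v_{0}\,e^{\imag\lambda\xi'\cdot x'}$ is a cut-off plane wave of real tangential frequency $\lambda\xi'$, whose ${\dot\besov}{_{1,1}^{k-1}}(\partial\bbH)$ seminorm grows like $\lambda^{k-1}$; passing to real or imaginary parts to land in $V$, \eqref{eq:tracemaininequality0} would force $\lambda^{k-1}\lesssim\lambda^{k-2}$, a contradiction. The remaining case $\eta=0$ (so $\A$ fails even real ellipticity) is handled by the same ansatz with a wide tangential cutoff $\chi(\cdot/\lambda^{\alpha})$, $\alpha>0$, where the extra tangential support lets the boundary Besov norm outgrow the commutator terms.

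\emph{Main obstacle.} The substance lies in (\ref{item:mainA})$\Rightarrow$(\ref{item:mainB}): one must control the $(k-1)$-jet of $u$ on $\partial\bbH$ in the $\lebe^{1}$-critical Besov scale by $\|\A u\|_{\lebe^{1}}$ despite the failure of Calderón--Zygmund theory at $p=1$. The crux is the compatibility between the Riesz-potential representation induced by $\mathbb{C}$-ellipticity and the paper's oscillation-based Besov trace inequality, so that their composition is uniformly constant-controlled across halfspaces. Necessity, by contrast, is essentially a scaling computation, modulo the correct choice of cutoff widths in the non-elliptic degeneracy.
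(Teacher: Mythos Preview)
Your sufficiency argument has a genuine gap in the representation formula. You claim $D^{k-1}u(x) = \int_{\bbH} K(x-y)\,\A u(y)\,\dif y + P(x)$ with $K$ coming from the global Fourier multiplier $T = (\A^{*}\A)^{-1}\A^{*}$ and $\A u$ extended by zero, but this is not correct: if $u_{0}$ denotes the zero extension of $u$ to $\R^{n}$, then $\A u_{0}$ equals the zero extension of $\A u$ \emph{plus a singular boundary layer supported on $\partial\bbH$}, so $u|_{\bbH} - T(\mathbbm{1}_{\bbH}\A u)|_{\bbH}$ is a boundary-layer potential, not a polynomial in $\ker(\A)$. No polynomial correction $P$ can absorb this. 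The paper avoids this obstruction by using a \emph{local} representation on cones $K_{y',2t}\subset\bbH$ emanating from boundary points (Proposition~\ref{prop:decomposition} and Lemma~\ref{lem:conical}): the integral kernel there has compact support inside the cone (property~\ref{item:kernel-zero}), so no boundary contribution arises, and the correction $\Pi_{\A}^{B}u$ genuinely lies in the polynomial space $\ker(\A)$ --- which is precisely what makes it an admissible competitor in the oscillation infimum of Lemma~\ref{lem:Besovosc}. The computation in Section~\ref{sec:proofhalfspace} then bounds the oscillation directly by the conical Riesz potential and integrates via Fubini; there is no free-standing ``Besov trace estimate via Riesz potentials'' to plug into, so your last step is close to circular.

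For necessity your construction is in the spirit of \cite{GRVS} (which the paper simply invokes after the embedding ${\dot\besov}{_{1,1}^{k-1}}\hookrightarrow{\dot\sobo}{^{k-1,1}}$), but two points need repair. First, you must choose the halfspace with $e_{n}=\eta/|\eta|$, not merely $\eta\cdot e_{n}>0$; otherwise the tangential factor $e^{-\lambda\eta'\cdot x'}$ grows exponentially on $\spt(\chi)$ and destroys your $\lebe^{1}$ bound. Second, in the degenerate case $\eta=0$ your wide-cutoff scaling does not close: both the boundary Besov norm and $\|\A u_{\lambda}\|_{\lebe^{1}}$ pick up the same tangential volume factor $\lambda^{\alpha(n-1)}$ (the term with one $\partial_{x_{n}}\rho$ and $k-1$ derivatives on the exponential already has size $\lambda^{k-1}\lambda^{\alpha(n-1)}$), so no contradiction results and a different construction is required.
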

The previous theorem particularly implies that, for open and bounded domains $\Omega$ with smooth boundary $\partial\Omega$ and $\mathbb{C}$-elliptic $\A$, there exists a surjective boundary trace operator $\trace_{\partial\Omega}\colon {\bv}{^{\A}}(\Omega)\to{\besov}{_{1,1}^{k-1}}(\partial\Omega;V)$ for \emph{functions of bounded $\A$-variation} (cf.~Theorem~\ref{rem:tracestrict}). 

Even though a lowering of the smoothness of the boundary is conceivable (and hereafter requires to work with higher order smoothness spaces on less  regular sets), our main focus in the previous theorem is not a possibly involved geometry or low regularity of boundaries, but the estimate \eqref{eq:tracemaininequality0} based on Riesz potential estimates; more general domains are addressed in Section~\ref{sec:introreduction} and~\ref{sec:introKorn}. Note that a  weaker estimate, replacing the ${\dot\besov}{_{1,1}^{k-1}}$-norm in \eqref{eq:tracemaininequality0} by the ${\dot\sobo}{^{k-1,1}}$-norm has been obtained by \textsc{Raita, Van Schaftingen} and the second author in
\cite{GRVS}; again note that ${\dot\besov}{_{1,1}^{k-1}}\subsetneq{\Dot\sobo}{^{k-1,1}}$ (cf.~\textsc{Brezis \& Ponce} \cite[Rem.~ A.1, p.~1238]{BrezisPonce}).

 As pointed out by \textsc{Leoni \& Tice} \cite{LeoniTice}, the only available approaches to the classical \textsc{Uspenski\u{\i}} trace estimate \eqref{eq:Uspenskiimain} for $p=1$ rely on the finite difference characterisation of Besov spaces; also see \textsc{Mironescu \& Russ} \cite{MirRus15}. By \textsc{Ornstein}'s Non-Inequality, controlling the requisite finite differences in terms of $\A u$ and \emph{not} $D^{k}u$ is far from clear, and so \eqref{eq:tracemaininequality0} must be approached differently. Toward Theorem~\ref{thm:main}, our line of action instead is to combine an oscillation characterisation of Besov spaces (cf. Lemma~\ref{lem:Besovosc}) and a novel sharp Riesz potential representation formula for $\mathbb{C}$-elliptic operators (cf. Proposition~\ref{prop:decomposition}).  This approach equally seems to be new in the case $1<p<\infty$ and even for $\A=D^{k}$, see Section~\ref{sec:BVAsection}.
 
By  \textsc{Aronszajn} \cite{Aro54} and \textsc{Smith} \cite{Smith70} (also see  \textsc{\Kalamajska{}} \cite{Kal93,Kal94}), it is well-known that for $\mathbb{C}$-elliptic operators $\A$ and $1<p<\infty$ one has the norm equivalence
\begin{align}\label{eq:KornIntroEquiv}
\|u\|_{\sobo^{k,p}(\Omega)}\simeq \|u\|_{\sobo^{k-1,p}(\Omega)}+\|\A u\|_{\lebe^{p}(\Omega)},\qquad u\in\sobo^{k,p}(\Omega;V)
\end{align}
for smoothly bounded $\Omega\subset\R^{n}$. 
As asserted in Theorem~\ref{thm:mainLp}, for such domains a corresponding sharp variant of the trace estimate \eqref{eq:tracemaininequality0} for $1<p<\infty$ does not only follow from but \emph{is equivalent} to \eqref{eq:KornIntroEquiv}, thereby providing a self-contained proof of \eqref{eq:KornIntroEquiv}. Here, the respective trace estimates yield the existence of a suitable extension operator and so allow to reduce \eqref{eq:KornIntroEquiv} to \emph{global} singular integral estimates on $\R^{n}$. In particular, the sharp trace estimates obtained by our Riesz potential approach imply \eqref{eq:KornIntroEquiv} and then vice versa by the sharp trace theorem for $\sobo^{k,p}$.

In view of the failure of Calder\'{o}n-Zygmund estimates on $\lebe^{1}$, inequality \eqref{eq:KornIntroEquiv} cannot be obtained for $p=1$ in general and, if $1<p<\infty$, the above approach only allows to conclude \eqref{eq:KornIntroEquiv} for suitable extension domains. We thus proceed by classifying the underlying mechanisms that allow to arrive at Theorem~\ref{thm:main} and the Korn-type estimate \eqref{eq:KornIntroEquiv} in situations where the above approaches are not necessarily available. 
 
\subsection{Reduction to $\sobo^{k,1}$-estimates despite failure of $\lebe^{1}$-CZ theory}\label{sec:introreduction}
Working from Theorem~\ref{thm:main}, $\mathbb{C}$-ellipticity yields the same trace estimates as known from the full $k$-th order gradient case. Thus one might wonder whether -- \emph{despite the failure of the Calder\'{o}n-Zygmund estimates for $p=1$} -- $\mathbb{C}$-ellipticity directly allows to
\begin{align}\label{eq:reduxMP}
\fbox{\text{reduce limiting $\lebe^{1}$-trace estimates to those available for $\sobo^{k,1}$.}}
\end{align}
As our second main result, Theorem~\ref{thm:main1} below and, more importantly, its proof, this is the case indeed. It is singled out as its proof seems to be \emph{the first instance in the literature which uses the reduction metaprinciple \eqref{eq:reduxMP}} to arrive at limiting $\lebe^{1}$-estimates for differential operators in the face of \textsc{Ornstein}'s Non-Inequality. The proof is naturally set up for $\mathrm{NTA}_{n-1}$-domains (cf. Section~\ref{sec:domains} for this terminology), leading to the following
\begin{theorem}[Higher order traces II]\label{thm:main1}
Let $k\in\mathbb{N}_{\geq 1}$. Then the following are equivalent for an operator $\A$ of the form \eqref{eq:form}: 
\begin{enumerate}
\item\label{item:mainA1} $\A$ is $\mathbb{C}$-elliptic in the sense of \eqref{eq:Celliptic}. 
\item\label{item:mainB1} For every open, bounded, $\mathrm{NTA}_{n-1}$-domain $\Omega\subset\R^{n}$, the space $\sobo^{\A,1}(\Omega):=\{u\in\lebe^{1}(\Omega;V)\colon\;\|u\|_{\lebe^{1}(\Omega)}+\|\A u\|_{\lebe^{1}(\Omega)}<\infty\}$ \emph{has the same trace space on $\partial\Omega$ as $\sobo^{k,1}(\Omega;V)$}. 
\end{enumerate}
\end{theorem}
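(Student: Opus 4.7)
The embedding $\sobo^{k,1}(\Omega;V)\hookrightarrow\sobo^{\A,1}(\Omega)$ is immediate from $|\A u|\lesssim |\D^{k}u|$, yielding $\trace\sobo^{k,1}(\Omega;V)\subset\trace\sobo^{\A,1}(\Omega)$ for free; all the content of (a)$\implies$(b) lies in the reverse inclusion, while (b)$\implies$(a) is a counterexample assertion.

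For (a)$\implies$(b) my plan is to identify the common trace space as the Besov class $\besov^{k-1}_{1,1}(\partial\Omega;V)$, defined intrinsically on the $(n-1)$-Ahlfors regular boundary of $\Omega$, via two matching results. The first is an Uspenski\u{\i}-type trace theorem for $\sobo^{k,1}(\Omega;V)$ on $\mathrm{NTA}_{n-1}$ domains, giving boundedness and surjectivity of $\trace\colon\sobo^{k,1}(\Omega;V)\to \besov^{k-1}_{1,1}(\partial\Omega;V)$ together with a bounded right inverse. The second is the $\mathrm{NTA}_{n-1}$ analogue of Theorem~\ref{thm:main}: every $u\in\sobo^{\A,1}(\Omega)$ satisfies $\trace u\in\besov^{k-1}_{1,1}(\partial\Omega;V)$ with norm controlled by $\|u\|_{\lebe^{1}(\Omega)}+\|\A u\|_{\lebe^{1}(\Omega)}$. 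Together these identify the two trace spaces as sets.

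The core of the second statement is the reduction metaprinciple~\eqref{eq:reduxMP}. I would fix a Whitney decomposition $\{Q_{j}\}$ of $\Omega$ with a shadow family tiling $\partial\Omega$. On each enlarged cube $Q_{j}^{*}$, $\mathbb{C}$-ellipticity forces the differential kernel of $\A$ to consist of polynomials of degree at most some $d(\A)$, yielding a local Poincar\'{e}-type estimate controlling the oscillation of $u$ modulo $\ker\A\cap\Pol$ by a multiple of $(\diam Q_{j})^{k}\|\A u\|_{\lebe^{1}(Q_{j}^{*})}$. The polynomial correctors are made consistent between neighbouring cubes via the Harnack chain and corkscrew axioms of $\mathrm{NTA}_{n-1}$, and the resulting oscillations are summed over the shadow tiling through the oscillation characterisation of Besov spaces (cf.~Lemma~\ref{lem:Besovosc}), producing the required $\besov^{k-1}_{1,1}$-bound on $\trace u$. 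For (b)$\implies$(a) I would argue by contraposition: if $\A$ is not $\mathbb{C}$-elliptic, Smith's structural theorem~\cite{Smith70} produces a smooth non-polynomial element of $\ker\A$, and an appropriate cutoff concentrating it near $\partial\Omega$ yields $u\in\sobo^{\A,1}(\Omega)$ whose trace does not lie in the Besov trace space of $\sobo^{k,1}(\Omega;V)$, contradicting~(b).

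The main obstacle is the absence of bi-Lipschitz boundary charts on $\mathrm{NTA}_{n-1}$ domains, combined with the fact that $\mathbb{C}$-ellipticity is not preserved under arbitrary bi-Lipschitz changes of coordinates; Theorem~\ref{thm:main} cannot be invoked by flattening the boundary. The Riesz-potential estimates underlying that theorem must instead be localised inside Whitney cubes and glued intrinsically, and the cross-cube cancellation errors carried by the polynomial correctors must be controlled tightly enough that~\eqref{eq:reduxMP} delivers the sharp $\besov^{k-1}_{1,1}$-regularity rather than the weaker $\sobo^{k-1,1}$-bound of~\cite{GRVS}; this is precisely where the quantitative Harnack chain geometry of NTA domains becomes indispensable.
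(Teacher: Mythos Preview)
Your approach and the paper's diverge in a fundamental way. You propose to \emph{identify} the common trace space explicitly as $\besov^{k-1}_{1,1}(\partial\Omega;V)$, which forces you to (i) establish an Uspenski\u{\i}-type trace theorem for $\sobo^{k,1}$ on arbitrary $\mathrm{NTA}_{n-1}$ domains with an intrinsic Besov space on the Ahlfors regular boundary, and (ii) carry the oscillation characterisation of Lemma~\ref{lem:Besovosc} from $\partial\mathbb{H}$ to such boundaries. Neither ingredient is supplied in the paper, and both are substantial; Lemma~\ref{lem:Besovosc} is stated on $\R^{n}$, and the paper makes no claim about what the trace space of $\sobo^{k,1}$ on an $\mathrm{NTA}_{n-1}$ domain actually is.

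The paper instead takes an abstract route that bypasses both issues. It fixes an unspecified Banach space $\mathscr{X}(\partial\Omega;V)$ assumed only to be the trace space of $\sobo^{k,1}(\Omega;V)$, and constructs a \emph{replacement sequence} $T_{j}u$ (see \eqref{eq:replacementsequence}) that equals $u$ away from $\partial\Omega$ and near $\partial\Omega$ replaces $u$ by projections $\Pi_{j,i}u$ onto $\ker(\A)$ computed on reflected balls. The telescoping difference $T_{j+1}u-T_{j}u$ splits as $\mathrm{I}_{j}[u]+\mathrm{II}_{j}[u]$ (cf.~\eqref{eq:tracerewrite}): the first piece is supported away from $\partial\Omega$ and so has zero trace, while the second is a locally finite sum of terms of the form $\rho\cdot(\Pi_{j+1,i}u-\Pi_{j,m}u)$. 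Because $\mathbb{C}$-ellipticity forces $\ker(\A)\subset\mathscr{P}_{m-1}$, these differences are polynomials of fixed degree, so \emph{inverse estimates} give full $\sobo^{k,1}$ control on $\mathrm{II}_{j}[u]$ in terms of $\|\A u\|_{\lebe^{1}}$ on thin boundary strips (Proposition~\ref{lem:convergenceofTj}\ref{item:conv0}). Then one simply applies the assumed $\sobo^{k,1}$ trace inequality to each $\mathrm{II}_{j}[u]$ and sums (Corollary~\ref{cor:main}). This is precisely the reduction metaprinciple \eqref{eq:reduxMP}: the $\sobo^{k,1}$ trace theorem is used as a black box, never unpacked.

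You invoke \eqref{eq:reduxMP} by name, but your proposed implementation---controlling boundary oscillations of $u$ itself via Lemma~\ref{lem:Besovosc}---is the strategy of Theorem~\ref{thm:main}, not the reduction principle. The crucial mechanism you are missing is that Ornstein's Non-Inequality becomes invisible on $\ker(\A)$ by finite-dimensionality, so the problematic $D^{k}$-estimates can be obtained \emph{on the polynomial correctors alone} (cf.\ Remark~\ref{rem:keyidea2ndapproach}); one never needs $D^{k}u$ or a Besov bound on $\trace u$ directly. Your (b)$\Rightarrow$(a) sketch is in the right spirit and matches the paper's use of \cite[Thm.~5.2]{GRVS}.
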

Here, given a Banach space
$(\mathscr{X}(\partial\Omega),\|\cdot\|_{\mathscr{X}(\partial\Omega)})$ with
$\mathscr{X}(\partial\Omega)\subset\lebe_{\locc}^{1}(\partial\Omega;V)$,
we say that $\sobo^{k,1}(\Omega;V)$ \emph{has trace space
  $\mathscr{X}(\partial\Omega;V)$} provided there exists a linear and
bounded surjective trace operator
$\trace_{\partial\Omega}\colon\sobo^{k,1}(\Omega;V)\to\mathscr{X}(\partial\Omega;V)$
(and so satisfies $\trace_{\partial\Omega}(u)=u$
$\mathscr{H}^{n-1}$-a.e. on $\partial\Omega$ for all $u\in
\hold(\overline{\Omega};V)\cap\sobo^{k,1}(\Omega;V)$). Complementing
Theorem~\ref{thm:main}, Theorem~\ref{thm:main1} does not specify the
trace space of $\sobo^{\A,1}(\Omega)$ but rather asserts that it
\emph{equals that of $\sobo^{k,1}(\Omega;V)$}. Also note that we could replace $\sobo^{\A,1}$ by $\bv^{\A}$ in~\ref{item:mainB1}. 

Theorem~\ref{thm:main1} is approached by an advancement and strenghtening of a method employed in the first order case by the authors and \textsc{Breit} \cite{BreDieGme20}. The underlying key
novelty of the proof of Theorem~\ref{thm:main1} is that for a
$\mathbb{C}$-elliptic operator $\A$, the nullspace of $\A$ consists of
polynomials of a fixed degree and so, by the equivalence of all norms on
finite dimensional spaces, \textsc{Ornstein}'s Non-Inequality becomes
invisible on $\ker (\A)$. Section~\ref{sec:trac-reduct-class} is
devoted to the implementation of this strategy.

\subsection{Korn's inequality without global CZ-estimates}\label{sec:introKorn}
Returning to the Korn-type estimate~\eqref{eq:KornIntroEquiv} for $1<p<\infty$, the approach sketched in Section~\ref{sec:IntroLimitingRiesz} works by extensions and applying global Calder\'{o}n-Zygmund estimates on $\R^{n}$. Following the discussions in \cite{DieRuzSch10,Jiang17} for the symmetric gradient case $\A u=\frac{1}{2}(Du+Du^{\top})$, the natural geometric setup for such inequalities is given by \emph{John domains} (see Section~\ref{sec:domains} for this terminology). However, John domains need not even be extension domains for $\sobo^{k,p}$. Hence, in this situation, estimate \eqref{eq:KornIntroEquiv} cannot be established by means of global Calder\'{o}n-Zygmund estimates on $\R^{n}$. Interestingly, \eqref{eq:KornIntroEquiv} still persists for John domains and $\mathbb{C}$-elliptic operators:
\begin{theorem}[Korn for John]\label{thm:Korn}
Let $1<p<\infty$ and $\A$ be a differential operator of the form \eqref{eq:form} with $k\in\mathbb{N}_{\geq 1}$. Then the following are equivalent: 
\begin{enumerate}
\item\label{item:KornA} $\A$ is $\mathbb{C}$-elliptic. 
\item\label{item:KornB} For all open and bounded John domains $\Omega\subset\R^{n}$ we have 
\begin{align*}
\|u\|_{\sobo^{k,p}(\Omega)}\simeq \|u\|_{\lebe^{p}(\Omega)}+\|\A u\|_{\lebe^{p}(\Omega)},\qquad u\in\sobo^{k,p}(\Omega;V).
\end{align*}
\end{enumerate}
\end{theorem}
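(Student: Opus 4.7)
For the direction $\mathrm{(b)}\Rightarrow\mathrm{(a)}$, I would argue by contradiction: if $\A$ fails to be $\mathbb{C}$-elliptic then, by the Smith--Kalamajska characterisation underlying \cite{Smith70,Kal93,Kal94}, the nullspace $\ker\A$ on any open connected set fails to be finite dimensional. If $\A$ is already not elliptic, one picks $\xi\in\R^{n}\setminus\{0\}$ and $v\in V\setminus\{0\}$ with $\A[\xi]v=0$ and considers the WKB-type family $u_{t}(x)=\varphi(x)e^{\mathrm{i}t\xi\cdot x}v$ for a fixed smooth $\varphi$ supported in $\Omega$; a direct computation gives $\|u_{t}\|_{\lebe^{p}(\Omega)}+\|\A u_{t}\|_{\lebe^{p}(\Omega)}=O(t^{k-1})$ while $\|D^{k}u_{t}\|_{\lebe^{p}(\Omega)}\sim t^{k}$. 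If instead $\A$ is elliptic but not $\mathbb{C}$-elliptic, $\ker\A$ contains real-valued polynomials of arbitrarily high degree, yielding a sequence in $\sobo^{k,p}(\Omega)$ with bounded $\lebe^{p}$-norm and unbounded $\sobo^{k,p}$-norm; either alternative contradicts \ref{item:KornB}.

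For the principal direction $\mathrm{(a)}\Rightarrow\mathrm{(b)}$, the plan is to combine the classical Calder\'{o}n--Zygmund estimate \eqref{eq:CZ} on balls with a Boman chain covering tailored to the John condition. The decisive structural input is that $\mathbb{C}$-ellipticity forces $\mathscr{N}_{\A}:=\ker\A$ on any connected open set to be a finite-dimensional space of polynomials of bounded degree. Using the Bramble--Hilbert lemma, I fix on every ball $B$ a bounded linear projection $\pi_{B}\colon\lebe^{p}(B;V)\to\mathscr{N}_{\A}$ with the natural scaling. Since balls are smooth extension domains, applying \eqref{eq:CZ} to a fixed extension of $u-\pi_{B}u$ and iterating Poincar\'{e} yields the local Korn estimate
\begin{equation*}
  \|u-\pi_{B}u\|_{\sobo^{k,p}(B)}\lesssim r_{B}^{k}\|\A u\|_{\lebe^{p}(B)}
\end{equation*}
with a constant independent of $B$.

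To globalise, I invoke a Whitney decomposition $\Omega=\bigcup_{j}Q_{j}$ and exploit the John property in the form of a flexible-cone/Boman chain: there is a distinguished cube $Q_{0}\subset\Omega$ such that every $Q_{j}$ is joined to $Q_{0}$ by a chain $Q_{j}=Q_{j,0},Q_{j,1},\ldots,Q_{j,N_{j}}=Q_{0}$ of Whitney cubes of comparable consecutive sizes, with the whole family of chains of bounded overlap. After enlarging each $Q_{j,i}$ by a fixed factor so that successive links sit inside a common shape-regular ball, the differences $\pi_{Q_{j,i}}u-\pi_{Q_{j,i+1}}u$ lie in the finite-dimensional space $\mathscr{N}_{\A}$ and are therefore controlled in any norm by their $\lebe^{p}$-norm on the overlap; this is precisely where $\mathbb{C}$-ellipticity enters the global estimate. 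Telescoping along each chain, summing over $j$ via a Hardy-type/maximal-function inequality on chains (valid for $1<p<\infty$), and combining with the local Korn estimate delivers $\|u-\pi_{Q_{0}}u\|_{\sobo^{k,p}(\Omega)}\lesssim\|\A u\|_{\lebe^{p}(\Omega)}$. Since $\pi_{Q_{0}}u\in\mathscr{N}_{\A}$ is finite-dimensional, one trivially has $\|\pi_{Q_{0}}u\|_{\sobo^{k,p}(\Omega)}\lesssim\|\pi_{Q_{0}}u\|_{\lebe^{p}(Q_{0})}\lesssim\|u\|_{\lebe^{p}(\Omega)}$, which together yield \ref{item:KornB}.

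The main obstacle is the chain/telescoping step: $\mathscr{N}_{\A}$ may contain polynomials of several different degrees depending on the algebraic structure of $\A$, and one must carefully track how the equivalence constants between different norms on $\mathscr{N}_{\A}$ rescale under translation and dilation of the reference cube in order to obtain the correct $\ell^{p}(j)$-summability. This is exactly where the restriction $1<p<\infty$ is used, through the Hardy-type inequality on chains; correspondingly the argument side-steps the failure of a global extension theorem on an arbitrary John domain, since $u$ is never extended beyond $\Omega$ and the CZ theory is only used locally on balls.
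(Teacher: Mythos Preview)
Your approach to \ref{item:KornA}$\Rightarrow$\ref{item:KornB} via a direct Boman-chain telescope is a genuinely different route from the paper's. The paper does not sum chain contributions directly; instead it first derives from the local Korn estimate on cubes (obtained, as you do, by cutting off and invoking \eqref{eq:CZ}) the pointwise sharp-maximal bound
\[
\mathcal{M}^{\sharp}_{\mathrm{res},\Omega,\sigma_{1},q,\mathcal{N}}(D^{k}u)\leq c\,\mathcal{M}_{\mathrm{res},\Omega,1,q}(\A u),\qquad \mathcal{N}:=D^{k}\ker(\A),
\]
and then applies an abstract Fefferman--Stein-type inequality (Theorem~\ref{thm:fefferman-stein}) to pass from the $\mathcal{N}$-sharp maximal function back to $D^{k}u$ modulo $\mathcal{N}$. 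That Fefferman--Stein inequality is in turn proved by duality against a decomposition theorem on John domains (Theorem~\ref{thm:decomposition}) in which corrections relative to $\mathcal{N}$ are transported along emanating chains---so chains do enter, but on the dual side rather than through a direct telescope of the projections of $u$. A concrete payoff of the paper's detour is that the whole argument is set up for $A_{p}$-weights (Theorem~\ref{thm:KornMain}), which then yields Lorentz- and Orlicz-space Korn inequalities by extrapolation; your direct chain argument would need separate work to recover these.

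Two smaller points. The scaling in your local estimate is off at top order: one has $\|D^{k}(u-\pi_{B}u)\|_{\lebe^{p}(B)}\lesssim\|\A u\|_{\lebe^{p}(\sigma B)}$ with no factor $r_{B}^{k}$ (the power $r_{B}^{k-\ell}$ appears only for $\ell<k$), and a slightly enlarged ball is needed on the right. For \ref{item:KornB}$\Rightarrow$\ref{item:KornA}, your second case (elliptic but not $\mathbb{C}$-elliptic) asserts that polynomials in $\ker(\A)$ of growing degree give a sequence with bounded $\lebe^{p}$-norm and unbounded $\sobo^{k,p}$-norm; this is true but not immediate, and the clean way to see it is the Peetre--Tartar lemma with the compact embedding $\sobo^{k,p}\hookrightarrow\sobo^{k-1,p}$ on a smooth (hence John) domain---which is exactly how the paper argues.
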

Theorem~\ref{thm:Korn} is established by a generalisation of a decomposition method introduced by  \textsc{\Ruzicka{}, Schumacher} and the first author \cite{DieRuzSch10} to the sharp class of operators for which the above Korn-type inequalities can hold at all. Theorem~\ref{thm:Korn} is in the spirit of Theorem~\ref{thm:main1}, however, note that John domains need not allow for a boundary trace operator; but if they do, Theorem~\ref{thm:Korn} immediately implies the equality of the trace space of $\sobo^{k,p}(\Omega;V)$ and that of the $\A$-Sobolev space $\sobo^{\A,p}(\Omega):=\{u\in\lebe^{p}(\Omega;V)\colon\;\|u\|_{\lebe^{p}(\Omega)}+\|\A u\|_{\lebe^{p}(\Omega)}<\infty\}$. Motivated by recent interest in Korn-type inequalities on more general space scales (cf.~\cite{BreDie12,Cianchi14,DieRuzSch10}), Theorem~\ref{thm:Korn} is obtained in Section~\ref{sec:Korn} as a special case of an $A_{p}$-weighted version, cf.~Theorem~\ref{thm:KornMain}. By \textsc{Rubio de Francia} extrapolation, the latter   implies a variety of Korn-type inequalities, so e.g. on Orlicz- or Lorentz spaces; see Section~\ref{sec:Kornextend}.

\subsection{Organisation of the paper}  
Section~\ref{sec:domains} collects the different notions of domains considered in the main part of the paper. As a technical novelty, Section~\ref{sec:impr-repr} provides an improved representation formula a l\'{a} \textsc{Smith} and \textsc{\Kalamajska{}}. This results in a family of Poincar\'{e}-type inequalities for John domains and Riesz potential-type inequalities which should be of independent interest but also display a crucial ingradient for the sequel. In Section~\ref{sec:traces-estimates-via}, after gathering background facts on Besov spaces, we establish Theorem~\ref{thm:main}. Sections~\ref{sec:trac-reduct-class} and~\ref{sec:Korn} are devoted to the proofs of Theorems~\ref{thm:main1} and~\ref{thm:Korn}, respectively. The appendix, Section~\ref{sec:appendix} gathers the proofs of auxiliary results. 

\vspace{0.35cm}
\begin{center}
\textsc{General Notation}
\end{center}
\vspace{0.15cm}
By $\Omega\subset\R^{n}$ we understand an open set throughout, and the open ball of radius $r$ centered at $x_{0}$ is denoted $\ball(x_{0},r):=\{y\in\R^{n}\colon\;|x_{0}-y|<r\}$. For $\ball=\ball(x_{0},r)$ and $\sigma>0$, we set $\sigma\ball :=\ball(x_{0},\sigma r)$. The $n$-dimensional Lebesgue and $(n-1)$-dimensional Hausdorff measures are denoted $\mathscr{L}^{n}$ and $\mathscr{H}^{n-1}$, respectively; for brevity, we sometimes use $|A|:=\mathscr{L}^{n}(A)$ and $\dif^{n-1}:=\dif\mathscr{H}^{n-1}$. Given a
finite dimensional real vector space $E$, we denote the finite positive Radon measures on $\Omega$ by $\mathscr{M}(\Omega)$, the finite $E$-valued Radon measures on $\Omega$ by $\mathscr{M}(\Omega;E)$ and, for $\mu\in\mathscr{M}(\Omega;E)$, $|\mu|(\Omega)$ its total variation. Given $\mu\in\mathscr{M}(\Omega)$ and a Borel subset $U$ of $\Omega$ with $\mu(U)>0$, we put for a $\mu$-integrable map $f\colon\Omega\to E$
\begin{align*}
\dashint_{U}f\dif\mu := \frac{1}{\mu(U)}\int_{U}f\dif \mu, 
\end{align*}
and the choice of $\mu$ will be clear from the context. If $\mu=\mathscr{L}^{n}$, we also use the shorthand $(f)_{U}:=\dashint_{U}f\dif\mathscr{L}^{n}$. The space of $E$-valued polynomials on $\R^{n}$ of degree at most $m\in\mathbb{N}_{0}$ is denoted $\mathscr{P}_{m}(\R^{n};E)$, and the homogeneous $E$-valued polynomials of degree $m$ by $\mathscr{P}_{m}^{h}(\R^{n};E)$; we also set $\mathscr{P}_{-m}(\R^{n};E)=\{0\}$ for $m\in\mathbb{N}$ and $\mathscr{P}(\R^{n};E):=\bigcup_{m\in\mathbb{N}_{0}}\mathscr{P}_{m}(\R^{n};E)$. The symbol $\odot^{m}(\R^{n};E)$ denotes the symmetric $m$-multilinear maps from $\R^{n}$ to $E$. By $c,C>0$ we denote generic constants which might change from line to line and shall only be specified provided their precise value is required. As such, we write $a\simeq b$ if there exist $c,C>0$ such that $ca\leq b \leq Ca$ and both $c,C$ do not depend in any essential way on $a$ and $b$. We also use $X\simeq Y$ for normed spaces to indicate that $X=Y$ with equivalence of norms, but no ambiguities will arise from this.

\section{Domains}\label{sec:domains}
In this section we collect the various geometric assumptions on domains $\Omega$ that arise throughout the main part of the paper. 

Let $\Omega\subset\R^{n}$ be given. We say that $\Omega$ satisfies the \emph{interior corkscrew condition} provided there exist $R>0$ and $M>1$ such that for all $x\in\partial\Omega$ and all $0<r<R$ there exists $y\in\Omega$ such that 
\begin{align*}
|x-y|<r\;\;\;\text{and}\;\;\;\ball\Big(y,\frac{r}{M}\Big)\subset\Omega. 
\end{align*}
Likewise, $\Omega$ satisfies the \emph{exterior corkscrew condition} provided $\R^{n}\setminus\Omega$ satisfies the interior corkskrew condition.

Next, for $x_{1},x_{2}\in\Omega$, we say that a sequence of balls $\ball_{1},...,\ball_{K}\subset\Omega$ is a \emph{Harnack chain from $x_{1}$ to $x_{2}$ of length $K\in\mathbb{N}$} provided $x_{1}\in\ball_{1}$, $x_{2}\in\ball_{K}$ and for all $1\leq j \leq K-1$ there holds $\ball_{j}\cap\ball_{j+1}\neq\emptyset$ and their radii satisfy $M^{-1}r(\ball_{j})<\dista(\ball_{j},\partial\Omega)<Mr(\ball_{j})$ for all $1\leq j \leq K$. The set $\Omega$ is then said to satisfy the \emph{interior Harnack chain condition} if, whenever $\varepsilon>0$ and $x_{1},x_{2}\in\Omega\cap\ball(\xi,\frac{r}{4})$ (for some $\xi\in\partial\Omega$ and $0<r<R$) satisfy $\dista(x_{j},\partial\Omega)>\varepsilon$, $j\in\{1,2\}$, together with $|x_{1}-x_{2}|<2^{l}\varepsilon$ for some $l\in\mathbb{N}$, then there exists a Harnack chain $\ball_{1},...,\ball_{K}$ from $x_{1}$ to $x_{2}$ of length $K\leq Ml$ such that 
$\mathrm{diam}(\ball_{j})\geq\frac{1}{M}\min\{\dista(x_{1},\partial\Omega),\dista(x_{2},\partial\Omega)\}$ for all $j=1,...,K$. 
\begin{definition}[NTA and $\mathrm{NTA}_{n-1}$-domains]\label{def:NTA}
An open and bounded set $\Omega\subset\R^{n}$ is \emph{non-tangentially accessible} (or \emph{NTA} for brevity) provided $\Omega$ satisfies \emph{the interior, the exterior corkscrew and the interior Harnack chain condition}, we refer to $R>0$ and $M>1$ from above as the \emph{NTA-parameters} of $\Omega$. Moreover, $\Omega$ is said to have \emph{$(n-1)$-Ahlfors regular boundary $\partial\Omega$} if there exist $R>0$ and $L>0$ such that for all $x\in\partial\Omega$ and $0<r<R$ there holds 
\begin{align*}
\frac{1}{L}r^{n-1}\leq \mathscr{H}^{n-1}(\partial\Omega\cap\ball(x,r))\leq Lr^{n-1}. 
\end{align*}
If $\Omega\subset\R^{n}$ is NTA and has $(n-1)$-Ahlfors regular boundary, we say that $\Omega$ is $\mathrm{NTA}_{n-1}$.
\end{definition}
See \cite{Aik12,HofMitTay10,JerisonKenig} for more information on NTA domains. Now let $\gamma \subset \Rn$ be a rectifiable path with endpoints $a$ and
$b$ and length $\abs{\gamma}$. Assuming that $\gamma\,:[0,\abs{\gamma}]
\to \Rn$ is parametrised by arclength, we define the {\em $\alpha$-cigar} with core~$\gamma$ and
parameter~$\alpha>0$ by
\begin{align*}
  \cigar(\gamma,\alpha) &:= \bigcup_{t\in [0, \abs{\gamma}]} 
    B \Big(\gamma(t), \frac{1}{\alpha} \min \set{ t, \abs{\gamma}-t}\Big)
   .
\end{align*}
\begin{definition}[John domains]
  \label{def:john}
  An open and bounded set $\Omega \subset \Rn$ is called an \emph{$\alpha$-John
    domain}, $\alpha>0$, if every pair of distinct points
  $a,b \in \Omega$ can be joined by a rectifiable path~$\gamma$ such
  that $\cigar(\gamma,\alpha) \subset \Omega$.  If the
  constant~$\alpha$ is not important, we just say that $\Omega$ is a
  \emph{John domain}.
\end{definition}
Introduced by \textsc{John} \cite{John61} and named after him by \textsc{Martio \& Sarvas} \cite{MarSar78}, John domains include sets with fractal boundary such as the Koch snowflake or slit domains. Moreover, they can be decomposed into a suitable set of balls or cubes
that satisfy a certain chain condition. In particular, by~\cite[Thm.~3.8]{DieRuzSch10} every bounded John
domain also satisfies the \emph{emanating chain condition} in the
sense of the following definition.
\begin{definition}[Emanating chain condition]
  \label{def:boman_chain}
  Let $\Omega \subset \Rn$ be an open and bounded set and let $\sigma_1 >1$, $\sigma_2 \geq 1$. Then we say that~$\Omega$ satisfies the {\em
    emanating chain condition} with constants~$\sigma_1$
  and~$\sigma_2$ if there exists a covering~$\mathcal{W} =
  \set{W_i\,:\, i \in \setN_0}$ of~$\Omega$ consisting of open balls 
  (or cubes) such that:
  \begin{enumerate}[label=(C\arabic{*})]
  \item \label{itm:C1} We have $\sigma_1 W \subset \Omega$ for all $W
    \in \mathcal{W}$ and $\sum_{W \in \mathcal{W}} \mathbbm{1}_{\sigma_1 W}
    \leq \sigma_2\,\mathbbm{1}_\Omega$ on $\Rn$.
  \item \label{itm:C2} For every $W_i \in \mathcal{W}$ there exists a
    {\em chain} of $W_{i,0}, W_{i,1}, \dots, W_{i,m_i}$ (pairwise
    different) from $\mathcal{W}$ such that $W_{i,0} = W_i$,
    $W_{i,m_i} = W_0$, and $W_{i,l_1} \subset \sigma_2 W_{i,l_2}$ for
    $0 \leq l_1 \leq l_2 \leq m_i$.  Moreover,
    $W_{i,l} \cap W_{i,l+1}$, $0 \leq l < m_i$, contains a ball
    $B_{i,l}$ such that
    $W_{i,l} \cup W_{i,l+1} \subset \sigma_2 B_{i,l}$.  The chain
    $W_{i,0}, \dots, W_{i,m_i}$ is called \emph{chain emanating from
      $W_i$}. The number $m_i\in \setN_0 $ is called the {\em length}
    of this chain.
  \item \label{itm:C3}
    The set $\set{i \in \setN_0 \,:\, W_i \cap K \neq \emptyset}$ is
    finite for every compact subset $K \subset \Omega$.
  \end{enumerate}
  The family~$\mathcal{W}$ is called the {\em chain-covering}
  of~$\Omega$.  The ball (or cube)~$W_0$ is called the {\em central ball (or
    cube)}, since every chain ends in~$W_0$.
\end{definition}
Clearly, all sets satisfying the emanating chain condition are connected. Moreover, if $\Omega$ satisfies the emanating chain condition with $\sigma_{1}>1$ and $\sigma_{2}\geq 1$, we have 
\begin{align}\label{eq:mutualradiusECCbound}
\mathrm{diam}(\Omega)\leq \sigma_{2}\mathrm{diam}(W_{0}),
\end{align}
since every $x\in\Omega$ is contained in some $W\in\mathcal{W}$ that can be connected with $W_{0}$ by a chain satisfying \ref{itm:C2} and so $W\subset\sigma_{2}W_{0}$. Moreover, note that whenever an open and bounded set satisfies the emanating chain condition with $\sigma_{1},\sigma_{2}\geq 1$, then by \cite[Thm.~3.8]{DieRuzSch10} it automatically satisfies the emanating chain condition with $\sigma_{1}>1$ and $\sigma_{2}\geq 1$ (for some possibly different chain covering); this is why we directly restrict ourselves to $\sigma_{1}>1$ in Definition~\ref{def:boman_chain}.
\begin{remark}[Choice of overlap balls]\label{rem:chooseBALLS}
By \cite[Rem.~3.15]{DieRuzSch10}, if $\Omega\subset\R^{n}$ satisfies the requirements of Definition~\ref{def:boman_chain}, the balls $\ball_{i,l}$ as in \ref{itm:C2} can be chosen to belong to a family $\mathscr{B}$ of balls such that $\sum_{\ball\in\mathscr{B}}\mathbbm{1}_{\ball}\leq \sigma_{2}\mathbbm{1}_{\Omega}$ as an estimate on $\R^{n}$. 
\end{remark}
\begin{figure} 
\begin{tikzpicture}[rotate=270,scale=0.08] 
\draw[fill=black!5!white,opacity=0.8](0, 0) .. controls (5.18756, -27) and (60.36, -18.40)
   .. (60, 40) .. controls (59.87, 59.88) and (57.33, 81.64203)
   .. (40, 90) .. controls (22.39, 98.48) and (4.72, 84.46368)
   .. (10, 70) .. controls (13.38, 60.71) and (26.35, 59.1351)
   .. (30, 50) .. controls (39.19409, 26.95) and (-4.10, 21.23)
   .. (0, 0);    
\fill[fill=black!10!red, fill opacity=0.25] (45,40) circle [radius=10]; 
\fill[fill=black!10!red, fill opacity=0.25] (48.125,49.25) circle [radius=7.5]; 
\fill[fill=black!10!red, fill opacity=0.25] (48.75,58) circle [radius=6]; 
\fill[fill=black!10!red, fill opacity=0.25] (47.5,64) circle [radius=4.5]; 
\fill[fill=black!10!red, fill opacity=0.25] (45.75,68.75) circle [radius=3.5]; 
\fill[fill=black!10!red, fill opacity=0.25] (44.6,72.5) circle [radius=2.75]; 
\fill[fill=black!10!red, fill opacity=0.25] (44.6,75.5) circle [radius=2.25];
\fill[fill=black!10!red, fill opacity=0.25] (45.1,78) circle [radius=2]; 
\fill[fill=black!10!red, fill opacity=0.25] (45.8,80) circle [radius=1.75]; 
\fill[fill=black!10!red, fill opacity=0.25] (45.8,81.9) circle [radius=1.5]; 
\fill[fill=black!10!red, fill opacity=0.25] (45.5,83.5) circle [radius=1.25];
\fill[fill=black!10!red, fill opacity=0.25] (44.8,84.85) circle [radius=1];
\fill[fill=black!10!red, fill opacity=0.25] (44.2,85.85) circle [radius=0.8];
\fill[fill=black!10!red, fill opacity=0.25] (43.7,86.5) circle [radius=0.6];
\fill[fill=black!10!red, fill opacity=0.25] (43.1,87) circle [radius=0.5];
\fill[fill=black!10!red, fill opacity=0.25] (41.1,49.25) circle [radius=7]; 
\fill[fill=black!10!red, fill opacity=0.25] (36.5,57) circle [radius=6]; 
\fill[fill=black!10!red, fill opacity=0.25] (31,63) circle [radius=5.15]; 
\fill[fill=black!10!red, fill opacity=0.25] (28,69) circle [radius=4];
\fill[fill=black!10!red, fill opacity=0.25] (27,74) circle [radius=3.5]; 
\fill[fill=black!10!red] (27,74) circle [radius=0.4];
\draw[black!10!red,dashed] (27,74) circle [radius=19];
\fill[fill=black!10!red, fill opacity=0.25] (28,78.25) circle [radius=3];
\fill[fill=black!10!red, fill opacity=0.25] (29.5,81.5) circle [radius=2.35]; 
\fill[fill=black!10!red, fill opacity=0.25] (31,83.75) circle [radius=2]; 
\fill[fill=black!10!red, fill opacity=0.25] (32.65,85.45) circle [radius=1.75]; 
\fill[fill=black!10!red, fill opacity=0.25] (34.1,86.45) circle [radius=1.28];
\fill[fill=black!10!red, fill opacity=0.25] (35.35,87.2) circle [radius=1.1];
\fill[fill=black!10!red, fill opacity=0.25] (36.35,87.8) circle [radius=0.85];
\fill[fill=black!10!red, fill opacity=0.25] (37.25,88.1) circle [radius=0.65];
\fill[fill=black!10!red, fill opacity=0.25] (37.85,88.25) circle [radius=0.5]; 
\draw[red] (37.85,88.25) -- (45,95); 
\node[red] at (48,104) {\large $W_{i}=W_{i,0}$};
\fill[fill=black!10!red, fill opacity=0.25] (41,29.25) circle [radius=8.5]; 
\fill[fill=black!10!red, fill opacity=0.25] (36,21.25) circle [radius=8]; 
\fill[fill=black!10!red, fill opacity=0.25] (29,15) circle [radius=7.25]; 
\fill[fill=black!10!red, fill opacity=0.25] (22,10) circle [radius=6]; 
\fill[fill=black!10!red, fill opacity=0.25] (16,6) circle [radius=5]; 
\fill[fill=black!10!red, fill opacity=0.25] (11.35,4.15) circle [radius=3.7];
\fill[fill=black!10!red, fill opacity=0.25] (7.5,3.6) circle [radius=2.9];
\fill[fill=black!10!red, fill opacity=0.25] (4.8,3.5) circle [radius=2.1];
\fill[fill=black!10!red, fill opacity=0.25] (2.8,3.55) circle [radius=1.3];
\fill[fill=black!10!red, fill opacity=0.25] (1.6,3.65) circle [radius=0.9];
\fill[fill=black!10!red, fill opacity=0.25] (37,12.5) circle [radius=7.25]; 
\fill[fill=black!10!red, fill opacity=0.25] (37,5) circle [radius=6]; 
\fill[fill=black!10!red, fill opacity=0.25] (33,0) circle [radius=5]; 
\fill[fill=black!10!red, fill opacity=0.25] (27.75,-3.75) circle [radius=3.7];
\fill[fill=black!10!red, fill opacity=0.25] (23.75,-5.55) circle [radius=2.8];
\fill[fill=black!10!red, fill opacity=0.25] (20.5,-7) circle [radius=2.1];
\fill[fill=black!10!red, fill opacity=0.25] (18.5,-8) circle [radius=1.4];
\fill[fill=black!10!red, fill opacity=0.25] (17,-8.75) circle [radius=1.1];
\fill[fill=black!10!red, fill opacity=0.25] (16,-9.4) circle [radius=1];
\fill[fill=black!10!red, fill opacity=0.25] (15.1,-10) circle [radius=0.9];
\fill[fill=black!10!red, fill opacity=0.25] (14.4,-10.6) circle [radius=0.8];
\draw (40,90) .. controls (45,80) and (35,75) .. (35,70);
\draw (6.125,-10) .. controls (6.75,-6.5) and (17.5,-6) .. (20,-1);
\node[black!45!white] at (17.5,78.5) {\LARGE $\Omega$};
\node[black] at (1.5,18.5) {\LARGE $\partial\Omega$};
\node[red] at (51,38) {\large $W_{0}$};
\end{tikzpicture}
\caption{The emanating chain condition for an exemplary non-extension domain $\Omega\subset\R^{2}$ with slits, illustrating the key property in \ref{itm:C2}: In any chain $W_{i,0}=W_{i},..., W_{i,m_{i}}=W_{0}$, any $W_{i,l}$ might be blown up by $\sigma_{2}$ such that $\sigma_{2}W_{i,l}$ contains \emph{all} $W_{i,j}$, $0\leq j\leq l$. }
\end{figure}
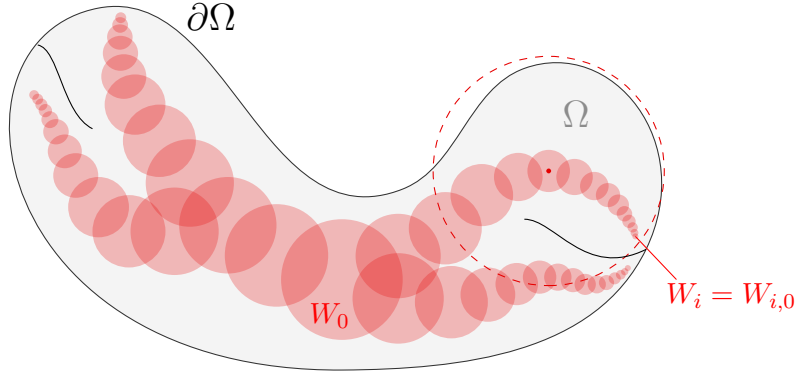
Following  \textsc{Boman} \cite{Boman82} and \textsc{Iwaniec \& Nolder} \cite{IwaNol85}, domains merely satisfying \ref{itm:C1} and \ref{itm:C2} are referred to as \emph{Boman (chain) domains}; also see \textsc{Hurri} \cite{Hurri88}. By a result due to \textsc{Buckley, Koskela \& Lu} \cite{BucKosLu96}, an open and bounded domain is a Boman chain domain if and only if it is John; moreover, by \cite{DieRuzSch10}, any open and bounded John domain satisfies the emanating chain condition. In total, for open and bounded sets $\Omega\subset\R^{n}$ these notions of sets are connected as follows (see, e.g., \cite[Sec.~3.7]{Aik12} and \cite[Thm.~3.8]{DieRuzSch10}): 
\begin{align}
\fbox{\text{$\text{NTA}_{n-1}\Longrightarrow \text{NTA} \Longrightarrow \text{John} =\text{Emanating Chain Condition}=\text{Boman}$}}
\end{align}

\section{The sharp representation theorem and Poincar\'{e}-type inequalities}
\label{sec:impr-repr}

In this section we derive a new representation formula in terms of
$\bbA u$. From this we conclude \Poincare{}-type estimates for balls
and John domains which should be of independent interest but also play a pivotal role in the proof of Theorems~\ref{thm:main}--\ref{thm:Korn}.

\subsection{The representation formula}
\label{sec:repr-form}
The natural framework for the results of this and the subsequent sections is given by the following variants of Sobolev- and BV-spaces adapted to differential operators $\A$ of the form \eqref{eq:form}: Given $1\leq p < \infty$ and an open  set~$\Omega \subset \Rn$, we define 
\begin{align}\label{eq:defBVA}
  \begin{split}
    &\sobo^{\A,p}(\Omega):=\{u\in\lebe^{p}(\Omega;V)\colon\;\|u\|_{\sobo^{\A,p}(\Omega)}:=\|u\|_{\lebe^{p}(\Omega)}+\|\A u\|_{\lebe^{p}(\Omega)}<\infty\},\\
    &\bv^{\A}(\Omega):=\{u\in\lebe^{1}(\Omega;V)\colon\;\|u\|_{\bv^{\A}(\Omega)}:=\|u\|_{\lebe^{1}(\Omega)}+|\A u|(\Omega)<\infty\},
  \end{split}
\end{align}
where $\|\A u\|_{\lebe^{p}(\Omega)}<\infty$ or $|\A u|(\Omega)<\infty$
have to be interpreted in the sense that the distributional
differential $\A u$ can be represented by an $\lebe^{p}(\Omega;W)$-map
or a finite $W$-valued Radon measure, respectively.  Moreover, we set ${\widetilde{\sobo}}{^{\A,p}}(\Omega):={\sobo}{^{\A,p}}(\Omega)\cap\sobo^{k-1,p}(\Omega;V)$ (in \cite{GR19} also denoted $\mathrm{V}^{\A,p}$) to be endowed with norm $\|u\|_{\widetilde{\sobo}{^{\A,p}}(\Omega)}:=\|u\|_{\sobo^{k-1,p}(\Omega)}+\|\A u\|_{\lebe^{p}(\Omega)}$. All of these spaces are Banach, and it is routine to check that $\hold^{\infty}(\Omega;V)\cap{\widetilde{\sobo}}{^{\A,p}}(\Omega)$
is dense in ${\widetilde{\sobo}}{^{\A,p}}(\Omega)$ and, if $\partial\Omega$ is sufficiently regular, we even have density of $\hold^{\infty}(\overline{\Omega};V)\cap{\widetilde{\sobo}}{^{\A,p}}(\Omega)$. For future reference, note that such density properties cannot expected for the norm on $\bv^{\A}$ (cf.~\textsc{Ambrosio} \cite[Chpt.~3]{AmbFusPal00} et al. for $\A=D$), and the correct substitute here is the $\A$-strict metric defined by
\begin{align}\label{eq:Astrictmetric}
  d_{\A}(u,v):=\|u-v\|_{\lebe^{1}(\Omega)}+\bigabs{|\A u|(\Omega)-|\A
  v|(\Omega)},\qquad u,v\in{\bv}{^{\A}}(\Omega). 
\end{align}
(Non-)examples of $\mathbb{C}$-elliptic operators are as follows; other relevant instances are gathered in \cite[Sec.~6.4]{Van13}: 
\begin{example}\label{ex:exfirst}
For each $k,n\in\mathbb{N}$ and $N\in\mathbb{N}$, the $k$-th order gradients $D^{k}$ acting on $u\colon\R^{n}\to\R^{N}$ are $\mathbb{C}$-elliptic. For $n\in\mathbb{N}$ and maps $u\colon\R^{n}\to\R^{n}$, the \emph{symmetric gradient operator} $\varepsilon(u):=\frac{1}{2}(Du+Du^{\top})$ is $\mathbb{C}$-elliptic for all $n\geq 1$, see \cite[Sec.~2.2]{BreDieGme20}.  For $n\geq 3$, the  \emph{trace-free symmetric gradient} $\varepsilon^{D}u:=\varepsilon(u)-\frac{1}{n}\mathrm{div}(u)E_{n\times n}$ is equally $\mathbb{C}$-elliptic (again, see \cite[Sec.~2.2]{BreDieGme20}). It is easy to see that the composition of two $\mathbb{C}$-elliptic operators is again $\mathbb{C}$-elliptic; an example that occurs frequently (so, e.g., in the regularity theory for elasticity and plasticity, is $\A = D\varepsilon^{D}$, see \textsc{Fuchs \& Seregin} \cite{FuchsSeregin}).
\end{example}
\begin{example}\label{ex:FuchsSeregin}
The trace-free symmetric gradient is not $\mathbb{C}$-elliptic for $n=2$. This can be seen by the fact (following from Proposition~\ref{prop:decomposition} below) that $\mathbb{C}$-elliptic operators have finite dimensional nullspace on connected sets. Elements of $\ker(\varepsilon^{D})$ in $n=2$ have a one-to-one-correspondence with holomorphic functions on $\mathbb{C}$, hence $\varepsilon^{D}$ is not $\mathbb{C}$-elliptic, see \cite[Sec.~2.2]{BreDieGme20}. On the other hand, if $n\geq 2$ and $\A$ is an elliptic operator on $\R^{n}$ from $V$ to $W$, then the $\A$-\emph{Laplacean} $\Delta_{\A}:=\A^{*}\A$ is \emph{not $\mathbb{C}$-elliptic}. Indeed, in this situation $\Delta_{\A}$ is elliptic as an operator on $\R^{n}$ from $V$ to $V$, and hence does not even satisfy \textsc{Van Schaftingen}'s cancellation condition:
\begin{align*}
\bigcap_{\xi\in\R^{n}\setminus\{0\}}\Delta_{\A}[\xi](V)=V\neq\{0\}. 
\end{align*}
As $\mathbb{C}$-ellipticity implies ellipticity and cancellation (cf. \cite[Lem.~3.2]{GR19}, \cite[Prop.~3.1]{GRVS}) for $n\geq 2$, the $\A$-Laplacean $\Delta_{\A}$ is not $\mathbb{C}$-elliptic. 
\end{example}
Towards Proposition~\ref{prop:decomposition}, we need to revise a decomposition as established in the pioneering works of \textsc{Smith}~\cite{Smith70} and
\textsc{\Kalamajska{}} \cite{Kal94}, cf. \eqref{eq:decomposition-Kal} below. Let $\Omega \subset \Rn$ be a domain that is star-shaped with respect to an open ball~$B$ and satisfies~$\diameter(\Omega) \leq c_0\, \diameter(B)$. The averaged Taylor polynomial $\bbT_m^B$ of order~$m$ then  is defined as 
in~\cite[Chpt.~1.1.10]{Maz85} and \cite[Chpt.~4]{BreSco08}:
Choose~$\omega \in \hold^\infty_{c}(B)$ with $\int_B \omega(y)\dif y=1$
and then define for $u\in\lebe_{\locc}^{1}(\Omega;V)$:
\begin{align}\label{eq:Taylor}
  \begin{split}
    \opT_m^{\ball}u(x) &:= \int_{B} u(y) \sum_{\abs{\alpha} \leq m} \frac{(-1)^{|\alpha|}}{\alpha!}
    \partial_{y}^\alpha \big(\omega(y) (x-y)^\alpha\big) \dif y. 
  \end{split}
\end{align}
The averaged Taylor polynomial has good approximation
properties and commutes in some sense with derivatives. In
particular, we have
\begin{align}\label{eq:TaylorSwift}
  \A\mathbb{T}_{m-1}^{B}u = \mathbb{T}_{m-k-1}^{B}\A u.
\end{align}
Moreover, it follows
from~\cite[Chpt.~1.1.10,~Thm.~1]{Maz85} that for all $x\in\Omega$
there holds
\begin{align}
\begin{split}
  \label{eq:Mazrep}
  u(x) & = \mathbb{T}_{m-1}^{\ball}u(x) +
         \sum_{|\alpha|=m}\int_{\Omega}K_{\alpha,B}(x,y)\partial^{\alpha}u(y)\dif
         y,\;\;\;\text{where}
  \\
 &\;\;\;\;\;\;\;\;\;\;\;K_{\alpha,B}(x,y)=\frac{(-1)^{m}m}{\alpha!}\frac{(y-x)^{\alpha}}{|x-y|^{n}}
  \int_{|x-y|}^{\infty}\omega\Big( x+t\frac{y-x}{|y-x|}\Big)t^{n-1}\dif t. 
  \end{split}
\end{align}
As established in \cite{Kal94}, the $\mathbb{C}$-ellipticity in combination with the Hilbert Nullstellensatz \cite[Lem.~4]{Kal94}
implies the existence of $m \in \setN$ and a
$\odot^{m}(\R^{n};V)$-valued differential operator $\mathbb{L}$ of order
$(m-k)$ such that $D^{m}=\mathbb{L}\A$. The constant~$m$ is not given
explicitly, but at least $\ker(\bbA) \subset \Pol_{m-1}(\Rn;V)$, so the \emph{polynomial degree} $\degPol(\A)$ of $\A$ satisfies 
\begin{align}\label{eq:deg-pol}
m \geq \degPol(\bbA) &:= \min \set{j \in \setN_0\,:\, \ker(\bbA) \subset \Pol_{j-1}(\Rn;V)}\geq k.
\end{align}
For future reference, we note that $\degPol(\A)<\infty$ not only follows but is in fact equivalent to $\mathbb{C}$-ellipticity, cf.~\cite[Prop.~3.1]{GR19} and \cite{Smith70}. Now, $D^m = \bbL \bbA$ and integrating by parts $(m-k)$-times yields for $u\in\hold^{\infty}(\overline{\Omega};V)$
\begin{align}
  \label{eq:decomposition-Kal}
  u(x) &= \mathbb{T}_{m-1}^Bu(x) + \int_{\Omega}\widetilde{\frK}^B_{\bbA}(x,y)\A u(y)\dif
         y\qquad\text{for all}\;x\in\Omega,
\end{align}
cf.~\cite[Theorem~4 (v)]{Kal94}, where the kernel 
$\widetilde{\frK}_\A^B\colon
(\Omega\times\Omega)\setminus\Delta \to\mathscr{L}(W;V)$ is smooth off
the diagonal $\Delta$, satisfies
$\widetilde{\frK}_\A^B(x,\cdot)=0$ near $\partial\Omega$
for each $x\in\Omega$ and, with $C=C(c_{0},\A)>0$
\begin{align}
  \label{eq:kernel-Kal}
  \abs{\partial_{x}^\alpha \partial_{y}^\beta
  \widetilde{\frK}_\A^B(x,y)} &\leq C\,\abs{x-y}^{k-n-                                                                              \abs{\alpha}-\abs{\beta}}           \qquad\text{for                                                                              all}\;x,y\in\Omega,\;x\neq y, 
\end{align}
for all $\alpha,\beta\in\mathbb{N}_{0}^{n}$ with
$|\alpha|+|\beta|\leq k$.

In general, the polynomial $\bbT_{m-1}^B u$ in  \eqref{eq:decomposition-Kal} does not belong to the nullspace of~$\bbA$. As a simple, yet effective improvement, we now modify this representation formula in such a way that we can replace $\bbT_{m-1}^B$ by a projection $\Pi_\A^B$ to $\ker(\bbA)$, thereby yielding a quick argument for Riesz- and Poincar\'{e}-type estimates in Sections~\ref{sec:poincare} and \ref{sec:traces-estimates-via}. 
\begin{proposition}[Representation formula]\label{prop:decomposition}
  Let $\Omega\subset\R^{n}$ be a bounded domain which is star-shaped
  with respect to an open ball $\ball\subset\Omega$ with
  $\diameter \Omega \leq c_0\, \diameter B$. Let $\A$ be a $k$-th order
  $\mathbb{C}$-elliptic differential operator of the form
  \eqref{eq:form}.
  Then there exists an integral kernel $\frK_\A^B\,:\, (\Omega
  \times \Omega )\setminus\Delta\to \mathscr{L}(W;V)$
  and a linear projection~$\Pi_\A^B\,:\,\lebe^1(\Omega;V) \to \ker(\opA)$ such that
  the following hold:
  \begin{enumerate}[label={(K\arabic{*})},start=1]
  \item\label{item:decomp-kernel}
    $\frK_\A^B$ is smooth off the diagonal, and for $x,y
    \in \Omega$, $x\neq y$ and $\abs{\alpha}+\abs{\beta} \leq k$ we have
    \begin{align*}
      |\partial_{x}^{\alpha}\partial_{y}^{\beta}\frK_\A^B(x,y)|\leq
      C\,|x-y|^{k-n-|\alpha|-|\beta|}
    \end{align*}
    with a constant $C= C(\opA, c_0)>0$.
  \item\label{item:kernel-zero} $\frK_\A^B(x,\cdot)$ is zero
    near~$\partial \Omega$ for all $x \in \Omega$, i.e.,
    $\spt(\frK_\A^B(x,\cdot)) \compactsubset \Omega$.
  \item\label{item:decomp-repr} For any
    $u\in\hold^{\infty}(\Omega;V)$
    and  $x\in\Omega$ there holds 
    \begin{align}\label{eq:decompositionmain}
      u(x) = \Pi_\A^Bu(x) + \int_{\Omega}\frK_\A^B(x,y)\A u(y)\dif y.
    \end{align}
  \item \label{item:decomp-projection}
    $\Pi_\A^B\colon\Pol_{\ell}(\R^{n};V)\to\Pol_{\ell}(\R^{n};V)\cap\ker(\A)$ for
    every~$\ell$ with $0 \leq \ell < \degPol(\bbA)$.
  \end{enumerate}
\end{proposition}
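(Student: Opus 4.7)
The plan is to start from the \textsc{Kalamajska} representation \eqref{eq:decomposition-Kal} and correct its polynomial part, the averaged Taylor polynomial $\mathbb{T}_{m-1}^{B}u$, so that the corrected polynomial lies in $\ker(\A)$. The crucial structural input is $\mathbb{C}$-ellipticity, which via \eqref{eq:deg-pol} ensures $\degPol(\A)\leq m$ and hence $\ker(\A)\subset\Pol_{m-1}(\Rn;V)$; in particular, the projection onto $\ker(\A)$ reduces to a finite-dimensional linear projection on polynomials.

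First I would fix a linear projection $\pi\colon\Pol_{m-1}(\Rn;V)\to\ker(\A)$ that is \emph{graded} with respect to degree, meaning $\pi(\Pol_{\ell})\subset\Pol_{\ell}\cap\ker(\A)$ for every $0\leq \ell\leq m-1$. Such a $\pi$ is constructed by choosing a filtration-compatible basis of $\ker(\A)\cap\Pol_{m-1}$ (running through the chain $\Pol_{0}\cap\ker(\A)\subset\cdots\subset\Pol_{m-1}\cap\ker(\A)$), extending it to a filtration-compatible basis of $\Pol_{m-1}$, and taking $\pi$ to be the projection along the complementary directions. Setting $\Pi_{\A}^{B}u:=\pi(\mathbb{T}_{m-1}^{B}u)$ and using that $\mathbb{T}_{m-1}^{B}$ reproduces polynomials of degree $\leq m-1$ immediately yields property~\ref{item:decomp-projection}.

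The next step is to represent the error $R_{B}[u]:=(\mathrm{id}-\pi)(\mathbb{T}_{m-1}^{B}u)\in\Pol_{m-1}(\Rn;V)$ as an integral against $\A u$. Applying $\A$ and using the commutation relation \eqref{eq:TaylorSwift} gives $\A R_{B}[u]=\mathbb{T}_{m-k-1}^{B}(\A u)$, since $\pi(\mathbb{T}_{m-1}^{B}u)\in\ker(\A)$. Because $R_{B}[u]$ lies in the fixed complement of $\ker(\A)\cap\Pol_{m-1}$ inside $\Pol_{m-1}$, and $\A$ restricted to this complement is a bijection onto the finite-dimensional polynomial space $\A(\Pol_{m-1}(\Rn;V))$, the rule $\mathbb{T}_{m-k-1}^{B}(\A u)\mapsto R_{B}[u]$ is the restriction of a well-defined linear right inverse $\mathscr{S}$ of $\A$. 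Composing $\mathscr{S}$ with the integral formula for $\mathbb{T}_{m-k-1}^{B}$ from \eqref{eq:Taylor} produces a smooth kernel $\phi_{B}(x,y)$, polynomial in $x$ of degree $\leq m-1$ and compactly supported in $B$ in $y$, such that
\begin{align*}
R_{B}[u](x)=\int_{\Omega}\phi_{B}(x,y)\,\A u(y)\dif y\qquad\text{for all}\;x\in\Omega.
\end{align*}

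Finally, I would set $\frK_{\A}^{B}(x,y):=\widetilde{\frK}_{\A}^{B}(x,y)-\phi_{B}(x,y)$. Substituting into \eqref{eq:decomposition-Kal} yields the representation \eqref{eq:decompositionmain}; property~\ref{item:kernel-zero} is immediate since $\widetilde{\frK}_{\A}^{B}(x,\cdot)$ vanishes near $\partial\Omega$ and $\phi_{B}(x,\cdot)$ is supported in $B\Subset\Omega$. Property~\ref{item:decomp-kernel} is then inherited from the estimate \eqref{eq:kernel-Kal}: since $\phi_{B}$ and all its derivatives up to total order $k$ are uniformly bounded on $\overline{\Omega}\times\overline{\Omega}$, they are compatible with the power $|x-y|^{k-n-|\alpha|-|\beta|}$ on the bounded domain $\Omega$. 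The step I expect to require the most care is the graded construction of $\pi$: it is precisely what guarantees property~\ref{item:decomp-projection}, and this degree-preservation will be indispensable for the polynomial-degree tracking underlying the Poincar\'e and trace estimates on John domains developed later in the paper.
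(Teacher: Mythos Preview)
Your proposal is correct and follows essentially the same route as the paper's proof: start from the \textsc{\Kalamajska{}} representation~\eqref{eq:decomposition-Kal}, replace $\mathbb{T}_{m-1}^{B}$ by a degree-preserving projection onto $\ker(\A)$, and absorb the polynomial error into the integral kernel by writing it as a linear functional of $\mathbb{T}_{m-k-1}^{B}(\A u)$. The paper carries out the last step slightly more explicitly---it realises your abstract right inverse $\mathscr{S}$ via the identity $\langle q,\psi_{\ell}^{*}\rangle_{\lebe^{2}(B)}=\langle \A q,\A\xi_{\ell}\rangle_{\lebe^{2}(B)}$ on the complement $\mathcal{W}$, which immediately yields the integral kernel $\overline{\frK}_{\A}^{B}$; and it grades by \emph{homogeneous} degree rather than by filtration, but this is equivalent for the purpose of~\ref{item:decomp-projection}. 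One small slip: in your final definition the sign should be $\frK_{\A}^{B}=\widetilde{\frK}_{\A}^{B}+\phi_{B}$, since $\mathbb{T}_{m-1}^{B}u=\Pi_{\A}^{B}u+R_{B}[u]$.
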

\begin{proof}
  Using translation and dilation we can assume without loss of
  generality that $B$ is the unit ball~$B(0,1)$.

  Let $m \in \setN_0$  and $\widetilde{\frK}_\bbA^B$ be such
  that~\eqref{eq:decomposition-Kal} holds. In particular, we have $m \geq
  \degPol (\bbA)$ and
  $\ker(\bbA) \subset \Pol_{m-1}(\R^{n};V)$.  Define
  $\mathcal{Z}_{\ell}:=\ker(\A)\cap\mathscr{P}_{\ell}^{h}(\R^{n};V)$ for $\ell=0,\dots, m-1$. We claim that
  \begin{align}
    \label{eq:kerA-decomp}
    \ker(\bbA) = \bigoplus_{\ell=0}^{m-1} \mathcal{Z}_\ell
  \end{align}
  as a direct sum. For this, let $q \in \ker(\A)$. Then $q=
  \sum_{\ell=0}^{m-1} q_\ell$ with $q_\ell \in
  \Pol_{\ell}^h(\Rn;V)$. Since $0 = \opA q = \sum_{\ell=0}^{m-1} \opA
  q_\ell$ and $\opA q_\ell \in \Pol_{\ell-k}^h(\Rn;W)$ by the homogeneity of $\A$, we obtain
  $q_{\ell} \in \ker(\opA)$ for $\ell=0,\dots,m-1$. This proves the decomposition~\eqref{eq:kerA-decomp}.

  For $\ell=0,\dots,m-1$, we then choose a linear subspace
  $\mathcal{W}_{\ell}\subset\mathscr{P}_{\ell}^{h}(\R^{n};V)$ such
  that
  $\mathscr{P}_{\ell}^{h}(\R^{n};V)=\mathcal{Z}_{\ell}\oplus\mathcal{W}_{\ell}$. Then
  we have
  \begin{align}\label{eq:directsumdecomp}
    \mathscr{P}_{m-1}(\R^{n};V)=\bigoplus_{\ell=0}^{m-1}\mathcal{Z}_{\ell}
    \oplus\bigoplus_{\ell=0}^{m-1}\mathcal{W}_{\ell} =: \ker(\opA) + \mathcal{W}.
  \end{align}
  Let $\psi_1,\dots, \psi_M$ denote a basis of~$\Pol_{m-1}(\Rn;V)$
  such that for each $\ell=0,...,m-1$, $\psi_{j_\ell},\dots, \psi_{j_{\ell+1}-1}$ is a basis of
  $\mathcal{Z}_{\ell}$ and $\psi_{j_m},\dots,\psi_M$ is a basis
  of~$\mathcal{W}$; here, $j_{0}=1$. Now, let $\psi_1^{*},...,\psi_{M}^{*}$ be a dual basis with
  respect to the $\lebe^2(B;V)$-inner product $\skp{\cdot}{\cdot}_{\lebe^{2}(\ball)}$,
  i.e., $\skp{\psi_l}{\psi_j^*}_{\lebe^{2}(B)} = \delta_{l,j}$ for all $j,l=1,...,M$.

  Based on \eqref{eq:directsumdecomp}, we define the projection $\widetilde{\Pi}_\A\colon
  \mathscr{P}_{m-1}(\R^{n};V)\to \ker(\A)$ by 
  \begin{align*}
    \widetilde{\Pi}_{\A}(q) &:=     \sum_{j=1}^{j_m-1} \skp{q}{\psi_j^*}_{\lebe^{2}(\ball)}\psi_j.
  \end{align*}
  We define $\Pi_\A^B := \widetilde{\Pi}_\bbA \bbT_{m-1}^B$; since $\bbT_{m-1}^B$ is a projection to $\Pol_{m-1}(\Rn;V)$, $\Pi_\A^B$ is a projection to~$\ker(\bbA)$. Moreover, by
  construction, $\widetilde{\Pi}_\bbA$ maps $\Pol_{\ell}^h(\Rn;V)$ onto
  $\Pol_{\ell}^h(\Rn;V)$ for $\ell=0,\dots,m-1$, thus $\Pi_\A^B$ also
  maps $\Pol_{\ell}^h(\Rn;V)$ onto $\Pol_{\ell}^h(\Rn;V)$ for
  $\ell=0,\dots,m-1$.  This proves~\ref{item:decomp-projection}. Moreover, for all
  $u\in\hold^{\infty}(\overline{\Omega};V)$ we have
  \begin{align}\label{eq:rewritedifference}
    \begin{split}
      u - \Pi_\A^B u &= u-\widetilde{\Pi}_{\A}\T_{m-1}^{\ball}u
      \\
      & = (u-\T_{m-1}^{\ball}u) + (\identity -
      \widetilde{\Pi}_\bbA)\T_{m-1}^{\ball}u.
    \end{split}
  \end{align}
  Note that $\langle \opA \cdot, \opA \cdot\rangle_{\lebe^{2}(\ball)}$ is an inner product on
  $\mathcal{W}$ as 
  $\Pol_{m-1}(\Rn;V) = \ker(\bbA) \oplus \mathcal{W}$. Thus, for each
  $\psi_\ell^*$ with $j_m \leq \ell \leq M$ we find $\xi_\ell \in
  \mathcal{W}$ such that
  \begin{align*}
    \skp{w}{\psi_\ell^*}_{\lebe^{2}(B)} &= \skp{\opA w}{\opA \xi_\ell}_{\lebe^{2}(B)} \qquad \text{for all $w \in
                         \mathcal{W}$ and $j_m \leq \ell \leq M$}.
  \end{align*}
  If $z \in \ker(\opA)$, then
    $\skp{z}{\psi_\ell^*}_{\lebe^{2}(B)} = 0 = \skp{\opA z}{ \opA \xi_\ell}_{\lebe^{2}(B)}$ for all  $j_m \leq \ell \leq M$. Thus,
  \begin{align}
    \label{eq:innerprodct}
    \skp{q}{\psi_\ell^*}_{\lebe^{2}(B)} &= \skp{\opA q}{\opA \xi_\ell}_{\lebe^{2}(B)}\;\;\text{for all $q \in
                         \Pol_{m-1}(\Rn,V)$ and $j_m \leq \ell \leq M$}.
  \end{align}
  In conclusion,
  \begin{align}
    \begin{aligned}
      (\identity &- \widetilde{\Pi}_\bbA )\T_{m-1}^{\ball}u = \sum_{j=j_m}^M
      \skp{\bbT_{m-1}^B u}{\psi_j^*}_{\lebe^{2}(\ball)} \psi_j
      \\
      &= \sum_{j=j_m}^M \skp{\bbA \bbT_{m-1}^B u}{\bbA \xi_j}_{\lebe^{2}(\ball)} \psi_j \stackrel{\eqref{eq:TaylorSwift}}{=} \sum_{j=j_m}^M
      \skp{\bbT_{m-1-k}^B \bbA u}{\bbA \xi_j}_{\lebe^{2}(\ball)} \psi_j .
    \end{aligned}
  \end{align}
  Hence, by the definition~\eqref{eq:Taylor} of $\bbT_{m-1}^B$, we obtain for all
  $x\in\Omega$
  \begin{align}\label{eq:rewriteremainder}
    \begin{aligned}
      \lefteqn{\big((\identity - \widetilde{\Pi}_\bbA)\T_{m-1}^{\ball}u\big)(x)} \qquad&
      \\
      &= \int_B   \int_B \sum_{\substack{\abs{\alpha} \leq m-k-1\\ j_m 
          \leq j \leq M }} \!\!\!\!\!\! \frac{(-1)^{|\alpha|}}{\alpha!}
      \partial_{z}^\alpha \big(\omega(z)
      (y-z)^\alpha\big) \psi_{j}(x)(\A\xi_{j}(y))^{\top} \dif y \A
      u(z)\dif z
      \\
      &=: \int_{B}\overline{\frK}_\A^B(x,z)\A u(z)\dif z,
    \end{aligned}
  \end{align}
  where $\overline{\frK}_\A^B$ is defined in the obvious
  manner. Recalling
  \eqref{eq:decomposition-Kal}, put
  $\frK_\A^B:=\overline{\frK}_\A^B+\widetilde{\frK}_\A^B$.
  Then by \eqref{eq:decomposition-Kal}, \eqref{eq:rewritedifference}
  and \eqref{eq:rewriteremainder}, ~\eqref{eq:decompositionmain} follows.  Note that
  $\overline{\frK}_\bbA^B(x,\cdot)$ is supported on
  $\spt(\omega) \compactsubset B \subset \Omega$ and is therefore
  zero near~$\partial\Omega$. On the other hand, by ~\eqref{eq:decomposition-Kal}ff. we obtain that 
  $\widetilde{\frK}_\bbA^B(x,\cdot)$ is zero near~$\partial
  \Omega$. This proves that $\frK_\bbA^B(x,\cdot)$ is zero
  near~$\partial \Omega$ and so~\ref{item:kernel-zero} follows. It remains to establish the estimates for $\frK_\opA^B$
  in~\ref{item:decomp-kernel}; however, the estimates
  for~$\widetilde{\frK}_\bbA^B$ are much less singular and follow
  easily using $\diam(\Omega) \leq c_0 \diameter(B)$ and our
  simplifying assumption $0 \in B$.  The proof is complete.
\end{proof}
\begin{remark}
  It is possible to extend~\ref{item:decomp-projection} of
  Proposition~\ref{prop:decomposition} to all $0 \leq \ell < m$ for
  any fixed $m \geq \degPol (\bbA)$. For this one has to start
  with~\eqref{eq:decomposition-Kal} with a possible enlarged~$m$. Moreover, if $B$ in Proposition~\ref{prop:decomposition} is centered at zero
  or $\Omega$ at least contains zero, then it is possible to
  construct~$\Pi_\A^B$ such
  $\Pi_\A^B\colon\Pol_{\ell}^{h}(\R^{n};V)\to\Pol_{\ell}^{h}(\R^{n};V)\cap\ker
  (\A)$ for every~$\ell$ with $0 \leq \ell < \degPol \bbA$, where
  $\Pol_\ell^h$ is the linear space of homogeneous polynomials of
order~$\ell$. Note that not increasing the degree of polynomials when projecting \emph{cannot} be improved to the commuting-type relation $\partial^{\alpha}\mathrm{proj}_{\ker(\A)}u=\mathrm{proj}_{\partial^{\alpha}\ker(\A)}(\partial^{\alpha}u)$ for $u\in\mathscr{P}_{m-1}(\R^{n};V)$ for suitable projections onto $\ker(\A)$ or $\partial^{\alpha}\ker(\A)$, respectively.

This can be seen by means of the symmetric gradient $\A=\varepsilon$ (cf.~Example~\ref{ex:exfirst}) in $n=2$ dimensions. In this case,  the nullspace of $\A$ is given by the three-dimensional space of \emph{rigid deformations}
\begin{align*}
\ker(\A)=\left\{(x,y)\mapsto \Big(\begin{matrix} -ay+b \\ ax+c\end{matrix}\Big)\colon\;a,b,c\in\R  \right\}, 
\end{align*}
cf.~\textsc{Reshetnyak} \cite{Resh70}. Denote $p(x,y):=(0,x)^{\top}$. Should the commuting relation $\partial^{\alpha}\mathrm{proj}_{\ker(\A)}p=\mathrm{proj}_{\partial^{\alpha}\ker(\A)}(\partial^{\alpha}p)$ hold, then we have $\partial_{x}\mathrm{proj}_{\ker(\A)}p(x,y)=(0,1)^{\top}$ and $\partial_{y}\mathrm{proj}_{\ker(\A)}p(x,y)=(0,0)^{\top}$. Thus, $\mathrm{proj}_{\ker(\A)}p(x,y)=(b,x+c)^{\top}$ for certain $b,c\in\R$, but since $(x,y)\mapsto (b,x+c)^{\top}$ does not belong to $\ker(\A)$, such a projection $\mathrm{proj}_{\ker(\A)}$ onto $\ker(\A)$ cannot exist.
\end{remark}
\begin{lemma}
  \label{lem:PiB-estimate}
  Under the assumptions of Proposition~\ref{prop:decomposition}, 
  $\Pi_\A^B$ extends to a bounded linear operator
  $\Pi_\A^B\colon\lebe^{1}(\Omega;V)\to\ker(\A)$. Moreover, for all $1
  \leq p <\infty$ and all $u\in\lebe^{1}(\Omega;V)$ we have, with $r(B)$ denoting the radius of $\ball$,
  \begin{align}\label{eq:inverseFull}
  \begin{split}
    &\bigg(\dashint_\Omega \abs{\Pi_\A^B u}^p \dif x\bigg)^{\frac 1p} \leq
    \norm{\Pi_\A^B u}_{\lebe^\infty(\Omega)}
    \leq 
      C\, \dashint_B \abs{u}\!\dif x \leq 
      C\, \bigg(\dashint_B \abs{u}^p\,\dif x\bigg)^{\frac 1p},\\ 
      & \frac{1}{C}\Big(\sum_{|\alpha|\leq k}r(B)^{|\alpha|p}\dashint_{\ball}|\partial^{\alpha}\Pi_{\A}^{B}u|^{p}\dif x \Big)^{\frac{1}{p}}\leq  \dashint_{\ball}|\Pi_{\A}^{\ball}u|\dif x \leq C \dashint_{\ball}|u|\dif x,
  \end{split}
  \end{align}
  with a constant $C=C(\bbA,c_0)>0$.
\end{lemma}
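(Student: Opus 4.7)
The plan is to exploit the explicit representation
\[
\Pi_\A^B u = \sum_{j=1}^{j_m-1} \langle \T_{m-1}^B u,\, \psi_j^*\rangle_{\lebe^2(B)}\, \psi_j
\]
derived in the proof of Proposition~\ref{prop:decomposition}, and to combine it with the fact that $\mathbb{C}$-ellipticity renders $\ker(\A) \subset \Pol_{m-1}(\R^n;V)$ \emph{finite-dimensional}. By translation and dilation I first reduce to the normalised case $B = \ball(0,1)$, in which $\Omega \subset \ball(0, 2c_0)$; since $\A$ has constant coefficients and is homogeneous, the rescaling $u(x) \leftrightarrow u(x_0 + r(B)y)$ preserves $\ker(\A)$, and the general claim is recovered by tracking factors of $r(B)$.

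For the extension of $\Pi_\A^B$ to $\lebe^1(\Omega;V)$ and the first chain of inequalities, I apply Fubini to rewrite each evaluation functional as
\[
\langle \T_{m-1}^B u,\, \psi_j^*\rangle_{\lebe^2(B)} = \int_B u(y)\, \phi_j(y)\,\dif y,\quad \phi_j(y) := \int_B \psi_j^*(x) \!\!\sum_{|\alpha|\leq m-1}\!\! \frac{(-1)^{|\alpha|}}{\alpha!}\, \partial_y^\alpha\bigl(\omega(y)(x-y)^\alpha\bigr)\,\dif x.
\]
Since $\omega \in \hold_c^\infty(B)$ and the $\psi_j^*$ are fixed polynomials, each $\phi_j \in \hold_c^\infty(B)$ satisfies $\|\phi_j\|_\infty \leq C(\A,c_0)$. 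Coupled with the trivial pointwise bound $\|\psi_j\|_{\lebe^\infty(\Omega)} \leq C(\A,c_0)$ for the fixed polynomials $\psi_j$ on $\ball(0,2c_0)$, this yields
\[
\|\Pi_\A^B u\|_{\lebe^\infty(\Omega)} \leq C \int_B |u|\,\dif y \simeq C \dashint_B |u|\,\dif x,
\]
which simultaneously extends $\Pi_\A^B$ to a bounded linear operator $\lebe^1(\Omega;V) \to \ker(\A)$ and, together with the trivial bound $\dashint_\Omega|\,\cdot\,|^p \leq \|\cdot\|_{\lebe^\infty}^p$ and Jensen's inequality, establishes the first chain in \eqref{eq:inverseFull}.

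The right-hand inequality of the second chain is immediate from what has just been proved, so it remains only to establish the \emph{inverse estimate}
\[
\Bigl(\sum_{|\alpha|\leq k} r(B)^{|\alpha|p} \dashint_B |\partial^\alpha q|^p\,\dif x\Bigr)^{1/p} \leq C \dashint_B |q|\,\dif x \qquad \text{for all } q \in \ker(\A).
\]
For $B = \ball(0,1)$ this is just the equivalence of the two norms $q \mapsto \dashint_{\ball(0,1)} |q|\,\dif x$ and $q \mapsto \bigl(\sum_{|\alpha|\leq k} \dashint_{\ball(0,1)} |\partial^\alpha q|^p\,\dif x\bigr)^{1/p}$ on the finite-dimensional space $\ker(\A)$; both are genuine norms, since any polynomial vanishing a.e.\ on $\ball(0,1)$ vanishes identically. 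The change of variables $\tilde q(y) := q(x_0 + r(B) y)$ -- under which $\ker(\A)$ is invariant by the homogeneity of $\A$ -- then produces precisely the factors $r(B)^{|\alpha|}$ and completes the proof for general $B$. The only real obstacle is bookkeeping: carefully checking that the rescaling interacts with averages and derivatives so that the powers $r(B)^{|\alpha|p}$ appear exactly as claimed. The $\mathbb{C}$-ellipticity assumption enters only implicitly, via finiteness of $\degPol(\A)$, which is what renders this last norm equivalence available at all.
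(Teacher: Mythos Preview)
Your proof is correct and follows essentially the same approach as the paper's: both arguments rest on the fact that $\Pi_\A^B = \widetilde{\Pi}_\A \mathbb{T}_{m-1}^B$ projects onto a finite-dimensional polynomial space, then invoke inverse estimates for polynomials together with scaling and the relation $\diam(\Omega) \leq c_0 \diam(B)$. The paper's proof is a one-liner that simply names these ingredients, whereas you unpack them explicitly---in particular the Fubini manipulation showing that the coefficients $\langle \mathbb{T}_{m-1}^B u, \psi_j^*\rangle_{L^2(B)}$ are controlled by $\int_B |u|$, and the change of variables producing the $r(B)^{|\alpha|}$ factors---but the substance is identical.
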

\begin{proof}
 Since $\Pi_\A^B = \widetilde{\Pi}_{\bbA}\mathbb{T}_{m-1}^B$ is a projection to a finite dimensional vector space, this follows immediately by inverse estimates for polynomials using $\diameter(\Omega) \leq c_0 \diameter(B)$ and scaling.
\end{proof}
By the support and growth properties of the integral kernel in Proposition~\ref{prop:decomposition}, convolving $u\in\sobo^{\A,p}(\Omega)$ or $u\in\bv^{\A}(\Omega)$ with smooth bumps and passing to the limit yields
\begin{corollary}
  \label{cor:repr-estimate}
  Under the assumptions of Proposition~\ref{prop:decomposition}, let
  $u \in \sobo^{\bbA,p}(\Omega)$ for some $1\leq p \leq \infty$. Then
  there holds for all $\abs{\alpha} < k$ 
  \begin{align}
    \label{eq:repr-estimate-1}
    \partial^\alpha u = \partial^\alpha \Pi_\A^Bu +
    \int_{\Omega}(\partial^\alpha_x \frK_\A^B)(\cdot,y)\A
    u(y)\dif y\qquad \text{$\mathscr{L}^{n}$-almost everywhere}.
  \end{align}
  Moreover, for all $\ell$ with $0 \leq \ell < k$ we have 
  \begin{align}
    \label{eq:repr-estimate-20}
    \abs{D^\ell (u - \Pi_\A^Bu)} &\leq
                                                                C\, 
    \int_{\Omega} \abs{\cdot-y}^{-n+k-\ell}\abs{\A
    u(y)}\dif y\qquad \text{$\mathscr{L}^{n}$-almost everywhere}.
  \end{align}
  For $u \in \setBV^{\opA}(\Omega)$, \eqref{eq:repr-estimate-1} and \eqref{eq:repr-estimate-20} remain valid upon replacing $\bbA
  u(y)\dif y$ by $\dif \bbA u(y)$ and $\abs{\bbA
  u(y)}\dif y$ by $\dif \abs{\bbA u}(y)$, respectively.
\end{corollary}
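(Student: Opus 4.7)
The plan is to establish both formulas by differentiating the smooth representation~\eqref{eq:decompositionmain} from Proposition~\ref{prop:decomposition}\ref{item:decomp-repr} under the integral sign, and then extending to $u\in\sobo^{\A,p}(\Omega)$ and $u\in\bv^{\A}(\Omega)$ by approximation. Throughout, the crucial structural facts are that $\frK_\A^B(x,\cdot)$ has compact support in $\Omega$ by~\ref{item:kernel-zero}, that the kernel estimate $|\partial^\alpha_x\frK_\A^B(x,y)|\leq C|x-y|^{k-n-|\alpha|}$ from~\ref{item:decomp-kernel} yields a locally $\lebe^1$ majorant in $y$ whenever $|\alpha|<k$, and that $\Pi_\A^B u\in\ker(\A)$, so only the kernel term contributes to $D^\ell(u-\Pi_\A^B u)$.

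For $u\in\hold^\infty(\Omega;V)$ one differentiates~\eqref{eq:decompositionmain} pointwise: the compact support of $\frK_\A^B(x,\cdot)$ together with the locally integrable dominating majorant from~\ref{item:decomp-kernel} legitimises differentiation under the integral for $|\alpha|<k$ and delivers~\eqref{eq:repr-estimate-1}, and then~\eqref{eq:repr-estimate-20} follows immediately by pointwise majorisation using the same kernel estimate with $|\alpha|=\ell$.

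For $u\in\sobo^{\A,p}(\Omega)$ with $1\leq p<\infty$ I would pick smooth approximants $u_j\in\hold^\infty(\Omega;V)$ with $u_j\to u$ in $\lebe^p$ and $\A u_j\to\A u$ in $\lebe^p$ (produced e.g.\ by mollification on subdomains exhausting $\Omega$ that contain the compact support of $\frK_\A^B(x,\cdot)$ for the relevant $x$). Lemma~\ref{lem:PiB-estimate} yields $\Pi_\A^B u_j\to\Pi_\A^B u$ uniformly, while the Riesz-potential majorant from~\ref{item:decomp-kernel} ensures that $\int\frK_\A^B(\cdot,y)\A u_j(y)\dif y\to\int\frK_\A^B(\cdot,y)\A u(y)\dif y$ in a suitable (weak) Lebesgue space, hence almost everywhere along a subsequence. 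Passing to the limit in~\eqref{eq:decompositionmain} gives the identity for $u$ a.e., and repeating the argument with $\partial^\alpha_x\frK_\A^B$ (still admitting a locally $\lebe^1$ majorant for $|\alpha|<k$) yields~\eqref{eq:repr-estimate-1}--\eqref{eq:repr-estimate-20}; the distributional derivative $\partial^\alpha u$ is thereby identified a.e.\ with the locally integrable function on the right. The case $p=\infty$ reduces to $p<\infty$ by the inclusion $\sobo^{\A,\infty}(\Omega)\subset\sobo^{\A,p}(\Omega)$ on the bounded $\Omega$ of Proposition~\ref{prop:decomposition}.

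The main obstacle is the $\bv^{\A}$ case, since $\A u$ is only a finite $W$-valued Radon measure and strict approximation in the metric $d_\A$ does not pass directly through the singular integral. My plan is to mollify $u_\varepsilon:=u*\rho_\varepsilon$ on subdomains $\Omega'$ containing the support of $\frK_\A^B(x,\cdot)$ for $x$ in the subregion of interest, so that $(\A u_\varepsilon)\mathscr{L}^n$ converges weakly-$\ast$ to $\A u$ and $|\A u_\varepsilon|\mathscr{L}^n$ to $|\A u|$ in $\mathscr{M}(\Omega')$. For each $x$ with $|\A u|(\{x\})=0$ — a restriction excluding at most a countable, hence $\mathscr{L}^n$-null, set — one splits $\int_\Omega\frK_\A^B(x,y)\,\dif(\A u_\varepsilon\mathscr{L}^n)(y)$ into integrals over $\ball(x,\delta)$ and $\Omega\setminus \ball(x,\delta)$: on the complement the kernel is bounded and continuous in $y$, so weak-$\ast$ convergence passes to the limit, while on $\ball(x,\delta)$ the contribution is controlled by $C\delta^k|\A u_\varepsilon|(\ball(x,\delta))$, which by standard properties of convolutions against measures stays bounded by $C\delta^k|\A u|(\overline{\ball(x,\delta+\varepsilon)})$ and so tends to $0$ as $\delta\downarrow 0$ uniformly in small $\varepsilon$. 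Letting first $\varepsilon\downarrow 0$ and then $\delta\downarrow 0$ yields~\eqref{eq:decompositionmain} with $\A u(y)\dif y$ replaced by $\dif\A u(y)$, and the analogous argument applied to $\partial^\alpha_x\frK_\A^B$ for $|\alpha|<k$ delivers the remaining claims.
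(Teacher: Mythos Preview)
Your overall strategy---mollify and pass to the limit---is exactly what the paper indicates in its one-line justification (``convolving $u$ with smooth bumps and passing to the limit''), and your treatment of the smooth and $\sobo^{\A,p}$ cases is correct.

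The $\bv^{\A}$ argument, however, contains a genuine error. You claim that the contribution of the kernel integral over $\ball(x,\delta)$ is controlled by $C\delta^{k}\,|\A u_{\varepsilon}|(\ball(x,\delta))$. This is false: the kernel satisfies $|\frK_\A^B(x,y)|\leq C|x-y|^{k-n}$, which for $k<n$ blows up as $y\to x$, so there is no pointwise bound of the form $|\frK_\A^B(x,y)|\leq C\delta^{k}$ (or any other power of $\delta$) uniformly on $\ball(x,\delta)$. Consequently the integral $\int_{\ball(x,\delta)}|x-y|^{k-n}\,|\A u_{\varepsilon}(y)|\dif y$ admits no such factorised estimate. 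The condition $|\A u|(\{x\})=0$ that you impose is not the relevant one here.

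The clean fix is to first note that the Riesz potential $\int_{\Omega}|x-y|^{k-n}\dif|\A u|(y)$ is finite for $\mathscr{L}^{n}$-a.e.\ $x$ (a standard fact for finite measures), and to work only at such $x$. Then rewrite the mollified kernel integral via Fubini as
\[
\int_{\Omega}\frK_\A^B(x,y)\,(\rho_{\varepsilon}*\dif\A u)(y)\dif y
=\int_{\Omega}(\check{\rho}_{\varepsilon}*\frK_\A^B(x,\cdot))(z)\,\dif\A u(z),
\]
observe that $(\check{\rho}_{\varepsilon}*\frK_\A^B(x,\cdot))(z)\to\frK_\A^B(x,z)$ for $z\neq x$ with a dominating function $C|x-z|^{k-n}$ (up to a harmless enlargement of the ball), and invoke dominated convergence with respect to $\dif|\A u|$. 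The same works for $\partial_{x}^{\alpha}\frK_\A^B$ with $|\alpha|<k$.
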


\subsection{\Poincare{}-type inequalities}
\label{sec:poincare}
Throughout this section, let $\A$ be a $k$-th order $\mathbb{C}$-elliptic differential operator of the form \eqref{eq:form}. 
The present section is devoted to the proof of the following Poincar\'{e}-type inequality for the vast class of John domains, announced in \cite[Rem.~2.4]{DieGme20}, which shall turn out a crucial tool for the following sections.
\begin{theorem}[\Poincare{} for John]
  \label{thm:poincare-john}
  Let $\Omega\subset\R^{n}$ be an open and bounded $\alpha$-John domain or an open and bounded domain satisfying the emanating chain condition with
  constants~$\sigma_1$ and $\sigma_2$ and central ball~$B$, respectively. Then for all $u \in \sobo^{\bbA,p}(\Omega)$ with
  $1 \leq p \leq \infty$ and $\ell\in\{0,\dots,k-1\}$ there holds
  \begin{align}
    \label{eq:poincare-star}
    \bigg( \dashint_\Omega \abs{D^\ell (u-
    \Pi_\A^Bu)}^p\,\dif x\bigg)^{\frac 1p} &\leq C\,
                                         \diameter(\Omega)^{k-\ell}
                                         \bigg(\dashint_{\Omega}\abs{\A
                                        u}^p\dif x \bigg)^{\frac 1p}
    \\
    \intertext{and}
    \label{eq:poincare-star2}
    \diameter(\Omega)^{\ell} \bigg( \dashint_\Omega
    \abs{D^\ell u}^p\,\dif x\bigg)^{\frac 1p} &\leq C\,
                                                     \bigg(\dashint_{\Omega}\abs{u}^p\dif x \bigg)^{\frac 1p}
    \\
    \notag
                                      &\quad+ C\,
                                        \diameter(\Omega)^{k} \bigg(\dashint_{\Omega}\abs{\A u}^p\dif x
                                        \bigg)^{\frac 1p},
  \end{align}
  where $C=C(p,\bbA,\sigma_{1},\sigma_{2})$. For $p=\infty$, one has to exchange
  $(\dashint_\Omega \abs{\cdot}^p\dif x)^{\frac 1p}$ by $\esssup_\Omega$.
  For $p=1$ and
  $u \in \setBV^{\opA}(\Omega)$, one replaces
  $\abs{\bbA u(y)}\dif y$ by $\dif \abs{\bbA u}(y)$.
\end{theorem}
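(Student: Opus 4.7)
The plan is to combine a \emph{local} \Poincare{} estimate on each ball $W\in\mathcal{W}$ with a \emph{telescoping chain argument} transporting the polynomial projection from $W$ to the central ball $B$, in the tradition of \textsc{Boman} \cite{Boman82}, \textsc{Iwaniec \& Nolder} \cite{IwaNol85}, and \textsc{Diening-\Ruzicka{}-Schumacher} \cite{DieRuzSch10}. Since every bounded $\alpha$-John domain satisfies the emanating chain condition, I would work throughout under the latter hypothesis, fixing the chain covering $\mathcal{W}=\{W_i\}$ with central ball $W_0=B$. Inequality~\eqref{eq:poincare-star2} will follow at the end from~\eqref{eq:poincare-star}: writing $D^\ell u = D^\ell \Pi_\A^B u + D^\ell(u-\Pi_\A^B u)$, the first summand is controlled on $\Omega$ by the inverse estimates in Lemma~\ref{lem:PiB-estimate} transferred from $B$ to $\Omega$ via $\diam(\Omega)\simeq r(B)$ (cf.~\eqref{eq:mutualradiusECCbound}), and the second summand by~\eqref{eq:poincare-star}. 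So the heart is~\eqref{eq:poincare-star}.

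The first step is a \emph{local \Poincare{} inequality on each ball $W\in\mathcal{W}$}. Applying Corollary~\ref{cor:repr-estimate} on $W$ (star-shaped with respect to itself, $c_0=1$) yields the pointwise Riesz-type bound
\begin{align*}
  |D^\ell(u-\Pi_\A^Wu)(x)| \leq C\int_W|x-y|^{k-\ell-n}|\A u(y)|\dif y,\qquad x\in W,
\end{align*}
whose $L^p(W)$-norm I would control via Fubini ($p=1$), a Jensen/\!H\"older argument ($1<p<\infty$), or the pointwise bound $\int_W|x-y|^{k-\ell-n}\dif y\lesssim r(W)^{k-\ell}$ ($p=\infty$), obtaining
\begin{align*}
  \Big(\dashint_W|D^\ell(u-\Pi_\A^Wu)|^p\dif x\Big)^{\frac{1}{p}} \leq C\, r(W)^{k-\ell}\Big(\dashint_W|\A u|^p\dif y\Big)^{\frac{1}{p}}.
\end{align*}

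The second step is the \emph{telescoping chain argument} for the polynomial correction $\Pi_\A^Bu-\Pi_\A^{W_i}u$. Along the chain $W_{i,0}=W_i,W_{i,1},\dots,W_{i,m_i}=B$ from~\ref{itm:C2}, I telescope
\begin{align*}
  \Pi_\A^Bu - \Pi_\A^{W_i}u = \sum_{l=0}^{m_i-1}\pi_{i,l},\qquad \pi_{i,l}:=\Pi_\A^{W_{i,l+1}}u-\Pi_\A^{W_{i,l}}u\in\ker(\A).
\end{align*}
Each $\pi_{i,l}$ is a polynomial of degree $<\degPol(\A)$; since~\ref{itm:C2} gives $W_i\subset\sigma_2 W_{i,l}$ and $W_{i,l}\cup W_{i,l+1}\subset\sigma_2 B_{i,l}$, the set $W_i$ sits inside a bounded blowup of $B_{i,l}$. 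Inverse estimates for polynomials of bounded degree on balls of comparable size then give $\|D^\ell\pi_{i,l}\|_{L^p(W_i)}\lesssim r(B_{i,l})^{-\ell}\|\pi_{i,l}\|_{L^p(B_{i,l})}$. Splitting $\pi_{i,l}=(\Pi_\A^{W_{i,l+1}}u-u)+(u-\Pi_\A^{W_{i,l}}u)$ on $B_{i,l}\subset W_{i,l}\cap W_{i,l+1}$, invoking Step~1 and using $r(W_{i,l})\simeq r(W_{i,l+1})\simeq r(B_{i,l})$, I expect to obtain
\begin{align*}
  \|D^\ell\pi_{i,l}\|_{L^p(W_i)} \lesssim r(B_{i,l})^{k-\ell}\bigl(\|\A u\|_{L^p(W_{i,l})}+\|\A u\|_{L^p(W_{i,l+1})}\bigr).
\end{align*}

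The third step is the \emph{global summation}, which I anticipate to be the main obstacle. By the covering/bounded-overlap property~\ref{itm:C1} one has $\|D^\ell(u-\Pi_\A^Bu)\|_{L^p(\Omega)}^p\leq\sum_i\|D^\ell(u-\Pi_\A^Bu)\|_{L^p(W_i)}^p$; inserting Steps~1--2 and bounding each $r(B_{i,l})\lesssim\diam(\Omega)$ via~\eqref{eq:mutualradiusECCbound}, the local \Poincare{} piece is immediate, so it remains to bound $\sum_i\bigl(\sum_l\|\A u\|_{L^p(W_{i,l})}+\|\A u\|_{L^p(W_{i,l+1})}\bigr)^p$ by a multiple of $\|\A u\|_{L^p(\Omega)}^p$. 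Here the emanating chain structure is essential: the overlap balls $B_{i,l}$ can be chosen from a family $\mathscr{B}$ of bounded overlap by Remark~\ref{rem:chooseBALLS}, and one concludes by a discrete Hardy/Boman inequality along the chain combined with Fubini-type exchange of summation and integration. This is the crux because at $p=1$ no Hardy-Littlewood maximal argument is available; the rigid chain structure and the bounded overlap of $\mathscr{B}$ are precisely what allow the estimate to close at $p=1$. Finally, the cases $p=\infty$ and $u\in\bv^{\A}(\Omega)$ are handled by the corresponding variants of Corollary~\ref{cor:repr-estimate}, with $|\A u|\dif y$ replaced by $\dif|\A u|$ throughout.
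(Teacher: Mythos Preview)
Your Step~3 contains a genuine gap. After replacing $r(B_{i,l})^{k-\ell}$ by $\diam(\Omega)^{k-\ell}$, you claim that
\[
\sum_{i}\Big(\sum_{l}\|\A u\|_{\lebe^{p}(W_{i,l})}\Big)^{p}\lesssim \|\A u\|_{\lebe^{p}(\Omega)}^{p},
\]
but this is false. Take $|\A u|\equiv 1$ and $p=1$: the left side is $\sum_{W'\in\mathcal{W}}|W'|\cdot\#\{i:W'\in\text{chain}(i)\}$, and since \emph{every} chain terminates at $W_{0}=B$, the count $\#\{i:W_{0}\in\text{chain}(i)\}$ equals the cardinality of $\mathcal{W}$, which is infinite. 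The ``discrete Hardy/Boman inequality'' you invoke does not salvage this; it requires the extra weight $|W_{i}|$ that you have discarded. The loss occurs already in Step~2: the inverse estimate you use, $\|D^{\ell}\pi_{i,l}\|_{\lebe^{p}(W_{i})}\lesssim r(B_{i,l})^{-\ell}\|\pi_{i,l}\|_{\lebe^{p}(B_{i,l})}$, is correct but not sharp when $|W_{i}|\ll|W_{i,l}|$. Passing through $\lebe^{\infty}$ instead yields $\|D^{\ell}\pi_{i,l}\|_{\lebe^{p}(W_{i})}\lesssim|W_{i}|^{1/p}r(W_{i,l})^{k-\ell}\dashint_{W_{i,l}}|\A u|$, and it is precisely the factor $|W_{i}|^{1/p}$ that makes the shadow counting $\sum_{i:W'\in\text{chain}(i)}|W_{i}|\lesssim|W'|$ (from~\ref{itm:C2} and~\ref{itm:C1}) close the estimate at $p=1$. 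For $1<p\leq\infty$ further work is still needed.

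The paper sidesteps this entirely by a different route: rather than a ball-by-ball chain argument, it proves the \emph{pointwise} Riesz estimate of Proposition~\ref{prop:john-riesz-est},
\[
|D^{\ell}(u-\Pi_{\A}^{B}u)(x)|\leq C\int_{\Omega}|x-y|^{k-\ell-n}|\A u(y)|\dif y,
\]
and then \eqref{eq:poincare-star} follows for all $1\leq p\leq\infty$ at once from the elementary bound $\int_{\Omega}|x-y|^{k-\ell-n}\dif y\lesssim\diam(\Omega)^{k-\ell}$ together with Young's (or Jensen's) inequality. The pointwise proof also uses a chain from $x$ to $B$, but with a crucial twist: one truncates the chain at the first ball whose $\sigma_{1}$-enlargement already contains~$x$, which forces the radius comparability $r(\mathbb{W}_{j})\simeq|x-y|$ for $y\in\mathbb{W}_{j}$ (cf.~\eqref{eq:mutualradiibounds}); this is exactly what turns the telescoped sum into a Riesz potential and replaces your diverging summation by bounded overlap.
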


In view of Corollary ~\ref{cor:repr-estimate}, Theorem~\ref{thm:poincare-john} is already available for domains which are star-shaped with respect to a ball; in particular, if $\Omega$ is a ball, we may choose $B=\Omega$, giving us back the Poincar\'{e} inequalities from \cite{BreDieGme20,GR19}. Moreover, the quantity on the left-hand side of \eqref{eq:poincare-star} enjoys a best approximation property, a fact that we shall return to in slightly higher generality in Section~\ref{sec:bestapprox}. 

Theorem~\ref{thm:poincare-john} is a direct consequence of the following Riesz-type estimates, improving Corollary~\ref{cor:repr-estimate} to John domains. The proof is slightly inspired by~\cite[Section~8.2]{DieHarHasRuz11}.
\begin{proposition}
  \label{prop:john-riesz-est}
In the situation of Theorem~\ref{thm:poincare-john},  for 
  all $\ell\in\{0,...,k-1\}$ and all $u \in
  \sobo^{\bbA,1}(\Omega)$ there holds with a constant $C=C(\A,\sigma_{1},\sigma_{2})>0$
  \begin{align}
    \label{eq:repr-estimate-2}
    \abs{D^\ell (u - \Pi_\A^Bu)} &\leq
                                                                C\, 
    \int_{\Omega} \abs{\cdot-y}^{-n+k-\ell}\abs{\A
    u(y)}\dif y\qquad \text{$\mathscr{L}^{n}$-almost everywhere}.
  \end{align}
  For $u \in \setBV^{\opA}(\Omega)$, \eqref{eq:repr-estimate-2} remains valid upon replacing $\abs{\bbA
  u(y)}\dif y$ by $\dif \abs{\bbA u}(y)$.
\end{proposition}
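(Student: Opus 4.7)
The plan is to promote the star-shaped representation of Corollary~\ref{cor:repr-estimate} to the John setting by decomposing $\Omega$ via its emanating chain covering $\mathcal{W}=\{W_i\}$ (Definition~\ref{def:boman_chain}) and telescoping the local projections $\Pi_\A^{W_i}$ along chains, in the spirit of \cite[Sec.~8.2]{DieHarHasRuz11}. First I would reduce to smooth $u\in\hold^{\infty}(\Omega;V)\cap\sobo^{\A,1}(\Omega)$ by density, deferring the $\bv^{\A}$-case to $\A$-strict approximation afterwards. For a.e.\ $x\in\Omega$, fix $W_i\in\mathcal{W}$ containing $x$ and denote the associated chain from~\ref{itm:C2} by $W_{i,0}=W_i,W_{i,1},\ldots,W_{i,m_i}=W_0=B$.

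The first ingredient is a \emph{local} application of Corollary~\ref{cor:repr-estimate} on $\sigma_1 W_i\subset\Omega$, which is star-shaped with respect to $W_i$ and of comparable diameter. This immediately yields
\begin{align*}
|D^\ell(u-\Pi_\A^{W_i}u)(x)|\leq C\int_{\sigma_1 W_i}|x-y|^{k-n-\ell}|\A u(y)|\dif y.
\end{align*}
The remaining task is to control the gap $\Pi_\A^{W_i}u-\Pi_\A^{B}u$ between the local and the global projection, for which I would use the telescoping $\sum_{l=0}^{m_i-1}Q_l$ with $Q_l:=\Pi_\A^{W_{i,l}}u-\Pi_\A^{W_{i,l+1}}u$. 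Crucially, each $Q_l$ lies in the finite-dimensional space $\ker(\A)\subset\mathscr{P}_{\degPol(\A)-1}(\R^n;V)$. Restricting to the overlap ball $B_{i,l}\subset W_{i,l}\cap W_{i,l+1}$, subtracting the two representations of $u$ delivered by Corollary~\ref{cor:repr-estimate} on $\sigma_1 W_{i,l}$ and on $\sigma_1 W_{i,l+1}$ expresses $Q_l$ pointwise as a difference of Riesz potentials of $\A u$, so that together with $W_{i,l}\cup W_{i,l+1}\subset\sigma_2 B_{i,l}$ and Fubini one arrives at
\begin{align*}
\dashint_{B_{i,l}}|Q_l|\dif z\leq C\,r(W_{i,l})^{k}\dashint_{\sigma_2 B_{i,l}}|\A u(y)|\dif y.
\end{align*}

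Next I would apply polynomial inverse estimates in the spirit of Lemma~\ref{lem:PiB-estimate} to convert this $L^{1}$-average of $Q_l$ into a pointwise bound for $D^\ell Q_l(x)$: the chain property $W_i\subset\sigma_2 W_{i,l}$ from~\ref{itm:C2} together with $r(B_{i,l})\simeq r(W_{i,l})$ gives
\begin{align*}
|D^\ell Q_l(x)|\leq C\,r(W_{i,l})^{-\ell}\dashint_{B_{i,l}}|Q_l|\dif z\leq C\,r(W_{i,l})^{k-\ell}\dashint_{\sigma_2 B_{i,l}}|\A u|\dif y.
\end{align*}
For $y\in\sigma_2 B_{i,l}$ one has $|x-y|\lesssim r(W_{i,l})$, so in the regime $k-\ell\leq n$ the factor $r(W_{i,l})^{k-n-\ell}$ can be replaced by $|x-y|^{k-n-\ell}$ (the complementary regime $k-\ell>n$ produces a bounded kernel and is even more direct). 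Summing over $l$ and invoking the bounded-overlap property $\sum_{l}\mathbbm{1}_{\sigma_2 B_{i,l}}\leq C\,\mathbbm{1}_\Omega$ secured by Remark~\ref{rem:chooseBALLS} produces the global Riesz potential, which combined with the local bound above delivers \eqref{eq:repr-estimate-2}; the $\bv^{\A}$-version follows by a standard $\A$-strict approximation argument, the kernel $|x-y|^{k-n-\ell}$ being integrable against finite measures on bounded sets.

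The hard part will be this last conversion step: transferring $L^{1}$-information about the telescoping polynomial differences $Q_l$, localised on the small overlap balls $B_{i,l}$, into pointwise $\ell$-th derivative bounds at a possibly far-away point $x\in W_i$, with constants independent of the chain length $m_i$. This is precisely where $\mathbb{C}$-ellipticity enters through the back door: the finiteness of $\degPol(\A)$ confines each $Q_l$ to a fixed finite-dimensional polynomial space, making the polynomial inverse estimates scale correctly and rendering the argument robust to the potentially intricate geometry that a John (or merely emanating chain) domain may exhibit.
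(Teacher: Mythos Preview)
Your chain-telescoping strategy via the emanating chain covering, together with polynomial inverse estimates on the differences $Q_l\in\ker(\A)$, is essentially the approach taken in the paper. However, there is a genuine gap in your treatment of the regime $k-\ell>n$. You dismiss it as ``even more direct'' because the Riesz kernel $|x-y|^{k-n-\ell}$ is then bounded, but boundedness of the kernel is not the issue: the step you need is the pointwise comparison $r(W_{i,l})^{k-n-\ell}\leq C\,|x-y|^{k-n-\ell}$ for $y$ in the $l$-th chain piece, and when the exponent $k-n-\ell$ is \emph{positive} this demands a \emph{lower} bound $|x-y|\geq c\,r(W_{i,l})$. Your argument supplies only the upper bound $|x-y|\lesssim r(W_{i,l})$ (via $W_i\subset\sigma_2 W_{i,l}$ from~\ref{itm:C2}), which is exactly the wrong direction here. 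The paper secures the missing lower bound by a minimality trick: before telescoping, it replaces the emanating chain by the sub-chain starting at the \emph{first} index $j_0$ for which $x\in\sigma_1 W_{j_0}$, so that $x\notin\sigma_1 W_j$ for all earlier $j$; this yields the two-sided estimate $(\sigma_1-1)\,r(W_j)\leq|x-y|\leq 2\sigma_1\sigma_2\,r(W_j)$ for $y$ in the intermediate chain elements, making the replacement $r(W_j)^{k-n-\ell}\to|x-y|^{k-n-\ell}$ valid regardless of the sign of the exponent.

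A secondary imprecision: the overlap claim $\sum_l\mathbbm{1}_{\sigma_2 B_{i,l}}\leq C\,\mathbbm{1}_\Omega$ does not follow from Remark~\ref{rem:chooseBALLS}, which controls only the undilated balls in $\mathscr{B}$. The paper avoids this by arranging the final integration regions to be the chain elements $W_j$ themselves; these are pairwise distinct members of $\mathcal{W}$, so their finite overlap is immediate from~\ref{itm:C1}.
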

\begin{proof}
Let $x \in \Omega$. Slightly abusing notation, let $m\in\mathbb{N}$ and let $(W_j)_{j=0}^m$ be an emanating chain of balls connecting~$x$ with the central ball $B$ such that $W_0=B$ and $x\in W_m$ in the sense of Definition~\ref{def:boman_chain}, and we remark that each $\sigma_{1}W_{j}$ is still contained in $\Omega$. For the following argument, we may assume that $x\notin \sigma_{1}W_{j}$ for all $j\in\{0,...,m-1\}$ as otherwise we may take the minimal $j_{0}$ with $x\in \sigma_{1}W_{j_{0}}$, $x\notin \sigma_{1}W_{i}$ for $0\leq i<j_{0}$ and redefine $m$ to be $j_{0}$; we will only use that $x\in\sigma_{1}W_{m}$ (and not that $x\in W_{m}$) in the sequel. 

Now consider the chain $\mathbb{W}_{0}:=W_{0},...,\mathbb{W}_{m-1}:=W_{m-1}$ and $\mathbb{W}_{m}:=\sigma_{1}W_{m}$. It is then easy to see that this chain satisfies the following modification of~\ref{itm:C2}: For any $0\leq j \leq j' \leq m$ we have $\mathbb{W}_{j'}\subset\sigma_{1}\sigma_{2}\mathbb{W}_{j}$ and each $W_{j}\cap W_{j+1}$, $j\in\{0,...,m-1\}$ contains an open ball $\mathbb{B}_{j}$ with $\mathbb{W}_{j}\cup\mathbb{W}_{j+1}\subset\sigma_{1}\sigma_{2}\mathbb{B}_{j}$. As a main consequence of this slightly modified construction, every $y\in\mathbb{W}_{j}$, $j\in\{0,...,m-1\}$ is at least $(\sigma_{1}-1)r(\mathbb{W}_{j})$ distant apart from $x$, which follows from $x\in\sigma_{1}W_{m}$ but $x\notin\sigma_{1}W_{j}$ for any $0\leq j\leq m-1$ by the above minimality of $m$. On the other hand, since $x\in \mathbb{W}_{m}\subset \sigma_{1}\sigma_{2}\mathbb{W}_{j}$ for any $j\in\{0,...,m\}$, we have $|x-y|\leq 2\sigma_{1}\sigma_{2}r(W_{j})$ for all $y\in W_{j}$. In conclusion, we have  
\begin{align}\label{eq:mutualradiibounds}
(\sigma_{1}-1)r(\mathbb{W}_{j})\leq |x-y|\leq 2\sigma_{1}\sigma_{2}r(\mathbb{W}_{j})\;\;\;\text{for all}\;1\leq j \leq m-1\;\text{and}\;y\in\mathbb{W}_{j}.
\end{align}
Define $\Pi_\A^{\mathbb{W}_j}$ and
  $\Pi_\A^{\mathbb{B}_j}$ as indicated after Theorem~\ref{thm:poincare-john}.  Then, since $\Pi_{\A}^{\mathbb{W}_{j+1}}\Pi_{\A}^{\mathbb{B}_{i}}=\Pi_{\A}^{\mathbb{B}_{i}}$, $i\in\{j+1,j\}$,
  \begin{align*}
    \abs{D^\ell (u - \Pi_\A^{\mathbb{W}_{0}} u)(x)}
    &\leq
      \abs{D^\ell (u - \Pi_\A^{\mathbb{B}_{m-1}}u)(x)} + \abs{D^\ell (\Pi_\A^{\mathbb{W}_{0}} - \Pi_\A^{\mathbb{B}_{0}}u)(x)}\\ &
      +\sum_{j=0}^{m-2} \big(
      \abs{D^{\ell}(\Pi_{\A}^{\mathbb{B}_{j+1}} u  - \Pi_\A^{\mathbb{W}_{j+1}}u)(x)} +
      \abs{D^{\ell}(\Pi_{\A}^{\mathbb{B}_j} u - \Pi_\A^{\W_{j+1}}u)(x)} \big)\\
      &\leq
      \abs{D^\ell (u - \Pi_\A^{\mathbb{B}_{m-1}}u)(x)} + \abs{D^\ell (\Pi_\A^{\mathbb{W}_{0}} - \Pi_\A^{\mathbb{B}_{0}}u)(x)}\\ &
      +\sum_{j=0}^{m-2} \big(
      \abs{D^{\ell}\Pi_{\A}^{\mathbb{W}_{j+1}}(\Pi_{\A}^{\mathbb{B}_{j+1}} u  - u)(x)} +
      \abs{D^{\ell}\Pi_{\A}^{\mathbb{W}_{j+1}}( u - \Pi_\A^{\mathbb{B}_j}u)(x)} \big)\\ 
      & =: \mathrm{I} + \mathrm{II} + \mathrm{III}. 
  \end{align*}
  Then, since $x\in \mathbb{W}_{m}\subset\sigma_{1}\sigma_{2}\mathbb{W}_{j+1}$ for all $0\leq j \leq m-2$, we obtain by inverse estimates
\begin{align*}
\mathrm{III} & \leq \sum_{j=0}^{m-2}\sum_{i\in\{j,j+1\}} \|D^{\ell}\Pi_{\A}^{\mathbb{W}_{j+1}}(\Pi_{\A}^{\mathbb{B}_{i}} u  - u)\|_{\lebe^{\infty}(\sigma_{1}\sigma_{2}\mathbb{W}_{j+1})}\\ 
      & \leq c\sum_{j=0}^{m-2}\sum_{i\in\{j,j+1\}} r(\sigma_{1}\sigma_{2}\mathbb{W}_{j+1})^{-\ell}\times \\ & \;\;\;\;\;\;\;\;\;\;\;\;\;\;\;\;\;\;\;\;\;\;\;\;\;\;\;\;\times\Big( r(\sigma_{1}\sigma_{2}\mathbb{W}_{j+1})^{\ell} 
      \dashint_{\sigma_{1}\sigma_{2}\mathbb{W}_{j+1}}|D^{\ell}\Pi_{\A}^{\mathbb{W}_{j+1}}(\Pi_{\A}^{\mathbb{B}_{i}} u  - u)|\dif y\Big) \\ 
     & \!\!\!\!\stackrel{\eqref{eq:inverseFull}_{2}}{\leq}c\sum_{j=0}^{m-2}\sum_{i\in\{j,j+1\}} r(\mathbb{W}_{j+1})^{-\ell}\big(
      \dashint_{\sigma_{1}\sigma_{2}\mathbb{W}_{j+1}}|\Pi_{\A}^{\mathbb{W}_{j+1}}(\Pi_{\A}^{\mathbb{B}_{i}} u  - u)|\dif y \Big) \\ 
    & \leq c\sum_{j=0}^{m-2}\sum_{i\in\{j,j+1\}} r(\mathbb{W}_{j+1})^{-\ell}\big(
      \dashint_{\mathbb{W}_{j+1}}|\Pi_{\A}^{\mathbb{W}_{j+1}}(\Pi_{\A}^{\mathbb{B}_{i}} u  - u)|\dif y \Big) \\ 
       & \leq c\sum_{j=0}^{m-2}\sum_{i\in\{j,j+1\}} r(\mathbb{W}_{j+1})^{-\ell}\big(
      \dashint_{\mathbb{W}_{j+1}}|u-\Pi_{\A}^{\mathbb{B}_{i}} u|\dif y \Big)  =: c\sum_{j=0}^{m-2}\mathrm{III}_{j}, 
\end{align*}
where $c=c(\sigma_{1},\sigma_{2},\A)>0$ is a constant. Since each $\mathbb{W}_{j+1}$, $j\in\{0,...,m-2\}$, is starshaped with respect to both $\mathbb{B}_{j+1}$ and $\mathbb{B}_{j}$, we obtain by use of Proposition~\ref{prop:decomposition} with a constant $c=c(\sigma_{1},\sigma_{2},\A)>0$ and Fubini's theorem for $i\in\{j,j+1\}$ 
\begin{align*}
\mathrm{III}_{j} & \leq c\sum_{i\in\{j,j+1\}}r(\mathbb{W}_{j+1})^{k-\ell-n}\int_{\mathbb{W}_{j+1}}|\A u(z)|\dif z\\ 
& \leq cr(\mathbb{W}_{j+1})^{k-n-\ell}\int_{\mathbb{W}_{j+1}}|\A u(z)|\dif z \stackrel{\eqref{eq:mutualradiibounds}}{\leq} c\int_{\mathbb{W}_{j+1}}|x-z|^{k-n-\ell}|\A u(z)|\dif z =: \mathrm{IV}_{j}.
\end{align*}
On the other hand, the term $\mathrm{I}$ can clearly be bounded against $\mathrm{IV}_{j}$ with $j=m-1$, whereas the term $\mathrm{II}$ is estimated along the same lines as $\mathrm{III}$, now using that $\mathbb{W}_{0}$ is starshaped with respect to $\mathbb{B}_{0}$. In consequence, by the uniformly finite overlap of the balls $W_{0},...,W_{m}$ implied by \ref{itm:C2}, we conclude \eqref{eq:repr-estimate-2}. The proof is hereby complete. 
\end{proof}

\section{Proof of Theorem~\ref{thm:main}: The trace estimate via Riesz potentials}\label{sec:traces-estimates-via} 
In this section we establish Theorem~\ref{thm:main}. To this end, we recall in Section~\ref{sec:Besov} the scale of Besov spaces and prove Theorem~\ref{thm:main} in Sections~\ref{sec:geometricsetup}--~\ref{sec:BVAsection}. 
\subsection{Besov spaces}\label{sec:Besov}
For Theorem~\ref{thm:main}, we briefly recall the scale of Besov spaces and provide an instrumental characterisation by means of oscillations. Besov spaces can be introduced by different means, and we refer to \textsc{Triebel} \cite{Trie1,Trie2} for comprehensive overviews. In what follows, let $s>0$, $1\leq p < \infty$, $1\leq q < \infty$. Let $\varphi\in\hold_{c}^{\infty}(\ball(0,2))$ satisfy $\mathbbm{1}_{\ball(0,1)}\leq\varphi\leq\mathbbm{1}_{\ball(0,2)}$ and put, for $j\in\mathbb{Z}$, $\varphi_{j}(\xi):=\varphi(2^{-j}\xi)-\varphi(2^{-j+1}\xi)$. We recall that the \emph{Besov space} $\besov_{p,q}^{s}(\R^{n};V)$ consists of all $f\in\mathscr{S}'(\R^{n};V)$ such that 
\begin{align*}
\|f\|_{\besov_{p,q}^{s}(\R^{n})}:=\Big(\|(\varphi \widehat{f})^{\vee}\|_{\lebe^{p}(\R^{n})}^{q}+\sum_{j=1}^{\infty}2^{jsq}\|(\varphi_{j}\widehat{f})^{\vee}\|_{\lebe^{p}(\R^{n})}^{q}\Big)^{\frac{1}{q}}<\infty.
\end{align*}
The \emph{homogeneous Besov space} ${\dot\besov}{_{p,q}^{s}}(\R^{n};V)$ consists of all $f\in(\mathscr{S}'/\mathscr{P})(\R^{n};V)$ with
\begin{align}\label{eq:homogBesovnorm}
\|f\|_{{\dot\besov}{_{p,q}^{s}}(\R^{n})}:=\Big(\sum_{j=-\infty}^{\infty}2^{jsq}\|(\varphi_{j}\widehat{f})^{\vee}\|_{\lebe^{p}(\R^{n})}^{q}\Big)^{\frac{1}{q}}<\infty.
\end{align} 
Note that, whereas \eqref{eq:homogBesovnorm} does not give rise to a norm on $\mathscr{S}'(\R^{n};V)$ but only on the quotient $(\mathscr{S}'/\mathscr{P})(\R^{n};V)$, it does define a norm on $\mathscr{S}(\R^{n};V)$. By \textsc{Bergh \& L\"{o}fstr\"{o}m} \cite[p.~148]{MR58:2349} we have for all $s>0$ and $1\leq p,q<\infty$
\begin{align}\label{eq:BergLoef}
{\besov}{_{p,q}^{s}}(\R^{n};V)\simeq (\lebe^{p}\cap{\dot\besov}{_{p,q}^{s}})(\R^{n};V).
\end{align}
Next, for $1\leq p < \infty$ and $M\in\N_{0}$, we define the $(p,M)$-\emph{oscillation} of a measurable map $f\colon\R^{n}\to V$ at $x_{0}\in\R^{n}$ by 
\begin{align*}
\osc_{p}^{M}f(x_{0},r):=\inf\Big\{\Big(\dashint_{\ball(x_{0},r)}|f(x)-\pi(x)|^{p}\dif x\Big)^{\frac{1}{p}}\colon\;\pi\in\mathscr{P}_{M}(\R^{n};V) \Big\}. 
\end{align*}
The next lemma is certainly clear to the experts, but we have been unable to find a precise reference; see, however, \textsc{Dorronsoro} \cite{Dor85} for a related result. In our arguments below, we will merely require inequality '$\lesssim$' in \eqref{eq:Besovoscillation}, and a quick argument for this is sketched in the Appendix, Section~\ref{sec:oscchar}.
\begin{lemma}\label{lem:Besovosc}
  Let $1\leq p,q< \infty$ and $s>0$. If
  $M\in\mathbb{N}$ satisfies $M > \lfloor s\rfloor$, then for all $1\leq u \leq p$ we have the equivalence of norms 
  \begin{align}\label{eq:Besovoscillation}
    \begin{split}
\|f\|_{{\dot\besov}{_{p,q}^{s}}(\R^{n})}\simeq \Big(\int_{0}^{\infty}\|\osc_{u}^{M}f(\cdot,t)\|_{\lebe^{p}(\R^{n})}^{q}\frac{\dif t}{t^{1+sq}}\Big)^{\frac{1}{q}},\qquad f\in\mathscr{S}(\R^{n};V), 
    \end{split}
  \end{align}
  where the constants implicit in '$\simeq$' only depend on $n,M,s,p,q,u$ and $V$. 
\end{lemma}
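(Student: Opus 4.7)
The plan is to establish \eqref{eq:Besovoscillation} by combining the Littlewood--Paley decomposition of $f$ with Taylor approximation and Bernstein-type inverse estimates for bandlimited functions. Since the paper only uses the inequality $\lesssim$ in \eqref{eq:Besovoscillation}, I would focus on this direction; the converse is standard and comes from the vanishing-moment property of the Littlewood--Paley building blocks.

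First I would fix $t>0$ and pick $j_{0}\in\mathbb{Z}$ with $2^{-j_{0}}\sim t$. Splitting $f=g+h$ with $g:=\sum_{j<j_{0}}(\varphi_{j}\widehat{f})^{\vee}$ and $h:=\sum_{j\geq j_{0}}(\varphi_{j}\widehat{f})^{\vee}$, I would bound the oscillation on $\ball(x_{0},t)$ by choosing as competitor the sum of the order-$M$ Taylor polynomial $P_{x_{0}}$ of $g$ at $x_{0}$ (a legitimate Schwartz function) and a polynomial projection $\Pi_{\ball(x_{0},t)}h\in\mathscr{P}_{M}(\R^{n};V)$ that is bounded on $\lebe^{u}(\ball(x_{0},t))$. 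Applying the integral remainder of Taylor's formula to $g$ and the $\lebe^{u}$-boundedness of $\Pi_{\ball(x_{0},t)}$ to $h$, and then absorbing the resulting ball-averages into the Hardy--Littlewood maximal function, yields the pointwise estimate
\begin{align*}
\osc_{u}^{M}f(x_{0},t)\lesssim t^{M+1}\,\mathrm{M}\bigl[|D^{M+1}g|^{u}\bigr](x_{0})^{1/u}+\mathrm{M}\bigl[|h|^{u}\bigr](x_{0})^{1/u}.
\end{align*}

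Next, taking the $\lebe^{p}(\R^{n})$-norm in $x_{0}$ and invoking the Hardy--Littlewood maximal inequality on $\lebe^{p/u}$ (legitimate since $1\leq u\leq p<\infty$), together with the Bernstein estimate $\|D^{M+1}g\|_{\lebe^{p}}\lesssim \sum_{j<j_{0}}2^{j(M+1)}\|(\varphi_{j}\widehat{f})^{\vee}\|_{\lebe^{p}}$, I arrive at
\begin{align*}
\|\osc_{u}^{M}f(\cdot,t)\|_{\lebe^{p}(\R^{n})}\lesssim \sum_{j<j_{0}}2^{(j-j_{0})(M+1)}a_{j}+\sum_{j\geq j_{0}}a_{j},\qquad a_{j}:=\|(\varphi_{j}\widehat{f})^{\vee}\|_{\lebe^{p}(\R^{n})},
\end{align*}
using $2^{-j_{0}(M+1)}\sim t^{M+1}$. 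Integrating against $t^{-1-sq}\dt$ and discretising $t\sim 2^{-j_{0}}$ reduces matters to the weighted discrete inequality
\begin{align*}
\sum_{j_{0}\in\mathbb{Z}}2^{j_{0}sq}\Bigl(\sum_{j<j_{0}}2^{(j-j_{0})(M+1)}a_{j}+\sum_{j\geq j_{0}}a_{j}\Bigr)^{q}\lesssim \sum_{j\in\mathbb{Z}}2^{jsq}a_{j}^{q},
\end{align*}
which yields the right-hand side $\|f\|_{{\dot\besov}{_{p,q}^{s}}(\R^{n})}^{q}$ by \eqref{eq:homogBesovnorm}. The first summand is a discrete convolution with the exponentially decaying kernel $2^{-\ell(M+1)}\mathbbm{1}_{\ell>0}$, summable because $M+1>\lfloor s\rfloor+1>s$, so Young's inequality on $\ell^{q}(\mathbb{Z})$ closes the estimate; the second summand is handled by the discrete Hardy inequality, applicable since $s>0$ and $1\leq q<\infty$.

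The main obstacle lies in the interplay between the $\lebe^{u}$-average on $\ball(x_{0},t)$ and the $\lebe^{p}$-norm in $x_{0}$: the hypothesis $u\leq p$ is essential in order to replace ball-averages by maximal functions and then dualise back to $\lebe^{p}$ via the Hardy--Littlewood inequality. A further technical point is arranging the Taylor competitor and the polynomial projection of $h$ into a single polynomial of degree at most $M$; this is where the condition $M>\lfloor s\rfloor$ enters, since it guarantees $M+1>s$ and hence summability of the low-frequency contribution. For the converse inequality $\gtrsim$ (not strictly needed), one would exploit that $(\varphi_{j})^{\vee}$ has all moments vanishing, so $(\varphi_{j}\widehat{f})^{\vee}=(\varphi_{j})^{\vee}\ast(f-P)$ for any $P\in\mathscr{P}_{M}(\R^{n};V)$, bound this convolution pointwise by the oscillation at scale $2^{-j}$ via the Schwartz decay of $(\varphi_{j})^{\vee}$, and reassemble in $\ell^{q}$ by the maximal inequality.
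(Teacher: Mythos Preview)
Your argument is essentially correct and leads to a valid proof of the direction $\lesssim$, but it takes a genuinely different route from the paper's. The paper does not work with the Littlewood--Paley pieces at all: it invokes Triebel's local-means characterisation of ${\dot\besov}{_{p,q}^{s}}$ via the kernels $k:=\Delta^{N}k^{0}$ with $k^{0}\in\hold_{c}^{\infty}(\ball(0,1))$ and $2N>M$, so that
\[
\|f\|_{{\dot\besov}{_{p,q}^{s}}}\simeq\Big(\int_{0}^{\infty}\|k(t,f)\|_{\lebe^{p}}^{q}\,\frac{\dif t}{t^{1+sq}}\Big)^{1/q},\qquad k(t,f)(x)=\int k(y)f(x+ty)\dif y,
\]
and then observes that $\int k(y)\pi(x+ty)\dif y=0$ for every $\pi\in\mathscr{P}_{M}$ by integrating $\Delta^{N}$ by parts. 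This immediately gives $|k(t,f)(x)|\leq c\,\osc_{u}^{M}f(x,t)$ and the desired bound. In other words, the paper outsources all the hard analysis to the cited equivalence of norms and gets the oscillation comparison in one line. Your approach is more self-contained: you rebuild the comparison from the Littlewood--Paley decomposition via Taylor remainder, Bernstein, and discrete Hardy/Young inequalities. Both work; the paper's is shorter but relies on a deeper black box, while yours is longer but elementary.

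One technical point: your appeal to the Hardy--Littlewood maximal inequality on $\lebe^{p/u}$ requires $p/u>1$, i.e.\ $u<p$. At the endpoint $u=p$ (in particular whenever $p=1$) the maximal operator is not bounded on $\lebe^{1}$, so that step fails as written. The fix is straightforward: when $u=p$, bypass the maximal function and use Fubini directly, since for any $r>0$
\[
\Big\|\Big(\dashint_{\ball(\cdot,r)}|F|^{p}\Big)^{1/p}\Big\|_{\lebe^{p}(\R^{n})}^{p}=\int_{\R^{n}}\dashint_{\ball(x_{0},r)}|F|^{p}\dif y\dif x_{0}=\|F\|_{\lebe^{p}(\R^{n})}^{p}.
\]
Apply this to $F=D^{M+1}g$ (combined with Minkowski over the $\tau$-integral in the Taylor remainder) and to $F=h$ separately. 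With this adjustment your argument goes through for all $1\leq u\leq p$.
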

Let $\Omega\subset\R^{n}$ be open and bounded with $\hold^{\infty}$-boundary $\partial\Omega$ and outer unit normal $\nu=(\nu_{1},...,\nu_{n})$. By definition, there exists $N\in\mathbb{N}$ and open balls $W_{1},...,W_{N}$ with $\partial\Omega\subset\bigcup_{j=1}^{N}W_{j}$ such that the following holds for all $j=1,...,N$: We have $\partial\Omega\cap W_{j}\neq\emptyset$, and there exists $F^{(j)}\in\hold^{\infty}(\overline{W}_{j};\R^{n})$ with $F^{(j)}\colon W_{j}\to\R^{n}$ injective and having open, bounded image, $F^{(j)}(\partial\Omega\cap W_{j})\subset\{x\in\R^{n}\colon\;x_{n}=0\}$ and $F^{(j)}(\Omega\cap W_{j})\subset\R^{n-1}\times (0,\infty)$ is open, simply connected and satisfies $\det(\nabla F^{(j)})\neq 0$ on $\overline{W}_{j}$. Let  $W_{0}\Subset\Omega$ be open and bounded with $\hold^{\infty}$-boundary $\partial W_{0}$ such that $\overline{\Omega}\subset \bigcup_{j=0}^{N}W_{j}$ and $(\psi_{j})_{j=0}^{N}$ be a partition of unity subject to $(W_{j})_{j=0}^{N}$. Following \textsc{Triebel} \cite[Chpt.~3.2.2, Def.~2]{Trie1}, for $s>0$, $1\leq p,q<\infty$, we define the Besov norm 
\begin{align}\label{eq:BesovBoundary}
\|f\|_{{\besov}{_{p,q}^{s}}(\partial\Omega)}:=\sum_{j=1}^{N}\|(\psi_{j}f)({F^{(j)}}^{-1}(\cdot,0))\|_{{\besov}{_{p,q}^{s}}(\R^{n-1})},\qquad f\in\mathscr{D}'(\partial\Omega;V).
\end{align}
In view of a convenient characterisation of trace spaces for smooth domains, we follow the slightly more general approach of \textsc{Maz'ya, Mitrea \& Shaposhnikova} \cite{MazMitSha05} and define for $m\in\mathbb{N}$, $1<p<\infty$ and $0<s<1$
\begin{align}\label{eq:BesovHomBounded}
{\dot\besov}{_{p}^{m-1+s}}(\partial\Omega;V):=\overline{\{(\partial^{\alpha}v|_{\partial\Omega})_{|\alpha|\leq m-1}\colon\;v\in\hold_{c}^{\infty}(\R^{n};V)\}}^{({\besov}{_{p,p}^{s}}(\partial\Omega;V))^{\mathtt{M}}}, 
\end{align}
where $\mathtt{M}:=\#\{\alpha\in\mathbb{N}_{0}^{n}\colon\;|\alpha|\leq m-1\}$. By \cite[Prop.~6.9]{MazMitSha05}, $(f^{\alpha})_{|\alpha|\leq m-1}$ belongs to ${\dot\besov}{_{p}^{m-1+s}}(\partial\Omega;V)$ if and only if 
\begin{align}\label{eq:BWarray}
\begin{split}
& f^{\alpha}\in{\besov}{_{p,p}^{s}}(\partial\Omega;V)\qquad\text{for all}\;|\alpha|\leq m-1,\;\;\;\text{and}\\
&(\nu_{j}\partial_{k}-\nu_{k}\partial_{j})f^{\alpha}=\nu_{j}f^{\alpha+e_{k}}-\nu_{k}f^{\alpha+e_{j}}\;\;\text{for all $|\alpha|\leq m-2$, $j,k\in\{1,...,n\}$}.
\end{split}
\end{align}
The natural compatibility condition~$\eqref{eq:BWarray}_{2}$ on the tangential derivatives is a higher order variant of the Whitney array compatibility conditions considered by \textsc{Adolfsson \& Pipher} \cite{AdoPip98}. As in \cite{AdoPip98}, the equations of $\eqref{eq:BWarray}_{2}$ are understood in the sense of being fulfilled by an arbitrary extension $\widetilde{f}^{\alpha}$ of $f^{\alpha}$ to a neighbourhood of $\partial\Omega$; since the operator $\nu_{j}\partial_{k}-\nu_{k}\partial_{j}$ is a tangential derivative on $\partial\Omega$, $(\nu_{j}\partial_{k}-\nu_{k}\partial_{j})\widetilde{f}^{\alpha}$ will be independent of the particular extension and hence is well-defined.

\begin{lemma}[{\cite[Prop.~6.5]{MazMitSha05}}]\label{lem:MMS}
Let $\Omega\subset\R^{n}$ be an open and bounded domain with $\hold^{\infty}$-boundary $\partial\Omega$. Let $1<p<\infty$, $m\in\mathbb{N}$ and define $\mathrm{tr}_{m-1,\partial\Omega}$ on $\sobo^{m,p}(\Omega;V)$ by
\begin{align}
\mathrm{tr}_{m-1,\partial\Omega}f:= \big(\trace_{\partial\Omega}(\partial^{\alpha}f)\big)_{|\alpha|\leq m-1},\qquad f\in\sobo^{m,p}(\Omega;V),
\end{align}
where $\trace_{\partial\Omega}\colon\sobo^{1,p}(\Omega;V)\to{\besov}{_{p,p}^{1-1/p}}(\partial\Omega;V)$ is the trace operator on $\sobo^{1,p}(\Omega;V)$ as usual. Then 
\begin{align}
\mathrm{tr}_{m-1,\partial\Omega}\colon\sobo^{m,p}(\Omega;V)\to{\dot\besov}{_{p}^{m-1/p}}(\partial\Omega;V)
\end{align}
is a well-defined, linear, bounded operator \emph{which is onto}. Moreover, there exists a bounded, linear right-inverse $\mathcal{L}\colon {\dot\besov}{_{p}^{m-1/p}}(\partial\Omega;V) \to \sobo^{m,p}(\Omega;V)$ of $\mathrm{tr}_{m-1,\partial\Omega}$ in the sense that, whenever $\dot{f}=(f^{\alpha})_{|\alpha|\leq m-1}\in{\dot\besov}{_{p}^{m-1/p}}(\partial\Omega;V)$, then
\begin{align}\label{eq:rightinverse}
\trace_{\partial\Omega}(\partial^{\alpha}(\mathcal{L}\dot{f}))=f^{\alpha}\qquad\text{for all}\;|\alpha|\leq m-1.
\end{align}
\end{lemma}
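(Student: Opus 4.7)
The plan is to reduce to the half-space model by means of the partition of unity subordinate to the smooth coordinate charts $(W_{j},F^{(j)})$ used in \eqref{eq:BesovBoundary}, so that well-definedness of $\mathrm{tr}_{m-1,\partial\Omega}$ and the construction of a right-inverse can both be carried out componentwise and then glued. For well-definedness, I first apply the classical first order trace theorem to each $\partial^{\alpha}f$ with $|\alpha|\leq m-1$, giving $\trace_{\partial\Omega}(\partial^{\alpha}f)\in\besov_{p,p}^{m-|\alpha|-1/p}(\partial\Omega;V)\hookrightarrow\besov_{p,p}^{1-1/p}(\partial\Omega;V)$, which already yields $\mathtt{M}$-fold Besov regularity. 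The Whitney identity $\eqref{eq:BWarray}_{2}$ then follows because $\nu_{j}\partial_{k}-\nu_{k}\partial_{j}$ is a tangential first order operator on $\partial\Omega$: applying it to a smooth extension of $\partial^{\alpha}f$ and restricting to the boundary yields $\nu_{j}\partial^{\alpha+e_{k}}f-\nu_{k}\partial^{\alpha+e_{j}}f$ on $\partial\Omega$, independently of the chosen extension.

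For surjectivity and the construction of $\mathcal{L}$, after flattening by $F^{(j)}$ I work on the upper half-space $\R^{n-1}\times(0,\infty)$. The key structural observation is that once all tangential derivatives of the array $(f^{\alpha})$ are determined by the compatibility relations $\eqref{eq:BWarray}_{2}$, the only independent data at order $j\in\{0,\dots,m-1\}$ are the pure normal components $g_{j}:=f^{je_{n}}\in\besov_{p,p}^{m-j-1/p}(\R^{n-1};V)$. I would then recursively construct an extension $F\in\sobo^{m,p}(\R^{n-1}\times(0,\infty);V)$ whose first $m-1$ normal derivatives at $x_{n}=0$ are prescribed by $(g_{j})_{j=0}^{m-1}$, using a Taylor-type ansatz of the form
\begin{align*}
F(x',x_{n}):=\sum_{j=0}^{m-1}\frac{x_{n}^{j}}{j!}\,E_{j}\big(g_{j}-\mathrm{corr}_{j}\big)(x',x_{n}),
\end{align*}
where each $E_{j}$ is a standard Besov-to-Sobolev extension of order $m-j$ and the corrections $\mathrm{corr}_{j}$ successively cancel unwanted contributions from the lower order terms. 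The mixed derivatives of $F$ on $\{x_{n}=0\}$ then match the remaining $f^{\alpha}$ by iterated application of $\eqref{eq:BWarray}_{2}$.

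Globally, the half-space extensions are glued by the partition of unity $(\psi_{j})_{j=0}^{N}$, and the bound for $\mathcal{L}$ follows from the boundedness of each local extension combined with the equivalence of norms in \eqref{eq:BesovBoundary}. The main obstacle I anticipate is the bookkeeping required for the Whitney compatibility under these smooth coordinate changes and cut-off multiplications: both operations generate fresh tangential derivatives at the boundary, and one must verify that the compatibility system $\eqref{eq:BWarray}_{2}$ remains invariant so that the local pieces agree on overlaps. This invariance is essentially a Leibniz-rule computation combined with the covariance of the tangential operators $\nu_{j}\partial_{k}-\nu_{k}\partial_{j}$ under diffeomorphisms of $\partial\Omega$; it is elementary but forms the technical heart of the argument and is what makes the Maz'ya--Mitrea--Shaposhnikova formulation of higher order traces work.
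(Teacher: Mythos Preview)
The paper does not give its own proof of this lemma: it is stated as a citation of \cite[Prop.~6.5]{MazMitSha05} and used as a black box in the proof of Theorem~\ref{thm:mainLp} (to build the extension operator~$\mathscr{E}$ in~\eqref{eq:defextension}). So there is no in-paper proof to compare against.

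That said, your outline is the standard route and matches what is done in \cite{MazMitSha05}: localise via the charts $(W_{j},F^{(j)})$ and the partition of unity $(\psi_{j})$, reduce to the half-space, observe that the Whitney compatibility~$\eqref{eq:BWarray}_{2}$ forces all mixed components $f^{\alpha}$ to be tangential derivatives of the pure normal data $g_{j}=f^{je_{n}}$, and then extend the $g_{j}$'s by a Taylor-weighted Besov extension. Your identification of the main technical point---stability of the Whitney array condition under the diffeomorphisms $F^{(j)}$ and under multiplication by the cut-offs $\psi_{j}$---is correct; this is exactly the content of the auxiliary lemmas in \cite[\S6]{MazMitSha05} (the reference also treats Lipschitz boundaries, where this invariance is considerably more delicate than in the present $\hold^{\infty}$ setting). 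One minor refinement: for the boundedness of $\mathrm{tr}_{m-1,\partial\Omega}$ into ${\dot\besov}_{p}^{m-1/p}(\partial\Omega;V)$ you need not only $f^{\alpha}\in\besov_{p,p}^{1-1/p}$ but that the array lies in the \emph{closure} of restrictions of smooth functions, cf.~\eqref{eq:BesovHomBounded}; this follows at once from density of $\hold^{\infty}(\overline{\Omega};V)$ in $\sobo^{m,p}(\Omega;V)$.
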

\subsection{Geometric setup and potential estimates}\label{sec:geometricsetup}
In this and the following subsection, we suppose that $\A$ is a $\mathbb{C}$-elliptic differential operator of the form \eqref{eq:form}. Moreover, we shall tacitly work with the particular halfspace $\bbH:=\R^{n-1}\times (0,\infty)$, but our arguments are easily seen to generalise to any halfspace of $\R^{n}$.

We start by fixing notation: For some given $z'\in\R^{n-1}$ and $t>0$ we denote 
\begin{align}
K_{z',t}:=\{(y',y_{n})\in\R^{n}\colon 0\leq y_{n}\leq t,\;|y'-z'|^{2}<9y_{n}^{2} \}
\end{align}
the single-sided cone emanating from $(z',0)$ in direction $e_{n}$, of height $t$ and base radius $3t$. To distinguish between $n$- and $(n-1)$-dimensional balls, we put for $x'\in\R^{n-1}$, $x\in\R^{n}$ and $r>0$ 
\begin{align*}
\ball_{n}(x,r):=\ball(x,r)\;\;\;\text{and}\;\;\;\ball_{n-1}(x',r):=\partial\bbH\cap\ball((x',0),r). 
\end{align*} 
Adopting this notation, let us note that in this situation there holds 
\begin{align}\label{eq:coneinclusion}
\ball_{n}((x',t),\tfrac{t}{2})\subset K_{y',2t}\qquad\text{for all}\; y'\in\R^{n-1}\;\text{with}\;|x'-y'|<t. 
\end{align}
In fact, let $(\xi',\xi_{n})\in \ball_{n}((x',t),\frac{t}{2})$ so that, firstly, $\frac{1}{2}t\leq \xi_{n}\leq \frac{3}{2}t$ and hence $\frac{3}{2}t\leq 3\xi_{n}$. As a consequence, 
\begin{align*}
|\xi'-y'| \leq |\xi'-x'| + |x'-y'| < \frac{3}{2}t \leq 3\xi_{n}
\end{align*}
and so \eqref{eq:coneinclusion} follows. This situation is depicted in Figure~\ref{fig:umbra}. Combining this setup with Proposition~\ref{prop:decomposition}, we arrive at the following potential estimates on cones:
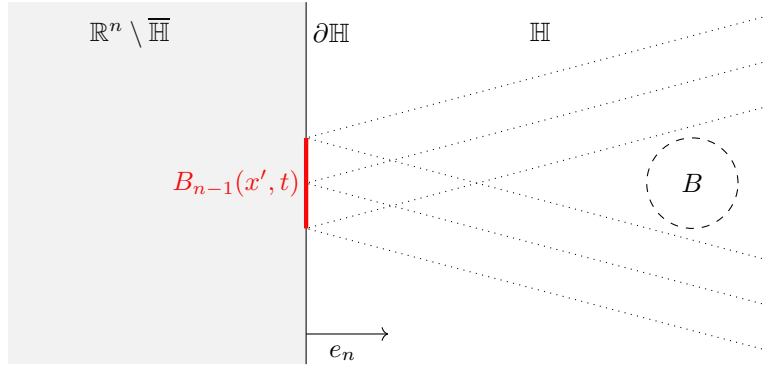
\begin{figure}   
\begin{tikzpicture}[scale=2]
  \begin{scope}
    \draw[-] (-0.04,-1.2) -- (-0.04,1.2);
  \end{scope} 
  \node at (0.125,1.0) {$\partial\bbH$};
  \node at (1.5,1.0) {$\bbH$};
  \node at (-1.2,1.0) {$\R^{n}\setminus\overline{\bbH}$};
  \begin{scope}[shift={(-0.04,0)}]
    \draw[dotted] (3,.8) -- (0,0) -- (3,-.8);
  \end{scope}
  \begin{scope}[shift={(-0.04,.3)}]
    \draw[dotted] (3,.8) -- (0,0) -- (3,-.8);
  \end{scope}
  \begin{scope}[shift={(-0.04,-.3)}]
    \draw[dotted] (3,.8) -- (0,0) -- (3,-.8);
  \end{scope}
  \draw[dashed] (2.5,0) circle (0.3cm);
  \node at (2.5,0)  {$B$};
  \draw[red,ultra thick]  (-0.04,-0.3) -- (-0.04,0.3);
  \node[red] at (-0.5,0) {$\ball_{n-1}(x',t)$};
  \path[fill=gray,opacity=0.1](-2,-1.2) -- (-0.04,-1.2) -- (-0.04,1.2) -- (-2,1.2) -- cycle;
  \draw[->] (-0.04,-1)--(0.5,-1);
  \node at (0.2,-1.125) {$e_{n}$};
\end{tikzpicture}
\caption{Not-to-scale construction of the the umbra shadow estimate underlying the proof of Theorem~\ref{thm:main}. To control the oscillation of $u(\cdot,0)$, the Riesz potential estimate from Lemma~\ref{lem:conical} is applied to shifted cones emanating from points in the red area, where all cones are star-shaped with respect to a joint ball $\ball$.
} \label{fig:umbra}
\end{figure}
\begin{lemma}[Conical Riesz potential estimate]\label{lem:conical}
There exists $c=c(\A)>0$ with the following property: If $x'\in\R^{n-1}$ and $t>0$, then for every $y'\in\R^{n-1}$ with $|x'-y'|<t$ there holds for all $u\in\hold^{\infty}(\overline{\bbH};V)$ and all $\alpha\in\mathbb{N}_{0}^{n}$ with $|\alpha|\leq k-1$
\begin{align}\label{eq:conical}
\begin{split}
|(\partial^{\alpha}u)(y',0)-(\partial^{\alpha}&\Pi_\A^{\ball_{n}((x',t),\frac{t}{2})}u)(y',0)|\\ &  \leq c \int_{K_{y',2t}}\frac{|\A u(z',z_{n})|}{(|z'-y'|+|z_{n}|)^{n+|\alpha|-k}}\dif\,(z',z_{n}).
\end{split}
\end{align}
\end{lemma}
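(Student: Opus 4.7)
The plan is to apply the representation formula of Proposition~\ref{prop:decomposition} on the cone $\Omega:=K_{y',2t}$ with central ball $\ball:=\ball_{n}((x',t),\tfrac{t}{2})$, and then evaluate the resulting pointwise estimate at the apex $(y',0)$ by continuity. In this way, the lemma becomes a direct instance of a Riesz potential bound on an adapted star-shaped domain.

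First I would verify the hypotheses of Proposition~\ref{prop:decomposition}. For $w_n\geq 0$ the cone $K_{y',2t}$ can be written as $\{(w',w_{n})\colon 0\le w_{n}\le 2t,\ |w'-y'|<3w_{n}\}$, which is the intersection of the half-space $\{w_{n}\ge 0\}$, the slab $\{w_{n}\le 2t\}$, and the convex cone $\{|w'-y'|<3w_{n}\}$; hence $K_{y',2t}$ is convex and in particular star-shaped with respect to every one of its interior points. The inclusion $\ball\subset K_{y',2t}$ is precisely the content of \eqref{eq:coneinclusion} (applied with the hypothesis $|x'-y'|<t$), and a trivial computation gives $\diam(K_{y',2t})\le 12t=12\,\diam(\ball)$, so the diameter-ratio hypothesis holds with a universal constant $c_{0}=12$.

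Next I would differentiate the representation formula \eqref{eq:decompositionmain} of Proposition~\ref{prop:decomposition} $|\alpha|$ times in the base point and invoke the kernel estimate~\ref{item:decomp-kernel} with $|\beta|=0$. Since $u\in\hold^{\infty}(\overline{\bbH};V)$ is smooth up to $\partial\bbH$, this yields for every $z\in K_{y',2t}$ and every $|\alpha|\le k-1$
\[
\bigl|\partial^{\alpha}u(z)-\partial^{\alpha}\Pi_{\A}^{\ball}u(z)\bigr|\leq C\int_{K_{y',2t}}|z-w|^{k-n-|\alpha|}|\A u(w)|\,\dif w,
\]
with a constant $C=C(\A)$ (the universality of $c_{0}$ makes it independent of $x',y',t$).

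Finally I would pass to the apex $z=(y',0)$ by continuity. The left-hand side is continuous on $\overline{K_{y',2t}}$ because $u$ is smooth up to $\partial\bbH$ and $\Pi_{\A}^{\ball}u$ lies in the finite-dimensional subspace $\ker(\A)$ consisting of polynomials. The exponent $k-n-|\alpha|>-n$ (since $|\alpha|\le k-1$ gives $k-|\alpha|\ge 1>0$) makes $w\mapsto|z-w|^{k-n-|\alpha|}$ locally integrable uniformly in $z\in\overline{K_{y',2t}}$, so dominated convergence shows the right-hand side is likewise continuous at the apex. Evaluating at $z=(y',0)$ and using the elementary two-sided bound
\[
\tfrac{1}{\sqrt{2}}\bigl(|w'-y'|+w_{n}\bigr)\le\sqrt{|w'-y'|^{2}+w_{n}^{2}}\le|w'-y'|+w_{n}\qquad (w_n\geq 0)
\]
to rewrite the kernel $|z-w|^{k-n-|\alpha|}$ in terms of $(|w'-y'|+|w_{n}|)^{k-n-|\alpha|}$ (the direction of the inequality using whichever side of the bound matches the sign of the exponent) yields exactly \eqref{eq:conical}.

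The only mildly delicate point is that the apex $(y',0)$ lies on $\partial K_{y',2t}\cap\partial\bbH$ rather than in the interior, so Proposition~\ref{prop:decomposition} does not apply there directly; but this is resolved by the local integrability of the Riesz-type kernel for $|\alpha|\le k-1$ together with the smoothness of $u$ up to $\partial\bbH$, so no genuine obstacle arises.
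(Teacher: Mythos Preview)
Your proposal is correct and follows essentially the same route as the paper: apply Proposition~\ref{prop:decomposition} on $\Omega=K_{y',2t}$ with $\ball=\ball_{n}((x',t),\tfrac{t}{2})$, use the kernel bound~\ref{item:decomp-kernel}, and read off the claim. You are somewhat more explicit than the paper about the star-shapedness (via convexity), the uniform diameter ratio, the norm equivalence $|z-w|\simeq|w'-y'|+w_n$, and especially the fact that the apex $(y',0)$ lies on $\partial K_{y',2t}$ so one must pass to the limit by continuity and local integrability of the kernel---the paper simply evaluates the representation at $(y',0)$ without comment.
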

\begin{proof}
Let $x',y'$ and $t$ be as in the lemma. By \eqref{eq:coneinclusion}, $\ball_{n}((x',t),\tfrac{t}{2})\subset K_{y',2t}$ and clearly $K_{y',2t}$ is star-shaped with respect to $\ball_{n}((x',t),\tfrac{t}{2})$. Thus we may apply Proposition~\ref{prop:decomposition} to $\Omega=K_{y',2t}$ and $\ball=\ball_{n}((x',t),\tfrac{t}{2})$. Note that $\Pi_\A^Bu:=\widetilde{\Pi}_{\A}\mathbb{T}_{m-1}^{\ball}$ is independent of $y'$ and the diameters of $K_{y',2t}$ and $\ball_{n}((x',t),\tfrac{t}{2})$ are uniformly comparable. Hence \eqref{eq:conical} follows from \ref{item:decomp-kernel} via
\begin{align*}
\left\vert \int_{K_{y',2t}}(\partial_{y}^{\alpha}\frK_\A^\ball)((y',0),z)\A u(z)\dif z \right\vert \leq C\int_{K_{y',2t}}\frac{|\A u(z',z_{n})|}{(|y'-z'|+|z_{n}|)^{n+|\alpha|-k}}\dif\,(z',z_{n}). 
\end{align*}
The proof is complete.  
\end{proof}

\subsection{The trace estimate for smooth maps}\label{sec:proofhalfspace}
After the preparations of the preceding subsection, we can now proceed to showing that $\mathbb{C}$-ellipticity suffices for the trace estimate of Theorem~\ref{thm:main}~\ref{item:mainB}. We provide a slightly stronger variant that is also applicable to the $1<p<\infty$-scenario of Theorem~\ref{thm:mainLp}   below.
\begin{proof}[Proof of sufficiency of $\mathbb{C}$-ellipticity for \eqref{eq:tracemaininequality0}]
Let $1\leq p < \infty$ and let $\A$ be a $\mathbb{C}$-elliptic differential operator of the form \eqref{eq:form} with $k\geq 2$. Moreover, let $u\in\hold_{c}^{\infty}(\overline{\bbH};V)$, $x'\in\partial\bbH$ and $t>0$. We distinguish the cases $|\alpha|\leq k-2$ and $|\alpha|=k-1$. In the first case, we will establish a ${\dot\besov}{_{p,p}^{k-|\alpha|-1/p}}(\partial\mathbb{H})$-bound on $(\partial^{\alpha}u)(\cdot,0)$ for all $1\leq p < \infty$, whereas in the second case we will only strive for such a bound provided $1<p<\infty$.

Let $|\alpha|\leq k-2$ and $1\leq p <\infty$. Since $\A$ is $\mathbb{C}$-elliptic, we may choose $m:=\mathrm{deg}_{\mathscr{P}}(\A)\in\mathbb{N}$ (cf.~\eqref{eq:deg-pol}) so that $\partial^{\alpha}\ker(\A)\subset\mathscr{P}_{m}(\R^{n};V)$ and, with $s:=k-|\alpha|-1/p>0$, 
\begin{align*}
m > \mathrm{deg}_{\mathscr{P}}(\A)-1\geq k-|\alpha|-1=\lfloor k-|\alpha|-1/p\rfloor = \lfloor s\rfloor.
\end{align*}
Since $p$ and $m$ are fixed, we put for a $\mathscr{H}^{n-1}$-locally integrable map $v\colon\partial\bbH\to V$
\begin{align}\label{eq:oschalfspacedef}
\osc_{p,\partial\bbH}^{m}v(x',t):=\inf \Big\{\Big(\dashint_{\ball_{n-1}(x',t)}|v-\pi|^{p}\dif^{n-1}y\Big)^{\frac{1}{p}}\colon\;\pi\in\mathscr{P}_{m}(\R^{n};V)\Big\}
\end{align}
In this situation, Lemma~\ref{lem:Besovosc} implies with $U^{\alpha}(x'):=(\partial^{\alpha}u)(x',0)$ 
\begin{align}\label{eq:osc1}
\|U^{\alpha}\|_{{\dot\besov}{_{p,p}^{k-|\alpha|-1/p}(\partial\bbH)}}^{p} \leq c \int_{0}^{\infty}\frac{\|\osc_{p,\partial\bbH}^{m}U^{\alpha}(\cdot,t)\|_{\lebe^{p}(\partial\bbH)}^{p}}{t^{(k-|\alpha|)p}}\dif t.
\end{align}
To bound the right-hand side, we use Lemma~\ref{lem:conical} and $\partial^{\alpha}\ker(\A)\subset\mathscr{P}_{m}(\R^{n};V)$ to obtain 
\begin{align}\label{eq:osc1A}
\begin{split}
&|\osc_{p,\partial\bbH}^{m}U^{\alpha}(x',t)|^{p}  \leq \dashint_{\ball_{n-1}(x',t)}|U^{\alpha}(y')-\partial^{\alpha}\Pi_\A^{\ball_{n}((x',t),\frac{t}{2})}u(y',0)|^{p}\dif^{n-1}y'\\
& \;\;\;\;\;\;\leq c\dashint_{\ball_{n-1}(x',t)}\left\vert\int_{0}^{2t}\int_{\partial\bbH}\mathbbm{1}_{\{|z'-y'|<3\tau\}}\frac{|\A u(z',\tau)|}{(|z'-y'|+\tau)^{n-k+|\alpha|}}\dif^{n-1} z'\dif\tau\right\vert^{p}\dif^{n-1} y'\\
& \;\;\;\;\;\;\leq c\dashint_{\ball_{n-1}(x',t)}t^{p-1}\int_{0}^{2t}\tau^{(n-1)(p-1)}\times \\ & \;\;\;\;\;\;\times \Big(\int_{\partial\bbH}\mathbbm{1}_{\{|z'-y'|<3\tau\}}\frac{|\A u(z',\tau)|^{p}}{(|z'-y'|+\tau)^{p(n-k+|\alpha|)}}\dif^{n-1} z'\Big)\dif\tau \dif^{n-1}y'. 
\end{split}
\end{align}
Let us note that the very first of the previous chain of inequalities is where the $\mathbb{C}$-ellipticity enters as otherwise the nullspace does not consist of polynomials of a fixed maximal degree. In combination with \eqref{eq:osc1}, we thus obtain 
\begin{align*}
& \|U^{\alpha}\|_{{\dot\besov}{_{p,p}^{k-|\alpha|-1/p}}(\partial\bbH)}^{p} \leq c\int_{0}^{\infty}\int_{\partial\bbH}\dashint_{\ball_{n-1}(x',t)}t^{p-1}\int_{0}^{2t}\tau^{(n-1)(p-1)}\times \\ & \times \Big(\int_{\partial\bbH}\mathbbm{1}_{\{|z'-y'|<3\tau\}}\frac{|\A u(z',\tau)|^{p}}{(|z'-y'|+\tau)^{p(n-k+|\alpha|)}}\dif^{n-1}z'\Big)\dif\tau \dif^{n-1} y'\dif^{n-1} x'\frac{\dif t}{t^{(k-|\alpha|)p}}\\
&\leq c\int_{0}^{\infty}\int_{\partial\bbH}\int_{\partial\bbH}\int_{0}^{2t}\tau^{(n-1)(p-1)}\mathbbm{1}_{\{|x'-y'|<t\}}\times \\ & \;\;\;\times \Big(\int_{\partial\bbH}\frac{\mathbbm{1}_{\{|z'-y'|<3\tau\}}|\A u(z',\tau)|^{p}}{(|z'-y'|+\tau)^{p(n-k+|\alpha|)}}\dif^{n-1} z'\Big)\dif\tau \dif^{n-1} y'\dif^{n-1} x'\frac{\dif t}{t^{(k-|\alpha|-1)p+n}} =(*)
\end{align*}
Now, if $\mathbbm{1}_{\{|z'-y'|<3\tau\}}=1$, then $\tau\leq |z'-y'|+\tau \leq 4\tau$. Therefore, 
\begin{align*}
(*)&\leq c\int_{0}^{\infty}\int_{\partial\bbH}\int_{\partial\bbH}\int_{\partial\bbH}\int_{0}^{2t}\tau^{(n-1)(p-1)}\mathbbm{1}_{\{|z'-y'|<3\tau\}}\mathbbm{1}_{\{|x'-y'|<t\}}\times \\ & \;\;\;\;\;\;\;\;\;\;\;\;\;\;\;\;\;\;\;\;\;\;\;\;\;\;\;\;\times\frac{|\A u(z',\tau)|^{p}}{\tau^{p(n-k+|\alpha|)}}\dif\tau \dif^{n-1} z'\dif^{n-1} y'\dif^{n-1} x'\frac{\dif t}{t^{(k-|\alpha|-1)p+n}}\\
&\leq c\int_{0}^{\infty}\int_{\partial\bbH}\int_{\partial\bbH}\int_{0}^{2t}\mathbbm{1}_{\{|z'-y'|<3\tau\}}\times \\ & \;\;\;\;\;\;\;\;\;\;\;\;\;\;\;\;\;\;\;\;\;\;\;\;\;\;\;\;\;\;\;\;\;\;\;\;\;\;\;\;\;\times\frac{|\A u(z',\tau)|^{p}}{\tau^{n-1+p(1+|\alpha|-k)}}\dif\tau \dif^{n-1} z'\dif^{n-1} y'\frac{\dif t}{t^{(k-|\alpha|-1)p+1}}\\
&\leq c\int_{0}^{\infty}\int_{\partial\bbH}\int_{0}^{2t}\frac{|\A u(z',\tau)|^{p}}{\tau^{p(1+|\alpha|-k)}}\dif\tau \dif^{n-1} z'\frac{\dif t}{t^{(k-|\alpha|-1)p+1}}\\
&= c\int_{0}^{\infty}\int_{\partial\bbH}\int_{0}^{\infty}\mathbbm{1}_{(0,2t)}(\tau)|\A u(z',\tau)|^{p}\frac{\dif \tau}{\tau^{p(1+|\alpha|-k)}}\dif^{n-1} z'\frac{\dif t}{t^{(k-|\alpha|-1)p+1}}\\
&= c\int_{0}^{\infty}\int_{\partial\bbH}\int_{0}^{\infty}\mathbbm{1}_{(0,2t)}(\tau)|\A u(z',\tau)|^{p}\frac{\dif t}{t^{(k-1-|\alpha|)p+1}}\dif^{n-1} z'\frac{\dif \tau}{\tau^{p(1+|\alpha|-k)}}\\
&\leq c\int_{0}^{\infty}\int_{\partial\bbH}\Big(\int_{\frac{\tau}{2}}^{\infty}\frac{\dif t}{t^{(k-1-|\alpha|)p+1}}\Big)|\A u(z',\tau)|^{p}\dif^{n-1}z'\frac{\dif \tau}{\tau^{p(1+|\alpha|-k)}}\\
&\!\!\!\!\!\!\!\!\!\!\stackrel{k-|\alpha|-1>0}{\leq} c\int_{0}^{\infty}\int_{\partial\bbH}|\A u(z',\tau)|^{p}\dif^{n-1}z'\dif \tau\\
& = c\|\A u\|_{\lebe^{p}(\bbH)}^{p}.
\end{align*}
The reader will notice that this argument requires refinement for $|\alpha|=k-1$ since in this case, the penultimate estimation cannot be accomplished. 

Let $1<p<\infty$ and $|\alpha|=k-1$; in this case, the reader will notice that we may even allow for $k=1$. Defining $\mathrm{osc}_{1,\partial\mathbb{H}}^{m}$ in the analogous manner as in \eqref{eq:oschalfspacedef}, we imitate \eqref{eq:osc1A} to find by changing variables $z'=x'-\xi'$ and $\eta'=x'-y'$ in the third step
\begin{align*}
|\osc_{1,\partial\mathbb{H}}^{m}U^{\alpha}&(x',t)|  \leq \frac{c}{t^{n-1}}\int_{0}^{2t}\int_{\partial\mathbb{H}}\int_{\partial\bbH}\mathbbm{1}_{\{|x'-y'|<t\}}\mathbbm{1}_{\{|z'-y'|<3\tau\}}\times \\ 
& \;\;\;\;\;\;\;\;\;\;\;\;\;\;\;\;\;\;\;\;\;\;\;\;\;\;\;\;\;\;\;\;\times\frac{|\A u(z',\tau)|}{(|z'-y'|+\tau)^{n-1}}\dif^{n-1} y'\dif^{n-1} z'\dif\tau \\ 
& = \frac{c}{t^{n-1}}\int_{0}^{2t}\int_{\partial\mathbb{H}}|\A u(z',\tau)|\Big(\int_{\partial\bbH}\mathbbm{1}_{\{|x'-y'|<t\}}\mathbbm{1}_{\{|z'-y'|<3\tau\}}\times \Big.\\ 
& \Big. \;\;\;\;\;\;\;\;\;\;\;\;\;\;\;\;\;\;\;\;\;\;\;\;\;\;\;\;\;\;\;\;\times\frac{1}{(|z'-y'|+\tau)^{n-1}}\dif^{n-1} y'\Big)\dif^{n-1} z'\dif\tau\\
& = \frac{c}{t^{n-1}}\int_{0}^{2t}\int_{\partial\mathbb{H}}|\A u(x'-\xi',\tau)|\mathbbm{1}_{\{|\xi'|\leq 7t\}}\Big(\int_{\partial\bbH}\mathbbm{1}_{\{|\eta'|<t\}}\mathbbm{1}_{\{|\eta'-\xi'|<3\tau\}}\times \Big.\\ 
& \Big. \;\;\;\;\;\;\;\;\;\;\;\;\;\;\;\;\;\;\;\;\;\;\;\;\;\;\;\;\;\;\;\;\times\frac{1}{(|\eta'-\xi'|+\tau)^{n-1}}\dif^{n-1}\eta'\Big)\dif^{n-1} \xi'\dif\tau = (**), 
\end{align*}
where we have used that if $|\xi'|>7t$, then $|\eta'|<t$ yields
\begin{align*}
3\tau\stackrel{\tau\leq 2t}{\leq} 6t = 7t - t < |\xi'| - |\eta'| \leq |\xi'-\eta'|
\end{align*}
and so $\mathbbm{1}_{\{|\eta'|<t\}}\mathbbm{1}_{\{|\eta'-\xi'|<3\tau\}}=\mathbbm{1}_{\{|\eta'|<t\}}\mathbbm{1}_{\{|\eta'-\xi'|<3\tau\}}\mathbbm{1}_{\{|\xi'|\leq 7t\}}$. Since we have 
\begin{align*}
c\int_{\ball_{n-1}(0,3\tau)}\frac{\dif^{n-1}\theta}{(|\theta|+\tau)^{n-1}} \leq c\frac{\mathscr{H}^{n-1}(\ball_{n-1}(0,3\tau))}{\tau^{n-1}}\leq c
\end{align*}
as a bound for the very inner integral in $(**)$, we thus conclude
\begin{align*}
|\osc_{1,\partial\mathbb{H}}^{m}U^{\alpha}(x',t)| \leq (**) \leq \frac{c}{t^{n-1}}\int_{0}^{2t}\int_{\partial\mathbb{H}}|\A u(x'-\xi',\tau)|\mathbbm{1}_{\{|\xi'|\leq 7t\}}\dif^{n-1}\xi'\dif\tau.
\end{align*}
Integrating the previous inequality with respect to $x'\in\partial\mathbb{H}$, we choose $0<\delta<p-1$ to find by H\"{o}lder's inequality and a change of variables
\begin{align}\label{eq:osc2}
\begin{split}
\|&\osc_{1,\partial\mathbb{H}}^{m}U^{\alpha}(\cdot,t)\|_{\lebe^{p}(\partial\mathbb{H})}^{p} \\ & \leq \frac{c}{t^{p(n-1)}}\int_{\partial\mathbb{H}}\left\vert\int_{0}^{2t}\frac{1}{\tau^{\frac{\delta}{p}}}\int_{\partial\mathbb{H}}\tau^{\frac{\delta}{p}}\mathbbm{1}_{\{|\xi'|\leq 7t\}}|\A u(x'-\xi',\tau)|\dif^{n-1}\xi'\dif\tau\right\vert^{p}\dif^{n-1}x'\\
& \leq \frac{ct^{(p-1)(n-1)}}{t^{p(n-1)}}\int_{\partial\mathbb{H}} \Big( \int_{0}^{2t}\frac{\dif\tau}{\tau^{\frac{\delta}{p-1}}}\Big)^{p-1}\times \\ & \;\;\;\;\;\;\;\times \Big(\int_{0}^{2t}\tau^{\delta}\int_{\partial\mathbb{H}}\mathbbm{1}_{\{|\xi'|\leq 7t\}}|\A u(x'-\xi',\tau)|^{p}\dif^{n-1}\xi'\dif\tau\Big) \dif^{n-1}x' \\ 
& \leq \frac{ct^{(p-1)(n-1)}}{t^{p(n-1)}}t^{p+n-2-\delta}\Big(\int_{0}^{2t}\tau^{\delta}\int_{\partial\mathbb{H}}|\A u(\xi',\tau)|^{p}\dif^{n-1}\xi'\dif\tau\Big)  \\
& \leq ct^{p-1-\delta}\int_{0}^{2t}\tau^{\delta}\|\A u(\cdot,\tau)\|_{\lebe^{p}(\partial\mathbb{H})}^{p}\dif\tau.
\end{split}
\end{align}
Working from \eqref{eq:osc2}, we then obtain  
\begin{align*}
\|U^{\alpha}\|_{{\dot\besov}{_{p,p}^{1-1/p}}(\partial\mathbb{H})}^{p} & \leq c\int_{0}^{\infty}t^{p-1-\delta}\int_{0}^{2t}\tau^{\delta}\|\A u(\cdot,\tau)\|_{\lebe^{p}(\partial\mathbb{H})}^{p}\dif\tau\frac{\dif t}{t^{p}} \\ 
& = c\int_{0}^{\infty}\Big(\int_{\frac{\tau}{2}}^{\infty}\frac{\dif t}{t^{1+\delta}}\Big)\tau^{\delta}\|\A u(\cdot,\tau)\|_{\lebe^{p}(\partial\mathbb{H})}^{p} \dif\tau \\ 
& \leq c\|\A u\|_{\lebe^{p}(\mathbb{H})}^{p}. 
\end{align*}
The proof is complete. 
\end{proof}
Let us note that for $p=1$ and $|\alpha|=0$, the above proof works \emph{and is bound to work} for $k\geq 2$ only. Indeed, for $k=1$, $\lebe^{1}(\partial\bbH;V)$-trace estimates are optimal \cite{Gal57,BreDieGme20}. However,
\begin{align}\label{eq:triebelmention}
{\dot\besov}{_{1,1}^{0}}(\partial\bbH)\hookrightarrow {\dot{\mathrm{F}}}{_{1,2}^{0}}(\partial\bbH)=\mathcal{H}^{1}(\partial\bbH)\subsetneq\lebe^{1}(\partial\bbH),
\end{align} 
with the homogeneous Hardy space $\mathcal{H}^{1}$, cf.~\textsc{Triebel} \cite[Chpt.~5.2.4]{Trie1}, so that maps with a finite ${\dot\besov}{_{1,1}^{0}}(\partial\bbH)$-norm need to have integral zero (cf.~\cite[Prop.~5.8]{MirRus15}). Also note that the oscillation characterisation in the zero order case does not hold, cf.~\cite[Chpt.~3.5.1]{Trie2}.
\subsection{Proof of Theorem~\ref{thm:main} and its counterpart for $1<p<\infty$}\label{sec:BVAsection}
In this section we prove Theorem~\ref{thm:main} and state and prove its counterpart for $1<p<\infty$, Theorem~\ref{thm:mainLp} below, where stronger statements are available. 
\begin{proof}[Proof of Theorem~\ref{thm:main}]
Implication \ref{item:mainA}$\Rightarrow$\ref{item:mainB} has been established in Section~\ref{sec:proofhalfspace}. On the other hand, since ${\dot\besov}{_{1,1}^{k-1}}(\partial\mathbb{H};V)\hookrightarrow {\dot\sobo}{^{k-1,1}}(\partial\mathbb{H};V)$ for any open halfspace $\mathbb{H}$, \ref{item:mainB} yields the inequality $\|D^{k-1}u\|_{\lebe^{1}(\partial\mathbb{H})}\leq c\|\A u\|_{\lebe^{1}(\mathbb{H})}$ for all $u\in\hold_{c}^{\infty}(\overline{\mathbb{H}};V)$. By \textsc{Raita, Van Schaftingen} and the second author \cite[Thm.~5.2]{GRVS}, this already implies that $\A$ is $\mathbb{C}$-elliptic, and the proof is complete. 
\end{proof}
The rest of the section is devoted to a self-contained proof of the following theorem in the spirit of \cite{Aro54,Smith70,Kal94}, where our main focus now is on the equivalence of sharp trace estimates and Korn-type inequalities for smooth domains. Here, sharp trace estimates give access to global Calder\'{o}n-Zygmund estimates, which is fundamentally different from the non-smooth context of John domains as discussed in Section~\ref{sec:Korn}. 
\begin{theorem}[Higher order traces, $1<p<\infty$] \label{thm:mainLp}
Let $k\in\mathbb{N}$, $k\geq 1$ and let $1<p<\infty$. Then the following are equivalent for a $k$-th order differential operator $\A$ of the form \eqref{eq:form}: 
\begin{enumerate}
\item\label{item:mainALp} $\A$ is $\mathbb{C}$-elliptic.
\item\label{item:mainBLp}
There exists a constant $c=c(p,\A)>0$ such that for any open halfspace $\mathbb{H}$ and all $\alpha\in\mathbb{N}_{0}^{n}$ with $|\alpha|\leq k-1$ there holds 
\begin{align}\label{eq:tracemaininequality0A}
\|\partial^{\alpha}u\|_{{\dot\besov}{_{p,p}^{k-|\alpha|-1/p}}(\partial\bbH)}\leq c\|\A u\|_{\lebe^{p}(\bbH)}\qquad\text{for all}\;u\in\hold_{c}^{\infty}(\overline{\bbH};V).
\end{align}
\item\label{item:mainCLp} For every open and bounded domain $\Omega$ with boundary of class $\hold^{\infty}$, there exists a constant $c=c(p,\A,\Omega)>0$ such that for all $\alpha\in\mathbb{N}_{0}^{n}$ with $|\alpha|\leq k-1$ there holds 
\begin{align}\label{eq:tracemaininequality0A}
\|\partial^{\alpha}u\|_{{\besov}{_{p,p}^{k-|\alpha|-1/p}}(\partial\Omega)}\leq c(\|u\|_{\sobo^{k-1,p}(\Omega)}+\|\A u\|_{\lebe^{p}(\Omega)})\;\;\text{for all}\;u\in\hold^{\infty}(\overline{\Omega};V).
\end{align} 
\item\label{item:mainDLp} For every open and bounded domain $\Omega$ with boundary of class $\hold^{\infty}$ and all $1< p < \infty$, there exists a constant $c=c(p,\A,\Omega)>0$ such that
\begin{align}\label{eq:KornLp}
\sum_{j=0}^{k}\|\nabla^{j}u\|_{\lebe^{p}(\Omega)}\leq c(\|u\|_{\sobo^{k-1,p}(\Omega)}+\|\A u\|_{\lebe^{p}(\Omega)})\qquad\text{for all}\;u\in\hold^{\infty}(\overline{\Omega};V), 
\end{align}
\end{enumerate}
In particular, if any of \ref{item:mainALp}--\ref{item:mainDLp} holds, then we have (with $\sobo^{\A,p}$ and $\widetilde{\sobo}{^{\A,p}}$ as in Section~\ref{sec:repr-form}) for any open and bounded domain with smooth boundary $\partial\Omega$
\begin{align}\label{eq:Kornvariant}
\sobo^{k,p}(\Omega;V)\simeq \sobo^{\A,p}(\Omega)\simeq  {\widetilde{\sobo}}{^{\A,p}}(\Omega).
\end{align} 
\end{theorem}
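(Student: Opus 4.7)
The approach is to close the cycle $\ref{item:mainALp}\Rightarrow\ref{item:mainBLp}\Rightarrow\ref{item:mainCLp}\Rightarrow\ref{item:mainDLp}\Rightarrow\ref{item:mainALp}$ and then deduce the concluding norm equivalence \eqref{eq:Kornvariant} from \ref{item:mainDLp}. The first arrow $\ref{item:mainALp}\Rightarrow\ref{item:mainBLp}$ is exactly the content of the proof in Section~\ref{sec:proofhalfspace}, which was deliberately written to cover every $|\alpha|\leq k-1$ in the range $1<p<\infty$, the endpoint case $|\alpha|=k-1$ being the subject of the refined oscillation estimate \eqref{eq:osc2}; no further work is required there.

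For $\ref{item:mainBLp}\Rightarrow\ref{item:mainCLp}$ I would localize $u\in\hold^\infty(\overline{\Omega};V)$ via the boundary charts $(W_j,F^{(j)})$ and partition of unity $(\psi_j)_{j=0}^N$ of Section~\ref{sec:Besov}. The interior piece $\psi_0 u$ contributes nothing to the boundary norms, while each boundary piece $(\psi_j u)\circ(F^{(j)})^{-1}$ lives on a halfspace. Under the pullback the constant-coefficient $\A$ turns into a variable-coefficient operator $\widetilde\A_j$ whose top-order symbol equals $\A[(DF^{(j)})^{-\top}\xi]$. Freezing the Jacobian at a boundary point $x_j^\star\in\partial\Omega\cap W_j$ produces a constant-coefficient operator of the form \eqref{eq:form}, still $\mathbb{C}$-elliptic after a linear change of variables and hence eligible for \ref{item:mainBLp}. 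The commutator with the frozen operator contributes top-order terms with small coefficients on a sufficiently fine covering (absorbed by a Neumann-series argument) together with lower-order terms (absorbed into $\|u\|_{\sobo^{k-1,p}(\Omega)}$). Summing the patchwise contributions through the definition \eqref{eq:BesovBoundary} yields \ref{item:mainCLp}.

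The heart of the matter is $\ref{item:mainCLp}\Rightarrow\ref{item:mainDLp}$. I would first extract \emph{ellipticity} of $\A$ from \ref{item:mainCLp} by a high-frequency test: for $\xi\in\R^n\setminus\{0\}$ and $w\in\ker\A[\xi]$, pick $x_0\in\partial\Omega$ whose outer normal $\nu(x_0)$ is not parallel to $\xi$, and a cutoff $\eta\in\hold^\infty(\overline{\Omega})$ concentrated near $x_0$ with $\eta|_{\partial\Omega}$ not identically zero there. Setting $u_\lambda(x):=\eta(x)e^{i\lambda x\cdot\xi}w$, the vanishing $\A[\xi]w=0$ suppresses the $\lambda^k$ contribution to $\A u_\lambda$, leaving $\|\A u_\lambda\|_{\lebe^p(\Omega)}+\|u_\lambda\|_{\sobo^{k-1,p}(\Omega)}=O(\lambda^{k-1})$. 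The boundary trace $u_\lambda|_{\partial\Omega}$, pulled back in a chart at $x_0$, oscillates at the nonzero tangential frequency $\xi-(\xi\cdot\nu(x_0))\nu(x_0)$, whence its $\besov_{p,p}^{k-1/p}(\partial\Omega)$-norm grows as $\lambda^{k-1/p}$ as $\lambda\to\infty$. Since $k-1/p>k-1$, plugging into \ref{item:mainCLp} with $|\alpha|=0$ forces $w=0$, so $\A$ is elliptic and the global Calder\'on--Zygmund estimate $\|D^k\phi\|_{\lebe^p(\R^n)}\leq c\|\A\phi\|_{\lebe^p(\R^n)}$ applies on $\sobo^{k,p}(\R^n;V)$. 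For $u\in\hold^\infty(\overline{\Omega};V)$, set $\dot f:=\mathrm{tr}_{k-1,\partial\Omega}u$ and $v:=\mathcal{L}\dot f$ through Lemma~\ref{lem:MMS}; by \ref{item:mainCLp} and Lemma~\ref{lem:MMS} we have $\|v\|_{\sobo^{k,p}(\Omega)}\leq c(\|u\|_{\sobo^{k-1,p}(\Omega)}+\|\A u\|_{\lebe^p(\Omega)})$. The difference $w:=u-v\in\sobo^{k,p}_0(\Omega;V)$ extends by zero to $\overline{w}\in\sobo^{k,p}(\R^n;V)$, and the global Calder\'on--Zygmund bound on $\overline{w}$ produces $\|D^k w\|_{\lebe^p(\Omega)}\leq c(\|u\|_{\sobo^{k-1,p}(\Omega)}+\|\A u\|_{\lebe^p(\Omega)})$; combining with the $v$-bound yields \ref{item:mainDLp}.

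Finally, $\ref{item:mainDLp}\Rightarrow\ref{item:mainALp}$ is short: restricted to $\ker(\A)\cap\sobo^{k,p}(\Omega;V)$, the Korn inequality \ref{item:mainDLp} collapses to $\|\cdot\|_{\sobo^{k,p}(\Omega)}\leq c\|\cdot\|_{\sobo^{k-1,p}(\Omega)}$, and Rellich--Kondrachov compactness of $\sobo^{k,p}\hookrightarrow\sobo^{k-1,p}$ forces $\dim\ker(\A)<\infty$ on every bounded smooth connected $\Omega$; by \cite[Prop.~3.1]{GR19} this is equivalent to $\mathbb{C}$-ellipticity. The equivalence \eqref{eq:Kornvariant} then follows from \ref{item:mainDLp} combined with a Gagliardo--Nirenberg-type bound $\|u\|_{\sobo^{k-1,p}(\Omega)}\leq c(\|u\|_{\lebe^p(\Omega)}+\|\A u\|_{\lebe^p(\Omega)})$ obtained by iterating \ref{item:mainDLp}. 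The main obstacle is the step $\ref{item:mainCLp}\Rightarrow\ref{item:mainDLp}$: extracting real ellipticity from a fractional Besov trace estimate through a single scaling argument demands careful justification of the Besov-norm scaling on a curved boundary, after which the right-inverse $\mathcal{L}$ of Lemma~\ref{lem:MMS} has to be coupled cleanly with zero-extension in order to invoke the global Calder\'on--Zygmund inequality.
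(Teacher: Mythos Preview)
Your cycle $(a)\Rightarrow(b)\Rightarrow(c)\Rightarrow(d)\Rightarrow(a)$ matches the paper's, and most steps are correct. One circularity must be flagged: in $(b)\Rightarrow(c)$ you justify applying the halfspace estimate to the frozen operator by saying it is ``still $\mathbb{C}$-elliptic after a linear change of variables and hence eligible for~\ref{item:mainBLp}''. But at this stage you do not know $\A$ is $\mathbb{C}$-elliptic; you only know~\ref{item:mainBLp}. The clean fix is to observe that~\ref{item:mainBLp} is itself invariant under invertible linear changes of variables---it quantifies over \emph{all} halfspaces, and linear maps send halfspaces to halfspaces---so the frozen operator inherits the trace estimate directly from~\ref{item:mainBLp} for $\A$, without passing through~\ref{item:mainALp}. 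With this correction your freezing-plus-Neumann-series route is valid; the paper's own passage here is considerably more compressed (it only invokes ``the Leibniz rule and the properties of $\psi_j$ and $F^{(j)}$'').

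For $(c)\Rightarrow(d)$ the two proofs agree in spirit but differ in execution. To extract ellipticity the paper \emph{chooses} a domain with a flat boundary piece $\Gamma$ (legitimate since~\ref{item:mainCLp} quantifies over all smooth domains), tests with $u_h(x)=h(x\cdot\xi)v$ for a sequence $(h_j)$ bounded in $\sobo^{k-1,p}$ but with $h_j^{(k-1)}$ unbounded in $\lebe^q$ for $q>p$, and reaches a contradiction via the embedding $\besov_{p,p}^{1-1/p}(\partial\Omega)\hookrightarrow\lebe^q(\partial\Omega)$. This neatly sidesteps the Besov-norm scaling on curved boundaries that you yourself identify as the delicate point of your plane-wave test. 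For the extension step, you subtract $v=\mathcal{L}(\mathrm{tr}_{k-1,\partial\Omega}u)$ and zero-extend $u-v\in\sobo_0^{k,p}$; the paper instead builds an outward extension $\mathscr{E}u$ by gluing $u$ on $\Omega$ with a cut-off of $\mathcal{L}$ applied to the trace array on the complement, verifies via the Gau\ss--Green identity of Lemma~\ref{lem:GaussGreen} that $\A\mathscr{E}u$ carries no boundary layer, and then applies global Calder\'on--Zygmund to $\mathscr{E}u$. Both decompositions land on the same global estimate.

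For $(d)\Rightarrow(a)$ the paper packages your Rellich--Kondrachov argument into the Peetre--Tartar lemma (Lemma~\ref{lem:PTequiv}); the content is identical. For~\eqref{eq:Kornvariant} the paper invokes the Poincar\'e inequality of Theorem~\ref{thm:poincare-john} (now available since~\ref{item:mainALp} holds) rather than your Gagliardo--Nirenberg iteration; either works.
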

It will be clear from the proof that some implications also persist for less regular domains (so, e.g., \ref{item:mainALp}$\Rightarrow$\ref{item:mainCLp} would also follow for domains satisfying a cone condition by a slight variation of the arguments of Section~\ref{sec:proofhalfspace}), but we stick to smooth domains for ease of exposition.  For the proof of Theorem~\ref{thm:mainLp}, we require two preparatory lemmas:
\begin{lemma}[Peetre-Tartar lemma, {\cite[Lem.~11.1]{Tar07}}]\label{lem:PTequiv}
Let $(E_{i},\|\cdot\|_{E_{i}})$, $i\in\{1,2,3\}$, be three Banach spaces, $A\colon E_{1}\to E_{2}$ a linear and bounded operator and $B\colon E_{1}\to E_{3}$ be a linear and compact operator. If $x\mapsto \|Ax\|_{E_{2}}+\|Bx\|_{E_{3}}$ is a norm  on $E_{1}$ that is equivalent to
$\|\cdot\|_{E_{1}}$, then $\dim(\ker (A))<\infty$. 
\end{lemma}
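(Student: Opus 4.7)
The plan is to establish $\dim(\ker A) < \infty$ by contradiction, exploiting the interplay between the compactness of $B$ and the norm equivalence restricted to $\ker A$. The key observation is that on $\ker A$ the hypothesis degenerates: since $\|Ax\|_{E_{2}} = 0$, the equivalence reduces to the bound $\|x\|_{E_{1}} \leq C\,\|Bx\|_{E_{3}}$ for all $x \in \ker A$. In other words, $B$ restricted to $\ker A$ is bounded below, so it is a topological embedding of $\ker A$ into $E_{3}$. If $\ker A$ were infinite-dimensional, this would contradict the compactness of $B$.

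More concretely, I would first note that $\ker A$ is a closed subspace of $E_{1}$ (as $A$ is bounded) and therefore a Banach space in its own right. Assuming for contradiction that $\dim(\ker A) = \infty$, I would invoke Riesz's lemma iteratively to produce a sequence $(x_{n}) \subset \ker A$ with $\|x_{n}\|_{E_{1}} = 1$ and $\|x_{n} - x_{m}\|_{E_{1}} \geq \tfrac{1}{2}$ for all $n \neq m$. This is the standard way to detect non-compactness of the unit ball in an infinite-dimensional Banach space.

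Next, since $(x_{n})$ is bounded in $E_{1}$ and $B\colon E_{1} \to E_{3}$ is compact, there exists a subsequence $(x_{n_{k}})$ such that $(Bx_{n_{k}})$ converges in $E_{3}$, hence in particular is Cauchy. I would then apply the norm equivalence to the differences $x_{n_{k}} - x_{n_{\ell}} \in \ker A$: using $A(x_{n_{k}} - x_{n_{\ell}}) = 0$, the equivalence gives
\begin{align*}
\tfrac{1}{2} \leq \|x_{n_{k}} - x_{n_{\ell}}\|_{E_{1}} \leq C\bigl(\|A(x_{n_{k}} - x_{n_{\ell}})\|_{E_{2}} + \|B(x_{n_{k}} - x_{n_{\ell}})\|_{E_{3}}\bigr) = C\,\|Bx_{n_{k}} - Bx_{n_{\ell}}\|_{E_{3}}.
\end{align*}
Letting $k,\ell \to \infty$ the right-hand side tends to zero, yielding the contradiction $\tfrac{1}{2} \leq 0$. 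Hence $\dim(\ker A) < \infty$.

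The main obstacle is essentially bookkeeping: verifying that Riesz's lemma can be applied (closedness of $\ker A$) and that the norm equivalence on $E_{1}$ transfers cleanly to $\ker A$ with the restriction that $Ax = 0$ trivialises the first seminorm. Neither is actually difficult, so this is a short and standard functional-analytic argument rather than a deep one.
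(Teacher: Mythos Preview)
Your proof is correct and is the standard argument. Note that the paper does not actually supply its own proof of this lemma; it simply cites \cite[Lem.~11.1]{Tar07}, so there is nothing to compare against beyond the classical proof, which is precisely what you have written.
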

\begin{lemma}[Traces and Gau\ss -Green]\label{lem:GaussGreen}
Suppose that assertion~\ref{item:mainCLp} of Theorem~\ref{thm:mainLp} holds. Then for every $1< p < \infty$ and every open and bounded domain $\Omega\subset\R^{n}$ with boundary of class $\hold^{\infty}$ there exists a linear and bounded \emph{boundary trace operator}
\begin{align}\label{eq:sharpBesovsmooth}
\mathrm{tr}_{k-1,\partial\Omega}^{\A}\colon{\widetilde{\sobo}}{^{\A,p}}(\Omega)\ni u \mapsto \big(\mathrm{tr}_{\alpha,\partial\Omega}^{\A}(u)\big)_{|\alpha|\leq k-1}\in{\dot\besov}{_{p}^{k-1/p}}(\partial\Omega;V)
\end{align}
such that, in particular, $\mathrm{tr}_{k-1,\partial\Omega}^{\A}(u)=((\partial^{\alpha}u)|_{\partial\Omega})_{|\alpha|\leq k-1}$ holds for all $u\in\hold^{\infty}(\overline{\Omega};V)$ and so $\mathrm{tr}_{k-1,\partial\Omega}^{\A}=\mathrm{tr}_{k-1,\partial\Omega}$ on $\sobo^{k,p}(\Omega;V)$ with $\mathrm{tr}_{k-1,\partial\Omega}$ as in Lemma~\ref{lem:MMS}.

For $\ell\in\{0,...,k-1\}$, put $\mathrm{tr}_{\partial\Omega}^{\A,\ell}(u):=(\mathrm{tr}_{\alpha,\partial\Omega}^{\A}(u))_{|\alpha|=\ell}$. For each $l\in\{1,...,n\}$ and each $s\in\{1,...,k\}$ there exist continuous bilinear forms $\mathscr{B}_{l,s}^{\A}\colon \odot^{k-s}(\R^{n};V)\times\odot^{s-1}(\R^{n};W)\to\R$ such that for all $u\in{\widetilde{\sobo}}{^{\A,p}}(\Omega)$ and all $\varphi\in\hold^{\infty}(\overline{\Omega};W)$ we have the \emph{Gau\ss -Green identity} (with $\nu=(\nu_{1},...,\nu_{n})$ denoting the outer unit normal to $\partial\Omega$)
\begin{align}\label{eq:GG}
\begin{split}
\int_{\Omega}\A u\cdot\varphi\dif x & = (-1)^{k}\int_{\Omega} u\cdot \A^{*}\varphi\dif x  \\ &  \!\!\!\!+ \sum_{l=1}^{n}\sum_{s=1}^{k}\int_{\partial\Omega}\mathscr{B}_{l,s}^{\A}\Big(\mathrm{tr}_{\partial\Omega}^{\A,k-s}(u),\nabla^{s-1}\varphi\Big)\nu_{l}\dif\mathscr{H}^{n-1},
\end{split}
\end{align}
where $\A^{*}=\sum_{|\alpha|=k}\A_{\alpha}^{\top}\partial^{\alpha}$ is the formal $\lebe^{2}$-adjoint of $\A$.
\end{lemma}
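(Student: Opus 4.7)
The plan is to construct the trace operator by density from the a priori estimate in~\ref{item:mainCLp}, and then derive the Gau\ss--Green identity by integration by parts on smooth maps followed by approximation. First, recall from Section~\ref{sec:repr-form} that $\hold^{\infty}(\overline{\Omega};V)\cap\widetilde{\sobo}^{\A,p}(\Omega)$ is dense in $\widetilde{\sobo}^{\A,p}(\Omega)$ when $\partial\Omega$ is smooth. For smooth $u$, the classical trace array $\mathrm{tr}_{k-1,\partial\Omega}u=(\partial^\alpha u|_{\partial\Omega})_{|\alpha|\leq k-1}$ lies in $\dot\besov^{k-1/p}_{p}(\partial\Omega;V)$ by Lemma~\ref{lem:MMS}, and \ref{item:mainCLp} furnishes $\sum_{|\alpha|\leq k-1}\|\partial^{\alpha}u\|_{\besov^{k-|\alpha|-1/p}_{p,p}(\partial\Omega)}\leq c(\|u\|_{\sobo^{k-1,p}(\Omega)}+\|\A u\|_{\lebe^{p}(\Omega)})$. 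Extending by density produces a bounded linear map $\widetilde{\sobo}^{\A,p}(\Omega)\to\prod_{|\alpha|\leq k-1}\besov^{k-|\alpha|-1/p}_{p,p}(\partial\Omega;V)$; since the Whitney-type compatibility conditions~$\eqref{eq:BWarray}_2$ are linear and closed under limits in this product space, the extended operator takes values in $\dot\besov^{k-1/p}_{p}(\partial\Omega;V)$ as claimed.

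Next I would establish \eqref{eq:GG} first for $u,\varphi\in\hold^{\infty}(\overline{\Omega})$. For each $|\alpha|=k$, perform $k$ successive integrations by parts in $\int_\Omega \A_\alpha\partial^\alpha u\cdot\varphi\dx$; at the $s$-th step one transfers a derivative from $u$ to $\varphi$ and collects a boundary integral carrying a normal factor $\nu_{l}$. Summing over $|\alpha|=k$ and regrouping the boundary pieces by the number $k-s$ of derivatives remaining on $u$ (equivalently, by $s-1$ derivatives moved to $\varphi$ together with the outward factor $\nu_l$), the total boundary contribution takes the form $\sum_{l=1}^{n}\sum_{s=1}^{k}\int_{\partial\Omega}\mathscr{B}^{\A}_{l,s}(\nabla^{k-s}u,\nabla^{s-1}\varphi)\,\nu_l\dif\mathscr{H}^{n-1}$, where each $\mathscr{B}^{\A}_{l,s}\colon\odot^{k-s}(\R^n;V)\times\odot^{s-1}(\R^n;W)\to\R$ is a continuous bilinear form depending only on the coefficients $(\A_\alpha)$. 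The key point is that the boundary integrand involves $u$ only through its trace array $\mathrm{tr}^{\A,k-s}_{\partial\Omega}(u)$: after $s$ integrations the $u$-side carries $k-s$ derivatives, whose boundary values form exactly an element of $\odot^{k-s}(\R^n;V)$ determined by the traces of order $\leq k-s$.

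Finally, I pass to $u\in\widetilde{\sobo}^{\A,p}(\Omega)$ by approximation. Pick smooth $u_j\to u$ in $\widetilde{\sobo}^{\A,p}(\Omega)$; then $u_j\to u$ in $\sobo^{k-1,p}$ and $\A u_j\to \A u$ in $\lebe^{p}$, so the two bulk integrals in \eqref{eq:GG} converge to their target values. By the first paragraph, $\mathrm{tr}^{\A,k-s}_{\partial\Omega}(u_j)\to \mathrm{tr}^{\A,k-s}_{\partial\Omega}(u)$ in $\besov^{s-1/p}_{p,p}(\partial\Omega)$, hence in $\lebe^{p}(\partial\Omega)$; combined with $\nabla^{s-1}\varphi\in\lebe^{p'}(\partial\Omega)$ and bilinear continuity of $\mathscr{B}^{\A}_{l,s}$, each boundary integral converges as well. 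This proves \eqref{eq:GG} for $u\in\widetilde{\sobo}^{\A,p}(\Omega)$ and completes the plan.

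The main obstacle will be the second step: the bookkeeping of the iterated integration by parts must be carried out so that every boundary contribution depends on $u$ \emph{solely} through its trace array of order $\leq k-1$, with no residual independent normal derivatives. This requires a careful induction on $s$ together with the compatibility conditions~$\eqref{eq:BWarray}_2$, which precisely express higher normal derivatives in terms of the full trace array of appropriate order, thereby guaranteeing the bilinear structure on $\odot^{k-s}(\R^n;V)\times\odot^{s-1}(\R^n;W)$ demanded by the statement.
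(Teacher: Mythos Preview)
Your approach is essentially the same as the paper's: construct the trace operator by density using assertion~\ref{item:mainCLp}, verify the Whitney-array compatibility~$\eqref{eq:BWarray}_{2}$ is preserved in the limit, derive the Gau\ss--Green identity for smooth $u$ by iterated integration by parts, and then pass to the limit. The paper carries out the closure of~$\eqref{eq:BWarray}_{2}$ slightly more explicitly, using that $(\nu_{i}\partial_{l}-\nu_{l}\partial_{i})$ is a tangential operator and that convergence in $\besov_{p,p}^{k-|\alpha|-1/p}$ with $|\alpha|\leq k-2$ controls one tangential derivative; your claim that the compatibility conditions are ``closed under limits'' is correct but relies on precisely this observation.

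Your expressed obstacle at the end is, however, not a real one. When you integrate $\int_{\Omega}\A_{\alpha}\partial^{\alpha}u\cdot\varphi\dif x$ by parts $k$ times, the boundary term produced at the $s$-th step carries exactly $k-s$ derivatives on $u$ and $s-1$ on $\varphi$, together with one normal factor~$\nu_{l}$; thus every boundary contribution involves $\partial^{\gamma}u|_{\partial\Omega}$ with $|\gamma|\leq k-1$, which is already a component of the trace array. No derivatives of order $\geq k$ of $u$ ever appear on the boundary, so there are no ``residual independent normal derivatives'' to eliminate, and the compatibility conditions~$\eqref{eq:BWarray}_{2}$ play no role whatsoever in the integration-by-parts bookkeeping. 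The bilinear forms $\mathscr{B}_{l,s}^{\A}$ are simply read off from the coefficients~$\A_{\alpha}$. The paper accordingly dispatches this step in a single sentence.
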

\begin{proof} 
Since $\Omega$ has $\hold^{\infty}$-boundary, an argument similar to that in, e.g., \cite[Chpt.~5.3.3, Thm.~5.3]{Eva98} yields that for every $u\in{\widetilde{\sobo}}{^{\A,p}}(\Omega)$ there exists $(u_{j})\subset\hold^{\infty}(\overline{\Omega};V)$ such that $\|u-u_{j}\|_{\widetilde{\sobo}^{\A,p}(\Omega)}\to 0$ as $j\to\infty$. Let $\alpha\in\mathbb{N}_{0}^{n}$ satisfy $|\alpha|\leq k-1$. By \eqref{eq:tracemaininequality0A}, $((\partial^{\alpha}u_{j})|_{\partial\Omega})$ is Cauchy in ${\besov}{_{p,p}^{k-|\alpha|-1/p}}(\partial\Omega;V)$ and therefore converges to some $u^{\alpha}=:\mathrm{tr}_{\alpha,\partial\Omega}^{\A}(u)\in {\besov}{_{p,p}^{k-|\alpha|-1/p}}(\partial\Omega;V)$, which is clearly independent of the approximating sequence and hereafter well-defined. To conclude \eqref{eq:sharpBesovsmooth}, we note that $\{u^{\alpha}\colon\,|\alpha|\leq k-1\}$ matches the Besov-Whitney array condition $\eqref{eq:BWarray}_{2}$ $\mathscr{H}^{n-1}$-a.e. on $\partial\Omega$; indeed, for any $|\alpha|\leq k-2$ and all $i,l\in\{1,...,n\}$ we have 
\begin{align*}
\|\nu_{i}\partial_{l}u^{\alpha} &- \nu_{l}\partial_{i}u^{\alpha} -(\nu_{i}u^{\alpha + e_{l}}-\nu_{l}u^{\alpha+e_{i}})\|_{\lebe^{p}(\partial\Omega)} \\ & \leq \|(\nu_{i}\partial_{l}u^{\alpha}-\nu_{l}\partial_{i}u^{\alpha})-(\nu_{i}\partial_{l}\partial^{\alpha}u_{j}-\nu_{l}\partial_{i}\partial^{\alpha}u_{j})\|_{\lebe^{p}(\partial\Omega)} \\ 
& + \|\nu_{i}u^{\alpha + e_{l}} - \nu_{i} \partial^{\alpha+e_{l}}u_{j}\|_{\lebe^{p}(\partial\Omega)} + \|\nu_{l}u^{\alpha+e_{i}} - \nu_{l}\partial^{\alpha+e_{i}}u_{j} \|_{\lebe^{p}(\partial\Omega)}\\ 
 & +\|\underbrace{\nu_{i}\partial_{l}\partial^{\alpha}u_{j}-\nu_{l}\partial_{i}\partial^{\alpha}u_{j}-(\nu_{i}\partial^{\alpha+e_{l}}u_{j}-\nu_{l}\partial^{\alpha+e_{i}}u_{j})}_{=0}\|_{\lebe^{p}(\partial\Omega)} =: \mathrm{I}_{j}+...+\mathrm{IV}_{j}. 
\end{align*}
Since $(\nu_{i}\partial_{l}-\nu_{l}\partial_{i})$ is a tangential differential operator on the smooth boundary $\partial\Omega$, we have with the full tangential gradient $\nabla_{\tau}$ along $\partial\Omega$ 
\begin{align*}
\mathrm{I}_{j} \leq \|\nabla_{\tau}(u^{\alpha}-\partial^{\alpha}u_{j})\|_{\lebe^{p}(\partial\Omega)} \leq \|u^{\alpha}-\partial^{\alpha}u\|_{\besov_{p,p}^{k-|\alpha|-1/p}(\partial\Omega)}\to 0,\qquad j\to\infty,
\end{align*}
as $k-|\alpha|-1/p\geq 1$ by our assumption on $\alpha$ and $p$. The terms $\mathrm{II}_{j},\mathrm{III}_{j}$ vanish in the limit by construction, and $\mathrm{IV}_{j}=0$ for all $j\in\mathbb{N}$; thus, \eqref{eq:sharpBesovsmooth} follows. Equation~\eqref{eq:GG} follows for maps $u$ and $\varphi$ which are smooth up to $\partial\Omega$ by repeated use of the usual Gau\ss -Green theorem, and then inherits to $u\in{\widetilde{\sobo}}{^{\A,p}}(\Omega)$ by means of smooth approximation. 
\end{proof}
\begin{proof}[Proof of Theorem~\ref{thm:mainLp}] Let $1<p<\infty$. 
Implication~\ref{item:mainALp}$\Rightarrow$\ref{item:mainBLp} has already been proved in Section~\ref{sec:proofhalfspace}. For implication~\ref{item:mainBLp}$\Rightarrow$\ref{item:mainCLp}, we assume that $\Omega$ is connected, open and bounded with $\hold^{\infty}$-boundary; if it is not connected, we apply the following to each of the finitely many connected components of $\Omega$. Let $u\in\hold^{\infty}(\overline{\Omega};V)$. Adopting the notation from \eqref{eq:BesovBoundary}, for $j\in\{1,...,N\}$, \ref{item:mainBLp} and \eqref{eq:BergLoef} yield for any $\alpha\in\mathbb{N}_{0}^{n}$ with $|\alpha|\leq k-1$ and all $\beta\in\mathbb{N}_{0}^{n}$ for the compactly supported, smooth map $\mathrm{I}_{j,\beta}:=((\partial^{\beta}\psi_{j})u)({F^{(j)}}^{-1}(\cdot,0))$
\begin{align*}
\|\partial^{\alpha}\mathrm{I}_{j,\beta}&\|_{{\besov}{_{p,p}^{k-|\alpha|-1/p}}(\R^{n-1})}  \stackrel{\eqref{eq:BergLoef}}{\leq}  c\|\partial^{\alpha}\mathrm{I}_{j,\beta}\|_{\lebe^{p}(\R^{n-1})} + c\|\partial^{\alpha}\mathrm{I}_{j,\beta}\|_{{\dot\besov}{_{p,p}^{k-|\alpha|-1/p}}(\R^{n-1})}\\ & \;\leq c\|\partial^{\alpha}\mathrm{I}_{j,\beta}\|_{{\dot\besov}{_{p,p}^{k-|\alpha|-1/p}}(\R^{n-1})}  \stackrel{\ref{item:mainBLp}}{\leq} c\|\A(((\partial^{\beta}\psi_{j})u)({F^{(j)}}^{-1}))\|_{\lebe^{p}(\mathbb{H})},
\end{align*}
where $\mathbb{H}=\R^{n-1}\times(0,\infty)$ and $c$ also depends on $\mathrm{diam}(\spt(\psi_{j}))$. Thus, by the Leibniz rule and the properties of $\psi_{j}$ and $F^{(j)}$, 
\begin{align}\label{eq:besovcenter1}
\|\partial^{\alpha}\mathrm{I}_{j,\beta}\|_{{\besov}{_{p,p}^{k-|\alpha|-1/p}}(\R^{n-1})} \leq c(\|u\|_{\sobo^{k-1,p}(\Omega)}+\|\A u\|_{\lebe^{p}(\Omega)})
\end{align}
where $c=c(\alpha,\beta,\psi_{j},F^{(j)},p,\A,\Omega)>0$. Based on \eqref{eq:besovcenter1} and again by the Leibniz rule, we conclude inductively for suitable choices of $\beta$ and by the properties of $\psi_{j},F^{(j)}$,
\begin{align*}
\|(\psi_{j}\partial^{\alpha}u)({F^{(j)}}^{-1}(\cdot,0))\|_{{\besov}{_{p,p}^{k-|\alpha|-1/p}}(\R^{n-1})} \leq c(\|u\|_{\sobo^{k-1,p}(\Omega)}+\|\A u\|_{\lebe^{p}(\Omega)})
\end{align*}
and \ref{item:mainCLp} follows.  Ad~\ref{item:mainCLp}$\Rightarrow$\ref{item:mainDLp}. Let $\ball\subset\R^{n}$ be an open ball with $\Omega\Subset\ball$. Because of \ref{item:mainCLp}, we may consider the bounded, linear operator $\mathrm{tr}_{k-1,\partial\Omega}^{\A}\colon{\widetilde{\sobo}}{^{\A,p}}(\Omega)\to{\dot\besov}{_{p}^{k-1/p}}(\partial\Omega;V)$ from Lemma~\ref{lem:GaussGreen}. We subsequently pick the bounded and linear operator $\mathcal{L}\colon {\dot\besov}{_{p}^{k-1/p}}(\partial(\ball\setminus\overline{\Omega});V)\to\sobo^{k,p}(\ball\setminus\overline{\Omega};V)$ from Lemma~\ref{lem:MMS} and fix some $\rho\in\hold_{c}^{\infty}(\R^{n};[0,1])$ with $\mathbbm{1}_{\Omega}\leq\rho\leq\mathbbm{1}_{\ball}$. Then we define for $u\in{\widetilde{\sobo}}{^{\A,p}}(\Omega)$ 
\begin{align}\label{eq:defextension}
\mathscr{E}u:=\begin{cases} u&\;\text{in}\;\Omega,\\
\rho\mathcal{L}(\mathbbm{1}_{\partial\Omega}\mathrm{tr}_{k-1,\partial\Omega}^{\A}(u))&\;\text{in}\;\R^{n}\setminus\overline{\Omega}.
\end{cases}
\end{align}
By construction of $\mathscr{E}$, the Gau\ss -Green formula \eqref{eq:GG} then implies 
\begin{align}\label{eq:absolutecontinuity}
\int_{\R^{n}}\A\mathscr{E}u\cdot\varphi\dif x = (-1)^{k} \int_{\R^{n}}\mathscr{E}u\cdot\A^{*}\varphi\dif x\qquad\text{for all}\;\varphi\in\hold_{c}^{\infty}(\R^{n};W)
\end{align}
and so we obtain $\mathscr{E}u\in{\widetilde{\sobo}}{^{\A,p}}(\R^{n})$ together with $\|\mathscr{E}u\|_{{\widetilde{\sobo}}{^{\A,p}}(\R^{n})}\leq c\|u\|_{{\widetilde{\sobo}}{^{\A,p}}(\Omega)}$ for all $u\in{\widetilde{\sobo}}{^{\A,p}}(\Omega)$ with $c>0$ solely depending on $\A$, $p$ and $\Omega$. 

Now note that, if \ref{item:mainCLp} holds, then the differential operator $\A$ is necessarily elliptic in the sense of \eqref{eq:ellipticIntro}. This can be seen by a modification of a standard construction (see, e.g., \cite{Van13} or \cite{ConGme20}): If it were not elliptic, we would find $\xi\in\R^{n}\setminus\{0\}$ and $v\in V\setminus\{0\}$ such that $\A[\xi]v=0$. For any $h\in\hold^{\infty}(\R)$ and any open and bounded set $\Omega\subset\R^{n}$, the function $u_{h}(x):=h(x\cdot\xi)v$ belongs to $\sobo^{k-1,p}(\Omega;V)$ and satisfies both $\partial^{\alpha}u_{h}(x)=h^{(|\alpha|)}(x\cdot\xi)\xi^{\alpha}v$ and $\A u_{h}=0$. We extend $\{\xi\}$ to an orthonormal basis $\{\xi,\eta_{2},...,\eta_{n}\}$ and choose an open and bounded set $\Omega\subset\R^{n}$ with smooth boundary such that $\Gamma:=(0,1)\xi\times(0,1)\eta_{2}\times...\times(0,1)\eta_{n-1}\times\{0\}\subset\partial\Omega$ and $\Omega\subset (-2,2)\xi\times (-2,2)\eta_{2}\times...\times(-2,2)\eta_{n}$. Choose a sequence $(h_{j})\subset \hold^{\infty}((-2,2))$ such that $\sup_{j}\|h_{j}\|_{\sobo^{k-1,p}((-2,2))}<\infty$ but $\limsup_{j\to\infty}\|h_{j}^{(k-1)}\|_{\lebe^{q}((0,1))}=\infty$ for $q>p$. Let $\alpha\in\mathbb{N}_{0}^{n}$ with $|\alpha|=k-1$ be such that $\xi^{\alpha}\neq 0$. By construction and Fubini's theorem, $\sup_{j}\|u_{h_{j}}\|_{{\widetilde{\sobo}}{^{\A,p}}(\Omega)}<\infty$ but, with a constant $c=c(n,\xi,\alpha,v)>0$, we have for $q>p$ 
\begin{align}\label{eq:tracecontra}
\int_{\partial\Omega}|\partial^{\alpha}u_{h_{j}}|^{q}\dif^{n-1}x \geq \int_{\Gamma}|\partial^{\alpha}u_{h_{j}}|^{q}\dif^{n-1}x = c\int_{0}^{1}|h_{j}^{(k-1)}(t)|^{q}\dif t \to \infty
\end{align}
at least for a suitable subsequence. Now, if $1<p<n$, choose $q=\frac{p(n-1)}{n-p}$ and let $q>p$ be arbitrary provided $p\geq n$. Since \ref{item:mainCLp} implies that $(\partial^{\alpha}u_{h_{j}}|_{\partial\Omega})$ is bounded in ${\besov}{_{p,p}^{1-1/p}}(\partial\Omega;V)$ and so, by embedding theorems for Besov spaces, in $\lebe^{q}(\partial\Omega;V)$, \eqref{eq:tracecontra} yields a contradiction. Therefore, $\A$ must be elliptic. 

Now, \eqref{eq:absolutecontinuity}ff. and an argument as in \cite[Lem.~6.1]{GRVS} or \cite[Prop.~4.1]{ConGme20} allow to conclude \ref{item:mainDLp}: Let $\alpha\in\mathbb{N}_{0}^{n}$ satisfy $|\alpha|=k$. Consider for $\psi\in\hold_{c}^{\infty}(\R^{n};W)$ the Fourier multiplication operators $\psi\mapsto \Phi_{\alpha}(\psi)$, where 
\begin{align}\label{eq:diffopFourier}
\Phi_{\alpha}(\psi)(x):=\mathscr{F}^{-1}[\xi^{\alpha}(\A^{*}[\xi]\A[\xi])^{-1}\A^{*}[\xi]\widehat{\psi}(\xi)](x),\qquad x\in\R^{n}.
\end{align}
Then we have $\Phi_{\alpha}(\A\varphi)=\partial^{\alpha}\varphi$ for $\varphi\in\hold_{c}^{\infty}(\R^{n};V)$. Since $\A$ is elliptic, $\A[\xi]\colon V\to W$ is injective and hence $\A^{*}[\xi]\A[\xi]\colon V\to V$ is bijective for any $\xi\in\mathbb{R}^{n}\setminus\{0\}$; in particular, it is bijective for any $\xi\in\R^{n}\setminus\{0\}$, and so $\eqref{eq:diffopFourier}$ is well-defined. The Fourier multiplier 
\begin{align}\label{eq:Fourmult}
\R^{n}\setminus\{0\}\ni\xi\mapsto \mathbf{m}_{\alpha}^{\A}(\xi):=\xi^{\alpha}(\A^{*}[\xi]\A[\xi])^{-1}\A^{*}[\xi]\in\mathscr{L}(W;V)
\end{align}
is of class $\hold^{\infty}(\R^{n}\setminus\{0\};\mathscr{L}(W;V))$, homogeneous of degree zero, and so \cite[Thm.~4.13]{Duo00} yields that $\Phi_{\alpha}$ extends to a bounded operator $\Phi_{\alpha}\colon \lebe^{p}(\R^{n};W)\to\lebe^{p}(\R^{n};V)$. Thus, 
\begin{align}\label{eq:Kornintheproof}
\begin{split}
\|u&\|_{\sobo^{k,p}(\Omega)} \leq \|\mathscr{E}u\|_{\sobo^{k-1,p}(\R^{n})}+\|\nabla^{k}\mathscr{E}u\|_{\lebe^{p}(\R^{n})} \\ & \leq \|\mathscr{E}u\|_{\sobo^{k-1,p}(\R^{n})} + c \|\A\mathscr{E}u\|_{\lebe^{p}(\R^{n})}\leq c\|\mathscr{E}u\|_{{\widetilde{\sobo}}{^{\A,p}}(\R^{n})} \stackrel{\eqref{eq:absolutecontinuity}ff.}{\leq} c\|u\|_{{\widetilde{\sobo}}{^{\A,p}}(\Omega)}
\end{split}
\end{align}
for all $u\in\sobo^{\A,p}(\Omega)$ with some constant 
$c=c(\A,p,\rho,\Omega)>0$. This is  \ref{item:mainDLp}. Now, if \ref{item:mainDLp} holds, we apply Lemma~\ref{lem:PTequiv} to $E_{1}=\sobo^{k,p}(\Omega;V)$,
$E_{2}=\lebe^{p}(\Omega;W)$, $E_{3}=\sobo^{k-1,p}(\Omega;V)$, $A=\A$ and $B$ being the compact embedding
$\sobo^{k,p}(\Omega;V)\hookrightarrow\sobo^{k-1,p}(\Omega;V)$. The equivalence of the $\sobo^{k,p}$- and ${\widetilde{\sobo}}{^{\A,p}}$-norms implied by \ref{item:mainDLp} now yields $\dim(\ker(A))<\infty$ by virtue of Lemma~\ref{lem:PTequiv}, and by \eqref{eq:deg-pol}ff., this implies that $\A$ is $\mathbb{C}$-elliptic. Hence \ref{item:mainA} follows, and so all of \ref{item:mainALp}--\ref{item:mainDLp} are equivalent. 

Lastly, by equivalence of \ref{item:mainALp} and \ref{item:mainDLp}, \eqref{eq:Kornvariant} follows if we can establish $\sobo^{\A,p}(\Omega)\hookrightarrow{\widetilde{\sobo}}{^{\A,p}}(\Omega)$ provided any of the mutually equivalent conditions \ref{item:mainALp}--\ref{item:mainDLp} holds. We thus assume that $\A$ is $\mathbb{C}$-elliptic and assume without loss of generality that $\Omega$ is an open, connected and bounded domain with $\hold^{\infty}$-boundary. Since every such domain is John, the Poincar\'{e}-type inequality \eqref{eq:poincare-star2} now yields $\sobo^{\A,p}(\Omega)\hookrightarrow{\widetilde{\sobo}}{^{\A,p}}(\Omega)$, and the proof is complete. 
\end{proof}
By Theorem~\ref{thm:mainLp} and Lemma~\ref{lem:GaussGreen}, we obtain the following theorem for $1<p<\infty$; for $p=1$ and $\bv^{\A}$, $\hold^{\infty}\cap\bv^{\A}$ in $\bv^{\A}$ is only dense in $\bv^{\A}$ for the $\A$-strict metric and not the norm topology, and so the underlying approximation must be approached differently. For the reader's convenience, the precise argument (hinging on a multiplicative inequality) is given in the Appendix, Section~\ref{sec:BVAstrict}.
\begin{theorem}[Trace operator]\label{rem:tracestrict}
Let $\Omega\subset\R^{n}$ be open and bounded with boundary $\partial\Omega$ of class $\hold^{\infty}$. Then, for any $k$-th order $\mathbb{C}$-elliptic differential operator $\A$ of the form \eqref{eq:form} and $1\leq p <\infty$, there exists a bounded and linear trace operator 
\begin{align*}
\mathrm{Tr}_{\partial\Omega}\colon\sobo^{\A,p}(\Omega)\to{\besov}{_{p,p}^{k-1/p}}(\partial\Omega;V)
\end{align*}
which is onto. Moreover, for $p=1$, there exists a linear trace operator
\begin{align*}
&\mathrm{Tr}_{\partial\Omega}\colon\bv^{\A}(\Omega)\to{\besov}{_{1,1}^{k-1}}(\partial\Omega;V)
\end{align*} 
which is onto and continuous for the $\A$-strict metric defined by \eqref{eq:Astrictmetric}. 
\end{theorem}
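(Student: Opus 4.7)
The argument splits into the Sobolev case (all $1\leq p<\infty$) and the $\bv^{\A}$-case at $p=1$. The common starting point in both is the inhomogeneous trace estimate
$$\|u\|_{{\besov}{_{p,p}^{k-1/p}}(\partial\Omega)}\leq c\bigl(\|u\|_{\lebe^{p}(\Omega)}+\|\A u\|_{\lebe^{p}(\Omega)}\bigr),\qquad u\in\hold^{\infty}(\overline{\Omega};V),$$
which we derive by the localization strategy from the proof of implication~\ref{item:mainBLp}$\Rightarrow$\ref{item:mainCLp} of Theorem~\ref{thm:mainLp}: given a boundary atlas $(W_{j},F^{(j)})$ with subordinate partition of unity $(\psi_{j})$, each piece $(\psi_{j}u)\circ (F^{(j)})^{-1}$ is a compactly supported smooth map on a halfspace, to which we apply Theorem~\ref{thm:main} when $p=1$ and the halfspace bound~\ref{item:mainBLp} of Theorem~\ref{thm:mainLp} when $1<p<\infty$; the Leibniz rule together with~\eqref{eq:BergLoef} transfers the resulting homogeneous Besov control on the flat boundary into inhomogeneous Besov control on $\partial\Omega$.

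For $\sobo^{\A,p}(\Omega)$ with $1<p<\infty$, the Korn equivalence~\eqref{eq:Kornvariant} delivered by Theorem~\ref{thm:mainLp} reduces matters to classical theory: $\sobo^{\A,p}(\Omega)\simeq\sobo^{k,p}(\Omega;V)$ in norm, smooth maps are norm-dense up to the boundary, and the trace extends by continuity. Surjectivity follows by prescribing the zeroth component of a Besov--Whitney array: given $g\in{\besov}{_{p,p}^{k-1/p}}(\partial\Omega;V)$, we build $(f^{\alpha})_{|\alpha|\leq k-1}$ with $f^{0}=g$ by forcing the normal components of $f^{\alpha}$ ($|\alpha|\geq 1$) to vanish and letting the tangential components be determined by the compatibility condition~$\eqref{eq:BWarray}_{2}$, which automatically yields data in the correct Besov class, and then apply the right inverse~$\mathcal{L}$ from Lemma~\ref{lem:MMS}. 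For $\sobo^{\A,1}(\Omega)$ the Korn equivalence fails; instead, norm-density of $\hold^{\infty}(\overline{\Omega};V)\cap\sobo^{\A,1}(\Omega)$ is achieved by a local reflection-and-mollification construction along the smooth $\partial\Omega$, and surjectivity follows from $\sobo^{k,1}(\Omega;V)\subset\sobo^{\A,1}(\Omega)$ together with the classical surjectivity of the $\sobo^{k,1}$-trace onto ${\besov}{_{1,1}^{k-1}}(\partial\Omega;V)$.

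The $\bv^{\A}$-case is the most delicate, since $\hold^{\infty}(\overline{\Omega};V)\cap\bv^{\A}(\Omega)$ is dense only for the $\A$-strict metric $d_{\A}$ and not for the norm. A Reshetnyak--Anzellotti-type construction (local extension across $\partial\Omega$, mollification, and matching of total variations via lower semicontinuity of $|\A\cdot|(\Omega)$) supplies such a strict approximating sequence. The chief obstacle is that the linear estimate $\|u_{j}-u_{l}\|_{{\besov}{_{1,1}^{k-1}}(\partial\Omega)}\leq c\,|\A(u_{j}-u_{l})|(\Omega)$ does not suffice: $d_{\A}$-convergence does not force the total variation of the difference to vanish. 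The remedy, carried out in the Appendix~\ref{sec:BVAstrict}, is a multiplicative inequality of the shape
$$\|u_{j}-u_{l}\|_{{\besov}{_{1,1}^{k-1}}(\partial\Omega)}\leq c\,\|u_{j}-u_{l}\|_{\lebe^{1}(\Omega)}^{\theta}\bigl(\|u_{j}\|_{\bv^{\A}(\Omega)}+\|u_{l}\|_{\bv^{\A}(\Omega)}\bigr)^{1-\theta}$$
for a suitable $\theta\in(0,1)$, obtained by interpolating the linear trace bound against a weaker boundary estimate controllable purely by $\lebe^{1}(\Omega)$-norms. This renders $(u_{j}|_{\partial\Omega})$ Cauchy in ${\besov}{_{1,1}^{k-1}}(\partial\Omega;V)$ under any $d_{\A}$-Cauchy choice of $(u_{j})$, and hence defines $\mathrm{Tr}_{\partial\Omega}(u)$ unambiguously and continuously for $d_{\A}$. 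Surjectivity once more reduces, via $\sobo^{k,1}(\Omega;V)\hookrightarrow\bv^{\A}(\Omega)$, to the classical $\sobo^{k,1}$-trace theorem.
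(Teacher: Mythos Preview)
Your treatment of the Sobolev cases $1\leq p<\infty$ is correct and essentially matches the paper. The gap is in the $\bv^{\A}$-case: the multiplicative trace inequality you propose,
\[
\|u_{j}-u_{l}\|_{{\besov}{_{1,1}^{k-1}}(\partial\Omega)}\leq c\,\|u_{j}-u_{l}\|_{\lebe^{1}(\Omega)}^{\theta}\bigl(\|u_{j}\|_{\bv^{\A}(\Omega)}+\|u_{l}\|_{\bv^{\A}(\Omega)}\bigr)^{1-\theta},
\]
is false for every $\theta\in(0,1)$. If it held, the trace would be continuous for the weak-* topology on $\bv^{\A}$ (i.e.\ $\lebe^{1}$-convergence plus bounded $\bv^{\A}$-norm), which fails already for $\A=D^{k}$. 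A counterexample localised at a flattened boundary piece for $k=2$: set $u_{j}(x',x_{n})=\phi(jx_{n})\,\psi(jx_{1}',x_{2}',\dots,x_{n-1}')$ with fixed bumps $\phi,\psi$ and $\phi(0)=1$. Then $\|u_{j}\|_{\lebe^{1}}\sim j^{-2}$ and $\|D^{2}u_{j}\|_{\lebe^{1}}$ stays bounded, yet $\|u_{j}|_{x_{n}=0}\|_{{\besov}{_{1,1}^{1}}}\geq c\,\|\partial_{1}(u_{j}|_{x_{n}=0})\|_{\lebe^{1}}=c\,\|\partial_{1}\psi\|_{\lebe^{1}}>0$ is constant. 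For $k=1$ the standard $\bv$-example $u_{j}=\phi(j\,\dista(\cdot,\partial\Omega))$ already suffices. There is no ``weaker boundary estimate controllable purely by $\lebe^{1}(\Omega)$-norms'' to interpolate against, since the trace only sees a Lebesgue-null set.

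The paper's route is different in kind. The multiplicative inequality actually used (Lemma~\ref{lem:intermediate}) bounds \emph{intermediate bulk derivatives}, $\|D^{\ell}v\|_{\lebe^{1}(\R^{n})}\leq c\|v\|_{\lebe^{1}}^{1-\ell/k}\|\A v\|_{\lebe^{1}}^{\ell/k}$ for $\ell<k$; via a Jones-type extension this upgrades $d_{\A}$-convergence to $\sobo^{k-1,1}(\Omega)$-convergence. One then applies the linear $\sobo^{\A,1}$-trace bound not to $u_{j}-u_{m}$ but to $\rho_{l}(u_{j}-u_{m})$, with $\rho_{l}$ a cutoff supported in a shrinking boundary layer $\Omega_{l}$ chosen so that $|\A u|(\partial\Omega_{l})=0$. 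The Leibniz expansion of $\A(\rho_{l}(u_{j}-u_{m}))$ separates lower-order terms---which vanish as $j,m\to\infty$ by the $\sobo^{k-1,1}$-convergence just obtained---from the single top-order term $\rho_{l}(\A u_{j}-\A u_{m})$. For the latter no linear bound is attempted; instead, strict convergence gives $\limsup_{j,m}\|\rho_{l}(\A u_{j}-\A u_{m})\|_{\lebe^{1}(\Omega)}\leq c\,|\A u|(\Omega_{l})$, and this tends to zero as $l\to\infty$ because $|\A u|$ is a finite measure and $\bigcap_{l}\Omega_{l}=\emptyset$. The localisation near $\partial\Omega$ is exactly what allows \emph{strict} rather than norm convergence of $\A u_{j}$ to close the argument.
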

\section{Proof of Theorem~\ref{thm:main1}: The trace estimate via reduction to $\sobo^{k,1}(\Omega;V)$}\label{sec:trac-reduct-class}
Let $p=1$. In comparison with the previous section, we now establish the novel reduction principle \eqref{eq:reduxMP} which leads to the trace estimates underlying Theorem~\ref{thm:main1}. This manifests the metaprinciple of $\mathbb{C}$-ellipticity helping to overcome \textsc{Ornstein}'s Non-Inequality by passing \emph{to inverse estimates for polynomials}. Throughout this section, we hereafter suppose that $\A$ is a $\mathbb{C}$-elliptic differential operator of the form \eqref{eq:form} with $k\geq 2$.

\subsection{Geometric setup}\label{sec:geometric}
Throughout this section, we suppose that $\Omega\subset\R^{n}$ is an open and bounded $\mathrm{NTA}_{n-1}$ domain, cf.~Definition~\ref{def:NTA}. To arrive at the conclusion of Theorem~\ref{thm:main1}, we borrow and adapt some technology from the precursor \cite{BreDieGme20} to the present paper. Given $j\in\mathbb{Z}$, pick a countable covering $(\ball_{j,i})_{i\in\mathbb{N}}$ of $\R^{n}$ by balls, each having diameter $\diam(\ball_{j,i})=2r(\ball_{j,i})$ and satisfying each of the following properties: 
\begin{enumerate}[label={(C\arabic{*})},start=1]
\item\label{item:C1} For all $j\in\mathbb{Z},i\in\mathbb{N}$, $2^{-j-4}\leq \ell(\ball_{j,i}) \leq 2^{-j-3}$, 
\item\label{item:C2} For each $j\in\mathbb{Z}$, $\bigcup_{i\in\mathbb{N}}\frac{7}{8}\ball_{j,i}=\R^{n}$, 
\item\label{item:C3} There exists $c>0$ such that $\sup_{j\in\mathbb{Z}}\sum_{i\in\mathbb{N}}\mathbbm{1}_{\ball_{j,i}}\leq c$.
\end{enumerate}
In the sequel, we let $\Omega_{j}:=\{x\in\Omega\colon\;\dista(x,\partial\Omega)<2^{-j}\}$. Since $\Omega$ is a non-tangentially accessible domain and thereby satisfies the interior corkscrew as well as the Harnack chain condition, we may record the following further properties, see \cite{BreDieGme20}: 
\begin{enumerate}[label={(C\arabic{*})},start=4]
\item\label{item:C4} There exists an index $j_{0}\in\mathbb{Z}$ such that for any ball $\ball_{j,i}$ with $j\geq j_{0}$, $i\in\mathbb{N}$ and $\ball_{j,i}\cap \Omega_{j}\neq\emptyset$, there exists a \emph{reflected ball} $\ball_{j,i}^{\sharp}\subset\Omega$ satisfying 
\begin{align*}
\diam(\ball_{j,i})\simeq\diam(\ball_{j,i}^{\sharp})\simeq \dista(\ball_{j,i}^{\sharp},\partial\Omega),\;\;\;\dista(\ball_{j,i},\ball_{j,i}^{\sharp})\lesssim\diam(\ball_{j,i}). 
\end{align*}
Here, the constants implicit in '$\simeq$' or '$\lesssim$' are independent of $j$ and $i$.
\item\label{item:C5} With $j_{0}$ as in \ref{item:C4}, if $\ball_{j,i}\subset\Omega$ and $\ball_{j,i}\cap\Omega_{j}\neq\emptyset$ for some $j\geq j_{0}$, then there exists a chain of balls $\mathbb{B}_{1},...,\mathbb{B}_{\gamma}\subset\Omega$ with $\gamma\in\mathbb{N}$ independent of $j,i$ such that 
\begin{enumerate}
\item $\mathbb{B}_{1}=\ball_{j,i}$, $\mathbb{B}_{\gamma}=\ball_{j,i}^{\sharp}$, 
\item $\mathscr{L}^{n}(\mathbb{B}_{\beta}\cap\mathbb{B}_{\beta+1})\simeq \mathscr{L}^{n}(\mathbb{B}_{\beta})\simeq\mathscr{L}^{n}(\mathbb{B}_{\beta+1})\simeq\mathscr{L}^{n}(\ball_{j,i})$ for all $\beta\in\{1,...,\gamma-1\}$, 
\item $\diam(\mathbb{B}_{\beta})\simeq \diam(\ball_{j,i})$ for all $\beta\in\{1,...,\gamma\}$. 
\end{enumerate}
Here, the constants implicit in '$\simeq$' are independent of $j,i$ and $\beta$. In this situation, we define the \emph{chain set}
$\ch(\ball_{j,i},\ball_{j,i}^{\sharp})=\bigcup_{\beta=1}^{\gamma}\mathbb{B}_{\beta}$.
\item\label{item:C6} If $j_{0}$ is as in \ref{item:C4} and $j,l\in\mathbb{Z}$ are such that $j,l\geq j_{0}$ and $|j-l|\leq 1$, then for all balls $\ball_{j,i}$ and $\ball_{l,m}$ with $\ball_{j,i}\cap\ball_{l,m}\neq\emptyset$, $\ball_{j,i}\cap\Omega_{j}\neq\emptyset$, $\ball_{l,m}\cap\Omega_{l}\neq\emptyset$ there exists a chain of balls $\mathbb{B}_{1},...,\mathbb{B}_{\gamma}\subset\Omega$ with $\gamma$ independent of $j,l,m,i$ such that 
\begin{enumerate}
\item $\mathbb{B}_{1}=\ball_{j,i}^{\sharp}$, $\mathbb{B}_{\gamma}=\ball_{l,m}^{\sharp}$, 
\item $\mathscr{L}^{n}(\mathbb{B}_{\beta}\cap\mathbb{B}_{\beta+1})\simeq \mathscr{L}^{n}(\mathbb{B}_{\beta})\simeq \diam(\ball_{j,i})$ for all $\beta\in\{1,...,\gamma-1\}$,
\item $\dista(\mathbb{B}_{\beta},\partial\Omega)\simeq \diam(\mathbb{B}_{\beta})\simeq \diam(\ball_{j,i})$ for all $\beta\in\{1,...,\gamma\}$. 
\end{enumerate}
Here, the constants implicit in '$\simeq$' are independent of $j,i$ and $\beta$, and we define similarly as above the respective chain set $\ch(\ball_{j,i}^{\sharp},\ball_{l,m}^{\sharp}):=\bigcup_{\beta=1}^{\gamma}\mathbb{B}_{\beta}$.
\end{enumerate}
Working from \ref{item:C4}--\ref{item:C6}, we further obtain the existence of an $i_{0}\in\mathbb{N}_{\geq 2}$ such that for all $j\geq j_{0}$ there holds with a universal constant $c>0$
\begin{align}\label{eq:sharpsum}
\begin{split}
&\sum_{\substack{m\colon\\ \ball_{j,m}\cap\Omega_{j}\neq\emptyset}}\mathbbm{1}_{\ball_{j,m}^{\sharp}}\leq c\mathbbm{1}_{\Omega_{j-i_{0}}\setminus\Omega_{j+i_{0}}},\\
&\sum_{\substack{m\colon\\ \ball_{j,m}\cap\Omega_{j}\neq\emptyset}}\sum_{\substack{i\colon\\\ball_{j+1,i}\cap\ball_{j,m}\neq\emptyset\\ \ball_{j+1,i}\cap\Omega_{j+1}\neq\emptyset}}\mathbbm{1}_{\ch(\ball_{j,m}^{\sharp},\ball_{j+1,i}^{\sharp})}\leq c\mathbbm{1}_{\Omega_{j-i_{0}}\setminus\Omega_{j+i_{0}}},\\
& \sum_{\substack{m\colon\\ \ball_{j,m}\cap (\Omega_{j}\setminus\Omega_{j+2})\neq\emptyset}}\mathbbm{1}_{\ch(\ball_{j,m},\ball_{j,m}^{\sharp})}\leq c\mathbbm{1}_{\Omega_{j-i_{0}}\setminus\Omega_{j+i_{0}}}.
\end{split}
\end{align}
For $\eqref{eq:sharpsum}_{3}$, note that if $\ball_{j,m}\cap (\Omega_{j}\setminus\Omega_{j+2})\neq\emptyset$, then we have $\ball_{j,m}\subset\Omega$ by construction. Given $u\in\sobo^{\A,1}(\Omega)$ and $j\geq j_{0}$, $i\in\mathbb{N}$, we introduce the shorthand notation
\begin{align}\label{eq:projdef}
\Pi_{j,i}u:={\Pi}{_{\A}^{\ball_{j,i}^{\sharp}}}u, 
\end{align}
with the projection ${\Pi}{_{\A}^{\ball_{j,i}^{\sharp}}}$ from Proposition~\ref{prop:decomposition}. The proof of the next lemma is accomplished by an argument similar to that underlying Proposition~\ref{prop:john-riesz-est} and is therefore omitted. 
\begin{lemma}\label{lem:aux}
There exists a constant $c>0$ that only depends on $\A$ and the \emph{NTA}-parameters of $\Omega$ such that the following hold for all $j\geq j_{0}$ and all $u\in\sobo^{\A,1}(\Omega)$: 
\begin{enumerate}
\item\label{item:proj1} For all $\ball_{j,i}$ with $\ball_{j,i}\cap\Omega_{j}\neq\emptyset$ and all $\alpha\in\mathbb{N}_{0}^{n}$ there holds  
\begin{align*}
\|\partial^{\alpha}\Pi_{j,i}u\|_{\lebe^{\infty}(\ball_{j,i})}\leq c\diam(\ball_{j,i})^{-|\alpha|}\dashint_{\ball_{j,i}^{\sharp}}|u|\dif x.
\end{align*}
\item\label{item:proj2} Whenever $\ball_{j,m}\cap (\Omega_{j}\setminus\Omega_{j+2})\neq\emptyset$, then $\ball_{j,m}\subset\Omega$ and we have, for all $\alpha\in\mathbb{N}_{0}^{n}$ with $|\alpha|\leq k-1$,
\begin{align*}
\|\partial^{\alpha}(u-\Pi_{j,m}u)\|_{\lebe^{1}(\ball_{j,m})}\leq c\diam(\ball_{j,m})^{k-|\alpha|}\int_{\ch(\ball_{j,m},\ball_{j,m}^{\sharp})}|\A u|\dif x.
\end{align*}
\item\label{item:proj3} \emph{Chain control:} Whenever $\ball_{j,m}\cap\Omega_{j}\neq\emptyset$, $\ball_{j+1,i}\cap\Omega_{j+1}\neq\emptyset$ and $\ball_{j+1,i}\cap\ball_{j,m}\neq\emptyset$, then we have for all $\alpha\in\mathbb{N}_{0}^{n}$ with $|\alpha|\leq k-1$
\begin{align*}
\dashint_{\ball_{j,m}}|\partial^{\alpha}(\Pi_{j+1,i}u-\Pi_{j,m}u)|\dif x\leq c \diam(\ball_{j,m})^{k-|\alpha|}\dashint_{\ch(\ball_{j+1,i}^{\sharp},\ball_{j,m}^{\sharp})}|\A u|\dif x.
\end{align*}
\end{enumerate}
\end{lemma}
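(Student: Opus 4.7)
The plan is to adapt the chain-telescoping strategy from the proof of Proposition~\ref{prop:john-riesz-est} to the ball-cover~\ref{item:C4}--\ref{item:C6}, exploiting that $\mathbb{C}$-ellipticity forces each $\Pi_{j,i}u$ to lie in the finite-dimensional space $\ker(\A)\cap\mathscr{P}_{\degPol(\A)-1}(\R^{n};V)$, so that polynomial inverse estimates are available throughout. For~(a), I will use that by~\ref{item:C4} both $\ball_{j,i}$ and $\ball_{j,i}^{\sharp}$ lie in an enlargement $C\ball_{j,i}^{\sharp}$ with $C$ depending only on the NTA-parameters. Lemma~\ref{lem:PiB-estimate} applied on $\ball_{j,i}^{\sharp}$ yields $\|\Pi_{j,i}u\|_{\lebe^{\infty}(\ball_{j,i}^{\sharp})}\lesssim \dashint_{\ball_{j,i}^{\sharp}}|u|\dx$, and the polynomial inverse estimate~\eqref{eq:inverseFull} then extends this to $\ball_{j,i}\subset C\ball_{j,i}^{\sharp}$ and to all derivatives of $\Pi_{j,i}u$, which delivers~(a).

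For~(b), I first observe that the hypothesis $\ball_{j,m}\cap(\Omega_{j}\setminus\Omega_{j+2})\neq\emptyset$ combined with the diameter control in~\ref{item:C1} forces $\ball_{j,m}\subset\Omega$. Taking the chain $\mathbb{B}_{1}=\ball_{j,m},\dots,\mathbb{B}_{\gamma}=\ball_{j,m}^{\sharp}$ from~\ref{item:C5}, I telescope
\[
\partial^{\alpha}(u-\Pi_{j,m}u)=\partial^{\alpha}(u-\Pi_{\A}^{\mathbb{B}_{1}}u)-\sum_{l=1}^{\gamma-1}\partial^{\alpha}(\Pi_{\A}^{\mathbb{B}_{l+1}}u-\Pi_{\A}^{\mathbb{B}_{l}}u).
\]
Integrated over $\ball_{j,m}=\mathbb{B}_{1}$, the first term is bounded via Corollary~\ref{cor:repr-estimate}. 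For each telescoping summand, I will mimic the overlap argument from the proof of Proposition~\ref{prop:john-riesz-est}: control $\Pi_{\A}^{\mathbb{B}_{l+1}}u-\Pi_{\A}^{\mathbb{B}_{l}}u$ in $\lebe^{1}$-mean over $\mathbb{B}_{l}\cap\mathbb{B}_{l+1}$ by $\dashint_{\mathbb{B}_{l}}|u-\Pi_{\A}^{\mathbb{B}_{l}}u|\dx+\dashint_{\mathbb{B}_{l+1}}|u-\Pi_{\A}^{\mathbb{B}_{l+1}}u|\dx$ via Corollary~\ref{cor:repr-estimate}, then transport this polynomial bound---together with the derivative factor $\diam(\ball_{j,m})^{-|\alpha|}$---to $\ball_{j,m}$ by polynomial inverse estimates, using the comparability $\diam(\mathbb{B}_{l})\simeq\diam(\ball_{j,m})$ from~\ref{item:C5}. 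Summing the uniformly many terms and using $\bigcup_{l}\mathbb{B}_{l}=\ch(\ball_{j,m},\ball_{j,m}^{\sharp})$ will produce the prefactor $\diam(\ball_{j,m})^{k-|\alpha|}$ and yield~(b).

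For~(c), the approach is analogous but now telescopes along the chain $\mathbb{B}_{1}=\ball_{j,m}^{\sharp},\dots,\mathbb{B}_{\gamma}=\ball_{j+1,i}^{\sharp}$ supplied by~\ref{item:C6}, writing
\[
\Pi_{j+1,i}u-\Pi_{j,m}u=\sum_{l=1}^{\gamma-1}(\Pi_{\A}^{\mathbb{B}_{l+1}}u-\Pi_{\A}^{\mathbb{B}_{l}}u).
\]
Each summand is a polynomial in $\ker(\A)$; I will bound its $\lebe^{1}$-mean on $\mathbb{B}_{l}\cap\mathbb{B}_{l+1}$ by $\diam(\mathbb{B}_{l})^{k}\dashint_{\mathbb{B}_{l}\cup\mathbb{B}_{l+1}}|\A u|\dx$ via the same overlap argument combined with Corollary~\ref{cor:repr-estimate}, and then use polynomial inverse estimates to transport this bound, with an additional derivative factor $\diam(\ball_{j,m})^{-|\alpha|}$, to an $\lebe^{1}$-mean on $\ball_{j,m}$. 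Summation over the uniformly many chain links concludes~(c). The hard part throughout is that each projection $\Pi_{j,i}u$ is built from values of $u$ on the reflected ball $\ball_{j,i}^{\sharp}\subset\Omega$, while the estimates are required on $\ball_{j,i}$ or $\ball_{j,m}$ which a priori lie outside of $\Omega$ and even outside of $\ball_{j,i}^{\sharp}$; what makes the bypass possible is precisely $\mathbb{C}$-ellipticity, which confines $\ker(\A)$ to polynomials of degree less than $\degPol(\A)$ and thereby renders all norms of the projections on nearby balls of comparable diameter equivalent, so that \textsc{Ornstein}'s Non-Inequality becomes invisible on $\ker(\A)$.
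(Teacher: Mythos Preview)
Your proposal is correct and is precisely the argument the paper has in mind: the paper omits the proof of this lemma, stating only that it ``is accomplished by an argument similar to that underlying Proposition~\ref{prop:john-riesz-est}''. Your adaptation of the chain-telescoping from that proposition to the ball geometry of~\ref{item:C4}--\ref{item:C6}, combined with Corollary~\ref{cor:repr-estimate} on each chain ball and the polynomial inverse estimates~\eqref{eq:inverseFull} to transport bounds between comparable balls, is exactly this.
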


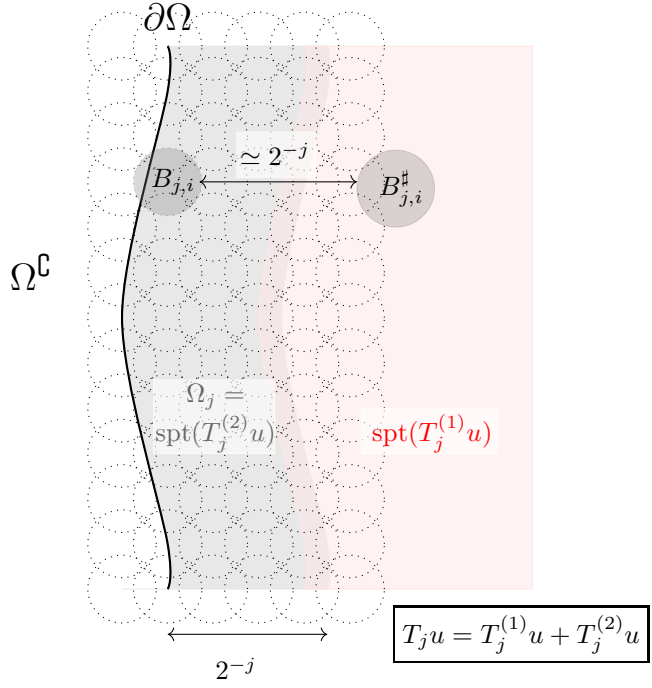
\begin{figure}
\begin{tikzpicture}[scale=3]
  \begin{scope}
    \draw  plot [smooth] coordinates { (0.1,-1.2) (0.1,-1) (-.1,0) (0.1,1) (0.1,1.2)};
    \draw [ color=black, thick] plot [smooth] coordinates { (0.7,-1.2) (0.7,-1) (0.5,0) (0.7,1) (0.7,1.2)};
       \draw [fill= black!9!white, color=black!9!white, fill opacity=1] plot [smooth] coordinates { (0.1,-1.2) (0.1,-1) (-0.1,0) (0.1,1) (0.1,1.2)} -- (0.4,1.2) -- (0.4,-1.2) -- (-0.1,-1.2);
      \draw [fill= black!9!white, color=black!9!white, fill opacity=1] plot [smooth] coordinates { (0.8,-1.2) (0.8,-1) (0.6,0) (0.8,1) (0.8,1.2)} -- (0.4,1.2) -- (0.4,-1.2) -- (0.8,-1.2);
       \draw [fill= red!30!white, color=red!10!white, fill opacity=0.5] plot [smooth] coordinates { (0.7,-1.2) (0.7,-1) (0.5,0) (0.7,1) (0.7,1.2)} -- (1.7,1.2) -- (1.7,-1.2) -- (0.7,-1.2);
            \draw[<->] (0.1,-1.4) -- (0.8,-1.4);
      \node at (0.4,-1.55) {$2^{-j}$};
      \node at (0.1,1.33) {\LARGE $\partial\Omega$};
      \node at (-0.5,0.2) {\LARGE $\Omega^{\complement}$};
      \draw [ color=black, thick] plot [smooth] coordinates { (0.1,-1.2) (0.1,-1) (-0.1,0) (0.1,1) (0.1,1.2)};
      \draw[dotted] (-0.1,-1.2) circle (0.15);
      \draw[dotted] (-0.1,-1.0) circle (0.15);
      \draw[dotted] (-0.1,-0.8) circle (0.15);
      \draw[dotted] (-0.1,-0.6) circle (0.15);
      \draw[dotted] (-0.1,-0.4) circle (0.15);
      \draw[dotted] (-0.1,-0.2) circle (0.15);
      \draw[dotted] (-0.1,-0.0) circle (0.15);
      \draw[dotted] (-0.1,0.2) circle (0.15);
      \draw[dotted] (-0.1,0.4) circle (0.15);
      \draw[dotted] (-0.1,0.6) circle (0.15);
      \draw[dotted] (-0.1,0.8) circle (0.15);
      \draw[dotted] (-0.1,1.0) circle (0.15);
      \draw[dotted] (-0.1,1.2) circle (0.15);
      \draw[dotted] (0.1,-1.2) circle (0.15);
      \draw[dotted] (0.1,-1.0) circle (0.15);
      \draw[dotted] (0.1,-0.8) circle (0.15);
      \draw[dotted] (0.1,-0.6) circle (0.15);
      \draw[dotted] (0.1,-0.4) circle (0.15);
      \draw[dotted] (0.1,-0.2) circle (0.15);
      \draw[dotted] (0.1,-0.0) circle (0.15);
      \draw[dotted] (0.1,0.2) circle (0.15);
      \draw[dotted] (0.1,0.4) circle (0.15);
      \draw[dotted, fill=gray, opacity = 0.3] (0.1,0.6) circle (0.15);
      \draw[color=gray, fill=gray, opacity = 0.3] (1.1,0.57) circle (0.17);
      \node at (0.125,0.6) {$\ball_{j,i}$};
      \node at (1.125,0.5725) {$\ball_{j,i}^{\sharp}$};
      \draw [<->] (0.24,0.6) -- (0.93,0.6);
      \draw[dotted] (0.1,0.8) circle (0.15);
      \draw[dotted] (0.1,1.0) circle (0.15);
      \draw[dotted] (0.1,1.2) circle (0.15);
      \draw[dotted] (0.3,-1.2) circle (0.15);
      \draw[dotted] (0.3,-1.0) circle (0.15);
      \draw[dotted] (0.3,-0.8) circle (0.15);
      \draw[dotted] (0.3,-0.6) circle (0.15);
      \draw[dotted] (0.3,-0.4) circle (0.15);
      \draw[dotted] (0.3,-0.2) circle (0.15);
      \draw[dotted] (0.3,-0.0) circle (0.15);
      \draw[dotted] (0.3,0.2) circle (0.15);
      \draw[dotted] (0.3,0.4) circle (0.15);
      \draw[dotted] (0.3,0.6) circle (0.15);
      \draw[dotted] (0.3,0.8) circle (0.15);
      \draw[dotted] (0.3,1.0) circle (0.15);
      \draw[dotted] (0.3,1.2) circle (0.15);
      \draw[dotted] (0.5,-1.2) circle (0.15);
      \draw[dotted] (0.5,-1.0) circle (0.15);
      \draw[dotted] (0.5,-0.8) circle (0.15);
      \draw[dotted] (0.5,-0.6) circle (0.15);
      \draw[dotted] (0.5,-0.4) circle (0.15);
      \draw[dotted] (0.5,-0.2) circle (0.15);
      \draw[dotted] (0.5,-0.0) circle (0.15);
      \draw[dotted] (0.5,0.2) circle (0.15);
      \draw[dotted] (0.5,0.4) circle (0.15);
      \draw[dotted] (0.5,0.6) circle (0.15);
      \draw[dotted] (0.5,0.8) circle (0.15);
      \draw[dotted] (0.5,1.0) circle (0.15);
      \draw[dotted] (0.5,1.2) circle (0.15);
      \draw[dotted] (0.7,-1.2) circle (0.15);
      \draw[dotted] (0.7,-1.0) circle (0.15);
      \draw[dotted] (0.7,-0.8) circle (0.15);
      \draw[dotted] (0.7,-0.6) circle (0.15);
      \draw[dotted] (0.7,-0.4) circle (0.15);
      \draw[dotted] (0.7,-0.2) circle (0.15);
      \draw[dotted] (0.7,-0.0) circle (0.15);
      \draw[dotted] (0.7,0.2) circle (0.15);
      \draw[dotted] (0.7,0.4) circle (0.15);
      \draw[dotted] (0.7,0.6) circle (0.15);
      \draw[dotted] (0.7,0.8) circle (0.15);
      \draw[dotted] (0.7,1.0) circle (0.15);
      \draw[dotted] (0.7,1.2) circle (0.15);
      \draw[dotted] (0.9,-1.2) circle (0.15);
      \draw[dotted] (0.9,-1.0) circle (0.15);
      \draw[dotted] (0.9,-0.8) circle (0.15);
      \draw[dotted] (0.9,-0.6) circle (0.15);
      \draw[dotted] (0.9,-0.4) circle (0.15);
      \draw[dotted] (0.9,-0.2) circle (0.15);
      \draw[dotted] (0.9,-0.0) circle (0.15);
      \draw[dotted] (0.9,0.2) circle (0.15);
      \draw[dotted] (0.9,0.4) circle (0.15);
      \draw[dotted] (0.9,0.6) circle (0.15);
      \draw[dotted] (0.9,0.8) circle (0.15);
      \draw[dotted] (0.9,1.0) circle (0.15);
      \draw[dotted] (0.9,1.2) circle (0.15);
      \draw[fill=white, color=white, opacity=0.6] (0.95,-0.6) -- (1.55,-0.6) -- (1.55,-0.4) -- (0.95,-0.4) -- (0.95,-0.6);
      \node[red] at (1.26,-0.51) { $\mathrm{spt}(T_{j}^{(1)}u)$};
      \draw[fill=white, color=white, opacity=0.6] (0.038,-0.6) -- (0.57,-0.6) -- (0.57,-0.275) -- (0.038,-0.275) -- (0.038,-0.6);
      \node[black!60!white] at (0.31,-0.51) { $\mathrm{spt}(T_{j}^{(2)}u)$};
      \node[black!60!white] at (0.31,-0.35) { $\Omega_{j}=$};
      \draw[fill=white, color=white, opacity=0.6] (0.4,0.62) -- (0.75,0.62) -- (0.75,0.82) -- (0.4,0.82);
      \node[black] at (0.57,0.725) { $\simeq 2^{-j}$};
      \node at (1.65,-1.4) {\fbox{$T_{j}u=T_{j}^{(1)}u+T_{j}^{(2)}u$}};
  \end{scope}
\end{tikzpicture}
\caption{Construction of $T_{j}$ and idea of the (trace) estimates of Proposition~\ref{lem:convergenceofTj} and Corollary~\ref{cor:main}. With the dashed balls indicating the covering $(\ball_{j,i})_{i}$, the approximation $T_{j}u$ given by \eqref{eq:replacementsequence} splits into two parts: At a distance at least $2^{-j}$ from $\partial\Omega$ (i.e., in the red area), $T_{j}u$ leaves $u$ unchanged, whereas close to $\partial\Omega$ (i.e., on $\Omega_{j}:=\{\dist(\cdot,\partial\Omega)< 2^{-j}\}$ represented by the grey area), $T_{j}u$ essentially replaces $u$ by its projections onto the nullspace of $\A$ on the reflected balls $\ball_{j,i}^{\sharp}$.}
\label{fig:replacement}
\end{figure}
\subsection{Proof of Theorem~\ref{thm:main1}}
In view of the requisite trace estimates, we now introduce a suitable
approximation $T_{j}u$ of a given function
$u\in\sobo^{\A,1}(\Omega)$. For each $j\in\mathbb{Z}$, pick the covering $(\ball_{j,i})_{i}$ from Section~\ref{sec:geometric} and let $(\rho_{j,i})_{i}$ be a smooth partition of unity subject to $(\ball_{j,i})_{i}$ such that, with a universal constant $c>0$, 
\begin{align}\label{eq:gradbound0}
\sum_{|\alpha|\leq k}\diam(\ball_{j,i})^{|\alpha|}\|\partial^{\alpha}\rho_{j,i}\|_{\lebe^{\infty}(\Omega)}\leq c\qquad\text{for all}\;i\in\mathbb{N},j\in\mathbb{Z}.
\end{align}
Moreover, let $\rho_{j}\in\hold^{\infty}(\overline{\Omega};[0,1])$ be such that $\mathbbm{1}_{\Omega_{j+1}}\leq\rho_{j}\leq\mathbbm{1}_{\Omega_{j}}$ and 
\begin{align}\label{eq:gradbound}
\|\partial^{\alpha}\rho_{j}\|_{\lebe^{\infty}(\Omega)}\leq c 2^{j|\alpha|}\qquad\text{for all}\;\alpha\in\mathbb{N}_{0}^{n},\;|\alpha|\leq k.
\end{align}
We then define the \emph{replacement sequence} $(T_{j}u)_{j\geq j_{0}}$ with $j_{0}\in\mathbb{Z}$ as in \ref{item:C4} from above by 
\begin{align}\label{eq:replacementsequence}
T_{j}u:=T_{j}^{(1)}u+T_{j}^{(2)}u:=(1-\rho_{j})u + \rho_{j}\sum_{i\in\mathbb{N}}\rho_{j,i}\Pi_{j,i}u
\end{align}
in $\Omega$. Note that, as a locally finite sum and by construction,
$T_{j}^{(2)}u\in\hold^{\infty}(\overline{\Omega};V)$. Thus, in
passing from $u$ to $T_{j}u$ we essentially leave the map $u$
unchanged away from $\partial\Omega$ while close to $\partial\Omega$,
we replace it by its projections onto $\ker(\A)$ on balls nearby, see Figure~\ref{fig:replacement}. The proof of Theorem~\ref{thm:main1} crucially hinges on the decomposition
\begin{align}\label{eq:tracerewrite}
\begin{split}
T_{j+1}u-T_{j}u & = (\rho_{j}-\rho_{j+1})\sum_{m\in\mathbb{N}}\rho_{j,m}(u-\Pi_{j,m}u) \\ 
& \;\;\;\; + \rho_{j+1}\sum_{i,m\in\mathbb{N}}\rho_{j,m}\rho_{j+1,i}(\Pi_{j+1,i}u-\Pi_{j,m}u) =: \mathrm{I}_{j}[u] + \mathrm{II}_{j}[u],
\end{split} 
\end{align}
which is implied by $\sum_{m}\rho_{j,m}\equiv 1$ for all $j$. In comparison with the precursor \cite{BreDieGme20}, the key observation is now given by Proposition~\ref{lem:convergenceofTj} below: The terms $\mathrm{I}_{j}[u]$ will have trace zero along $\partial\Omega$ and $\mathrm{II}_{j}[u]$ \emph{uniformly belongs to} $\sobo^{k,1}(\Omega;V)$. For the reduction argument underlying the proof of Theorem~\ref{thm:main1}, it will then suffice to apply the trace estimate for $\sobo^{k,1}$ to $\mathrm{II}_{j}[u]$, cf.~Corollary~\ref{cor:main}. 
\begin{proposition}\label{lem:convergenceofTj}
Let $u\in \sobo^{\A,1}(\Omega)$ and define $T_{j}u$, $\mathrm{I}_{j}[u]$ and $\mathrm{II}_{j}[u]$ for $j\geq j_{0}$ by \eqref{eq:replacementsequence} or \eqref{eq:tracerewrite}, respectively, and let $i_{0}$ be as in \eqref{eq:sharpsum}. Then the following holds:
\begin{enumerate}
\item\label{item:conv00} There exists $c>0$ such that $\|\mathrm{I}_{j}[u]\|_{\sobo^{\A,1}(\Omega)}\leq c\|\A u\|_{\lebe^{1}(\Omega_{j-i_{0}}\setminus\Omega_{j+i_{0})}}$.
\item\label{item:conv0} There exists $c>0$ such that $\|\mathrm{II}_{j}[u]\|_{\sobo^{k,1}(\Omega)}\leq c\|\A u\|_{\lebe^{1}(\Omega_{j-i_{0}}\setminus\Omega_{j+i_{0})}}$.
\item\label{item:conv1} We have $T_{j}u\to u$ with respect to the $\sobo^{\A,1}$-norm. 
\item\label{item:conv2a} There exists $(u_{j})\subset\hold^{\infty}(\overline{\Omega};V)$ such that $u_{j}\to u$ with respect to the $\sobo^{\A,1}$-norm. 
\item\label{item:conv1a} If $u\in\hold(\overline{\Omega};V)\cap\sobo^{\A,1}(\Omega)$, then $T_{j}u\to u$ uniformly in $\Omega$. 
\end{enumerate}
In both \ref{item:conv00} and \ref{item:conv0}, the constant $c>0$ is independent of $u$. 
\end{proposition}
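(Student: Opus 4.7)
The plan is to exploit the decomposition \eqref{eq:tracerewrite} and establish the five assertions in order \ref{item:conv00}--\ref{item:conv1a}. The decisive algebraic observation I would build everything on is that $\Pi_{j,i}u$, being in the image of $\Pi_\A^{B_{j,i}^\sharp}$, lies in $\ker(\A)\subset\Pol_{\degPol(\A)-1}(\R^{n};V)$ by Proposition~\ref{prop:decomposition}\ref{item:decomp-projection} and \eqref{eq:deg-pol}. Hence every difference $\Pi_{j+1,i}u-\Pi_{j,m}u$ is a polynomial of uniformly bounded degree, and on any ball $B_{j,m}$ \emph{inverse estimates for polynomials apply at all orders}, not merely below the $\sobo^{\A,1}$-threshold $k$. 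This is exactly where $\mathbb{C}$-ellipticity bypasses \textsc{Ornstein}'s non-inequality: the obstruction becomes invisible on $\ker(\A)$ by finite-dimensionality.

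For \ref{item:conv00}, I would apply $\A$ to $\mathrm{I}_j[u]=\sum_{m}(\rho_j-\rho_{j+1})\rho_{j,m}(u-\Pi_{j,m}u)$ and \textsc{Leibniz}-expand. The term in which all derivatives fall on $u-\Pi_{j,m}u$ collapses to $(\rho_j-\rho_{j+1})\A u$ via $\Pi_{j,m}u\in\ker\A$ and $\sum_m\rho_{j,m}=1$; being supported in $\Omega_j\setminus\Omega_{j+2}$, it contributes $\|\A u\|_{\lebe^1(\Omega_j\setminus\Omega_{j+2})}$ immediately. Every remaining term pairs $|\alpha|\geq 1$ cutoff-derivatives (of magnitude $\lesssim\diam(B_{j,m})^{-|\alpha|}$ by \eqref{eq:gradbound0}--\eqref{eq:gradbound}) with $(k-|\alpha|)$-derivatives of $u-\Pi_{j,m}u$, which Lemma~\ref{lem:aux}\ref{item:proj2} bounds in $\lebe^1(B_{j,m})$ by $\diam(B_{j,m})^{|\alpha|}\|\A u\|_{\lebe^1(\mathrm{ch}(B_{j,m},B_{j,m}^\sharp))}$. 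The diameter weights cancel and summation in $m$ via \eqref{eq:sharpsum}$_3$ closes the estimate; the $\lebe^1$-norm of $\mathrm{I}_j[u]$ itself follows by the same template with no differentiation.

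For \ref{item:conv0}, which is the crux, I would chain Lemma~\ref{lem:aux}\ref{item:proj3} with the inverse estimate
\begin{align*}
\dashint_{B_{j,m}}|\partial^{\gamma}(\Pi_{j+1,i}u-\Pi_{j,m}u)|\dif x\;\lesssim\;\diam(B_{j,m})^{-|\gamma|}\dashint_{B_{j,m}}|\Pi_{j+1,i}u-\Pi_{j,m}u|\dif x,
\end{align*}
valid for \emph{every} multi-index $\gamma$ since the difference is a polynomial of degree $<\degPol(\A)$. The chaining gives, for all $0\leq|\gamma|\leq k$,
\begin{align*}
\dashint_{B_{j,m}}|\partial^{\gamma}(\Pi_{j+1,i}u-\Pi_{j,m}u)|\dif x\;\lesssim\;\diam(B_{j,m})^{k-|\gamma|}\dashint_{\mathrm{ch}(B_{j+1,i}^\sharp,B_{j,m}^\sharp)}|\A u|\dif x.
\end{align*}
\textsc{Leibniz}-expanding $\partial^{\gamma}(\rho_{j+1}\rho_{j,m}\rho_{j+1,i}(\Pi_{j+1,i}u-\Pi_{j,m}u))$ and pairing $|\beta|$-th polynomial derivatives with $(|\gamma|-|\beta|)$-th cutoff-derivatives, the weights $\diam(B_{j,m})^{k-|\beta|}$ and $\diam(B_{j,m})^{-(|\gamma|-|\beta|)}$ combine to a factor $\lesssim\diam(\Omega)^{k}$, so each summand is bounded by $\|\A u\|_{\lebe^1(\mathrm{ch}(B_{j+1,i}^\sharp,B_{j,m}^\sharp))}$; summation over $i,m$ via \eqref{eq:sharpsum}$_2$ closes \ref{item:conv0} for all orders $|\gamma|\leq k$ at once.

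For \ref{item:conv1}--\ref{item:conv1a}, the shell-overlap $\sum_j\mathbbm{1}_{\Omega_{j-i_0}\setminus\Omega_{j+i_0}}\lesssim\mathbbm{1}_\Omega$ together with \ref{item:conv00}--\ref{item:conv0} gives $\sum_j\|T_{j+1}u-T_ju\|_{\sobo^{\A,1}(\Omega)}\lesssim\|\A u\|_{\lebe^1(\Omega)}<\infty$, so $(T_j u)$ is Cauchy in $\sobo^{\A,1}(\Omega)$; since $T_j u\equiv u$ on every compact $K\Subset\Omega$ for $j$ large, the limit equals $u$, yielding \ref{item:conv1}. For \ref{item:conv2a}, $T_j^{(2)}u\in\hold^\infty(\overline\Omega;V)$ already and $T_j^{(1)}u=(1-\rho_j)u$ is compactly supported in $\Omega$, hence $\sobo^{\A,1}$-approximable by $\hold_c^\infty(\Omega;V)\subset\hold^\infty(\overline\Omega;V)$ via standard mollification, and a diagonal selection against \ref{item:conv1} concludes. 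For \ref{item:conv1a}, writing $T_j u-u=\rho_j\sum_m\rho_{j,m}(\Pi_{j,m}u-u)$, I note that constants lie in $\ker(\A)$ and are therefore fixed by $\Pi_{j,m}$; picking any $y_{j,m}\in\overline{B_{j,m}\cup B_{j,m}^\sharp}$ and applying Lemma~\ref{lem:PiB-estimate} to $u-u(y_{j,m})$ yields $\|\Pi_{j,m}u-u\|_{\lebe^\infty(B_{j,m})}\lesssim\|u-u(y_{j,m})\|_{\lebe^\infty(B_{j,m}\cup B_{j,m}^\sharp)}$, which tends to $0$ uniformly in $m$ by uniform continuity of $u$ on $\overline\Omega$ and $\diam(B_{j,m}\cup B_{j,m}^\sharp)\lesssim 2^{-j}$. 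The main obstacle is unambiguously \ref{item:conv0}: deriving a $\sobo^{k,1}$-bound from an $\A u$-source is forbidden for generic $\sobo^{\A,1}$-functions by \textsc{Ornstein}'s non-inequality, and the whole scheme works only because $\Pi_{j+1,i}u-\Pi_{j,m}u$ inhabits the finite-dimensional polynomial space $\ker(\A)$, where all norms are equivalent and inverse estimates are lossless across the full range $0\leq|\gamma|\leq k$.
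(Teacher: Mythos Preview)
Your proposal is correct and follows essentially the same route as the paper's proof: the Leibniz expansion combined with Lemma~\ref{lem:aux}\ref{item:proj2} and \eqref{eq:sharpsum}$_3$ for \ref{item:conv00}, the inverse estimates on $\ker(\A)$ chained with Lemma~\ref{lem:aux}\ref{item:proj3} and \eqref{eq:sharpsum}$_2$ for \ref{item:conv0}, and the telescope/mollification/uniform-continuity arguments for \ref{item:conv1}--\ref{item:conv1a} all match the paper's strategy. The only cosmetic difference is in \ref{item:conv1a}, where you subtract a point value $u(y_{j,m})$ whereas the paper subtracts the mean $(u)_{B_{j,i}^\sharp}$; both work since constants lie in $\ker(\A)$ and are fixed by $\Pi_{j,i}$.
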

\begin{proof}
Let $u\in\sobo^{\A,1}(\Omega)$. Ad~\ref{item:conv00}. For $j\geq j_{0}$, denote $\mathcal{I}_{j}$ the set of all indices $m\in\mathbb{N}$ such that $(\rho_{j}-\rho_{j+1})\rho_{j,m}\neq 0$. Since $\spt(\rho_{j}-\rho_{j+1})\subset\Omega_{j}\setminus\Omega_{j+2}$, $m\in\mathcal{I}_{j}$ implies that $\ball_{j,m}\cap (\Omega_{j}\setminus\Omega_{j+2})\neq\emptyset$. For such indices, we may use Lemma~\ref{lem:aux}~\ref{item:proj2}. In consequence, 
\begin{align*}
\|\mathrm{I}_{j}[u]\|_{\sobo^{\A,1}(\Omega)} & \leq \Big(\sum_{m\in\mathcal{I}_{j}}\sum_{l=0}^{k-1}\|D^{\ell}((\rho_{j}-\rho_{j+1})\rho_{j,m}(u-\Pi_{j,m}u))\|_{\lebe^{1}(\Omega)}\Big) \\ 
& \!\!\!\!\!\!\!\!\!\!\!\!\!\!\!\!\!\!\!\!\!\!\!\! + \Big(\sum_{m\in\mathcal{I}_{j}}\|\A ((\rho_{j}-\rho_{j+1})\rho_{j,m}(u-\Pi_{j,m}u))\|_{\lebe^{1}(\Omega)}\Big) \\ & \!\!\!\!\!\!\!\!\!\!\!\!\!\!\!\!\!\!\!\!\!\!\!\!\!\stackrel{(*)}{\leq} c\Big(\sum_{m\in\mathcal{I}_{j}}\sum_{l=0}^{k-1}\sum_{|\alpha|+|\beta|=\ell}\|\partial^{\alpha}((\rho_{j}-\rho_{j+1})\rho_{j,m})\|_{\lebe^{\infty}(\Omega)}\|\partial^{\beta}(u-\Pi_{j,m}u)\|_{\lebe^{1}(\ball_{j,m})}\Big) \\ 
& \!\!\!\!\!\!\!\!\!\!\!\!\!\!\!\!\!\!\!\!\!\!\!\! + c\Big(\sum_{m\in\mathcal{I}_{j}}\sum_{\substack{|\alpha|+|\beta|=k \\ |\beta|\leq k-1}}\|\partial^{\alpha}((\rho_{j}-\rho_{j+1})\rho_{j,m})\|_{\lebe^{\infty}(\Omega)}\|\partial^{\beta}(u-\Pi_{j,m}u)\|_{\lebe^{1}(\ball_{j,m})} \Big.\\ 
& + \sum_{m\in\mathcal{I}_{j}}\|(\rho_{j}-\rho_{j+1})\rho_{j,m}\A u\|_{\lebe^{1}(\Omega)}\Big) \\ 
& \!\!\!\!\!\!\!\!\!\!\!\!\!\!\!\!\!\!\!\!\!\!\!\!\!\!\!\!\!\!\!\!\!\!\!\!\!\!\!\!\!\!\stackrel{\text{Lem.}~\ref{lem:aux}\ref{item:proj2}, \eqref{eq:gradbound0}, \eqref{eq:gradbound}}{\leq} c\Big(\sum_{m\in\mathcal{I}_{j}}\sum_{l=0}^{k-1}\sum_{|\alpha|+|\beta|=\ell}2^{j|\alpha|}2^{-j(k-|\beta|)}\|\A u\|_{\lebe^{1}(\mathrm{ch}(\ball_{j,m},\ball_{j,m}^{\sharp}))}\Big) \\ 
& \!\!\!\!\!\!\!\!\!\!\!\!\!\!\!\!\!\!\!\!\!\!\!\! + c\Big(\sum_{m\in\mathcal{I}_{j}}\sum_{\substack{|\alpha|+|\beta|=k \\ |\beta|\leq k-1}}2^{j|\alpha|}2^{-j(k-|\beta|)}\|\A u\|_{\lebe^{1}(\mathrm{ch}(\ball_{j,m},\ball_{j,m}^{\sharp}))} + \sum_{m\in\mathcal{I}_{j}}\|\A u\|_{\lebe^{1}(\ball_{j,m})} \Big)\\ 
& \!\!\!\!\!\!\!\!\!\!\!\!\!\!\!\!\!\!\!\!\!\!\!\! \leq c\sum_{m\in\mathcal{I}_{j}}\|\A u\|_{\lebe^{1}(\mathrm{ch}(\ball_{j,m},\ball_{j,m}^{\sharp}))}\\ 
& \!\!\!\!\!\!\!\!\!\!\!\!\!\!\!\!\!\!\!\!\!\!\!\!\!\!\!\stackrel{\eqref{eq:sharpsum}_{3}}{\leq} c \|\A u\|_{\lebe^{1}(\Omega_{j-i_{0}}\setminus\Omega_{j+i_{0}})}. 
\end{align*}
Here we used at $(*)$ that, if we expand $\A(\varphi v)$ for sufficiently regular functions $\varphi\colon\Omega\to\R$ and $v\colon \Omega\to V$ by the Leibniz rule, then the only summand that involves $k$-th order derivatives of $v$ is of the form $\varphi\A v$. Hence, \ref{item:conv00} follows. Ad~\ref{item:conv0}. Let $\mathcal{J}_{j}^{m}$ be the set of all indices $i\in\mathbb{N}$ such that $\rho_{j+1}\rho_{j,m}\rho_{j+1,i}\neq 0$. Then, by \ref{item:C3}, $\# \mathcal{J}_{j}^{m}$ is uniformly bounded in $j$ and $m$. If $i\in\mathcal{J}_{j}^{m}$, then $\Omega_{j+1}\cap\ball_{j,m}\neq\emptyset$, $\Omega_{j+1}\cap\ball_{j+1,i}\neq\emptyset$ and $\ball_{j,m}\cap\ball_{j+1,i}\neq\emptyset$. As such, $\eqref{eq:sharpsum}_{2}$ is available for such indices. Let $|\beta|\leq k$. Since $\ker(\A)$ is a finite dimensional space by the $\mathbb{C}$-ellipticity of $\A$, we may invoke the inverse estimates $\eqref{eq:inverseFull}_{2}$ on elements of the nullspace of $\A$ to obtain for all $j\geq j_{0}$, $m\in\mathbb{N}$ and $i\in\mathcal{J}_{j}^{m}$: 
\begin{align*}
\|\partial^{\beta}(&\Pi_{j+1,i}u-\Pi_{j,m}u)\|_{\lebe^{1}(\ball_{j,m})} \leq \|D^{|\beta|}(\Pi_{j+1,i}u-\Pi_{j,m}u)\|_{\lebe^{1}(\ball_{j,m})} \\
& \;\;\leq C\diam(\ball_{j,m})^{n-|\beta|}\diam(\ball_{j,m})^{|\beta|}\dashint_{\ball_{j,m}}|D^{|\beta|}(\Pi_{j+1,i}u-\Pi_{j,m}u)|\dif x\\
& \;\;\leq C\diam(\ball_{j,m})^{n-|\beta|} \times \\ 
& \;\;\;\;\;\;\;\;\;\;\;\;\;\;\;\times \Big(\sum_{\ell=0}^{k}\diam(\ball_{j,m})^{\ell}\dashint_{\ball_{j,m}}|D^{\ell}(\Pi_{j+1,i}u-\Pi_{j,m}u)|\dif x\Big)\\
& \!\stackrel{\eqref{eq:inverseFull}_{2}}{\leq} C\diam(\ball_{j,m})^{n-|\beta|}\dashint_{\ball_{j,m}}|\Pi_{j+1,i}u-\Pi_{j,m}u|\dif x\\
& \!\!\!\!\!\stackrel{\text{Lem.~\ref{lem:aux}~\ref{item:proj3}}}{\leq} C\diam(\ball_{j,m})^{(k-|\beta|)}\|\A u\|_{\lebe^{1}(\ch(\ball_{j+1,i}^{\sharp},\ball_{j,m}^{\sharp}))}\\
& \,\,\,\leq C 2^{-j(k-|\beta|)}\|\A u\|_{\lebe^{1}(\ch(\ball_{j+1,i}^{\sharp},\ball_{j,m}^{\sharp}))}.
\end{align*}
As $\mathcal{J}_{j}^{m}$ is non-empty only if $m\in\mathcal{K}_{j}:=\{l\in\mathbb{N}\colon\;\Omega_{j+1}\cap\ball_{j,l}\neq\emptyset\}$, we obtain by \eqref{eq:gradbound0}, \eqref{eq:gradbound} and the preceding inequality
\begin{align*}
\|\mathrm{II}_{j}[u]\|_{\sobo^{k,1}(\Omega)} & \leq c \sum_{m\in\mathcal{K}_{j}}\sum_{i\in\mathcal{J}_{j}^{m}}\sum_{\ell=0}^{k}\sum_{|\alpha|+|\beta|=\ell}\Big(\|\partial^{\alpha}(\rho_{j+1}\rho_{j,m}\rho_{j+1,i})\|_{\lebe^{\infty}(\Omega)} \times \Big. \\ & \Big. \;\;\;\;\;\;\;\;\;\;\;\;\;\;\;\;\;\;\;\;\;\;\;\;\;\;\;\;\;\;\;\;\;\;\;\;\;\;\;\;\;\;\;\;\times\|\partial^{\beta}(\Pi_{j+1,i}u-\Pi_{j,m}u)\|_{\lebe^{1}(\ball_{j,m})}\Big)\\ 
& \leq c\sum_{m\in\mathcal{K}_{j}}\sum_{i\in\mathcal{J}_{j}^{m}}\sum_{\ell=0}^{k}\sum_{|\alpha|+|\beta|=\ell} 2^{j|\alpha|}2^{-j(k-|\beta|)}\|\A u\|_{\lebe^{1}(\ch(\ball_{j+1,i}^{\sharp},\ball_{j,m}^{\sharp}))} \\ 
& \leq c\sum_{m\in\mathcal{K}_{j}}\sum_{i\in\mathcal{J}_{j}^{m}}\sum_{\ell=0}^{k}2^{(\ell - k)j}\|\A u\|_{\lebe^{1}(\ch(\ball_{j+1,i}^{\sharp},\ball_{j,m}^{\sharp}))} \\ 
& \leq c\sum_{m\in\mathcal{K}_{j}}\sum_{i\in\mathcal{J}_{j}^{m}}\|\A u\|_{\lebe^{1}(\ch(\ball_{j+1,i}^{\sharp},\ball_{j,m}^{\sharp}))} \\ & \!\!\!\stackrel{\eqref{eq:sharpsum}_{2}}{\leq} c \|\A u\|_{\lebe^{1}(\Omega_{j-i_{0}}\setminus\Omega_{j+i_{0}})}
\end{align*}
This establishes \ref{item:conv0}. Ad~\ref{item:conv1}. By construction of the sets $\Omega_{j}$, $\sum_{j\geq l}\mathbbm{1}_{\Omega_{j-i_{0}}\setminus\Omega_{j+i_{0}}}\leq c(i_{0})\mathbbm{1}_{\Omega_{l-i_{0}}}$ for all $l\in\mathbb{N}$. Since $\|\cdot\|_{\sobo^{\A,1}}$ always can be dominated by $\|\cdot\|_{\sobo^{k,1}}$, we obtain by \ref{item:conv00} and \ref{item:conv0} for all $l'\geq l\geq j_{0}$ by virtue of a telescope sum argument
\begin{align*}
\|T_{l'}u-T_{l}u\|_{\sobo^{\A,1}(\Omega)}\leq \sum_{j=l}^{\infty}\|T_{j+1}u-T_{j}u\|_{\sobo^{\A,1}(\Omega)} \leq c \|\A u\|_{\lebe^{1}(\Omega_{l-i_{0}})}\to 0
\end{align*}
as $l,l'\to\infty$. Thus $(T_{l}u)$ is Cauchy in $\sobo^{\A,1}(\Omega)$ and thus converges to some $v\in\sobo^{\A,1}(\Omega)$. On the other hand, since by construction there holds $\spt(u-T_{l}u)\subset\Omega_{l}$, we have $\|u-T_{l}u\|_{\sobo^{\A,1}(\omega)}\to 0$ for any open $\omega\Subset\Omega$ as $l\to\infty$, and so we must have $v=u$. Therefore, $T_{j}u\to u$ in $\sobo^{\A,1}(\Omega)$ as $j\to\infty$, which is \ref{item:conv1}.

Ad~\ref{item:conv2a}. We recall the decomposition~\eqref{eq:replacementsequence}. Since $T_{j}^{(2)}u$ is a locally finite sum of $\hold^{\infty}$-maps, $T_{j}^{(2)}u\in\hold^{\infty}(\overline{\Omega};V)$. Moreover, since $T_{j}^{(1)}u\in\sobo^{\A,1}(\Omega)$ is compactly supported in $\Omega$, standard mollification yields some $v_{j}\in\hold_{c}^{\infty}(\Omega;V)$ such that $\|T_{j}^{(1)}u-v_{j}\|_{\sobo^{\A,1}(\Omega)}<\frac{1}{j}$.  Put $u_{j}:=v_{j}+T_{j}^{(2)}u$ so that $u_{j}\in\hold^{\infty}(\overline{\Omega};V)$, and then  \ref{item:conv1} implies $\|u-u_{j}\|_{\sobo^{\A,1}(\Omega)}\to 0$ as $j\to\infty$. 

To see \ref{item:conv1a}, let $\varepsilon>0$. By uniform continuity of $u$, we find $\delta>0$ such that $|x-y|<\delta$ implies $|u(x)-u(y)|<\varepsilon$. Let $\ball_{j,i}$ be such that $\ball_{j,i}\cap\Omega_{j}\neq\emptyset$. By \ref{item:C4} and assuming that $j\geq j_{0}$, there exists a constant $\mathtt{C}>0$ independent of $j$ and $i$ such that $|x-y|<\mathtt{C}2^{-j}$ holds for all $x\in\ball_{j,i}$ and $y\in\ball_{j,i}^{\sharp}$; moreover, we may assume that $\diam(\ball_{j,i}^{\sharp})<\mathtt{C}2^{-j}$ as well. We thus find $J_{0}\geq j_{0}$ such that for all $j\geq J_{0}$ there holds $\mathtt{C}2^{-j}\leq \delta$. Thus, invoking Lemma~\ref{lem:aux}\ref{item:proj1} and because of $(u)_{\ball_{j,i}^{\sharp}}=\Pi_{j,i}(u)_{\ball_{j,i}^{\sharp}}$,
\begin{align*}
\|\rho_{j}\rho_{j,i}(u-\Pi_{j,i}u)\|_{\lebe^{\infty}(\Omega)} & \leq \|u-(u)_{\ball_{j,i}^{\sharp}}\|_{\lebe^{\infty}(\ball_{j,i}\cap\Omega_{j})}+\|\Pi_{j,i}(u-(u)_{\ball_{j,i}^{\sharp}})\|_{\lebe^{\infty}(\ball_{j,i}\cap\Omega_{j})} \\ 
& \leq \sup_{x\in\ball_{j,i}\cap\Omega_{j}}\dashint_{\ball_{j,i}^{\sharp}}|u(x)-u(y)|\dif y + c\dashint_{\ball_{j,i}^{\sharp}}|u-(u)_{\ball_{j,i}^{\sharp}}|\dif y \\ 
& \leq (1+c)\varepsilon. 
\end{align*}
Since $u-T_{j}u=\sum_{i}\rho_{j}\rho_{j,i}(u-\Pi_{j,i}u)$ and the $\ball_{j,i}$'s have mutual uniformly finite overlap (in $i$ for each fixed $j$), we thus obtain $T_{j}u\to u$ in $\lebe^{\infty}(\Omega;V)$ and hence \ref{item:conv1a} follows. The proof is complete. 
\end{proof}
For the following, let $(\mathscr{X}(\partial\Omega;V);\|\cdot\|_{\mathscr{X}(\partial\Omega)})$ be a Banach space with $\mathscr{X}(\partial\Omega;V)\subset\lebe_{\locc}^{1}(\partial\Omega;V)$ such that $\sobo^{k,1}(\Omega;V)$ has trace space $\mathscr{X}(\partial\Omega;V)$ (cf. Theorem~\ref{thm:main1}~ff. for this terminology); we denote $\trace_{\partial\Omega}\colon\sobo^{k,1}(\Omega;V)\to\mathscr{X}(\partial\Omega;V)$ the underlying trace operator.
\begin{corollary}\label{cor:main}
There exists a constant $c=c(\Omega,\A)>0$ such that the following holds: Whenever $u\in\hold^{\infty}(\overline{\Omega};V)$ and $j\geq j_{0}$ (with $j_{0}\in\mathbb{Z}$ as in \ref{item:C4}), then there holds 
\begin{align}\label{eq:tracesconverge}
\sum_{j=j_{0}}^{\infty}\|\trace_{\partial\Omega}(T_{j+1}u)-\trace_{\partial\Omega}(T_{j}u)\|_{\mathscr{X}(\partial\Omega)}\leq c\|\A u\|_{\lebe^{1}(\Omega)}. 
\end{align}
\end{corollary}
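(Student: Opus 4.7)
The plan is to exploit the splitting \eqref{eq:tracerewrite}, namely $T_{j+1}u-T_{j}u = \mathrm{I}_{j}[u] + \mathrm{II}_{j}[u]$, and to show that the first summand contributes nothing to the boundary trace while the second summand is controlled in $\sobo^{k,1}(\Omega;V)$ by the content of Proposition~\ref{lem:convergenceofTj}\ref{item:conv0}. Crucially, $u\in\hold^{\infty}(\overline{\Omega};V)$ guarantees that both $\mathrm{I}_{j}[u]$ and $\mathrm{II}_{j}[u]$ belong to $\sobo^{k,1}(\Omega;V)$, so the trace operator $\trace_{\partial\Omega}$ from the hypothesis applies to each of them. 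Recall the assumption in Corollary~\ref{cor:main} that $\sobo^{k,1}(\Omega;V)$ admits a bounded linear trace operator onto $\mathscr{X}(\partial\Omega;V)$, with some operator norm $C_{\trace}>0$.

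First I would observe that $\mathrm{I}_{j}[u]$ has trace zero. Indeed, its cutoff factor $(\rho_{j}-\rho_{j+1})$ is supported in $\Omega_{j}\setminus\Omega_{j+2}$ by the bracketing $\mathbbm{1}_{\Omega_{j+1}}\leq\rho_{j}\leq\mathbbm{1}_{\Omega_{j}}$, and this set keeps a positive distance $\geq 2^{-j-2}$ from $\partial\Omega$. Hence $\mathrm{I}_{j}[u]$ vanishes in a neighbourhood of $\partial\Omega$, so $\trace_{\partial\Omega}(\mathrm{I}_{j}[u])=0$. Consequently,
\begin{align*}
\|\trace_{\partial\Omega}(T_{j+1}u)-\trace_{\partial\Omega}(T_{j}u)\|_{\mathscr{X}(\partial\Omega)}
  = \|\trace_{\partial\Omega}(\mathrm{II}_{j}[u])\|_{\mathscr{X}(\partial\Omega)}.
\end{align*}

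Next I would invoke Proposition~\ref{lem:convergenceofTj}\ref{item:conv0} to bound this by $C_{\trace}\cdot c\,\|\A u\|_{\lebe^{1}(\Omega_{j-i_{0}}\setminus\Omega_{j+i_{0}})}$. Summing in $j$ and using the uniformly finite overlap $\sum_{j\geq j_{0}}\mathbbm{1}_{\Omega_{j-i_{0}}\setminus\Omega_{j+i_{0}}}\leq 2i_{0}\,\mathbbm{1}_{\Omega}$ (each point of $\Omega$ lies in at most $2i_{0}$ of the annular shells, since the sets $\Omega_{l}\setminus\Omega_{l+1}$ are pairwise disjoint), one arrives at
\begin{align*}
\sum_{j=j_{0}}^{\infty}\|\trace_{\partial\Omega}(T_{j+1}u)-\trace_{\partial\Omega}(T_{j}u)\|_{\mathscr{X}(\partial\Omega)}
 \leq c \sum_{j=j_{0}}^{\infty}\|\A u\|_{\lebe^{1}(\Omega_{j-i_{0}}\setminus\Omega_{j+i_{0}})}
 \leq c'\|\A u\|_{\lebe^{1}(\Omega)},
\end{align*}
which is the desired estimate.

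No step should be particularly delicate here, since Proposition~\ref{lem:convergenceofTj}\ref{item:conv0} has already done the substantive work of translating $\mathbb{C}$-ellipticity into $\sobo^{k,1}$-control on the telescoping differences; the only mild subtlety is the verification that $\mathrm{I}_{j}[u]$ has vanishing trace, which I would make precise by noting that $(\rho_{j}-\rho_{j+1})$ is compactly supported in $\Omega$ and the remaining factor $\sum_{m}\rho_{j,m}(u-\Pi_{j,m}u)$ is a locally finite $\hold^{\infty}$-sum so that $\mathrm{I}_{j}[u]\in\hold^{\infty}(\overline{\Omega};V)$ with support at distance $\geq 2^{-j-2}$ from $\partial\Omega$.
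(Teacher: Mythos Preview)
Your proposal is correct and follows essentially the same route as the paper: split $T_{j+1}u-T_{j}u=\mathrm{I}_{j}[u]+\mathrm{II}_{j}[u]$ via \eqref{eq:tracerewrite}, observe that $\mathrm{I}_{j}[u]$ has support at distance $\geq 2^{-j-2}$ from $\partial\Omega$ and hence zero trace, apply the $\sobo^{k,1}$-trace bound together with Proposition~\ref{lem:convergenceofTj}\ref{item:conv0} to $\mathrm{II}_{j}[u]$, and sum using the uniformly finite overlap of the annuli $\Omega_{j-i_{0}}\setminus\Omega_{j+i_{0}}$. The paper's proof is identical in structure; your version even makes the overlap constant and the $\hold^{\infty}(\overline{\Omega};V)$-membership of $\mathrm{I}_{j}[u]$ slightly more explicit.
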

\begin{proof}
Let $u\in\hold^{\infty}(\overline{\Omega};V)$ and $j\geq j_{0}$. Then \eqref{eq:tracerewrite} implies that $T_{j+1}u-T_{j}u$ is a locally finite sum of $\hold^{\infty}(\overline{\Omega};V)$-maps and so is of class $\hold^{\infty}(\overline{\Omega};V)$, too. Since $\spt(\mathrm{I}_{j}[u])\subset\{x\in\Omega\colon\;\dista(x,\partial\Omega)> 2^{-j-2}\}$ because of $\spt(\rho_{j}-\rho_{j+1})\subset\Omega_{j}\setminus\Omega_{j+2}$, we have $\trace_{\partial\Omega}(\mathrm{I}_{j}[u])=0$. 
Since $\sobo^{k,1}(\Omega;V)$ has trace space $\mathscr{X}(\partial\Omega;V)$, there exists a constant $c>0$ such that $\|\trace_{\partial\Omega}(v)\|_{\mathscr{X}(\partial\Omega)}\leq c\|v\|_{\sobo^{k,1}(\Omega)}$ for all $v\in\sobo^{k,1}(\Omega;V)$. Now, by \eqref{eq:tracerewrite} and Proposition~\ref{lem:convergenceofTj}~\ref{item:conv0}, this entails
\begin{align}\label{eq:tracesconvergeA}
\begin{split}
\sum_{j=j_{0}}^{\infty}\|\trace_{\partial\Omega}(T_{j+1}u)-\trace_{\partial\Omega}(T_{j}u)\|_{\mathscr{X}(\partial\Omega)} & = \sum_{j=j_{0}}^{\infty}\|\trace_{\partial\Omega}(\mathrm{II}_{j}[u])\|_{\mathscr{X}(\partial\Omega)} \\ 
& \leq c\sum_{j=j_{0}}^{\infty}\|\mathrm{II}_{j}[u]\|_{\sobo^{k,1}(\Omega)} \\ 
& \!\!\!\!\!\!\!\!\!\stackrel{\text{Prop.}~\ref{lem:convergenceofTj}~\ref{item:conv0}}{\leq} c\sum_{j=j_{0}}^{\infty}\|\A u\|_{\lebe^{1}(\Omega_{j-i_{0}}\setminus\Omega_{j+i_{0}})} \\ 
& \leq c\|\A u\|_{\lebe^{1}(\Omega_{j_{0}-i_{0}})}, 
\end{split}
\end{align}
where in the ultimate step we used that the sets $\Omega_{j-i_{0}}\setminus\Omega_{j+i_{0}}$ only have a uniformly finite mutual overlap. This is \eqref{eq:tracesconverge}, and the proof is complete. 
\end{proof}
Before we proceed, we briefly pause to comment on the preceding proof in  
\begin{remark}\label{rem:keyidea2ndapproach}
Based on Proposition~\ref{lem:convergenceofTj}~\ref{item:conv0}, in the second line of \eqref{eq:tracesconvergeA} we have employed \emph{full $k$-th order gradient $\lebe^{1}$-estimates} to obtain the requisite $\mathscr{X}(\partial\Omega;V)$-estimates for the traces of $T_{j+1}u-T_{j}u$. We wish to emphasize that a similar strategy \emph{does not} yield any uniform $\lebe^{1}$-bounds on the sequence $(D^{k}(T_{j+1}u-T_{j}u))_{j}$, and so this is in line with \textsc{Ornstein}'s Non-Inequality. Indeed, even for $u\in\hold^{\infty}(\overline{\Omega};V)$ the term $\mathrm{I}_{j}[u]$ in \eqref{eq:tracerewrite}, for which Proposition~\ref{lem:convergenceofTj} only provides $\sobo^{\A,1}$- but not $\sobo^{k,1}$-bounds, might have diverging $\sobo^{k,1}$-norm as $j\to\infty$: Precisely by \textsc{Ornstein}'s Non-Inequality, when $D^{k}$ acts on $u$ in the term $\mathrm{I}_{j}[u]$ in \eqref{eq:tracerewrite}, one has $\limsup_{j\to\infty}\|\mathrm{I}_{j}[u]\|_{\sobo^{k,1}(\Omega)}=\infty$ in general. By the inverse estimates on polynomials of a fixed degree, this obstruction becomes invisible for $\mathrm{II}_{j}[u]$, and only $\mathrm{II}_{j}[u]$ matters for the requisite trace estimate \eqref{eq:tracesconverge}. 
\end{remark}
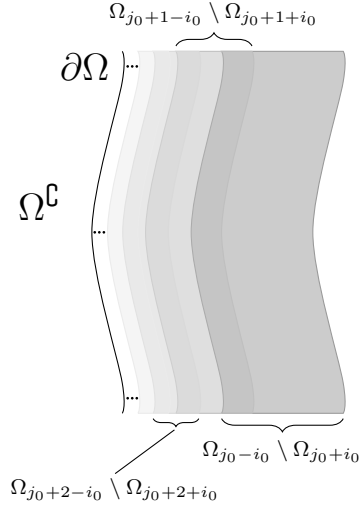
\begin{figure}
\begin{tikzpicture}[scale=2]
  \begin{scope}
    \draw  plot [smooth] coordinates { (0.047,-1.2) (0.047,-1) (-.153,0) (0.047,1) (0.047,1.2)};
       \draw [fill= black!40!white, color=black!40!white, fill opacity=0.5] plot [smooth] coordinates { (0.7,-1.2) (0.7,-1) (0.5,0) (0.7,1) (0.7,1.2)} -- (1.5,1.2) -- plot [smooth] coordinates  { (1.5,1.2) (1.5,1) (1.3,0) (1.5,-1) (1.5,-1.2)}   -- (0.7,-1.2);
       \draw [fill= black!25!white, color=black!25!white, fill opacity=0.5] plot [smooth] coordinates { (0.4,-1.2) (0.4,-1) (0.2,0) (0.4,1) (0.4,1.2)} -- (0.9,1.2) -- plot [smooth] coordinates  { (0.9,1.2) (0.9,1) (0.7,0) (0.9,-1) (0.9,-1.2)}   -- (0.4,-1.2);
        \draw [fill= black!15!white, color=black!15!white, fill opacity=0.6] plot [smooth] coordinates { (0.25,-1.2) (0.25,-1) (0.05,0) (0.25,1) (0.25,1.2)} -- (0.55,1.2) -- plot [smooth] coordinates  { (0.55,1.2) (0.55,1) (0.35,0) (0.55,-1) (0.55,-1.2)}   -- (0.25,-1.2);
         \draw [fill= black!8!white, color=black!8!white, fill opacity=0.5] plot [smooth] coordinates { (0.15,-1.2) (0.15,-1) (-0.05,0) (0.15,1) (0.15,1.2)} -- (0.35,1.2) -- plot [smooth] coordinates  { (0.35,1.2) (0.35,1) (0.15,0) (0.35,-1) (0.35,-1.2)}   -- (0.15,-1.2);
      \node at (-0.2,1.1) {\LARGE $\partial\Omega$};
      \node at (-0.5,0.2) {\LARGE $\Omega^{\complement}$};
      \draw [decorate,decoration={brace,amplitude=5pt,mirror,raise=2pt},yshift=0pt]
(0.7,-1.2) -- (1.5,-1.2) node [black,midway,xshift=0cm,yshift = -0.5cm] {\footnotesize $\Omega_{j_{0}-i_{0}}\setminus\Omega_{j_{0}+i_{0}}$};
\draw [decorate,decoration={brace,amplitude=5pt,raise=2pt},yshift=0pt]
(0.4,1.2) -- (0.9,1.2) node [black,midway,xshift=0cm,yshift = 0.5cm] {\footnotesize $\Omega_{j_{0}+1-i_{0}}\setminus\Omega_{j_{0}+1+i_{0}}$};
  \draw [decorate,decoration={brace,amplitude=3pt,mirror,raise=2pt},yshift=0pt]
(0.25,-1.2) -- (0.55,-1.2) node [black,midway,xshift=0cm,yshift = -0.5cm] {};
\node at (0,-1.7) {\footnotesize $\Omega_{j_{0}+2-i_{0}}\setminus\Omega_{j_{0}+2+i_{0}}$};
\draw [-] (0,-1.6) -- (0.39,-1.31);
\node at (0.0825,1.1) [circle,fill,inner sep=0.1pt]{};
\node at (0.1125,1.1) [circle,fill,inner sep=0.1pt]{};
\node at (0.1425,1.1) [circle,fill,inner sep=0.1pt]{};
\node at (-0.074,0) [circle,fill,inner sep=0.1pt]{};
\node at (-0.104,0) [circle,fill,inner sep=0.1pt]{};
\node at (-0.134,0) [circle,fill,inner sep=0.1pt]{};
\node at (0.0825,-1.1) [circle,fill,inner sep=0.1pt]{};
\node at (0.1125,-1.1) [circle,fill,inner sep=0.1pt]{};
\node at (0.1425,-1.1) [circle,fill,inner sep=0.1pt]{};
      \end{scope}
      \end{tikzpicture}
      \caption{Idea of the trace estimate of Theorem~\ref{thm:main1}. In the telescope sum \eqref{eq:telescope}, the trace of $u\in\hold^{\infty}(\overline{\Omega};V)$ is decomposed into a contribution from $T_{j_{0}}u$ and a sum over $\trace_{\partial\Omega}(T_{j+1}u-T_{j}u)=\trace_{\partial\Omega}(\mathrm{II}_{j}[u])$. By construction, $\mathrm{II}_{j}[u]$ is determined by the projections of $u$ onto $\ker(\A)$ on reflected balls contained in the strips $\Omega_{j-i_{0}}\setminus\Omega_{j+i_{0}}$; the latter have uniformly finite mutual overlap and yield the entire $\Omega$ when united.}
\end{figure}
Towards the proof of Theorem~\ref{thm:main1}, let us note that it is only the sufficiency part that requires proof; similarly as for Theorem~\ref{thm:main}, the necessity part can be established by localising the construction in \cite[Proof of Theorem~5.2]{GRVS}. Indeed, if $\A$ is not $\mathbb{C}$-elliptic, the aforementioned construction leads to some smoothly bounded $\Omega\subset\R^{n}$ and a $\sobo^{\A,1}$-bounded sequence $(u_{j})\subset\hold^{\infty}(\overline{\Omega};V)$ for which $(D^{k-1}u_{j})$ is unbounded in $\lebe^{1}(\partial\Omega;\odot^{k-1}(\R^{n};V))$. Since $\sobo^{k,1}(\Omega;V)$ has trace space $\besov_{1,1}^{k-1}(\Omega;V)$, $\sobo^{\A,1}(\Omega)$ cannot have the same trace space as $\sobo^{k,1}(\Omega)$. It thus suffices to give the
\begin{proof}[Proof of the sufficiency part of Theorem~\ref{thm:main1}]
Let $\A$ be a $\mathbb{C}$-elliptic operator and let $u\in\hold^{\infty}(\overline{\Omega};V)$ be given. Since $u\in\hold^{\infty}(\overline{\Omega};V)$, Lemma~\ref{lem:convergenceofTj}~\ref{item:conv1a} implies by a telescope sum argument
\begin{align}\label{eq:telescope}
\trace_{\partial\Omega}(u)-\trace_{\partial\Omega}(T_{j_{0}}u)=\sum_{j=j_{0}}^{\infty}\trace_{\partial\Omega}(T_{j+1}u)-\trace_{\partial\Omega}(T_{j}u)
\end{align}
everywhere on $\partial\Omega$. Based on the decomposition \eqref{eq:replacementsequence}, we recall that for $u\in\hold^{\infty}(\overline{\Omega};V)$ there holds $\|\trace_{\partial\Omega}(T_{j_{0}}^{(1)}u)\|_{\mathscr{X}(\partial\Omega)}=0$, and so we proceed  by proving that the function $T_{j_{0}}^{(2)}u$ belongs to $\sobo^{k,1}(\Omega;V)$ together with the requisite estimates. Since $T_{j_{0}}^{(2)}u\in\hold^{\infty}(\overline{\Omega};V)$, it possesses classical traces and so it suffices to give an estimate of the $\sobo^{k,1}$-norm of $T_{j_{0}}^{(2)}u$. Denoting $\mathcal{L}_{j_{0}}$ the set of indices $i\in\mathbb{N}$ such that $\Omega_{j_{0}}\cap\ball_{j_{0},i}\neq\emptyset$,
\begin{align}\label{eq:Tj0PreEst}
\begin{split}
\|T_{j_{0}}^{(2)}u\|_{\sobo^{k,1}(\Omega)} & \leq c\sum_{\ell=0}^{k}\sum_{\substack{|\alpha|+|\beta|=\ell}}\sum_{i\in\mathcal{L}_{j_{0}}}\|(\partial^{\alpha}(\rho_{j_{0}}\rho_{j_{0},i}))(\partial^{\beta}\Pi_{j_{0},i}u)\|_{\lebe^{1}(\Omega)}\\
& \!\!\!\!\!\!\!\stackrel{\eqref{eq:gradbound0},\,\eqref{eq:gradbound}}{\leq}  c\sum_{\ell=0}^{k}\sum_{\substack{|\alpha|+|\beta|=\ell}}\sum_{i\in\mathcal{L}_{j_{0}}}2^{j_{0}|\alpha|} \times \\ & \;\;\;\;\;\;\;\;\;\;\;\;\;\;\;\;\;\;\;\;\;\;\;\;\times 2^{-j_{0}(n-|\beta|)}2^{-j_{0}|\beta|}\dashint_{\ball_{j_{0},i}}|\partial^{\beta}\Pi_{j_{0},i}u|\dif x \\ 
& \!\!\!\!\!\!\!\!\!\!\!\!\!\!\!\!\stackrel{\eqref{eq:inverseFull}_{2},\,\text{Lem.}~\ref{lem:aux}\ref{item:proj2}}{\leq} c\sum_{\ell=0}^{k}2^{j_{0}\ell}2^{-j_{0}n}\sum_{i\in\mathcal{L}_{j_{0}}}\dashint_{\ball_{j_{0},i}^{\sharp}}|u|\dif x\\
& \!\!\!\stackrel{\eqref{eq:sharpsum}_{1}}{\leq} c2^{j_{0}k}\|u\|_{\lebe^{1}(\Omega_{j_{0}-i_{0}}\setminus\Omega_{j_{0}+i_{0}})},
\end{split}
\end{align}
where we additionally used the uniform finite overlap property of the $\ball_{j,i}$'s. Since, by assumption, $\sobo^{k,1}(\Omega;V)$ has trace space $\mathscr{X}(\partial\Omega;V)$, we consequently obtain 
\begin{align}\label{eq:Tj0estimate}
\begin{split}
\|\trace_{\partial\Omega}(T_{j_{0}}u)\|_{\mathscr{X}(\partial\Omega)} & = \|\trace_{\partial\Omega}(T_{j_{0}}^{(2)}u)\|_{\mathscr{X}(\partial\Omega)}  \leq c2^{j_{0}k}\|u\|_{\lebe^{1}(\Omega)}.
\end{split}
\end{align}
Combining \eqref{eq:telescope}, \eqref{eq:Tj0estimate} and \eqref{eq:tracesconverge}, we then obtain
\begin{align}\label{eq:traceWin1}
\|\trace_{\partial\Omega}(u)\|_{\mathscr{X}(\partial\Omega)} & \leq c2^{j_{0}k}\|u\|_{\lebe^{1}(\Omega)} + c\|\A u\|_{\lebe^{1}(\Omega)}\qquad\text{for all}\;u\in\hold^{\infty}(\overline{\Omega};V). 
\end{align}
If $u\in\sobo^{\A,1}(\Omega)$, we invoke Lemma~\ref{lem:convergenceofTj}~\ref{item:conv2a} to find $(u_{j})\subset\hold^{\infty}(\overline{\Omega};V)$ such that $u_{j}\to u$ in $\sobo^{\A,1}(\Omega)$. As in the proof of Lemma~\ref{lem:GaussGreen} and since $(\mathscr{X}(\partial\Omega),\|\cdot\|_{\mathscr{X}(\partial\Omega)})$ is assumed Banach, we may define as a limit in $\mathscr{X}(\partial\Omega;V)$
\begin{align}\label{eq:traceWin2}
\widetilde{\trace}_{\partial\Omega}(u):=\lim_{j\to\infty}\trace_{\partial\Omega}(u_{j}), 
\end{align}
being independent of the approximating sequence $(u_{j})\subset\hold^{\infty}(\overline{\Omega};V)$. It is then straightforward to verify that $\widetilde{\trace}_{\partial\Omega}\colon\sobo^{\A,1}(\Omega)\to\mathscr{X}(\partial\Omega;V)$ is bounded and linear. Clearly $\widetilde{\trace}_{\partial\Omega}$ and $\trace_{\partial\Omega}$ coincide on $\sobo^{k,1}(\Omega;V)$, and since $\trace_{\partial\Omega}\colon\sobo^{k,1}(\Omega;V)\to\mathscr{X}(\partial\Omega;V)$ is surjective, $\widetilde{\trace}_{\partial\Omega}\colon\sobo^{\A,1}(\Omega)\to\mathscr{X}(\partial\Omega;V)$ is onto, too. The proof is complete.
\end{proof}

 We conclude this section with the following
\begin{remark}[Homogeneous trace inequalities]
To connect with the results of Section~\ref{sec:traces-estimates-via}, let $\Omega=\mathbb{H}$ be an open halfspace and recall that ${\dot\sobo}{^{k,1}}(\mathbb{H};V)$ has trace space ${\dot\besov}{_{1,1}^{k-1}}(\partial\mathbb{H};V)$. Given $u\in\hold_{c}^{\infty}(\overline{\bbH};V)$, we define $T_{j}u$ as in \eqref{eq:replacementsequence}. Note that in this particular geometric situation, the number as it appears in \ref{item:C4} can be chosen to be arbitrarily negative. Proposition~\ref{lem:convergenceofTj} directly inherits to the homogeneous situation and hence one obtains \eqref{eq:tracesconverge} with $j_{0}=-\infty$. As a substitute of \eqref{eq:Tj0PreEst} we then obtain 
\begin{align}\label{eq:Tj0PreEst1}
\|T_{j_{0}}^{(2)}u\|_{{\dot\sobo}{^{k,1}}(\Omega)} \leq c2^{j_{0}k}\|u\|_{\lebe^{1}(\Omega_{j_{0}-i_{0}}\setminus\Omega_{j_{0}+i_{0}})}.
\end{align} 
Since $u\in\hold_{c}^{\infty}(\overline{\mathbb{H}};V)$, the term on the right-hand side of \eqref{eq:Tj0PreEst1} will vanish as $j_{0}\to-\infty$.
Based on the \textsc{Uspenski\u{\i}} estimate \eqref{eq:Uspenskiimain}, an analogous argument as that for \eqref{eq:traceWin1} then equally yields $
\|u\|_{{\dot\besov}{_{1,1}^{k-1}}(\partial\mathbb{H})}\leq c\|\A u\|_{\lebe^{1}(\bbH)}$ for all  $u\in\hold_{c}^{\infty}(\overline{\bbH};V)$. 
\end{remark}

\section{Proof of Theorem~\ref{thm:Korn}: Korn without global singular integral estimates}\label{sec:Korn}
We finally turn to the Korn-type inequality of Theorem~\ref{thm:Korn} for John domains, which form the canonical class of domains for such inequalities. Since such domains in general need not be extension domains for $\sobo^{\A,p}$, the global Calder\'{o}n-Zygmund estimates from the proof of Theorem~\ref{thm:mainLp} cannot be utilised and a local argument is required. In Section~\ref{sec:decomp}, which should be of independent interest, we provide a generalisation of a decomposition for maps on John domains by  \textsc{\Ruzicka{}, Schumacher} and the first author \cite{DieRuzSch10}, the latter being only applicable to maps with zero mean. Building on a best approximation result and an inequality of \textsc{Fefferman-Stein}-type (Sections~\ref{sec:bestapprox} and \ref{sec:feffermanstein}),  we obtain even more general Korn-type inequalities by means of extrapolation (so e.g. involving Orlicz- or Lorentz norms) once Korn-type inequalities are provided for weighted Lebesgue spaces, see Sections~\ref{sec:Kornmain} and \ref{sec:Kornextend}. Hence, even though Theorem~\ref{thm:Korn} is only stated for the usual $\lebe^{p}$-spaces, we directly deal with the weighted setting throughout.

\subsection{Weights and a decomposition theorem for John domains}\label{sec:decomp}
As usual, a weight $w\,:\, \Rn \to \setR$ with $w > 0$ almost everywhere is said to be a $A_q$-\emph{Muckenhoupt weight} with $1 \leq q < \infty$, in short $w \in A_q$, if
\begin{align}\label{eq:weights}
\begin{split}
 & [w]_{A_q} := \sup_Q \dashint_Q w\dif x \bigg(\dashint_Q w^{-\frac{1}{q-1}}\dif x\bigg)^{q-1} < \infty,\qquad\text{if}\;1<q<\infty,\\ 
 & [w]_{A_1} := \sup_{Q} \Bigg[ \dashint_Q w\,\dif x\; \sup_{x\in Q} \frac{1}{w(x)}
  \bigg] < \infty,\qquad\qquad\;\;\,\text{if}\;q=1.
\end{split}
\end{align}
The class $A_\infty$ is defined by $A_\infty=\bigcup_{q>1} A_q$ and endowed with $[w]_{A_{\infty}}:=\lim_{p\to\infty}[w]_{A_{p}}$. Now let $\Omega\subset\R^{n}$ be open and bounded. For $w\in A_{\infty}$, $\lebe_{w}^{q}(\Omega;V)$ is defined in the obvious manner. For a finite dimensional real vector space $E$, a subspace $\mathcal{N}\subset\mathscr{P}_{m}(\R^{n};E)$ and a space $X(\Omega;E)\subset\lebe^{1}(\Omega;E)$, we define 
\begin{align}\label{eq:orthspace}
X_{\mathcal{N}}(\Omega;E):=\left\{f\in X(\Omega;E)\colon\;\int_{\Omega}f\cdot\pi\dif x = 0\;\;\;\text{for all}\;\pi\in\mathcal{N} \right\}.
\end{align}
If $w\in A_{q}$ with $1\leq q < \infty$ and $f\in \lebe_{w}^{q}(\Omega;E)$, then H\"{o}lder's inequality and \eqref{eq:weights} imply that $f\in\lebe^{1}(\Omega;E)$. In particular, for such weights $w$, $\lebe_{w,\mathcal{N}}^{q}(\Omega;E)$ is well-defined. We can now state the decomposition theorem that shall later be applied to~$\mathcal{N} = \nabla^{k} \ker(\bbA)$; as usual, $\mathcal{M}$ denotes the Hardy-Littlewood maximal operator. 
\begin{theorem}[Decomposition theorem]
  \label{thm:decomposition}
  Let $\Omega\subset \setR^n$ be an open and bounded John domain or an open and bounded
  domain satisfying the emanating chain condition with constants
  $\sigma_1, \sigma_2$ and chain-covering
  $\mathcal{W} = \set{W_i \,:\, i \in \setN_0}$.  Then there exists a
  family of linear operators
  $T_{i}\colon \hold^\infty_{c,\mathcal{N}}(\Omega;E) \to \hold^\infty_{c,\mathcal{N}}(W_i ;E)$,
  $i \in \setN_0$, such that for all $1<q <\infty$ and all $w \in A_q$
  the following holds:
  \begin{enumerate}
   \item \label{itm:mainMf} For each $i \in \setN_0$ and all $f \in
    \lebe^q_{w,\mathcal{N}}(\Omega ;E)$ there holds
    \begin{align} \label{eq:20}
    \abs{T_i f} &\leq c(\sigma_2,\mathcal{N})\,\mathbbm{1}_{W_i}\, \mathcal{M}(\mathbbm{1}_{\Omega}f) \quad \text{$\mathscr{L}^{n}$-almost
        everywhere}.
    \end{align}
  \item \label{itm:Lqw0Wi} For each $i \in \setN_0$ the operator $T_i$
    maps $\lebe^q_{w,\mathcal{N}}(\Omega;E)$ boundedly to $\lebe^q_{w,\mathcal{N}}(W_i ;E)$.
     \item\label{itm:mainsum} The family $\{T_{i} f\colon\;i\in\mathbb{N}_{0}\}$ is a decomposition
    of~$f$ in $\lebe^q_{w,\mathcal{N}}(\Omega;E)$, i.e.,
    \begin{align}
      \label{eq:mainsum}
      f = \sum_{i \in\mathbb{N}_{0}} T_i f \qquad \text{in }\,\lebe^q_{w,\mathcal{N}}(\Omega;E)
    \end{align}
    for all $f \in \lebe^q_{w,\mathcal{N}}(\Omega;E)$. The convergence is {\em
      unconditionally}.
  \item \label{itm:mainlqLq} The mapping $f \mapsto (\norm{T_i
      f}_{\lebe^q_{w}(W_i)})_i$ from $\lebe^q_{w,\mathcal{N}}(\Omega;E)$ to
    $l^q(\setN_0)$ is bounded and 
    \begin{align}
      \label{eq:19}
      \frac{1}{c} \norm{f }_{\lebe^q_w(\Omega)}
      &\leq
        \bigg( \sum_{i\in\mathbb{N}_{0}} \norm{T_i f}_{\lebe^q_w(W_i)}^q
        \bigg)^{\frac{1}{q}} \leq c\,\norm{f}_{\lebe^q_w(\Omega)}
    \end{align}
    with a constant $c=c(\sigma_1, \sigma_2, q, [w]_{A_{q}},\mathcal{N})>0$.
  \item \label{itm:Cinfty} If $f \in
    \hold^\infty_{c,\mathcal{N}}(\Omega;E)$, then $\set{i\in\mathbb{N}_{0}\,:\, T_i f \not= 0}$
    is finite.
  \end{enumerate}
\end{theorem}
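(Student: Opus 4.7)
The plan is to extend the Bogovski\u{\i}-type decomposition of \cite{DieRuzSch10} (which covers the case $\mathcal{N}=\operatorname{Span}\{1\}$ of mean-zero maps) to an arbitrary finite-dimensional subspace $\mathcal{N}\subset\Pol_{m}(\R^{n};E)$, proceeding in three stages.

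\emph{Stage 1: local projections.} First I would fix a smooth partition of unity $\{\psi_{i}\}_{i\in\mathbb{N}_{0}}$ subordinate to the chain-covering $\mathcal{W}$ with the standard scaling $\|\nabla^{\ell}\psi_{i}\|_{\infty}\lesssim r(W_{i})^{-\ell}$. On each $W\in\mathcal{W}$, using a basis of $\mathcal{N}$ together with its $\lebe^{2}(W)$-dual basis, I would construct a linear projection $\mathbb{P}^{W}\colon\lebe^{1}(W;E)\to\mathcal{N}$ satisfying the inverse-type bound $|\mathbb{P}^{W}g|\leq C\dashint_{W}|g|\dif x$ pointwise on $W$; an analogous projection $\mathbb{P}^{\mathbb{B}}$ is placed on each overlap ball $\mathbb{B}=\mathbb{B}_{i,l}$ from \ref{itm:C2}, with the $\mathbb{B}_{i,l}$ selected according to Remark~\ref{rem:chooseBALLS} so that $\sum_{\mathbb{B}}\mathbbm{1}_{\mathbb{B}}\leq\sigma_{2}\mathbbm{1}_{\Omega}$.

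\emph{Stage 2: telescoping transport along chains.} The naive decomposition $f=\sum_{i}\psi_{i}f$ fails \ref{itm:mainMf} because $\psi_{i}f$ need not be $\mathcal{N}$-orthogonal on $W_{i}$. To repair this, for each emanating chain $W_{i,0}=W_{i},W_{i,1},\dots,W_{i,m_{i}}=W_{0}$ with overlap balls $\mathbb{B}_{i,l}$ I would construct a telescoping correction that transports the $\mathcal{N}$-component of $\psi_{i}f$ through the intermediate balls $\mathbb{B}_{i,l}$ and deposits it at the central ball $W_{0}$; since $f\in\lebe_{w,\mathcal{N}}^{q}(\Omega;E)$, the total $\mathcal{N}$-mass collected at $W_{0}$ vanishes, so the construction closes. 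The operator $T_{i}$ is then defined to gather $\psi_{i}f$, the correction exiting $W_{i}$, and the corrections entering $W_{i}$ from those chains that use $W_{i}$ as a transfer station. By \ref{itm:C2} each $W_{i}$ serves this role for only a controlled number of chains, all emanating from descendants sitting inside $\sigma_{2}W_{i}$.

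\emph{Stage 3: verification.} Every summand contributing to $T_{i}f$ is either $\psi_{i}f$ or $\mathbb{P}^{\mathbb{B}}f$ for some ball $\mathbb{B}\subset\sigma_{2}W_{i}$, hence is pointwise dominated on $W_{i}$ by $\mathcal{M}(\mathbbm{1}_{\Omega}f)$ via Stage~1; this yields \ref{itm:mainMf}. Muckenhoupt's maximal inequality then gives \ref{itm:Lqw0Wi}. The identity $f=\sum_{i}T_{i}f$ holds by construction, and the pointwise bound \ref{itm:mainMf} combined with dominated convergence promotes it to unconditional $\lebe_{w}^{q}$-convergence, settling \ref{itm:mainsum}. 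The upper estimate in \ref{itm:mainlqLq} follows by raising the pointwise bound to the $q$-th power and integrating against $w$, using the finite overlap \ref{itm:C1}; the lower estimate is obtained from $\|f\|_{\lebe_{w}^{q}(\Omega)}^{q}\leq C\sum_{i}\|\mathbbm{1}_{W_{i}}f\|_{\lebe_{w}^{q}}^{q}$ together with an $\ell^{q}$-estimate on the correction terms, again controlled by $\mathcal{M}$. Finally, for $f\in\hold_{c,\mathcal{N}}^{\infty}(\Omega;E)$, only chains emanating from the finitely many $W_{i}$ meeting $\spt(f)$ contribute nontrivially, which proves \ref{itm:Cinfty}.

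\emph{Main obstacle.} The crux is implementing Stage~2 so that the three requirements---support in $W_{i}$, the pointwise maximal bound \ref{itm:mainMf}, and the telescoping identity---are met simultaneously. Upgrading from the one-dimensional case $\mathcal{N}=\operatorname{Span}\{1\}$ treated in \cite{DieRuzSch10} forces one to transport an entire polynomial element of $\mathcal{N}$ through each overlap ball, whose degrees and scales along the chain must be tracked carefully; it is precisely the scale-invariant bound $|\mathbb{P}^{\mathbb{B}}g|\lesssim\dashint_{\mathbb{B}}|g|\dif x$ and the bounded-overlap choice of the $\mathbb{B}_{i,l}$ via Remark~\ref{rem:chooseBALLS} that render these transports compatible.
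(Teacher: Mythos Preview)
Your overall strategy matches the paper's proof: adapt the decomposition of \cite{DieRuzSch10} by replacing mean-value corrections with projections onto $\mathcal{N}$ and transport these along the emanating chains to $W_0$. The construction of the local projections, the telescoping transport, and the verification of \ref{itm:mainlqLq} and \ref{itm:Cinfty} are all essentially as in the paper.

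There is, however, a genuine gap in how you obtain the pointwise bound \ref{itm:mainMf}. In Stage~2 you assert that ``by \ref{itm:C2} each $W_i$ serves this role for only a controlled number of chains''. This is false: a given $W_i$ can be a transfer station for \emph{infinitely} many chains (e.g.\ the central ball $W_0$ lies on every chain). Consequently, your Stage~3 argument for \ref{itm:mainMf}---that each summand is individually dominated by $\mathcal{M}(\mathbbm{1}_\Omega f)$ and there are boundedly many of them---does not close.

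The correct mechanism is different. The corrections transported through $W_i$ are projections of $\psi_j f$, not of $f$; each satisfies a bound of the type $\|\mathbb{P}(\psi_j f)\|_{\lebe^\infty}\lesssim |W_i|^{-1}\|\psi_j f\|_{\lebe^1}$, and the supports $W_j$ of the relevant $\psi_j f$ all lie in $\sigma_2 W_i$ by \ref{itm:C2}. Summing the infinitely many contributions over $j$ is then controlled not by finiteness of the index set but by the partition-of-unity identity $\sum_j\psi_j\leq 1$, which collapses the sum to a single average $\dashint_{\sigma_2 W_i}|f|\,\dif x\leq\mathcal{M}(\mathbbm{1}_\Omega f)$. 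The bounded overlap of the $\mathbb{B}_{i,l}$ from Remark~\ref{rem:chooseBALLS} that you invoke plays no role at this step.
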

\begin{proof}
  The proof is an adaptation of the proof of~\cite[Thm.~4.2]{DieRuzSch10} by
\textsc{\Ruzicka, Schumacher} and the first author, and so we 
confine ourselves to the points that need refinement in the setting considered here. 

 The idea in~\cite{DieRuzSch10}, where $\mathcal{N}$ is given by the constants, is that
  each~$f$ is first decomposed into local functions~$S_i f :=\xi_i f$
  by means of a partition of unity~$\xi_i$ subject to the
  covering~$\mathcal{W}=\{W_{i}\colon\;i\in\mathbb{N}_{0}\}$. Then each $S_i f$ is corrected
  in~\cite{DieRuzSch10} such that it has vanishing integral. To
  compensate this change, the corrections are transported along the
  chains to the central ball~$W_0$. In the much more general  setting as considered here we have to correct
  the~$S_i f$'s by local projections to~$\mathcal{N}$ and transport
  those corrections along the chains to the central ball~$W_0$. Throughout, we tacitly suppose $f$ to be extended to $\R^{n}$ by zero. 

Adopting the notation of Definition~\ref{def:boman_chain}, we may suppose that the balls $\ball_{i,l}$ as in \ref{itm:C2} belong to a family $\mathscr{B}$ such that $\sum_{B\in\mathscr{B}}\mathbbm{1}_{\ball}\leq \sigma_{2}\mathbbm{1}_{\Omega}$; see Remark~\ref{rem:chooseBALLS}. For $B\in\mathscr{B}$ we find, by translating and scaling a fixed non-negative $\eta\in\hold_{c}^{\infty}(\ball(0,1))$ with $\int_{\ball(0,1)}\eta\dif x=1$, a  non-negative function $\eta_{\ball}\in\hold_{c}^{\infty}(\ball)$ with $\int_{\ball}\eta_{\ball}\dif x = 1$ and $|\eta_{\ball}|\leq c/\mathscr{L}^{n}(\ball)$, where $c=c(n)>0$. Setting $\eta_{i,l}:=\eta_{\ball_{i,l}}$, we may record that
  \begin{align}
    \label{eq:eta}
    \begin{aligned}
      \eta_{i,l} &\in \hold^\infty_c(W_{i,l}) \cap
      \hold^\infty_c(W_{i,l+1}),
      \\
      \norm{\eta_{i,l}}_{\lebe^{\infty}} &\leq c\, \min \biggset{
        \frac{1}{\mathscr{L}^{n}(W_{i,l})}, \frac{1}{\mathscr{L}^{n}(W_{i,l+1})}}
    \end{aligned}
  \end{align}
  for all $i,l \geq 0$ with $0 \leq l \leq m_i -1$. Equally,  we pick a non-negative $\eta_{0}\in\hold_{c}^{\infty}(W_{0})$ with $\int_{W_{0}}\eta_{0}\dif x = 1$ and $|\eta_{0}|\leq c/\mathscr{L}^{n}(W_{0})$; since every chain ends in $W_{i,m_{i}}=W_{0}$, we put $\eta_{i,m_{i}}:=\eta_{0}$ for all $i\geq 0$. 
    
Now define an operator
  $\Pi_{j,l}\,:\, \lebe^1(\sigma_2 W_{j,l};E) \to \mathcal{N}$ by 
  \begin{align}\label{eq:projDEFKorn}
    \int_{W_{j,l}} \eta_{j,l} (\Pi_{j,l} f)\, \pi\dif x
    &= \int_{\sigma_2 W_{j,l}} f\, \pi\dif x \qquad \text{for all $\pi
      \in \mathcal{N}$.} 
  \end{align}
Inserting $\pi = \Pi_{j,l} f$, inverse estimates for polynomials and using that all $\eta_{j,l}$'s are translated and scaled versions of some $\eta$, we obtain
\begin{align*}
\|\Pi_{j,l}f\|_{\lebe^{\infty}(\sigma_{2}W_{j,l})} &  \leq c \Big(\int_{\sigma_{2}W_{j,l}}\eta_{j,l}|\Pi_{j,l}f|^{2}\dif x\Big)^{\frac{1}{2}} \\ & \!\!\!\!\!\!\!\! \leq c \Big(\int_{\sigma_{2}W_{j,l}}f\cdot\Pi_{j,l}f\dif x\Big)^{\frac{1}{2}} \leq c\|\Pi_{j,l}f\|_{\lebe^{\infty}(\sigma_{2}W_{j,l})}^{\frac{1}{2}}\Big(\int_{\sigma_{2}W_{j,l}}|f|\dif x \Big)^{\frac{1}{2}}, 
\end{align*}
which implies 
  \begin{align}
    \label{eq:est-Pjk}
    \|\Pi_{j,l}f\|_{\lebe^{\infty}(\sigma_2 W_{j,l})} 
    & \leq c
      \int_{\sigma_2 W_{j,l}} \abs{f}\dif x.
  \end{align}
  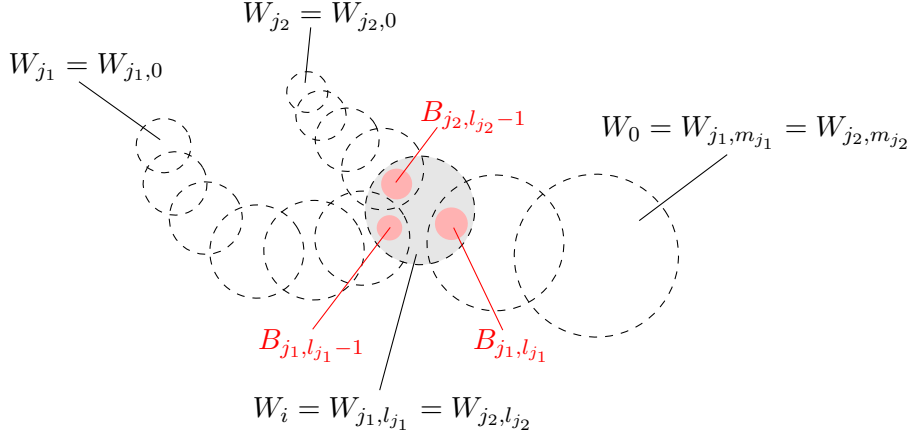
\begin{figure}
\begin{tikzpicture}[rotate=30, scale=0.12] 
\draw[dashed, fill=black!10!white, opacity=1] (48.75,58) circle [radius=6]; 
\fill [red!30!white, opacity=1] (44.9,58) circle [radius=1.4];
\fill [red!30!white, opacity=1] (48,61.8) circle [radius=1.7];
\fill [red!30!white, opacity=1] (51,55) circle [radius=1.8];
\draw[dashed] (63,44) circle [radius=9]; 
\draw[dashed] (54.125,50.75) circle [radius=7.5];
\draw[dashed] (47.5,64) circle [radius=4.5]; 
\draw[dashed] (45.75,68.75) circle [radius=3.5]; 
\draw[dashed] (44.6,72.5) circle [radius=2.75]; 
\draw[dashed] (44.6,75.5) circle [radius=2.25];
\draw[dashed] (36.5,60) circle [radius=5.5];
\draw[dashed] (41.75,58.5) circle [radius=5.2]; 
\draw[dashed] (31,63) circle [radius=5.15]; 
\draw[dashed] (28,69) circle [radius=4];
\draw[dashed] (27,74) circle [radius=3.5]; 
\draw[dashed] (28,78.25) circle [radius=3];
\node[black] at (85,47) {\large $W_{0}=W_{j_{1},m_{j_{1}}}=W_{j_{2},m_{j_{2}}}$};
\draw (81,46.5) -- (69,45);
\node[black] at (25,90) {\large $W_{j_{1}}=W_{j_{1},0}$};
\draw (23.5,89) -- (28,79);
\node[black] at (50,82) {\large $W_{j_{2}}=W_{j_{2},0}$};
\draw (48.5,81.5) -- (44.9,76.5);
\node[red] at (59,64) {\large $\ball_{j_{2},l_{j_{2}}-1}$};
\draw[red] (55,65) -- (48,62);
\draw[red] (32,52.5) -- (45,58);
\node[red] at (31,51) {\large $\ball_{j_{1},l_{j_{1}}-1}$};
\node[red] at (50,40) {\large $\ball_{j_{1},l_{j_{1}}}$};
\draw[red] (50,42) -- (51,54.5);
\node[black] at (35,40) {\large $W_{i}=W_{j_{1},l_{j_{1}}}=W_{j_{2},l_{j_{2}}}$};
\draw (35,43) -- (46,54);
\end{tikzpicture}
\caption{Construction of the decomposition blocks $T_{i}$, cf.~\eqref{eq:defTi}. In \eqref{eq:defTi}, one sums over all chains $W_{j,0}=W_{j},...,W_{j,m_{j}}=W_{0}$ that contain $W_{i}$; here, two such chains and the location of the relevant balls that matter in the definition of $T_{i}$ are depicted. }
\end{figure}
We now modify the decomposition operators
  of~\cite{DieRuzSch10} and, adopting the notation of Definition~\ref{def:boman_chain}, set for $f\in\lebe_{w,\mathcal{N}}^{q}(\Omega;E)$
  \begin{align}
    \label{eq:defTi}
    \begin{split}
      T_i f &:= (S_i f - \eta_{i,0} \Pi_{i,0}(S_i f))
      \\
      &\qquad + \sum_{\substack{j \geq 0\\j \not= i}}
      \sum_{\substack{l:0< l \leq m_j:\\W_{j,l} = W_i}} \big(
      \eta_{j,l-1} \Pi_{j,l-1}(S_j f) - \eta_{j,l} \Pi_{j,l}(S_j
      f)\big).
    \end{split}
  \end{align}
In the ultimate double sum, $j$ effectively ranges over all numbers for which the chain $W_{j,m_{j}}=W_{0},...,W_{j,0}=W_{j}$ from \ref{itm:C2} connects $W_{j}$ with $W_{0}$ where, in addition, $W_{j,l}=W_{i}$ for some $0<l\leq m_{j}$ asserts that $W_{i}$ must be contained in this chain. If $j\neq i$ and $0<l\leq m_{j}$ are such that $W_{j,l}=W_{i}$, then \eqref{eq:eta} implies that $\spt(\eta_{j,l-1})\cup\spt(\eta_{j,l})\subset W_{j,l}=W_{i}$. We hence obtain $\spt(T_{i}f)\subset W_{i}$ and, by \eqref{eq:eta}, $\eta_{j,l-1}+\eta_{j,l}\leq c\mathbbm{1}_{W_{i}}/\mathscr{L}^{n}(W_{i})$. Also note that the second sum over $0<l\leq m_{j}$ such that $W_{j,l}=W_{i}$ contains only one summand. 
  
We begin by proving that $T_{i}f$ is well-defined. Similarly to~\cite{DieRuzSch10}, we define the majorant
  \begin{align}\label{eq:majorantUi}
    U_i &:= 
          \sum_{\substack{j \geq 0\\j \not= i}} 
    \sum_{\substack{l:0< l \leq m_j:\\W_{j,l} = W_i}}
    \big(
    \eta_{j,l-1} \bigabs{\Pi_{j,l-1}(S_j f)} + \eta_{j,l} \bigabs{\Pi_{j,l}(S_j f)}\big).
  \end{align}
Next, if $W_{j,0}=W_{j},...,W_{j,m_{j}}=W_{0}$ is a chain with $W_{j,l}=W_{i}$ for some $0<l\leq m_{j}$, then we have by \eqref{eq:eta}, $\ball_{j,l-1}\subset\sigma_{2}W_{j,l-1}\subset\sigma_{2}^{2}W_{j,l}$, $\ball_{j,l}\subset W_{j,l}\subset\sigma_{2}W_{j,l}$ and the uniform comparability $\mathscr{L}^{n}(\ball_{j,l-1})\simeq\mathscr{L}^{n}(\ball_{j,l})\simeq\mathscr{L}^{n}(W_{j,l})$ the pointwise estimate 
\begin{align*}
U_{i} & \leq c\frac{\mathbbm{1}_{W_{i}}}{\mathscr{L}^{n}(W_{i})}\sum_{\substack{j \geq 0\\j \not= i}} 
    \sum_{\substack{l:0< l \leq m_j:\\W_{j,l} = W_i}}\big(\|\Pi_{j,l-1}(S_{j}f)\|_{\lebe^{\infty}(\ball_{j,l-1})}+\|\Pi_{j,l}(S_{j}f)\|_{\lebe^{\infty}(\ball_{j,l})}\big) \\ 
    & \!\stackrel{\eqref{eq:est-Pjk}}{\leq} c\frac{\mathbbm{1}_{W_{i}}}{\mathscr{L}^{n}(W_{i})}\sum_{\substack{j \geq 0\\j \not= i}} 
    \sum_{\substack{l:0< l \leq m_j:\\W_{j,l} = W_i}}\|S_{j}f\|_{\lebe^{1}(\sigma_{2}^{2}W_{j,l})} \\ 
    & \leq  c\frac{\mathbbm{1}_{W_{i}}}{\mathscr{L}^{n}(W_{i})}\sum_{\substack{j \geq 0\\j \not= i}} 
    \|S_{j}f\|_{\lebe^{1}(\sigma_{2}W_{i})} \\ 
    & \leq  c\mathbbm{1}_{W_{i}}\dashint_{\sigma_{2}W_{i}}|f|\dif x 
\end{align*}
where we used in the ultimate step that the $W_{j}$'s have uniformly finite overlap, cf.~\ref{itm:C1} and, by \ref{itm:C2}, there holds $\spt(S_{j}f)\subset W_{j}\subset \sigma_{2}W_{i}$. Since $f\in\lebe_{\locc}^{1}(\Omega;E)$, the sum in $T_{i}f$ converges absolutely $\mathscr{L}^{n}$ and so $T_{i}f$ is well-defined $\mathscr{L}^{n}$-a.e.. The term $S_{i}f-\eta_{i,0}\Pi_{i,0}(S_{i}f)$ can be estimated analogously, and so \ref{itm:mainMf} follows.

Since $\mathcal{M}$ is bounded on~$\lebe^q_w(\R^{n})$, we obtain that $T_{i}$ maps $\lebe_{w}^{q}(\Omega;E)\to\lebe_{w}^{q}(W_{i};E)$ boundedly. In combination with \ref{itm:mainMf} and recalling that $\lebe_{w}^{q}(W_{i};E)\hookrightarrow\lebe^{1}(W_{i};E)$, we obtain by Lebesgue's theorem on dominated convergence that the sum defining $T_{i}f$ also converges in $\lebe^{1}(W_{i};E)$. For the full statement of \ref{itm:Lqw0Wi}, let $f\in\lebe_{w,\mathcal{N}}^{q}(\Omega;E)$ and consider some $j\neq i$ such that $W_{i}$ is contained in the chain connecting $W_{j}$ and $W_{0}$, so $W_{i}=W_{j,l}$ for some $0<l\leq m_{j}$. By \ref{itm:C2}, we then have $W_{j}\subset\sigma_{2}W_{j,l-1}\cap\sigma_{2}W_{j,l}$. Therefore, $\mathrm{spt}(S_{j}f)\subset W_{j}$ yields for all $\pi\in\mathcal{N}$ 
\begin{align}\label{eq:cancelling}
\int_{\sigma_{2}W_{j,l-1}}
      S_j f\cdot\pi\dif x = \int_{\sigma_{2}W_{j,l}} S_j
      f\cdot\pi\dif x. 
\end{align}  
The support properties of the $S_{j}f$'s and the $\eta_{j,l}$'s, the definition of $\Pi_{j,l}$ and \eqref{eq:cancelling} imply that each summand in the definition of $T_{i}$ has integral zero when being tested against any $\pi\in\mathcal{N}$. Since the sum  defining $T_{i}f$ converges in $\lebe^{1}(\Omega;E)$, 
 we conclude \ref{itm:Lqw0Wi}. We now establish that $f=\sum_{i\in\mathbb{N}_{0}}T_{i}f$ holds pointwisely $\mathscr{L}^{n}$-a.e. for $f\in\lebe_{w,\mathcal{N}}^{q}(\Omega;E)$; working from here, the proof evolves exactly as in \cite{DieRuzSch10}. By pointwise absolute convergence of $\sum_{i\in\mathbb{N}_{0}}T_{i}f$ $\mathscr{L}^{n}$-a.e., we may change the order of summation in the following to find  
\begin{align*}
\sum_{i\in\mathbb{N}_{0}} T_i f & = \sum_{i\in\mathbb{N}_{0}} S_i f - \sum_{i\in\mathbb{N}_{0}}\eta_{i,0} \Pi_{i,0}(S_i f)
      \\
      & + \sum_{j \geq 0}
      \sum_{l:0< l \leq m_j} \sum_{\substack{i\in\mathbb{N}_{0}\setminus\{j\} \\ W_{i}=W_{j,l}} }\big(
      \eta_{j,l-1} \Pi_{j,l-1}(S_j f) - \eta_{j,l} \Pi_{j,l}(S_j f)\big)\\
      & \!\stackrel{(*)}{=} \sum_{i\in\mathbb{N}_{0}} S_i f - \sum_{i\in\mathbb{N}_{0}}\eta_{i,0} \Pi_{i,0}(S_i f)
   + \sum_{j \geq 0}
      \big(\eta_{j,0} \Pi_{j,0}(S_j f) - \eta_{j,m_{j}} \Pi_{j,m_{j}}(S_j f) \big)\\ 
      & =  \sum_{i\in\mathbb{N}_{0}} S_i f - \sum_{j \geq 0}
      \eta_{j,m_{j}} \Pi_{j,m_{j}}(S_j f) \\ 
      & \!\!\!\!\!\!\!\!\stackrel{\eta_{j,m_{j}}=\eta_{0}}{=}  \sum_{i\in\mathbb{N}_{0}} S_i f - \eta_{0}\sum_{j \geq 0}\Pi_{j,m_{j}}(S_j f) \\ 
      & = f - \eta_{0}\sum_{j \geq 0}\Pi_{W_{0}}(S_j f) \\
      & = f - \eta_{0}\Pi_{W_{0}}f = f, 
\end{align*}
since $\Pi_{W_{0}}:=\Pi_{j,m_{j}}$ is independent of $j$ and we have $\Pi_{W_{0}}f=0$ as $f\in\lebe_{w,\mathcal{N}}^{q}(\Omega;E)$. For $(*)$ note that, for fixed $j\geq 0$ and $0<l\leq m_{j}$, $\sum_{i\neq j,\,W_{i}=W_{j,l}}1=1$, cf.~\cite[Eq.~(4.18)ff.]{DieRuzSch10}. Working from here, 
 the rest of the proof is exactly as in~\cite{DieRuzSch10}. We thus omit the details.
\end{proof}
\begin{remark}[Unbounded domains] 
  Theorem~\ref{thm:decomposition} also holds for unbounded John
  domains and unbounded domains satisfying the emanating chain
  condition. The proof requires no change. We refer to the modified
  definitions for unbounded domains to~\cite{DieRuzSch10}.
\end{remark}
\subsection{A best approximation property}\label{sec:bestapprox}
For our objectives in Section~\ref{sec:feffermanstein} and \ref{sec:Kornmain}, we require the following best approximation property of the projections underlying Theorem~\ref{thm:poincare-john}:
\begin{proposition}[Best approximation property]
  \label{pro:best_approximation}
  Let $\bbA$ be a $k$-th order $\setC$-elliptic operator of the form \eqref{eq:form}.  Let
  $\Omega \subset \setR^n$ be an open and bounded domain satisfying the emanating chain condition with constants $\sigma_{1},\sigma_{2}$ and central ball~$B$. Moreover, let $1<p<\infty$ and $w\in A_{p}$. Then for
  all $\ell\in\{0,...,k\}$ and all 
  $u \in \sobo^{\ell,p}(\Omega;V)$ we have
  \begin{align*}
    \bigg(\dashint_\Omega \abs{D^\ell (u-\Pi_\bbA^B u)}^p\,w\dif x\bigg)^{\frac
    1p} &\leq C\,
          \inf_{q \in \ker(\bbA)} \bigg(\dashint_\Omega \abs{D^\ell
          (u-q)}^p\,w\dif x\bigg)^{\frac 1p}.
  \end{align*}
  where $C=C(\opA,\sigma_2,p,[w]_{A_{p}})>0$. 
\end{proposition}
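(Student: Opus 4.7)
The plan is to exploit that $\Pi_\A^B$ is a linear projection onto $\ker(\A)$, together with the crucial vanishing property that $D^\ell \Pi_\A^B$ annihilates $\mathscr{P}_{\ell-1}(\R^n;V)$. For any $q\in\ker(\A)$, the identity $\Pi_\A^B q = q$ yields $u - \Pi_\A^B u = (u-q) - \Pi_\A^B(u-q)$, so a triangle inequality followed by infimising over $q \in \ker(\A)$ will reduce the proposition to establishing, for all $v \in \sobo^{\ell,p}(\Omega;V)$,
\begin{align}\label{eq:starKEY}
 \|D^\ell \Pi_\A^B v\|_{\lebe^{p}_{w}(\Omega)} \leq C\, \|D^\ell v\|_{\lebe^{p}_{w}(\Omega)}.
\end{align}

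The critical observation towards \eqref{eq:starKEY} is that $D^\ell \Pi_\A^B p = 0$ for every $p \in \mathscr{P}_{\ell-1}(\R^n;V)$: when $\ell \leq \degPol(\A)$ this follows from Proposition~\ref{prop:decomposition}~\ref{item:decomp-projection}, which gives $\Pi_\A^B p \in \mathscr{P}_{\ell-1}$; when $\ell > \degPol(\A)$ it is automatic since $\Pi_\A^B p \in \ker(\A) \subset \mathscr{P}_{\degPol(\A)-1}$. Granted this, for any $p \in \mathscr{P}_{\ell-1}$ we have $D^\ell \Pi_\A^B v = D^\ell \Pi_\A^B(v-p)$, so Lemma~\ref{lem:PiB-estimate} provides $\|\Pi_\A^B(v-p)\|_{\lebe^{\infty}(\Omega)} \leq C \dashint_B |v-p|\dx$, and the inverse estimates on the finite-dimensional space $\ker(\A)$ combined with $\Omega \subset \sigma_2 B$ (from~\eqref{eq:mutualradiusECCbound}) will upgrade this to
\begin{align*}
 \|D^\ell \Pi_\A^B(v-p)\|_{\lebe^{\infty}(\Omega)} \leq C\,r(B)^{-\ell}\dashint_B |v-p|\dx.
\end{align*}
Passing to the weighted $\lebe^p$-norm using doubling of $A_p$-weights, i.e.\ $w(\Omega) \leq w(\sigma_2 B) \leq C w(B)$, together with the $A_p$-H\"older inequality $\dashint_B |v-p|\dx \leq C [w]_{A_p}^{1/p} w(B)^{-1/p} \|v-p\|_{\lebe^{p}_{w}(B)}$, one arrives at $\|D^\ell \Pi_\A^B v\|_{\lebe^{p}_{w}(\Omega)} \leq C\,r(B)^{-\ell}\|v-p\|_{\lebe^{p}_{w}(B)}$. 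Infimising over $p \in \mathscr{P}_{\ell-1}$ and invoking the classical weighted Poincar\'e inequality for $D^\ell$ on $B$ with $A_p$-weights (of Fabes--Kenig--Serapioni type) then delivers \eqref{eq:starKEY}.

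The main obstacle will be establishing \eqref{eq:starKEY} with constants depending only on $\A$, $\sigma_2$, $p$, and $[w]_{A_p}$. This requires combining four ingredients---the pointwise control from Lemma~\ref{lem:PiB-estimate}, polynomial inverse estimates on the finite-dimensional space $\ker(\A)$, the doubling of $A_p$-weights, and the weighted Poincar\'e inequality on a ball---in a way that respects the correct scaling in $r(B)$. The vanishing $D^\ell \Pi_\A^B|_{\mathscr{P}_{\ell-1}} = 0$ is precisely what makes everything go through: it permits the optimisation over $p \in \mathscr{P}_{\ell-1}$ inside the $\lebe^1$-average on $B$, thereby converting a na\"\i ve $\lebe^1$-bound into a $\lebe^{p}_{w}$-bound on $D^\ell v$ via weighted Poincar\'e.
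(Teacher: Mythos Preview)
Your proposal is correct and follows essentially the same route as the paper: the identity $u-\Pi_\A^B u=(u-q)-\Pi_\A^B(u-q)$ for $q\in\ker(\A)$, the vanishing $D^\ell\Pi_\A^B q_1=0$ for $q_1\in\mathscr{P}_{\ell-1}$, the weighted inverse estimate via $\diam(\Omega)\lesssim\diam(B)$ and the $A_p$-condition, and finally the weighted Poincar\'e inequality on $B$ are exactly the ingredients the paper uses. The only cosmetic difference is that the paper observes directly that $\mathscr{P}_{\ell-1}\subset\ker(\A)$ (since $\ell\leq k$), so $\Pi_\A^B q_1=q_1$ and hence $D^\ell\Pi_\A^B q_1=D^\ell q_1=0$, which renders your case distinction $\ell\lessgtr\degPol(\A)$ unnecessary.
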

\begin{proof}
We start by noting that, if $\ball\subset\Omega$ is an open ball with $\diam(\Omega)\leq c_{0}\diam(\ball)$, then for any $m\in\mathbb{N}_{0}$ there exists a constant $c_{1}=c_{1}(c_{0},m,n,V)>0$ such that for all $\pi\in\mathscr{P}_{m}(\R^{n};V)$ and all $\ell\in\mathbb{N}_{0}$ with $\ell\leq m+1$ there holds 
\begin{align}\label{eq:inverseweighted}
\begin{split}
\Big(\dashint_{\Omega}|D^{\ell}\pi|^{p}w\dif x\Big)^{\frac{1}{p}} & \leq (\sup_{\Omega}|D^{\ell}\pi|)\Big(\frac{w(\Omega)}{\mathscr{L}^{n}(\Omega)}\Big)^{\frac{1}{p}} \\ & \leq  c_{1}r(\ball)^{-\ell}\dashint_{\ball}|\pi|\dif x \Big(\frac{w(\Omega)}{\mathscr{L}^{n}(\Omega)}\Big)^{\frac{1}{p}}\\ 
& \leq c_{1}[w]_{A_{p}}^{\frac{1}{p}}r(\ball)^{-\ell}\Big(\dashint_{\ball}|\pi|^{p}w\dif x\Big)^{\frac{1}{p}}, 
\end{split}
\end{align}
where the ultimate line follows by H\"{o}lder's inequality and because $w$ is doubling. 

Now let $\ell\in\{0,...,k\}$ and let $q \in \ker(\A)$ be arbitrary, so that $\Pi_\A^B q=q$.  Moreover, 
  let $q_1 \in \Pol_{\ell-1}\subset\ker(\A)$ be arbitrary. Then $\Pi_\A^B q_1=q_1$ and
  therefore $D^\ell \Pi_\A^B q_1 = 0$.  Now,
  $D^\ell (u - \Pi_\A^B u) = D^\ell (u - q) - D^\ell \Pi_\A^B
  (u-q-q_1)$, inverse estimates for polynomials in conjunction with
  $\diameter(\Omega) \leq c_0(\sigma_2) \diameter(B)$ (cf.~\eqref{eq:mutualradiusECCbound}) and
 \eqref{eq:inverseweighted} imply
  \begin{align*}
    \lefteqn{\bigg(\dashint_\Omega \abs{D^\ell (u-\Pi_\bbA^B u)}^p w\dif x\bigg)^{\frac
    1p}} \qquad &
    \\
                &\leq C\,
                  \bigg(\dashint_\Omega \abs{D^\ell (u-q)}^p w\dif x\bigg)^{\frac
                  1p} + C\,     \bigg(\dashint_\Omega \abs{ D^\ell \Pi_\bbA^B(u-q-q_1)}^p w\dif x\bigg)^{\frac
                  1p}
    \\
                &  \!\!\!\stackrel{\eqref{eq:inverseweighted}}{\leq} C\,
                  \bigg(\dashint_\Omega \abs{D^\ell (u-q)}^p w\dif x\bigg)^{\frac
                  1p} + C\, \diameter(\Omega)^{-\ell}     \bigg(\dashint_B \abs{ u-q-q_1}^p w\dif x\bigg)^{\frac
                  1p}.
  \end{align*}
  Now we may fix~$q_1 \in \Pol_{\ell-1}(\R^{n};V)$ such that the standard $A_{p}$-weighted
  \Poincare{} inequality for balls (see \cite[Chpt.~1.4]{HeiKilMar}) yields
  \begin{align*}
    \diameter(\Omega)^{-\ell}     \bigg(\dashint_B \abs{ u-q-q_1}^p w\dif x\bigg)^{\frac
    1p}
    &\leq C\,
         \bigg(\dashint_B \abs{ D^\ell(u-q)}^p w\dif x\bigg)^{\frac
      1p}.
  \end{align*}
  This, the previous estimate and   $\diameter(\Omega) \leq
  c_0(\sigma_2) \diameter(B)$ prove the claim.
\end{proof}

\subsection{An inequality of Fefferman-Stein-type}\label{sec:feffermanstein}
In this section we derive an inequality which allows to control the information of a function by
its $\mathcal{N}$-sharp maximal operator. This serves as an instrumental tool in the proof of Theorem~\ref{thm:Korn} in Section~\ref{sec:Kornmain}, but should be of independent interest.

We require the following restricted variant of maximal operators: Given an open subset $\Omega\subset\R^{n}$, $\sigma\geq 1$ and, for some finite
dimensional real vector space $E$, a subset
$\mathcal{N}\subset\mathscr{P}_{m}(\R^{n};E)$ for some $m\in\mathbb{N}$ and $1\leq p <\infty$, we put
\begin{align}
&\mathcal{M}_{\mathrm{res},\Omega,\sigma,p}f(x) := \begin{cases}\displaystyle\sup_{Q\ni x:\,\sigma Q\subset\Omega}\Big(\dashint_{Q}|f|^{p}\dif y\Big)^{\frac{1}{p}},&\text{if}\;x\in\Omega\\ 
0&\;\text{otherwise}
\end{cases}\\
&\mathcal{M}_{\mathrm{res},\Omega,\sigma,p,\mathcal{N}}^{\sharp}f(x):=\begin{cases}\displaystyle\sup_{Q\ni x:\,\sigma Q\subset\Omega}\inf_{\pi\in\mathcal{N}}\Big(\dashint_{Q}|f-\pi|^{p}\dif y\Big)^{\frac{1}{p}}, &\text{if}\;x\in\Omega\\
0&\text{otherwise}
\end{cases}
\end{align}
for $f\in\lebe_{\locc}^{p}(\Omega;E)$ and non-degenerate cubes $Q$. For brevity, we also set $\mathcal{M}_{\mathrm{res},\Omega,\sigma}:=\mathcal{M}_{\mathrm{res},\Omega,\sigma,1}$ and $\mathcal{M}_{\mathrm{res},\Omega,\sigma,\mathcal{N}}^{\sharp}:=\mathcal{M}_{\mathrm{res},\Omega,\sigma,1,\mathcal{N}}^{\sharp}$. 

The main result of this section is a generalisation
of~\cite[Theorem~5.23]{DieRuzSch10} for~$\mathcal{N}=\setR$ and its unweighted version~\cite[Lemma~4]{Iwa82}. We present the proof later in this section.
\begin{theorem}[of Fefferman-Stein-type]
  \label{thm:fefferman-stein}
  Let $\Omega \subset \setR^n$ be an open and bounded domain satisfying the emanating chain condition with constants $\sigma_{1}>1,\sigma_{2}\geq 1$ and central ball~$B$.
  Let $1 < q < \infty$, $w \in A_q$ and $f \in \lebe^1_\loc(\Omega;E)$.  If $\mathcal{M}_{\mathrm{res},\Omega,\sigma_1,\mathcal{N}}^{\sharp}f \in \lebe^q_w(\Omega)$,
  then $f \in \lebe^q_w(\Omega;E)$ and
  \begin{align}
    \label{eq:9}
    \inf_{\pi \in \mathcal{N}}
    \bignorm{f-\pi}_{\lebe^q_w(\Omega)} &\leq c\,
    \bignorm{\mathcal{M}_{\mathrm{res},\Omega,\sigma_1,\mathcal{N}}^{\sharp}f}_{\lebe^q_w(\Omega)}.
  \end{align}
  The constant~$c$ depends only on~$\sigma_1$, $\sigma_2$, $\mathcal{N}$, $q$, and
  $[w]_{A_{q}}$.
\end{theorem}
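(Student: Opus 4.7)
The plan is to extend the classical good-$\lambda$ proof of the Fefferman-Stein inequality (cf.~\cite{Iwa82} for the full-space, scalar setting and \cite[Thm.~5.23]{DieRuzSch10} for the scalar chain-domain version) to the present setting where $\mathcal{N}$ is a finite-dimensional subspace of $\Pol_m(\R^n;E)$ rather than just the constants. The essential modifications are: (i) replacing means over cubes by the best $\mathcal{N}$-approximation on the cube; (ii) working exclusively with cubes $Q$ satisfying $\sigma_{1}Q\subset\Omega$, as forced by the definition of the restricted sharp maximal function; and (iii) exploiting the emanating chain condition to transport local information through chains ending in the central ball $B$.

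First I would reduce to the case where $f$ has no $\mathcal{N}$-part. Since $\mathcal{N}$ is finite-dimensional, the infimum on the LHS of \eqref{eq:9} is attained by some $\pi_0\in\mathcal{N}$, and as $\mathcal{M}^{\sharp}_{\mathrm{res},\Omega,\sigma_1,\mathcal{N}}$ is unaffected by addition of elements of $\mathcal{N}$, I may replace $f$ by $f-\pi_0$. Next, for each admissible cube $Q$ (i.e., $\sigma_{1}Q\subset\Omega$), pick the best $\lebe^1$-approximation $\pi_Q\in\mathcal{N}$ of $f$ on $Q$; by inverse estimates for polynomials of fixed degree, analogous to \eqref{eq:est-Pjk}, one has $\|\pi_Q\|_{\lebe^{\infty}(Q)}\lesssim\dashint_Q |f|\,dy$, and this bound is what will make the good-$\lambda$ argument close up.

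The central step is a good-$\lambda$ inequality of the form
\begin{align*}
w\bigl(\{\mathcal{M}_{\mathrm{res},\Omega,\sigma_1}(f)>2\lambda,\ \mathcal{M}^{\sharp}_{\mathrm{res},\Omega,\sigma_1,\mathcal{N}}f\leq\gamma\lambda\}\bigr)\leq\eta(\gamma)\,w\bigl(\{\mathcal{M}_{\mathrm{res},\Omega,\sigma_1}(f)>\lambda\}\bigr),
\end{align*}
with $\eta(\gamma)\to 0$ as $\gamma\to 0$. This would be established by a Whitney-type stopping time decomposition of $\{\mathcal{M}_{\mathrm{res},\Omega,\sigma_1}(f)>\lambda\}$ into admissible cubes $Q_k$; inside each $Q_k$, subtract $\pi_{Q_k}$, use the hypothesis on $\mathcal{M}^{\sharp}_{\mathrm{res},\Omega,\sigma_1,\mathcal{N}}f$ at a suitable point of $Q_k$ to bound $\dashint_{Q_k}|f-\pi_{Q_k}|\,dy$ by $\gamma\lambda$, and then translate the resulting Lebesgue-measure estimate into a weighted one via the $A_\infty$-property of $w\in A_q$. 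Integrating against $q\lambda^{q-1}\,d\lambda$ and using that $\mathcal{M}_{\mathrm{res},\Omega,\sigma_1}$ is bounded on $\lebe^q_w$ (it is dominated by the Hardy-Littlewood maximal operator, which is bounded by Muckenhoupt's theorem) would then yield \eqref{eq:9}.

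The main obstacle I foresee is the constraint $\sigma_1 Q\subset\Omega$: close to $\partial\Omega$, the level sets of $\mathcal{M}_{\mathrm{res},\Omega,\sigma_1}(f)$ need not admit a direct Whitney decomposition by admissible cubes, so the naive global argument breaks down in the boundary layer. This is exactly where the emanating chain condition from Definition~\ref{def:boman_chain} takes over, the chain-covering $\{W_i\}$ together with the transport of local polynomial projections along chains (in the spirit of the construction \eqref{eq:defTi} used in Theorem~\ref{thm:decomposition}) filling in the missing strip and reducing matters to the interior situation. Keeping all constants encountered in this chain argument uniform and compatible with the $A_q$-weighted bounded-overlap estimate $\sum_{\ball\in\mathscr{B}}\mathbbm{1}_{\ball}\leq\sigma_2\mathbbm{1}_{\Omega}$ (Remark~\ref{rem:chooseBALLS}) — so that the good-$\lambda$ inequality retains its form with constants depending only on $\sigma_1,\sigma_2,\mathcal{N},q,[w]_{A_q}$ — is the technical heart of the argument.
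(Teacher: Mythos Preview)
Your high-level plan of using a good-$\lambda$ inequality with $\mathcal{N}$-projections in place of means is correct in spirit, and the paper does prove exactly such an inequality---but only on a \emph{single cube} $Q_0$ (Lemma~\ref{lem:iwaniec_omega} and Corollary~\ref{cor:iwaniec_omega2}). The decisive structural difference is how one passes from the cube result to the domain~$\Omega$. You attempt a \emph{direct} global good-$\lambda$ on $\Omega$ and propose to ``patch'' the boundary layer by transporting polynomial projections along chains. The paper instead proceeds by \emph{duality}: it applies the Decomposition Theorem~\ref{thm:decomposition} to a test function $h\in \hold^{\infty}_{c,\mathcal{N}}(\Omega;E)$ (not to $f$), writes $\langle f,h\rangle=\sum_i\langle f-\Pi_{\mathcal{N}}^{W_i}f,\,T_i h\rangle$, invokes the cube-version Corollary~\ref{cor:iwaniec_omega2} on each $W_i$ (where $\sigma_1 W_i\subset\Omega$ makes every sub-cube admissible, so no boundary issue arises), and sums using the $l^{q'}$-bound \eqref{eq:19} on $(T_ih)_i$ together with the finite overlap~\ref{itm:C1}. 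All the chain work is thus absorbed into the \emph{dual} side via Theorem~\ref{thm:decomposition}, and the good-$\lambda$ argument never sees the boundary of~$\Omega$.

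Your primal-side patching is not obviously wrong, but as stated it is a genuine gap: you would need the Whitney cubes of $\{\mathcal{M}_{\mathrm{res},\Omega,\sigma_1}f>\lambda\}$ near $\partial\Omega$ to admit a parent cube on which the sharp-function hypothesis controls the oscillation, and the chain condition does not furnish this directly---it gives chains of balls with controlled overlap, not a stopping-time hierarchy compatible with the level-set geometry. The duality route neatly sidesteps this. A second, smaller issue: your opening reduction (``the infimum is attained by some $\pi_0$, so replace $f$ by $f-\pi_0$'') presupposes that $f-\pi\in\lebe^q_w(\Omega)$ for some $\pi$, which is part of what must be proved; the paper avoids this circularity by establishing the bound on $\langle f,h\rangle$ for $h\in\lebe^{q'}_{w',\mathcal{N}}$ first and only then identifying $f-\Pi_{\mathcal{N}}^{B}f$ as an element of $\lebe^q_w$.
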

For a cube~$Q$ (or ball) we define $\Pi_{\mathcal{N}}^Q$ as the
$\lebe^2(Q;E)$-projection to~$\mathcal{N}$. It follows easily by inverse
estimates on~$\mathcal{N}$ that $\Pi_{\mathcal{N}}^Q$ is
$\lebe^1(Q;E)$-stable, i.e.
\begin{align}
  \label{eq:PiNstab}
  \dashint_Q \abs{\Pi_{\mathcal{N}}^Q f}\dif x \leq C_1
  \dashint_Q \abs{f}\dif x,
\end{align}
and, similarly to Proposition~\ref{pro:best_approximation}, has the approximation property
\begin{align}
  \label{eq:PiNapprox}
  \dashint_Q \abs{f - \Pi_\mathcal{N}^Q f}\dif x \leq C_2
  \inf_{q \in \mathcal{N}} \dashint_Q \abs{f - q}\dif x.
\end{align}
We need the following
version of the \Calderon{}-Zygmund decomposition by \textsc{Iwaniec}:
\begin{lemma}[{\cite[Lemma~3]{Iwa82}}]
  \label{lem:cz}
  Let $Q_0 \subset \Rn$ be an open cube and let $f \in \lebe^1(Q_0;E)$. For
  every $\alpha \geq (\abs{f})_{Q_0}$ there exist pairwise disjoint cubes
  $Q^\alpha_j \subset Q_0$ indexed by $j \in \setN$ such that
  \begin{enumerate}
  \item \label{itm:cza} $\alpha < \dashint_{Q^\alpha_j} \abs{f}\dif x
    \leq 2^n \alpha$,
  \item \label{itm:czb} If $\alpha \geq \beta \geq
    (\abs{f})_{Q_0}$, then each cube $Q^\alpha_j$ is contained in an element of $\set{Q_j^\beta\,:\, j \in \setN}$,
  \item \label{itm:czc} $\abs{f} \leq \alpha$ on $Q_0
    \setminus \bigcup_j Q^\alpha_j$,
  \item \label{itm:czd} $\bigcup_j Q_j^\alpha \subset
    \set{\mathcal{M}_{\mathrm{res},Q_0,1} f > \alpha}$ for all $\alpha \geq
    (\abs{f})_{Q_0}$,
  \item \label{itm:cze} $\set{\mathcal{M}_{\mathrm{res},Q_0,1} f> 5^n \alpha} \subset
    \bigcup_j 5Q_j^\alpha$ for all $\alpha \geq
    (\abs{f})_{Q_0}$.
  \end{enumerate}
\end{lemma}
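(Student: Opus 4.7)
The plan is to perform a dyadic stopping-time Calderón-Zygmund decomposition on $Q_0$ and extract all five properties from a single family indexed by~$\alpha$. I would fix the dyadic lattice subordinate to $Q_0$ obtained by repeated bisection into $2^n$ congruent subcubes. For each threshold $\alpha \geq (|f|)_{Q_0}$, I would select a dyadic subcube $Q \subsetneq Q_0$ into the collection $\{Q_j^\alpha\}$ precisely when $\dashint_Q |f|\,\dif x > \alpha$ but the dyadic parent $\widehat{Q}$ (still inside $Q_0$) satisfies $\dashint_{\widehat{Q}}|f|\,\dif x \leq \alpha$; the constraint $\alpha \geq (|f|)_{Q_0}$ prevents the process from stopping at $Q_0$ itself. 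The lower bound in \ref{itm:cza} is then immediate, and the upper bound follows from $|\widehat{Q}| = 2^n|Q|$ together with $\dashint_{\widehat{Q}}|f| \leq \alpha$. Property \ref{itm:czc} follows from Lebesgue differentiation, since on the complement $Q_0\setminus\bigcup_j Q_j^\alpha$ no dyadic ancestor is ever selected, so dyadic averages stay $\leq\alpha$ and shrink to $|f|$ almost everywhere.

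The nesting in \ref{itm:czb} is the key benefit of using a single dyadic lattice for every level~$\alpha$: if $Q_j^\alpha$ is selected at level $\alpha$, then $\dashint_{Q_j^\alpha}|f| > \alpha \geq \beta$, and walking up the dyadic tree one either finds $Q_j^\alpha$ itself selected at level $\beta$ or reaches its largest dyadic ancestor whose average still exceeds $\beta$; in either case $Q_j^\alpha$ lies inside some $Q_k^\beta$. Property \ref{itm:czd} is a one-line observation: for $x \in Q_j^\alpha$, the cube $Q_j^\alpha$ itself is admissible in the definition of $\mathcal{M}_{\mathrm{res},Q_0,1}f(x)$ since $Q_j^\alpha \subset Q_0$, hence $\mathcal{M}_{\mathrm{res},Q_0,1}f(x) \geq \dashint_{Q_j^\alpha}|f| > \alpha$.

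The main obstacle is \ref{itm:cze}, which requires a Whitney-type covering argument to bridge between the arbitrary (non-dyadic) cubes arising in the definition of $\mathcal{M}_{\mathrm{res},Q_0,1}$ and the dyadic cubes produced by the stopping construction. Given $x$ with $\mathcal{M}_{\mathrm{res},Q_0,1}f(x) > 5^n\alpha$, pick a cube $Q\ni x$ with $Q\subset Q_0$ and $\dashint_Q|f|\,\dif x > 5^n\alpha$. I would then cover $Q$ by dyadic subcubes of $Q_0$ whose sidelength is comparable to $\ell(Q)$ (at most $\ell(Q)$ but at least $\ell(Q)/5$) and which meet $Q$; averaging forces at least one such dyadic cube $\widetilde{Q}$ to satisfy $\dashint_{\widetilde{Q}}|f| > \alpha$. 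Consequently $\widetilde{Q}\subset Q_k^\alpha$ for some~$k$, and since $\widetilde{Q}$ and $Q$ intersect with comparable diameter one concludes $Q \subset 5Q_k^\alpha$, so $x\in 5Q_k^\alpha$. The precise constant $5^n$ is the delicate part of the covering step; I would follow Iwaniec's original covering argument \cite{Iwa82} to secure this exact factor rather than settling for a larger dimensional constant.
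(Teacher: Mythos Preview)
The paper does not give its own proof of this lemma; it is quoted verbatim from \textsc{Iwaniec}~\cite[Lemma~3]{Iwa82} and used as a black box in the proof of Lemma~\ref{lem:iwaniec_omega}. Your proposal reconstructs the standard dyadic stopping-time argument that underlies Iwaniec's original lemma, and the sketch is correct: parts~\ref{itm:cza}--\ref{itm:czd} are routine consequences of the maximal-cube selection, and your covering argument for~\ref{itm:cze} (pick a dyadic generation with sidelength in $(\ell(Q)/2,\ell(Q)]$, use pigeonhole to locate a dyadic cube $\widetilde{Q}$ meeting $Q$ with $\dashint_{\widetilde{Q}}|f|>\alpha$, then enclose $Q$ in $5Q_k^\alpha$ via $\ell(Q_k^\alpha)\geq\ell(\widetilde{Q})>\ell(Q)/2$) is the right mechanism and delivers the constant $5^n$ once the sidelength comparison is tightened from your stated $\ell(Q)/5$ to $\ell(Q)/2$.
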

We start with a version of Theorem~\ref{thm:fefferman-stein} for the case
where~$\Omega$ is just a cube.
\begin{lemma}
  \label{lem:iwaniec_omega}
  Let $Q_0 \subset \Rn$ be an open cube, $1 \leq q < \infty$, $w
  \in A_\infty$, and let $f \in \lebe^1(Q_0;E)$. If $\mathcal{M}_{\mathrm{res},Q_0,1,\mathcal{N}}^{\sharp}f
  \in \lebe^q_w(Q_0)$, then $\mathcal{M}_{\mathrm{res},Q_0,1} f \in \lebe^q_w(Q_0)$ and there holds
  \begin{align*}
    \int_{Q_0} \abs{\mathcal{M}_{\mathrm{res},Q_0,1} f}^q w\dif x &\leq c\, \int_{Q_0}
    \abs{\mathcal{M}_{\mathrm{res},Q_0,1,\mathcal{N}}^{\sharp} f}^q w\dif x + c\,
    w(Q_0) \bigg(\dashint_{Q_0} \abs{f}\dif x \bigg)^q ,
  \end{align*}
  where $c=c(n,\mathcal{N},q,[w]_{A_{\infty}})>0$. 
\end{lemma}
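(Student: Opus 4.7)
The plan is to establish a good-$\lambda$ inequality relating the level sets of $\Mres{Q_0,1}f$ to those of $\Mressharp{Q_0,1,\mathcal{N}}f$ and then integrate it using the $A_\infty$-property of $w$. This is the natural generalisation of \textsc{Iwaniec}'s argument from~\cite{Iwa82} and~\cite[Thm.~5.23]{DieRuzSch10}, where the scalar average is now replaced by the $\lebe^2$-projection $\Pi_\mathcal{N}^Q$ onto the polynomial subspace $\mathcal{N}$.

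First, I fix $\alpha\geq (\abs{f})_{Q_0}$ and invoke Lemma~\ref{lem:cz} to obtain the pairwise disjoint cubes $Q_j^\alpha\subset Q_0$. On each such cube I split
\begin{align*}
  f = (f - \Pi_\mathcal{N}^{Q_j^\alpha}f) + \Pi_\mathcal{N}^{Q_j^\alpha}f,
\end{align*}
and use the stability~\eqref{eq:PiNstab} together with inverse estimates for polynomials (exactly as in~\eqref{eq:inverseFull}) to conclude $\|\Pi_\mathcal{N}^{Q_j^\alpha}f\|_{\lebe^\infty(5Q_j^\alpha)}\leq c\dashint_{Q_j^\alpha}\abs{f}\dif x\leq c\,2^n\alpha$, with a constant $c=c(n,\mathcal{N})$ coming from inverse estimates on the finite-dimensional space $\mathcal{N}$. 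This is the step where the geometry of $\mathcal{N}$ enters quantitatively.

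Next, fix $K>5^n$ large and $\varepsilon>0$ small (to be chosen). For $x\in 5Q_j^\alpha$ and a cube $Q\ni x$ with $Q\subset Q_0$ I distinguish two cases: if $Q\subset cQ_j^\alpha$ for a suitable dilation constant $c=c(n)$, then using the decomposition of $f$ and~\eqref{eq:PiNapprox},
\begin{align*}
  \dashint_Q \abs{f}\dif y \leq c\inf_{\pi\in\mathcal{N}}\dashint_{cQ_j^\alpha}\abs{f-\pi}\dif y + c\alpha \leq c\,\Mressharp{Q_0,1,\mathcal{N}}f(x) + c\alpha;
\end{align*}
if instead $Q\supsetneq cQ_j^\alpha$, then $Q$ must intersect a CZ-cube at a lower level~$\beta<\alpha$ by Lemma~\ref{lem:cz}\ref{itm:czb}, so $\dashint_Q\abs{f}\dif y\leq c\alpha$ follows from Lemma~\ref{lem:cz}\ref{itm:cza},\ref{itm:czc}. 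Combining these bounds yields the pointwise inclusion
\begin{align*}
  \set{\Mres{Q_0,1}f>K\alpha}\cap\set{\Mressharp{Q_0,1,\mathcal{N}}f\leq\varepsilon\alpha} \subset \bigcup_j 5Q_j^\alpha
\end{align*}
for $K$ large and $\varepsilon$ small depending only on $n,\mathcal{N}$. By the measure estimate from Lemma~\ref{lem:cz}\ref{itm:cza} together with Lemma~\ref{lem:cz}\ref{itm:czd} this gives the Lebesgue good-$\lambda$ inequality, which transfers to $w$ via the strong $A_\infty$-property: there exist constants $c=c(n,\mathcal{N},[w]_{A_\infty})$ and $\delta=\delta([w]_{A_\infty})>0$ with
\begin{align*}
  w\bigl(\set{\Mres{Q_0,1}f>K\alpha}\cap\set{\Mressharp{Q_0,1,\mathcal{N}}f\leq\varepsilon\alpha}\bigr) \leq c\,\varepsilon^\delta\,w(\set{\Mres{Q_0,1}f>\alpha}).
\end{align*}

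Finally, I write $\int_{Q_0}\abs{\Mres{Q_0,1}f}^q w\dif x = q\int_0^\infty\alpha^{q-1}w(\set{\Mres{Q_0,1}f>\alpha})\dif\alpha$, split the integral at $\alpha_0:=(\abs{f})_{Q_0}$, bound the small-$\alpha$ part by $c\,w(Q_0)\alpha_0^q$, and in the large-$\alpha$ part use the good-$\lambda$ estimate to absorb a small fraction of $\int\abs{\Mres{Q_0,1}f}^q w\dif x$ onto the left-hand side after choosing $\varepsilon$ small enough; the remaining term is controlled by $\int\abs{\Mressharp{Q_0,1,\mathcal{N}}f}^q w\dif x$. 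A standard truncation argument ensures that the absorption is legitimate even before we a priori know $\Mres{Q_0,1}f\in\lebe^q_w$.

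The main technical obstacle is the case analysis for $Q\ni x$ above, in particular keeping track that all cubes stay inside $Q_0$ (so that we genuinely work with the \emph{restricted} maximal operators) and that the enlarged cubes $5Q_j^\alpha$ used in Lemma~\ref{lem:cz}\ref{itm:cze} do not leave $Q_0$; both are handled by covering arguments as in~\cite{DieRuzSch10}. The upgrade to polynomial subspaces $\mathcal{N}$ of higher order does not affect the scheme, since all the polynomial-dependent constants are absorbed into the $c=c(n,\mathcal{N},q,[w]_{A_\infty})$ provided by the finite-dimensional inverse estimates on $\mathcal{N}$.
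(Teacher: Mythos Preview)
Your overall scheme (Calder\'{o}n--Zygmund decomposition, good-$\lambda$ inequality, $A_\infty$-transfer, layer-cake integration) is the same as the paper's, and your handling of the polynomial space $\mathcal{N}$ via $\Pi_\mathcal{N}^Q$ and inverse estimates is right in spirit. However, the core good-$\lambda$ step as written has a genuine gap.

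In your case analysis you claim that for $Q\subset cQ_j^\alpha$ one has
\[
\dashint_Q\abs{f}\dif y \leq c\inf_{\pi\in\mathcal{N}}\dashint_{cQ_j^\alpha}\abs{f-\pi}\dif y + c\alpha.
\]
This does not follow from the decomposition $f=(f-\Pi_\mathcal{N}^{Q_j^\alpha}f)+\Pi_\mathcal{N}^{Q_j^\alpha}f$: the polynomial part is indeed $\leq c\alpha$ in $\lebe^\infty$, but $\dashint_Q\abs{f-\Pi_\mathcal{N}^{Q_j^\alpha}f}$ is \emph{not} controlled by $\dashint_{cQ_j^\alpha}\abs{f-\Pi_\mathcal{N}^{Q_j^\alpha}f}$ when $Q$ is much smaller than $Q_j^\alpha$. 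As a consequence, the pointwise inclusion you derive is already contained in Lemma~\ref{lem:cz}\ref{itm:cze} and carries no information; it cannot produce the measure smallness needed for the good-$\lambda$ argument. The step ``by the measure estimate \dots\ this gives the Lebesgue good-$\lambda$ inequality'' is precisely the missing content.

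The paper organises the argument differently and avoids any pointwise bound on $\mathcal{M}_{\mathrm{res},Q_0,1}f$. It runs the CZ decomposition at \emph{two} levels $\alpha$ and $2^{-m}\alpha$, with $m$ chosen so that the inverse-estimate constant satisfies $\mathtt{C}\,2^{n-m}<\tfrac12$. Each $Q_j^\alpha$ lies in a parent cube $Q\in\{Q_i^{2^{-m}\alpha}\}$, and the projection is taken on the \emph{parent}: since $\|\Pi_\mathcal{N}^Q f\|_{\lebe^\infty(Q)}\leq\mathtt{C}\,2^{n-m}\alpha<\alpha/2$, one gets $\dashint_{Q_j^\alpha}\abs{f-\Pi_\mathcal{N}^Q f}\geq\alpha/2$, hence $\sum_{Q_j^\alpha\subset Q}\abs{Q_j^\alpha}\leq\tfrac{2}{\alpha}\int_Q\abs{f-\Pi_\mathcal{N}^Q f}$. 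The dichotomy is then on $Q$: either $\dashint_Q\abs{f-\Pi_\mathcal{N}^Q f}\leq\tfrac{\alpha}{2}\epsilon_2$, giving $\sum_{Q_j^\alpha\subset Q}\abs{Q_j^\alpha}\leq\epsilon_2\abs{Q}$ and hence small $w$-measure by $A_\infty$; or $Q\subset\{\mathcal{M}^\sharp_{\mathrm{res},Q_0,1,\mathcal{N}}f>\delta\alpha\}$. Summing over parent cubes yields the weighted good-$\lambda$ inequality directly. You should replace your case analysis on small cubes $Q\ni x$ by this two-level argument with the projection on the parent cube.
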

\begin{proof}
  Since $w \in A_\infty$, there exists a constant $c_0 >0$, only depending on $[w]_{A_{\infty}}$, such that $w(5Q) \leq
  c_0 w(Q)$ for any cube~$Q \subset \Rn$, see~\cite[V~1.6,
  p.~196]{Ste93}. By the usual inverse estimates and \eqref{eq:PiNstab}, there exists $\mathtt{C}=\mathtt{C}(\mathcal{N},n)>0$ such that, for any open cube $Q\subset\R^{n}$,
\begin{align}\label{eq:stabboundweighted}
\|\Pi_{\mathcal{N}}^{Q}f\|_{\lebe^{\infty}(Q)}\leq \mathtt{C}\dashint_{Q}|f|\dif x.
\end{align}
Then we fix a number $m\in\mathbb{N}$ such that $\mathtt{C}2^{n-m}<\frac{1}{2}$. We claim that for every $\epsilon>0$ there exists $\delta=\delta(\varepsilon,n,m,\mathcal{N},[w]_{A_{\infty}})>0$ such that
  \begin{align}
    \label{eq:iw_o2}
    \begin{aligned}
      w(\set{\mathcal{M}_{\mathrm{res},Q_{0},1}f > 5^n \alpha}) &\leq c_0 w(
      \set{\mathcal{M}_{\mathrm{res},Q_0,1,\mathcal{N}}^{\sharp}f > \delta \alpha })
      \\
      &\qquad + c_0 \epsilon\,w( \set{\mathcal{M}_{\mathrm{res},Q_0,1} f > 2^{-m}
        \alpha} )
    \end{aligned}
  \end{align}
  for all $f \in \lebe^1(Q_0;E)$ and all $\alpha \geq 2^{m}
  (\abs{f})_{Q_0}$. Let $\varepsilon>0$. As $\alpha\geq 2^{-m}\alpha \geq (|f|)_{Q_{0}}$, we may pick $(Q_{j}^{\alpha})_{j\in\mathbb{N}}$ as in Lemma~\ref{lem:cz}. Toward \eqref{eq:iw_o2}, it suffices to show by virtue of Lemma~\ref{lem:cz}~\ref{itm:cze} and $w(5Q) \leq c_0
  w(Q)$ for all cubes $Q$ that
  \begin{align}
    \label{eq:iwa_om_good2}
    \sum_j w(Q_j^\alpha) &\leq w(
    \set{\mathcal{M}_{\mathrm{res},Q_0,1,\mathcal{N}}^{\sharp} f > \delta \alpha }) + \epsilon\,w(
    \set{\mathcal{M}_{\mathrm{res},Q_0,1} f > 2^{-m} \alpha} ).
  \end{align}
 By Lemma~\ref{lem:cz}~\ref{itm:czb}, every $Q_{j}^{\alpha}$ is contained in some $Q$, $Q\in\{Q_{i}^{2^{-m}\alpha}\colon i\in\mathbb{N}\}$. Then
\begin{align*}
\dashint_{Q_{j}^{\alpha}}|\Pi_{\mathcal{N}}^{Q}f|\dif x \leq \|\Pi_{\mathcal{N}}^{Q}f\|_{\lebe^{\infty}(Q)} \stackrel{\eqref{eq:stabboundweighted}}{\leq} \mathtt{C}\dashint_{Q}|f|\dif x \stackrel{\text{Lemma}~\ref{lem:cz}~\ref{itm:cza}}{\leq} \mathtt{C}2^{n-m}\alpha \stackrel{\eqref{eq:stabboundweighted}\mathrm{ff.}}{<}\frac{\alpha}{2}. 
\end{align*}
Thus, by~\ref{itm:cza} of Lemma~\ref{lem:cz} we obtain for all $j\in\mathbb{N}$
  \begin{align*}
    \dashint_{Q^\alpha_j}|f-\Pi_{\mathcal{N}}^Q f|\dif x \ge
    \dashint_{Q^\alpha_j}|f|\dif x-\dashint_{Q_j^\alpha} \abs{\Pi_{\mathcal{N}}^Q f}\dif x\ge \alpha - \frac{\alpha}{2} =\frac\alpha2 .
  \end{align*}
Multiplying the previous inequality by
  $\abs{Q_j^\alpha}$ and summing over all $Q^\alpha_j\subset Q$, we obtain
  \begin{align*}
    \sum_{Q^\alpha_j \subset Q} \abs{Q_j^\alpha} \leq \frac{2}{\alpha}
    \int_Q \abs{f-\Pi_{\mathcal{N}}^Q f}\dif x.
  \end{align*}
Since $w \in
  A_\infty$, there exists an $\epsilon_2>0$ such that if $\mathcal{E} \subset Q$
  with $\abs{\mathcal{E}} \leq \epsilon_2 \abs{Q}$, then $w(\mathcal{E}) \leq \epsilon
  w(Q)$, see~\cite[V~1.7, p.~196]{Ste93}; here, ~$\epsilon_2$ only
  depends on $\epsilon$ and the $A_\infty$-constant of $w$ but is
  independent of~$Q$ and~$\mathcal{E}$. With $C_{2}$ as in \eqref{eq:PiNapprox}, we define $\delta:=\frac{1}{2C_{2}}\varepsilon_{2}$ and then consider two cases:

$\bullet$  If $\dashint_Q \abs{f-\Pi_{\mathcal{N}}^Q f}\dif x \leq \frac{\alpha}{2}
  \epsilon_2$, then $\sum_{Q^\alpha_j \subset Q} \abs{Q_j^\alpha} \leq
  \epsilon_2 \abs{Q}$.  By our choice of $\epsilon_2$, 
  $\sum_{Q^\alpha_j \subset Q} w(Q_j^\alpha) \leq \epsilon\,
  w(Q)$, where we have used that the~$Q^\alpha_j$, $j \in \setN$, are pairwise disjoint.

$\bullet$ If $\dashint_Q \abs{f-\Pi_{\mathcal{N}}^Q f}\dif x > \frac{\alpha}{2}
  \epsilon_2$, then we have, with $C_{2}$ as in \eqref{eq:PiNapprox}, $\mathcal{M}_{\mathrm{res},Q_0,1,\mathcal{N}}^{\sharp} f (x)> \frac{\alpha}{2C_{2}}
  \epsilon_2 = \delta \alpha$ for all points~$x \in Q$. In particular, $Q \subset
  \set{\mathcal{M}_{\mathrm{res},Q_0,1,\mathcal{N}}^{\sharp} f>\delta \alpha}$ and
  \begin{align*}
    \sum_{Q^\alpha_j \subset Q} w(Q_j^\alpha) \leq
    w(Q) = w(Q \cap \set{\mathcal{M}_{\mathrm{res},Q_0,1,\mathcal{N}}^{\sharp} f>\delta \alpha}).
  \end{align*}
  Combining both alternatives, we obtain
  \begin{align}\label{eq:ComboRockoSchamoni}
    \sum_{Q^\alpha_j \subset Q} w(Q_j^\alpha) \leq w(Q \cap
    \set{\mathcal{M}_{\mathrm{res},Q_0,1,\mathcal{N}}^{\sharp} f>\delta \alpha}) +\epsilon w(Q).
  \end{align}
By Lemma~\ref{lem:cz}~\ref{itm:czb} and since
  $\alpha \geq 2^{m} (\abs{f})_{Q_0}$, summing \eqref{eq:ComboRockoSchamoni} over the pairwise disjoint cubes~$Q
  \in\set{Q^{2^{-m}\alpha}_i\,:\,i\in\setN}$, all cubes from
  $\set{Q^\alpha_j\,:\, j \in \setN}$ are counted exactly once in the overall sum. We thus obtain~\eqref{eq:iwa_om_good2} and
 hereby ~\eqref{eq:iw_o2}. Once \eqref{eq:iw_o2} is established, we may exactly follow the layer cake argument of \textsc{Iwaniec}~\cite[Lemma~4]{Iwa82} to conclude.
\end{proof}
\begin{corollary}
  \label{cor:iwaniec_omega2}
  Let $Q_0 \subset \Rn$ be an open cube, $1 \leq q < \infty$, $w \in A_q$,
  and $f \in \lebe^1_\mathcal{N}(Q_{0};E)$. If $\mathcal{M}_{\mathrm{res},Q_0,1,\mathcal{N}}^{\sharp} f \in \lebe^q_w(Q_0;E)$, then
  $f \in \lebe^q_{w,\mathcal{N}}(Q_0;E)$ and
  \begin{align*}
    \norm{f}_{\lebe^q_{w}(Q_0)} &\leq c\, \bignorm{ \mathcal{M}_{\mathrm{res},Q_0,1,\mathcal{N}}^{\sharp}
      f}_{\lebe^q_w(Q_0)},
  \end{align*}
  where the constant $c$ only depends on $\mathcal{N}$, $q$ and $[w]_{A_{q}}$.
\end{corollary}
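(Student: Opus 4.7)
The plan is to derive the corollary from Lemma~\ref{lem:iwaniec_omega} by absorbing the excess term $w(Q_0)(\dashint_{Q_0}|f|\,dx)^q$ into the sharp maximal contribution, which is where the orthogonality condition $f\in\lebe^1_{\mathcal{N}}(Q_0;E)$ becomes essential. Concretely, since $f$ annihilates $\mathcal{N}$ in the $\lebe^2(Q_0;E)$-pairing, the $\lebe^2$-projection satisfies $\Pi_{\mathcal{N}}^{Q_0}f=0$. For any $\pi\in\mathcal{N}$ we then have $\pi = \Pi_{\mathcal{N}}^{Q_0}\pi = \Pi_{\mathcal{N}}^{Q_0}(\pi-f)$, so the $\lebe^1$-stability~\eqref{eq:PiNstab} of $\Pi_{\mathcal{N}}^{Q_0}$ yields
\[
  \dashint_{Q_0}|\pi|\,\dif x \;\leq\; C_1\dashint_{Q_0}|f-\pi|\,\dif x,
\]
and the triangle inequality gives $\dashint_{Q_0}|f|\,\dif x \leq (1+C_1)\dashint_{Q_0}|f-\pi|\,\dif x$. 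Taking the infimum over $\pi\in\mathcal{N}$, and observing that the cube $Q_0$ itself is admissible in the supremum defining $\mathcal{M}_{\mathrm{res},Q_0,1,\mathcal{N}}^{\sharp}f(x)$ for every $x\in Q_0$ (since $\sigma=1$ and $Q_0\subset Q_0$), I obtain
\[
  \dashint_{Q_0}|f|\,\dif x \;\leq\; (1+C_1)\,\mathcal{M}_{\mathrm{res},Q_0,1,\mathcal{N}}^{\sharp}f(x)
  \qquad\text{for every }x\in Q_0.
\]

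Next I will raise this inequality to the $q$-th power, multiply by $w$, and integrate over $Q_0$, producing
\[
  w(Q_0)\left(\dashint_{Q_0}|f|\,\dif x\right)^{q} \leq (1+C_1)^q \int_{Q_0}\bigl|\mathcal{M}_{\mathrm{res},Q_0,1,\mathcal{N}}^{\sharp}f\bigr|^{q}w\,\dif x.
\]
Inserting this into the conclusion of Lemma~\ref{lem:iwaniec_omega} (which applies since $A_q\subset A_\infty$) gives $\|\mathcal{M}_{\mathrm{res},Q_0,1}f\|_{\lebe^q_w(Q_0)}\leq c\|\mathcal{M}_{\mathrm{res},Q_0,1,\mathcal{N}}^{\sharp}f\|_{\lebe^q_w(Q_0)}$, with $c$ depending only on $\mathcal{N}$, $q$ and $[w]_{A_q}$.

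Finally, by the Lebesgue differentiation theorem applied to small cubes around almost every $x\in Q_0$ (which are admissible for $\mathcal{M}_{\mathrm{res},Q_0,1}$ because $Q_0$ is open), we have $|f(x)|\leq \mathcal{M}_{\mathrm{res},Q_0,1}f(x)$ almost everywhere on $Q_0$, and the chain of inequalities yields the desired bound $\|f\|_{\lebe^q_w(Q_0)}\leq c\|\mathcal{M}_{\mathrm{res},Q_0,1,\mathcal{N}}^{\sharp}f\|_{\lebe^q_w(Q_0)}$. The only genuinely delicate point in the argument is the first step, namely the use of $\Pi_{\mathcal{N}}^{Q_0}$ to convert the orthogonality constraint on $f$ into a pointwise comparison between $\dashint_{Q_0}|f|\,\dif x$ and the sharp maximal function; all remaining steps are mechanical once Lemma~\ref{lem:iwaniec_omega} is in hand.
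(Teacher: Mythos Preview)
Your proof is correct and follows essentially the same approach as the paper's: both use $\Pi_{\mathcal{N}}^{Q_0}f=0$ to bound $\dashint_{Q_0}|f|\,\dif x$ by a constant times $\inf_{\pi\in\mathcal{N}}\dashint_{Q_0}|f-\pi|\,\dif x$, observe that $Q_0$ itself is admissible in the sharp maximal function, and then feed the resulting estimate into Lemma~\ref{lem:iwaniec_omega} together with the pointwise bound $|f|\leq \mathcal{M}_{\mathrm{res},Q_0,1}f$. The only cosmetic difference is that the paper invokes the approximation property~\eqref{eq:PiNapprox} directly, whereas you re-derive the equivalent inequality inline from the $\lebe^1$-stability~\eqref{eq:PiNstab}.
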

\begin{proof}
  Let $f \in \lebe^1_{\mathcal{N}}(Q_0;E)$ so that $\Pi_{\mathcal{N}}^{Q_{0}} f=0$. In conjunction with Lemma~\ref{lem:iwaniec_omega} and \eqref{eq:PiNapprox}, this yields
  \begin{align*}
    \norm{f}_{\lebe^q_{w}(Q_0)}^q
    &\leq
      \norm{\mathcal{M}_{\mathrm{res},Q_0,1} f} _{\lebe^q_{w}(Q_0)}^q
    \\
    &\leq c\,
      \bignorm{ \mathcal{M}_{\mathrm{res},Q_0,1,\mathcal{N}}^{\sharp} f} _{\lebe^q_{w}(Q_0)}^q + c\,
    w(Q_0) \bigg(\dashint_{Q_0} \abs{f-\Pi_{\mathcal{N}}^{Q_{0}}f }\dif x\bigg)^q
    \\
    &\leq c\,
      \bignorm{ \mathcal{M}_{\mathrm{res},Q_0,1,\mathcal{N}}^{\sharp} f} _{\lebe^q_{w}(Q_0)}^q + c\, 
      \int_{Q_0}  \bigabs{ (\mathcal{M}_{\mathrm{res},Q_0,1,\mathcal{N}}^{\sharp}f)(y)}^q w \dif y
    \\
    &\leq c\,
      \bignorm{ \mathcal{M}_{\mathrm{res},Q_0,1,\mathcal{N}}^{\sharp} f} _{\lebe^q_{w}(Q_0)}^q,
  \end{align*}
  and the proof is complete. 
\end{proof}
Before we come to the proof of Theorem~\ref{thm:fefferman-stein}, the underlying duality argument requires
\begin{lemma}
  \label{lem:density}
  Let $\Omega \subset \Rn$ be as in Theorem~\ref{thm:fefferman-stein}.  Let $1 < q < \infty$ and $w \in A_q$. Then
  $\hold^\infty_{c,\mathcal{N}}(\Omega;E)$ is dense in $\lebe^q_{w,\mathcal{N}}(\Omega;E)$ for the norm topology.
\end{lemma}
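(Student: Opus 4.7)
\medskip

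\noindent\textbf{Proof proposal.} The plan is to prove density by combining the unconstrained density of smooth compactly supported functions in $\lebe^q_w(\Omega;E)$ with a finite-dimensional ``moment correction'' that respects the constraints $\int_\Omega f\cdot \pi\,\dif x=0$ for $\pi\in\mathcal{N}$.

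First, I recall the standard fact that $\hold^{\infty}_c(\Omega;E)$ is dense in $\lebe^q_w(\Omega;E)$ for any $w\in A_q$, $1<q<\infty$: one truncates and then mollifies, using that $w\in A_q$ implies $w$ is doubling and locally integrable, and that the Hardy--Littlewood maximal operator is bounded on $\lebe^q_w$, so that the mollifications converge to the original function in $\lebe^q_w$. Hence, given $f\in\lebe^q_{w,\mathcal{N}}(\Omega;E)$, one can find $g_j\in\hold^{\infty}_c(\Omega;E)$ with $\|g_j-f\|_{\lebe^q_w(\Omega)}\to 0$.

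Second, I would construct a dual system in $\hold^{\infty}_c(\Omega;E)$ for $\mathcal{N}$. Fix a basis $\pi_1,\dots,\pi_M$ of $\mathcal{N}$ and any open ball $B_0\Subset\Omega$. Since a nonzero polynomial cannot vanish on an open set, $\pi_1,\dots,\pi_M$ remain linearly independent when viewed as elements of $\lebe^2(B_0;E)$. Consequently the bilinear pairing $\hold^{\infty}_c(B_0;E)\times\mathcal{N}\ni(\eta,\pi)\mapsto\int_{B_0}\eta\cdot\pi\,\dif x$ has full rank in the first slot, and a standard linear-algebraic inversion of the (non-singular) Gram matrix of a sufficiently rich smooth family yields $\eta_1,\dots,\eta_M\in\hold^{\infty}_c(B_0;E)\subset\hold^{\infty}_c(\Omega;E)$ with $\int_\Omega \eta_i\cdot\pi_k\,\dif x=\delta_{ik}$. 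I then define the corrected approximation
\begin{align*}
  \tilde f_j:=g_j-\sum_{i=1}^M\Big(\int_\Omega g_j\cdot\pi_i\,\dif x\Big)\eta_i,
\end{align*}
so that $\tilde f_j\in\hold^{\infty}_c(\Omega;E)$ and, by the biorthogonality, $\int_\Omega \tilde f_j\cdot\pi_k\,\dif x=0$ for all $k$; thus $\tilde f_j\in\hold^{\infty}_{c,\mathcal{N}}(\Omega;E)$.

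Finally, to conclude convergence, I use that $f$ itself has vanishing moments: since $\int_\Omega f\cdot\pi_i\,\dif x=0$, the duality $(\lebe^q_w)^*\cong\lebe^{q'}_{w^{1-q'}}$ with $w^{1-q'}\in A_{q'}$ yields, via H\"older's inequality,
\begin{align*}
  \Big|\int_\Omega g_j\cdot\pi_i\,\dif x\Big|=\Big|\int_\Omega(g_j-f)\cdot\pi_i\,\dif x\Big|\leq \|g_j-f\|_{\lebe^q_w(\Omega)}\,\|\pi_i\|_{\lebe^{q'}_{w^{1-q'}}(\Omega)}.
\end{align*}
The right-hand norm is finite because $\pi_i$ is bounded on the bounded set $\Omega$ and $w^{1-q'}$ is locally integrable (a consequence of $w^{1-q'}\in A_{q'}$). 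Combining these estimates gives
\begin{align*}
  \|\tilde f_j-f\|_{\lebe^q_w(\Omega)}\leq \|g_j-f\|_{\lebe^q_w(\Omega)}\Big(1+\sum_{i=1}^M\|\pi_i\|_{\lebe^{q'}_{w^{1-q'}}(\Omega)}\|\eta_i\|_{\lebe^q_w(\Omega)}\Big)\longrightarrow 0,
\end{align*}
proving density. The only mildly non-routine step is the construction of the biorthogonal smooth family $\{\eta_i\}$, which is a purely algebraic observation; everything else reduces to the classical unweighted density technique transported into the $A_q$-weighted setting.
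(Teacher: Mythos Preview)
Your proof is correct and follows essentially the same strategy as the paper: approximate by $g_j\in\hold^\infty_c(\Omega;E)$, then apply a finite-dimensional moment correction that is itself smooth and compactly supported, and use that $f$ has vanishing moments so the correction is controlled by $\|g_j-f\|_{\lebe^q_w}$. The only cosmetic difference is that the paper packages the correction as $\eta_0\widetilde{\Pi}_0 g_j$ for a single cutoff $\eta_0\in\hold^\infty_c(B)$ and a projection $\Pi_0\colon\lebe^1(\Omega;E)\to\mathcal{N}$ defined via $\int_B \eta_0(\Pi_0 f)\xi\,\dif x=\dashint_\Omega f\xi\,\dif x$, whereas you write it via an explicit biorthogonal family $\{\eta_i\}$; these are equivalent finite-dimensional constructions.
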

\begin{proof}
  Let $\eta_0 \in \hold^\infty_c(B)$ with $\eta_0 \geq 0$ and
  $\int_B \eta_0\dif x=1$.  Similarly as in~\eqref{eq:projDEFKorn} we define a linear operator ~$\Pi_0\,:\, \lebe^{1}(\Omega;E) \to \mathcal{N}$ by
  \begin{align*}
    \int_B \eta_0 (\Pi_0 f)\xi \dif x = \dashint_\Omega f \xi\dif x \qquad \text{for all $\xi \in \mathcal{N}$}.
  \end{align*}
  By inverse estimates on~$\mathcal{N}$ and H\"older's inequality we
  obtain
  $ \|\Pi_0 f\|_{\lebe^{\infty}(\Omega)} \leq c \dashint_\Omega \abs{f}\dif x$. 
  This and~$w \in A_q$ imply 
  $\norm{\eta_0 \Pi_0 f}_{\lebe^q_w(\Omega)} \leq c \norm{f}_{\lebe^q_w(\Omega)}$. Now let $f\in\lebe_{w,\mathcal{N}}^{q}(\Omega;E)$ so that, in particular, $\Pi_0 f =0$. Then we find
  $(g_{j})\subset \hold^\infty_{c}(\Omega;E)$ such that $g_j \to f$ in
  $\lebe^q_{w}(\Omega;E)$, cf.~\textsc{Miller} \cite[Lem.~2.1]{Miller82}. Put $\widetilde{\Pi}_{0}:=|\Omega|\Pi_{0}$ and define
  $f_j := g_j - \eta_{0}\widetilde{\Pi}_0 g_j \in \hold^\infty_{c,\mathcal{N}}(\Omega;E)$. Since $\Pi_0 f=0$ we have $f_j= g_j - \eta_{0}\widetilde{\Pi}_0(g_j -f)$. Since $g_j \to f$ in $\lebe^q_w(\Omega;E)$ it follows with  $\norm{\eta_0 \Pi_0 (g_j-f)}_{\lebe^q_w(\Omega)} \leq c \norm{g_j-f}_{\lebe^q_w(\Omega)}$ that $f -f_j \to 0$ in $\lebe^q_w(\Omega;E)$, as desired.
\end{proof}
\begin{proof}[Proof of Theorem~\ref{thm:fefferman-stein}]
  Let $1<q<\infty$, $w \in A_q$, and $f \in \lebe^1_{\locc}(\Omega;E)$ with
  $\mathcal{M}_{\mathrm{res},\Omega,\sigma_1,\mathcal{N}}^{\sharp} f \in \lebe^q_{w}(\Omega)$.  Since $w \in
  A_q$, we have $w' := w^{\frac{1}{1-q}} \in A_{q'}$.

  Let $h \in \hold^\infty_{c,\mathcal{N}}(\Omega;E)$. By
  Theorem~\ref{thm:decomposition} we may then decompose $h$  into the sum of
  functions~$T_i h \in \hold^\infty_{c,\mathcal{N}}(W_i;E)\subset\lebe^{q'}_{w',\mathcal{N}}(W_i;E)$ such that
  \begin{align}
    \label{eq:18b}
    \bigg( \sum_{i \geq 0} \norm{T_i h}^{q'}_{\lebe^{q'}_{w'}(W_i)}
    \bigg)^{\frac{1}{q'}} &\leq c\,
    \norm{h}_{\lebe^{q'}_{w'}(\Omega)}.
  \end{align}
  Moreover, by Theorem~\ref{thm:decomposition}~\ref{itm:Cinfty}, only finitely many summands of $h = \sum_{i \geq 0}
  T_i h$ are non-zero.  Since $f \in \lebe^1_{\locc}(\Omega;E)$ and $h \in \hold^\infty_{c,\mathcal{N}}(\Omega;E)$,
  $\skp{f}{h}:=\int_{\Omega}fh\dif x$ is well-defined, and for all $i\in\mathbb{N}$, $\xi\in\mathcal{N}$ we have $\int_{W_i} \xi T_i h \,\dif x = 0$. Therefore, with $\Pi_{\mathcal{N}}^{W_{i}}$ as in \eqref{eq:PiNstab}ff.,
  \begin{align*}
    \skp{f}{h} &= \sum_i \skp{f}{T_i h} = \sum_i \skp{f -
      \Pi_{\mathcal{N}}^{W_{i}}f}{T_i h},
  \end{align*}
  where we also used that $f \in \lebe^1(W_i;E)$, since $\sigma_1>1$ and
  consequently $\overline {W}_1 \Subset \sigma_1 W_i \subset
  \Omega$.  We estimate with Corollary~\ref{cor:iwaniec_omega2} in the second
  step
  \begin{align}
    \label{eq:barf}
    \begin{aligned}
      \bigskp{f}{h} &\leq \sum_i \norm{f-\Pi_{\mathcal{N}}^{W_{i}}f}_{\lebe^q_{w}(W_i)}
      \norm{T_i h}_{\lebe^{q'}_{w'}(W_i)}
      \\
      &\leq c\,\sum_i \norm{\mathcal{M}_{\mathrm{res},W_i,1,\mathcal{N}}^{\sharp} f}_{\lebe^q_w(W_i)}
     \norm{T_i h}_{\lebe^{q'}_{w'}(W_i)}
      \\
      &\leq c\,\Big(\sum_i \norm{\mathbbm{1}_{W_{i}}\mathcal{M}_{\mathrm{res},\Omega,\sigma_{1},\mathcal{N}}^{\sharp} f}_{\lebe^q_w(\Omega)}^{q}\Big)^{\frac{1}{q}}\Big(\sum_{i}  \norm{T_i h}_{\lebe^{q'}_{w'}(W_i)}^{q'}\Big)^{\frac{1}{q'}}
      \\
      &\leq c\, \norm{\mathcal{M}_{\mathrm{res},\Omega,\sigma_1,\mathcal{N}}^{\sharp} f}_{\lebe^q_w(\Omega)}
      \norm{h}_{\lebe^{q'}_{w'}(\Omega)}
    \end{aligned}
  \end{align}
  for all $h \in \hold^\infty_{c,\mathcal{N}}(\Omega;E)$, where we have
  used in the last step $\sum_{W \in \mathcal{W}} \mathbbm{1}_{\sigma_1 W}
  \leq \sigma_2\,\mathbbm{1}_\Omega$ and~\eqref{eq:18b}. Therefore, invoking the smooth approximation from Lemma~\ref{lem:density}, 
  \begin{align}\label{eq:barfer}
  \sup_{\substack{h\in\lebe_{w',\mathcal{N}}^{q'}(\Omega;E)\\ \|h\|_{\lebe_{w'}^{q'}(\Omega)}\leq 1}}\bigskp{f}{h}  \leq c\norm{\mathcal{M}_{\mathrm{res},\Omega,\sigma_1,\mathcal{N}}^{\sharp} f}_{\lebe^q_w(\Omega)}.
  \end{align}
By an analogous argument to that of Proposition~\ref{pro:best_approximation} and because $\Pi_{\mathcal{N}}^{\ball}$ is $\lebe_{w}^{q}$-stable, 
\begin{align*}
\inf_{\pi\in\mathcal{N}}\|f-\pi\|_{\lebe_{w}^{q}(\Omega)} &\leq c \|f-\Pi_{\mathcal{N}}^{\ball}f\|_{\lebe_{w}^{q}(\Omega)} \\ & = c \sup_{\substack{h\in\lebe_{w'}^{q'}(\Omega;E)\\ \|h\|_{\lebe_{w'}^{q'}(\Omega)}\leq 1}}\bigskp{f-\Pi_{\mathcal{N}}^{\ball}f}{h-\Pi_{\mathcal{N}}^{\ball}h}+\underbrace{\bigskp{f-\Pi_{\mathcal{N}}^{\ball}f}{\Pi_{\mathcal{N}}^{\ball}h}}_{=0}  \\ 
& \leq c \sup_{\substack{\widetilde{h}\in\lebe_{w',\mathcal{N}}^{q'}(\Omega;E)\\ \|\widetilde{h}\|_{\lebe_{w'}^{q'}(\Omega)}\leq 1}}\bigskp{f}{\widetilde{h}} \stackrel{\eqref{eq:barfer}}{\leq} c\norm{\mathcal{M}_{\mathrm{res},\Omega,\sigma_1,\mathcal{N}}^{\sharp} f}_{\lebe^q_w(\Omega)}.
\end{align*} 
This completes the proof. 
\end{proof}

\subsection{Korn-type inequalities and the proof of Theorem~\ref{thm:Korn}}\label{sec:Kornmain} We now turn to Theorem~\ref{thm:KornMain} below, a special case of which is Theorem~\ref{thm:Korn}. Different from Theorem~\ref{thm:mainLp}, where the focus was on the equivalence of trace and Korn-type inequalities on \emph{smooth} domains, and previous contributions (cf.~\textsc{\Kalamajska{}} \cite{Kal93,Kal94}), the previous sections now allow us to cover the natural yet vast class of John domains which, in turn, might be very irregular:
\begin{theorem}[Weighted Korn inequality]\label{thm:KornMain}
Let $\Omega\subset\R^{n}$ be an open and bounded domain that satisfies the emanating chain condition with constants $\sigma_{1}>1$ and $\sigma_{2}\geq 1$ and central ball $\ball$. Moreover, let $k\in\mathbb{N}$, $1<p<\infty$ and $w\in A_{p}$. Then for any $k$-th order $\mathbb{C}$-elliptic operator $\A$ of the form \eqref{eq:form} there exists a constant $c=c(\sigma_{1},\sigma_{2},p,[w]_{A_{p}},\A)>0$ such that 
\begin{align}\label{eq:KornWeightedMain}
\begin{split}
\|D^{k}(u-\Pi_{\A}^{B}u)\|_{\lebe_{w}^{p}(\Omega)}& \leq c\|\A u\|_{\lebe_{w}^{p}(\Omega)}, \\ 
\|D^{k}u\|_{\lebe_{w}^{p}(\Omega)}& \leq c(\diam(\Omega)^{-k}\|u\|_{\lebe_{w}^{p}(\Omega)}+\|\A u\|_{\lebe_{w}^{p}(\Omega)})
\end{split}
\end{align}
hold for all $u\in\sobo^{\A}\lebe_{w}^{p}(\Omega):=\{v\in\lebe_{w}^{p}(\Omega;V)\colon\;\A v\in\lebe_{w}^{p}(\Omega;W)\}$. 
\end{theorem}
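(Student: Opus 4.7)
The plan is to bootstrap a \emph{local} Korn-type inequality on balls -- which holds in $\lebe^{p_{0}}$ for any $1<p_{0}<\infty$ despite \textsc{Ornstein}'s Non-Inequality -- to the global weighted inequality on John domains by means of the Fefferman-Stein-type inequality of Theorem~\ref{thm:fefferman-stein} combined with the best approximation property of Proposition~\ref{pro:best_approximation}. Both are applied with target space $E=\odot^{k}(\R^{n};V)$ and with $\mathcal{N}:=D^{k}\ker(\A)$, which is a finite-dimensional subspace of $\mathscr{P}_{\degPol(\A)-1-k}(\R^{n};E)$ precisely because $\A$ is $\mathbb{C}$-elliptic.

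The first step is a local Korn inequality on balls. By the self-improvement property of $A_{p}$-weights, fix $p_{0}\in(1,p)$ so that $w\in A_{p/p_{0}}$. For any ball $Q\subset\R^{n}$ with $\sigma_{1}Q\subset\Omega$ and any sufficiently smooth $u$ on $\sigma_{1}Q$, multiplication by a cut-off $\chi\in\hold_{c}^{\infty}(\sigma_{1}Q)$ with $\chi\equiv 1$ on $Q$ and $|D^{j}\chi|\lesssim r(Q)^{-j}$, combined with the global $\lebe^{p_{0}}$-Calder\'{o}n-Zygmund estimate on $\R^{n}$ available from the Fourier multiplier~\eqref{eq:diffopFourier}, yields
\begin{align*}
\Big(\dashint_{Q}|D^{k}u|^{p_{0}}\dx\Big)^{\!\frac{1}{p_{0}}}\leq c\Big(\dashint_{\sigma_{1}Q}|\A u|^{p_{0}}\dx\Big)^{\!\frac{1}{p_{0}}}+c\sum_{j<k}r(Q)^{j-k}\Big(\dashint_{\sigma_{1}Q}|D^{j}u|^{p_{0}}\dx\Big)^{\!\frac{1}{p_{0}}}.
\end{align*}
Applied to $u-\Pi_{\A}^{\sigma_{1}Q}u$ in place of $u$ and using the \Poincare{} inequality of Theorem~\ref{thm:poincare-john} on the ball $\sigma_{1}Q$ to absorb the lower-order terms, this yields the local Korn inequality
\begin{align*}
\inf_{q\in\ker(\A)}\Big(\dashint_{Q}|D^{k}(u-q)|^{p_{0}}\dx\Big)^{\!\frac{1}{p_{0}}}\leq c\Big(\dashint_{\sigma_{1}Q}|\A u|^{p_{0}}\dx\Big)^{\!\frac{1}{p_{0}}}.
\end{align*}
Choosing $\pi=D^{k}\Pi_{\A}^{\sigma_{1}Q}u\in\mathcal{N}$ and applying H\"{o}lder's inequality to pass from the $\lebe^{p_{0}}$-average to the $\lebe^{1}$-average translates this into the pointwise domination
\begin{align*}
\mathcal{M}^{\sharp}_{\mathrm{res},\Omega,\sigma_{1},\mathcal{N}}(D^{k}u)(x)\leq c\,\big(\mathcal{M}(\mathbbm{1}_{\Omega}|\A u|^{p_{0}})(x)\big)^{\!\frac{1}{p_{0}}},\qquad x\in\Omega.
\end{align*}

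The second step combines Fefferman-Stein with best approximation. Since $w\in A_{p/p_{0}}$, the Hardy-Littlewood maximal operator is bounded on $\lebe^{p/p_{0}}_{w}(\R^{n})$, and so the $\lebe^{p}_{w}(\Omega)$-norm of the right-hand side above is controlled by $c\|\A u\|_{\lebe^{p}_{w}(\Omega)}$. Invoking Theorem~\ref{thm:fefferman-stein} with $\mathcal{N}=D^{k}\ker(\A)$ hence yields
\begin{align*}
\inf_{\pi\in\mathcal{N}}\|D^{k}u-\pi\|_{\lebe^{p}_{w}(\Omega)}\leq c\|\A u\|_{\lebe^{p}_{w}(\Omega)}.
\end{align*}
Since $D^{k}\Pi_{\A}^{B}u\in\mathcal{N}$, Proposition~\ref{pro:best_approximation} with $\ell=k$ produces the first inequality of~\eqref{eq:KornWeightedMain}. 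The second follows from the triangle inequality together with the bound $\|D^{k}\Pi_{\A}^{B}u\|_{\lebe^{p}_{w}(\Omega)}\leq c\,\diam(\Omega)^{-k}\|u\|_{\lebe^{p}_{w}(\Omega)}$, which in turn results from the weighted inverse estimate~\eqref{eq:inverseweighted} applied with $\ell=k$, the $\lebe^{1}$-stability of $\Pi_{\A}^{B}$ from Lemma~\ref{lem:PiB-estimate}, and passage from unweighted $\lebe^{1}$-averages on $B$ to weighted $\lebe^{p}_{w}$-averages on $\Omega$ via H\"{o}lder and the $A_{p}$-condition, aided by $\diam(\Omega)\simeq r(B)$ from~\eqref{eq:mutualradiusECCbound}.

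The main technical obstacle I anticipate is the bookkeeping between the various geometric and analytic parameters: the chain constants $\sigma_{1},\sigma_{2}$, the enlargement factor introduced by the cut-off in the local Korn step, and the exponent $p_{0}\in(1,p)$ provided by the reverse H\"{o}lder self-improvement of $A_{p}$-weights must be aligned so that the sharp maximal operator $\mathcal{M}^{\sharp}_{\mathrm{res},\Omega,\sigma_{1},\mathcal{N}}$ is truly dominated pointwisely by the $p_{0}$-restricted maximal function of $\A u$ on admissible cubes. Once this domination is established, Theorem~\ref{thm:fefferman-stein} closes the loop and the rest of the argument is routine.
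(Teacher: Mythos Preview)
Your proposal is correct and follows essentially the same route as the paper: self-improvement of $A_p$ to choose an auxiliary exponent $p_0<p$, a local Korn inequality on balls via cut-off plus the global Fourier-multiplier estimate~\eqref{eq:diffopFourier} combined with the \Poincare{} inequality of Theorem~\ref{thm:poincare-john}, pointwise domination of the $\mathcal{N}$-sharp maximal function of $D^{k}u$ by a $p_0$-maximal function of $\A u$, and then Theorem~\ref{thm:fefferman-stein} together with Proposition~\ref{pro:best_approximation} and the weighted inverse estimates~\eqref{eq:inverseweighted}. The only cosmetic difference is that the paper keeps the $\lebe^{p_0}$-averages in the sharp maximal operator (writing $\mathcal{M}^{\sharp}_{\mathrm{res},\Omega,\sigma_{1},q,\mathcal{N}}$) rather than passing to $\lebe^{1}$-averages via H\"older as you do, and bounds by the restricted maximal operator $\mathcal{M}_{\mathrm{res},\Omega,1,q}(\A u)$ rather than the global $\mathcal{M}(\mathbbm{1}_{\Omega}|\A u|^{p_0})^{1/p_0}$; both variants lead to the same conclusion.
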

\begin{proof}
Since $w\in A_{p}$, by the open-endedness property of the Muckenhoupt classes \cite{CoiFef74}, there exists $\varepsilon=\varepsilon(n,p,[w]_{A_{p}})>0$ such that $w\in A_{p-\varepsilon}$, too. We then pick $1<q<p$ such that $p-\varepsilon < \frac{p}{q}<p$ and record in advance that $w\in A_{p/q}$. Let $\sigma_{1}>1$ and let $Q\subset\Omega$ be an open cube such that $\sigma_{1}Q\subset\Omega$. Choose a cut-off function $\rho\in\hold_{c}^{\infty}(\Omega;[0,1])$ such that $\mathbbm{1}_{Q}\leq\rho\leq\mathbbm{1}_{\sigma_{1}Q}$ and $|\nabla^{\ell}\rho|\leq c(\sigma_{1}-1)^{-l}\ell(Q)^{-l}$ for all $l\in\{0,...,k\}$. Then $\varphi:=\rho(u-\Pi_{\A}^{\sigma_{1}Q} u)$ (with $\Pi_{\A}^{\sigma_{1}Q}$ being as in Theorem~\ref{thm:poincare-john}) is compactly supported in $\R^{n}$. Since $\A$ in particular is elliptic, the Fourier multiplier argument underlying \eqref{eq:diffopFourier}ff. yields the existence of a constant $c=c(q,\A)>0$ such that
\begin{align}\label{eq:KornAux}
\|D^{k}\varphi\|_{\lebe^{q}(\R^{n})}\leq c\|\A\varphi\|_{\lebe^{q}(\R^{n})}. 
\end{align}
Moreover, note that $\Pi_{\A}^{Q}\Pi_{\A}^{\sigma_{1}Q}u=\Pi_{\A}^{\sigma_{1}Q}u$ on $Q$ and therefore, using the $\lebe^{q}$-stability of the projections $\Pi_{\A}^{Q}$ in the second and the Poincar\'{e}-type inequalities from Theorem~\ref{thm:poincare-john} in the fourth step,  
\begin{align}\label{eq:PoincAuxKorn}
\begin{split}
\Big(\dashint_{Q}|\Pi_{\A}^{\sigma_{1}Q}u & -\Pi_{\A}^{Q}u|^{q}\dif x \Big)^{\frac{1}{q}} = \Big(\dashint_{Q}|\Pi_{\A}^{Q}(u-\Pi_{\A}^{\sigma_{1}Q}u)|^{q}\dif x \Big)^{\frac{1}{q}}\\ 
& \leq c\Big(\dashint_{Q}|u-\Pi_{\A}^{\sigma_{1}Q}u|^{q}\dif x \Big)^{\frac{1}{q}}\\
& \leq c\Big(\dashint_{\sigma_{1}Q}|u-\Pi_{\A}^{\sigma_{1}Q}u|^{q}\dif x \Big)^{\frac{1}{q}} \stackrel{\eqref{eq:poincare-star}}{\leq} c\ell(Q)^{k}\Big(\dashint_{\sigma_{1}Q}|\A u|^{q}\dif x \Big)^{\frac{1}{q}},
\end{split}
\end{align}
where $c=c(\sigma_{1},q,\A)>0$. Now, using the Poincar\'{e}-type inequalities from Theorem~\ref{thm:poincare-john}, 
\begin{align*}
\Big(&\dashint_{Q}|D^{k}(u-\Pi_{\A}^{Q}u)|^{q}\dif x\Big)^{\frac{1}{q}}  \leq c\Big(\dashint_{\sigma_{1}Q}|D^{k}(\rho(u-\Pi_{\A}^{\sigma_{1}Q}u))|^{q}\dif x\Big)^{\frac{1}{q}} \\ & \;\;\;\;\;\;\;\;\;\;\;\;\;\;\;\;\;\;\;\;\;\;\;\;\;\;\;\;\;\;\;\;\;\;+c \Big(\dashint_{Q}|D^{k}(\Pi_{\A}^{\sigma_{1}Q}u-\Pi_{\A}^{Q}u)|^{q}\dif x \Big)^{\frac{1}{q}}\\ 
& \!\!\!\stackrel{\eqref{eq:KornAux}}{\leq} c\Big(\dashint_{\sigma_{1}Q}|\A(\rho(u-\Pi_{\A}^{\sigma_{1}Q}u))|^{q}\dif x\Big)^{\frac{1}{q}} + \Big(\dashint_{Q}|D^{k}(\Pi_{\A}^{\sigma_{1}Q}u-\Pi_{\A}^{Q}u)|^{q}\dif x \Big)^{\frac{1}{q}}\\ 
& \! \leq c\Big(\sum_{l=1}^{k}\ell(Q)^{-l}\Big(\dashint_{\sigma_{1}Q}|D^{k-l}(u-\Pi_{\A}^{\sigma_{1}Q}u)|^{q}\dif x\Big)^{\frac{1}{q}} \Big) + c\Big(\dashint_{\sigma_{1}Q}|\A u|^{q}\dif x\Big)^{\frac{1}{q}} \\ 
& \;\;\;\; + c\ell(Q)^{-k} \sum_{l=0}^{k}\ell(Q)^{l}\Big(\dashint_{Q}|D^{l}(\Pi_{\A}^{\sigma_{1}Q}u-\Pi_{\A}^{Q}u)|^{q}\dif x \Big)^{\frac{1}{q}}\\
& \!\!\!\!\!\!\!\!\!\!\stackrel{\eqref{eq:poincare-star}, \eqref{eq:inverseFull}_{2}}{\leq} c\Big(\dashint_{\sigma_{1}Q}|\A u|^{q}\dif x\Big)^{\frac{1}{q}}  + c\ell(Q)^{-k} \Big(\dashint_{Q}|\Pi_{\A}^{\sigma_{1}Q}u-\Pi_{\A}^{Q}u|^{q}\dif x \Big)^{\frac{1}{q}} \\
& \!\!\!\stackrel{\eqref{eq:PoincAuxKorn}}{\leq} c\Big(\dashint_{\sigma_{1}Q}|\A u|^{q}\dif x\Big)^{\frac{1}{q}},
\end{align*}
where again $c=c(\sigma_{1},q,\A)>0$. We define $\mathcal{N}:=D^{k}\ker(\A)$. Then, using that $D^{k}\Pi_{\A}^{Q}u\in\mathcal{N}$ for any open cube $Q$, 
\begin{align}\label{eq:sharpnonsharpest}
\mathcal{M}_{\mathrm{res},\Omega,\sigma_{1},q,\mathcal{N}}^{\sharp}(D^{k}u) \leq c \mathcal{M}_{\mathrm{res},\Omega,1,q}(\A u).
\end{align}
In the sequel, denote $\ball$ the central ball of $\Omega$, cf.~Definition~\ref{def:boman_chain}. We now invoke the best approximation property from Proposition~\ref{pro:best_approximation} with respect to the weight $w$ and $\ell=k$ in the first and the Fefferman-Stein-type inequality from Theorem~\ref{thm:fefferman-stein} with $f=D^{k}u$, $\mathcal{N}=D^{k}\ker(\A)$ in the second step to obtain 
\begin{align}\label{eq:almostKorn}
\begin{split}
\|D^{k}u-D^{k}\Pi_{\A}^{\ball}u\|_{\lebe_{w}^{p}(\Omega)} & \stackrel{\text{Proposition~\ref{pro:best_approximation}}}{\leq} c\inf_{\pi\in \mathcal{N}}\|f-\pi\|_{\lebe_{w}^{p}(\Omega)}\\ 
& \;\stackrel{\text{Theorem~\ref{thm:fefferman-stein}}}{\leq} c\|\mathcal{M}_{\mathrm{res},\Omega,\sigma_{1},q,\mathcal{N}}^{\sharp}(D^{k}u)\|_{\lebe_{w}^{p}(\Omega)} \\ 
& \;\;\;\,\,\stackrel{\eqref{eq:sharpnonsharpest}}{\leq} c \|\mathcal{M}_{\mathrm{res},\Omega,1,q}(\A u) \|_{\lebe_{w}^{p}(\Omega)} \\ 
& \;\;\;\;\;\;\leq c\|\mathcal{M}_{\mathrm{res},\Omega,1,1}(|\A u|^{q})\|_{\lebe_{w}^{p/q}(\Omega)}^{1/q}  \\ 
& \;\;\;\stackrel{1<q<p}{\leq} c\|\A u\|_{\lebe_{w}^{p}(\Omega)}, 
\end{split}
\end{align}
where we used that $\mathcal{M}_{\mathrm{res},\Omega,1,1}\colon\lebe_{w}^{p/q}(\Omega)\to\lebe_{w}^{p/q}(\Omega)$ is bounded provided $1<p/q<\infty$ and $w\in A_{p/q}$. Tracking the dependencies of the constants in \eqref{eq:almostKorn}, we obtain  $\eqref{eq:KornWeightedMain}_{1}$. To obtain $\eqref{eq:KornWeightedMain}_{2}$, the weighted inverse and stability estimates (\eqref{eq:inverseweighted} ff.) yield
\begin{align*}
\|D^{k}\Pi_{\A}^{\ball}u\|_{\lebe_{w}^{p}(\Omega)} \leq c\mathrm{diam}(\Omega)^{-k}\|\Pi_{\A}^{\ball}u\|_{\lebe_{w}^{p}(\Omega)}\leq c\mathrm{diam}(\Omega)^{-k}\|u\|_{\lebe_{w}^{p}(\Omega)},
\end{align*}
where $c=c(n,p,[w]_{A_{p}},\sigma_{1},\sigma_{2},\A)>0$. Combining this inequality with \eqref{eq:almostKorn}, \eqref{eq:KornWeightedMain} follows and the proof is complete. 
\end{proof}
\begin{proof}[Proof of Theorem~\ref{thm:Korn}]
For a $\mathbb{C}$-elliptic $k$-th order operator, Theorems~\ref{thm:KornMain} and~\ref{thm:poincare-john}  immediately imply Theorem~\ref{thm:Korn}, direction \ref{item:KornA}$\Rightarrow$\ref{item:KornB}, by taking $w\equiv 1$. For direction \ref{item:KornB}$\Rightarrow$\ref{item:KornA} of Theorem~\ref{thm:Korn} it suffices to note that Theorem~\ref{thm:mainLp} remains valid when only requiring~\ref{item:mainDLp} to hold for all \emph{connected}, open and bounded domains $\Omega\subset\R^{n}$; since every such domain is John, we thereby obtain \ref{item:KornB}$\Rightarrow$\ref{item:KornA} in Theorem~\ref{thm:Korn}.
\end{proof}
\begin{remark}[Korn for unbounded John domains] 
Theorem~\ref{thm:KornMain} also persists for unbounded John domains which are defined in analogy with Definition~\ref{def:john}. Indeed, by a result due to \textsc{V\"{a}is\"{a}l\"{a}} \cite[Thm.~4.6]{Vai94}, if $\Omega\subset\R^{n}$ is $\alpha$-John, then there exists a sequence of relatively compact $c\alpha$-John domains $\Omega_{j}\subset\Omega$ with $\Omega_{j}\nearrow\Omega$. Since any open and bounded John domain satisfies the emanating chain condition, we obtain by \eqref{eq:KornWeightedMain}
\begin{align*}
\|D^{k}u\|_{\lebe_{w}^{p}(\Omega_{j})}\leq C(\mathrm{diam}(\Omega_{j})^{-k}\|u\|_{\lebe_{w}^{p}(\Omega)}+\|\A u\|_{\lebe_{w}^{p}(\Omega)})\;\;\;\;\text{for all}\;u\in\sobo^{\A}\lebe_{w}^{p}(\Omega)
\end{align*}
for any $1<p<\infty$ and $w\in A_{p}$, with $C=C(c\alpha,p,[w]_{A_{p}},\A)>0$. Sending $j\to\infty$ in the preceding inequality then yields the Korn inequality
\begin{align*}
\|D^{k}u\|_{\lebe_{w}^{p}(\Omega)}\leq C\|\A u\|_{\lebe_{w}^{p}(\Omega)}\;\;\;\;\text{for all}\;u\in\sobo^{\A}\lebe_{w}^{p}(\Omega).
\end{align*} 
\end{remark}
\subsection{Korn-type inequalities for other space scales}\label{sec:Kornextend}
Theorem~\ref{thm:KornMain} easily allows for variants of Korn-type inequalities by means of extrapolation. For this, we recall a far-reaching variant of the \textsc{Rubio de Francia}-extrapolation theorem due to \textsc{Cruz-Uribe, Martell \& P\'{e}rez} \cite{CruMarPer11} that requires the following terminology: Given a Banach function space $(\mathbb{X},\|\cdot\|_{\mathbb{X}})$ on $\R^{n}$ with respect to $\mathscr{L}^{n}$, we call $(\mathbb{X},\|\cdot\|_{\mathbb{X}})$ \emph{rearrangement invariant} provided for each $f\in\mathbb{X}$ the norm $\|f\|_{\mathbb{X}}$ exclusively depends on its distribution function $d_{f}(\lambda):=\mathscr{L}^{n}(\{x\in\R^{n}\colon\;|f(x)|>\lambda\})$. Defining for $f\in\mathbb{X}$ its \emph{decreasing rearrangement} $f^{*}\colon [0,\infty)\to\R$ by $f^{*}(t):=\inf\{\lambda\geq 0\colon\;d_{f}(\lambda)\leq t\}$, there exists a unique rearrangement invariant Banach function space $(\overline{\mathbb{X}},\|\cdot\|_{\overline{\mathbb{X}}})$ on $[0,\infty)$ with respect to $\mathscr{L}^{1}$ such that $f\in\mathbb{X}$ precisely if $f^{*}\in\overline{\mathbb{X}}$. In this situation, defining for $0<t<\infty$ the dilation operator $D_{t}\colon\overline{\mathbb{X}}\ni g \mapsto g(\cdot/t)\in\overline{\mathbb{X}}$ and denoting $\|D_{t}\|_{\overline{\mathbb{X}}}$ its operator norm, the \emph{lower} or \emph{upper Boyd indices} of $\mathbb{X}$ are given by 
\begin{align}\label{eq:Boyd}
p_{\mathbb{X}}:=\lim_{t\to\infty}\frac{\log(t)}{\log(\|D_{t}\|_{\overline{\mathbb{X}}})}\;\;\text{and}\;\;q_{\mathbb{X}}:=\lim_{t\searrow 0}\frac{\log(t)}{\log(\|D_{t}\|_{\overline{\mathbb{X}}})},
\end{align}
respectively. 

On the other hand, a differentiable function $\varphi\colon\R_{\geq 0}\to\R_{\geq 0}$ with right-continuous, non-decreasing derivative $\varphi'$ such that $\varphi'(0)=0$ and $\varphi'(t)>0$ for $t>0$ is called an \emph{N-function}. If there exists $c_{1}>0$ such that $\varphi(2t)\leq c_{1}\varphi(t)$ holds for all $t>0$, we say that $\varphi$ satisfies the $\Delta_{2}$-condition; in this situation, the infimum over all such possible constants $c_{1}$ is denoted $\Delta_{2}(\varphi)$. If the convex conjugate of $\varphi$, $\widetilde{\varphi}(t):=\sup_{s>0}(st-\varphi(s))$, satisfies the $\Delta_{2}$-condition, we say that $\varphi$ satisfies the $\nabla_{2}$-condition and put $\nabla_{2}(\varphi):=\Delta_{2}(\widetilde{\varphi})$. 
\begin{lemma}[{\cite[Thm.~4.10,~4.15]{CruMarPer11}, \cite[Prop.~6.1]{DieRuzSch10}}]\label{lem:extrapolation}
Let $1<p_{0}<\infty$ and let $\mathcal{F}\subset\lebe_{\locc}^{1}(\R^{n})\times\lebe_{\locc}^{1}(\R^{n})$ be a family of pairs of non-negative functions not identically zero, such that for any $w\in A_{p_{0}}$ there exists $c=c(p_{0},[w]_{A_{p_{0}}})>0$ with
\begin{align}\label{eq:MucBound1}
\int_{\R^{n}}f(x)^{p_{0}}w(x)\dif x \leq c \int_{\R^{n}}g(x)^{p_{0}}w(x)\dif x \qquad\text{for all}\;(f,g)\in\mathcal{F}, 
\end{align}
the inequality tacitly implying finiteness of the left-hand side. Then the following hold: 
\begin{enumerate}
\item\label{item:extrapol1} If $(\mathbb{X},\|\cdot\|_{\mathbb{X}})$ is a rearrangement invariant function space whose lower and upper Boyd indices $p_{\mathbb{X}},q_{\mathbb{X}}$ satisfy $1<p_{\mathbb{X}}\leq q_{\mathbb{X}}<\infty$, then there holds
\begin{align}\label{eq:MucBound2}
\|f\|_{\mathbb{X}}\leq c\|g\|_{\mathbb{X}}\qquad\text{for all}\;(f,g)\in\mathcal{F}. 
\end{align}
\item\label{item:extrapol2} For any $N$-function $\varphi\colon\R_{\geq 0}\to\R_{\geq 0}$ with $\Delta_{2}(\varphi),\nabla_{2}(\varphi)<\infty$ there exists a constant $C=C(\Delta_{2}(\varphi),\nabla_{2}(\varphi),p_{0})>0$ such that there holds 
\begin{align}\label{eq:MucBound3}
\int_{\R^{n}}\varphi(f)\dif x \leq C\int_{\R^{n}}\varphi(cg)\dif x\qquad\text{for all}\;(f,g)\in\mathcal{F}. 
\end{align}
\end{enumerate}
\end{lemma}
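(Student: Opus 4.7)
The plan is to deduce Lemma~\ref{lem:extrapolation} from the classical Rubio de Francia extrapolation theorem by running his iteration algorithm in the Banach function space framework. First I would note the well-known off-diagonal form of Rubio de Francia's principle: if \eqref{eq:MucBound1} holds at the single exponent $p_0 \in (1,\infty)$ uniformly over $A_{p_0}$-weights, then for every $q \in (1,\infty)$ there exists $c = c(p_0,q,[w]_{A_q}) > 0$ such that
\[
\int_{\R^n} f^q\, w\dif x \leq c \int_{\R^n} g^q\, w\dif x \qquad\text{for all }(f,g)\in\mathcal{F}\text{ and all }w\in A_q.
\]
This is done via the standard duality: testing against $h \in L^{(q/p_0)'}(w)_{+}$ one constructs an $A_1$-majorant by the Rubio de Francia operator $Rh := \sum_{k\geq 0} 2^{-k}\|\mathcal{M}\|_{L^{(q/p_0)'}(w)}^{-k}\mathcal{M}^k h$, applies the hypothesis with the $A_{p_0}$-weight $(Rh)w$ (whose Muckenhoupt constant is controlled quantitatively by $[w]_{A_q}$), and finishes by Hölder.

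For part~\ref{item:extrapol1}, the key analytic input is the Lorentz--Shimogaki theorem: on a rearrangement invariant space $\mathbb{X}$, the Hardy--Littlewood maximal operator $\mathcal{M}$ is bounded on $\mathbb{X}$ precisely when $p_{\mathbb{X}} > 1$ and on the associate space $\mathbb{X}'$ precisely when $q_{\mathbb{X}} < \infty$. Assuming both, I would pick $p_0$ as in the hypothesis and, by the duality representation $\|f\|_{\mathbb{X}} = \sup\{\int fh : \|h\|_{\mathbb{X}'} \leq 1\}$, iterate $\mathcal{M}$ on $\mathbb{X}'$ to produce an $A_1$-weight $Rh$ with $h \leq Rh$, $\|Rh\|_{\mathbb{X}'} \leq 2\|h\|_{\mathbb{X}'}$ and $[Rh]_{A_1}\leq 2\|\mathcal{M}\|_{\mathbb{X}'\to\mathbb{X}'}$. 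Applying \eqref{eq:MucBound1} with the weight $Rh \in A_1 \subset A_{p_0}$ and Hölder's inequality in the form $\int f h \leq \int f\,Rh$, followed by bounding the resulting expression via a second Rubio de Francia iteration (this time on $\mathbb{X}$ itself) to absorb the factor on the right side, produces~\eqref{eq:MucBound2}.

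For part~\ref{item:extrapol2}, I would implement the same scheme in the Orlicz setting, using that $\Delta_2(\varphi) + \nabla_2(\varphi) < \infty$ is exactly the condition making both $\mathcal{M}\colon L^\varphi \to L^\varphi$ and $\mathcal{M}\colon L^{\widetilde\varphi}\to L^{\widetilde\varphi}$ bounded. Running the Rubio de Francia algorithm on $L^{\widetilde\varphi}$ yields the pointwise bound needed to convert the $p_0$-weighted inequality into the modular inequality~\eqref{eq:MucBound3} via the generalised Hölder inequality in Orlicz spaces $\int fg \leq 2\|f\|_{L^\varphi}\|g\|_{L^{\widetilde\varphi}}$ and the elementary identity $\int \varphi(f) = \sup_{g} \int fg - \int \widetilde\varphi(g)$.

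The main obstacle is quantitative control: one must keep track of how the $A_1$- (or $A_{p_0}$-) constant of the Rubio de Francia majorant depends only on $\|\mathcal{M}\|$ on the ambient space, so that the resulting constant in~\eqref{eq:MucBound2} or~\eqref{eq:MucBound3} is uniform and depends solely on the Boyd indices (respectively $\Delta_2(\varphi), \nabla_2(\varphi)$). Once this quantitative control is in place, \emph{both} assertions reduce to carefully pairing the $p_0$-weighted hypothesis with a duality/Hölder argument, so the entire lemma is a rather mechanical, if delicate, transfer of Rubio de Francia's original strategy to the spaces at hand.
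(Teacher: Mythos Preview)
The paper does not prove this lemma; it is stated with attribution to \cite[Thm.~4.10,~4.15]{CruMarPer11} and \cite[Prop.~6.1]{DieRuzSch10} and used as a black box. Your sketch is essentially the Rubio de Francia iteration argument that underlies those references, so there is nothing to compare against in the paper itself.

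That said, your outline is the correct strategy and aligns with how the cited sources proceed: the Lorentz--Shimogaki theorem furnishes boundedness of $\mathcal{M}$ on $\mathbb{X}$ and $\mathbb{X}'$ under the Boyd index hypothesis, and the $\Delta_2/\nabla_2$ conditions play the analogous role in the Orlicz case. One minor point: your description of part~\ref{item:extrapol2} via the conjugate duality formula $\int\varphi(f)=\sup_g\int fg-\int\widetilde\varphi(g)$ is somewhat loose; the actual argument in \cite{DieRuzSch10} works more directly with the Luxemburg norm and the modular, and the constant $c$ appearing inside $\varphi(cg)$ in \eqref{eq:MucBound3} arises precisely because one passes through the norm inequality rather than a pure modular one. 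But this is a matter of bookkeeping rather than a gap.
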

By virtue of Theorem~\ref{thm:KornMain}, Lemma~\ref{lem:extrapolation} implies a whole family of Korn-type inequalities for $\mathbb{C}$-elliptic differential operators on John domains, out of which we point out two:
\begin{corollary}[Korn for Lorentz]\label{cor:Lorentz}
Let $\Omega\subset\R^{n}$ be an open and bounded domain that satisfies the emanating chain condition with constants $\sigma_{1}>1$ and $\sigma_{2}\geq 1$ and central ball $\ball$. Let $1<p<\infty$, $1\leq q < \infty$. Then for any $\mathbb{C}$-elliptic operator $\A$ of order $k$ there exists a constant $c=c(\sigma_{1},\sigma_{2},p,q,\A)>0$ such that for all $u\in\sobo^{\A}\lebe^{p,q}(\Omega)$ there holds 
\begin{align}\label{eq:LorentzKorn}
\begin{split}
&\|D^{k}(u-\Pi_{\A}^{\ball}u)\|_{\lebe^{p,q}(\Omega)}\leq c\|\A u\|_{\lebe^{p,q}(\Omega)}, \\ & \|D^{k}u\|_{\lebe^{p,q}(\Omega)}\leq c\mathrm{diam}(\Omega)^{-k}\|u\|_{\lebe^{p,q}(\Omega)}+c\|\A u\|_{\lebe^{p,q}(\Omega)}. 
\end{split}
\end{align}
\end{corollary}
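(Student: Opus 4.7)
The plan is to deduce the corollary from the weighted Korn inequality of Theorem~\ref{thm:KornMain} by an application of the Rubio de Francia extrapolation result recorded as Lemma~\ref{lem:extrapolation}\ref{item:extrapol1}. The starting observation is that, fixing some $p_{0}\in (1,\infty)$ (say $p_{0}=2$), Theorem~\ref{thm:KornMain} asserts that for every weight $w\in A_{p_{0}}$ and every $u\in\sobo^{\A}\lebe_{w}^{p_{0}}(\Omega)$ (extended by zero outside $\Omega$ when convenient) one has the pointwise-majorisation-type bounds
\begin{align*}
\int_{\R^{n}}|D^{k}(u-\Pi_{\A}^{\ball}u)|^{p_{0}}\mathbbm{1}_{\Omega}\, w\dif x & \leq c\int_{\R^{n}}|\A u|^{p_{0}}\mathbbm{1}_{\Omega}\, w\dif x,\\
\int_{\R^{n}}|D^{k}u|^{p_{0}}\mathbbm{1}_{\Omega}\, w\dif x & \leq c\int_{\R^{n}}\bigl(\mathrm{diam}(\Omega)^{-k}|u|+|\A u|\bigr)^{p_{0}}\mathbbm{1}_{\Omega}\, w\dif x,
\end{align*}
with $c=c(\sigma_{1},\sigma_{2},p_{0},[w]_{A_{p_{0}}},\A)>0$ depending on $w$ only through the Muckenhoupt characteristic $[w]_{A_{p_{0}}}$.

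Based on this, I would define two families of pairs of non-negative functions on $\R^{n}$:
\begin{align*}
\mathcal{F}_{1} & :=\bigl\{(|D^{k}(u-\Pi_{\A}^{\ball}u)|\mathbbm{1}_{\Omega},\,|\A u|\mathbbm{1}_{\Omega}) : u\in\hold^{\infty}(\overline{\Omega};V)\bigr\},\\
\mathcal{F}_{2} & :=\bigl\{(|D^{k}u|\mathbbm{1}_{\Omega},\,[\mathrm{diam}(\Omega)^{-k}|u|+|\A u|]\mathbbm{1}_{\Omega}) : u\in\hold^{\infty}(\overline{\Omega};V)\bigr\}.
\end{align*}
By the previous display, both families satisfy the uniform weighted hypothesis \eqref{eq:MucBound1} of Lemma~\ref{lem:extrapolation} with exponent $p_{0}$ and constant depending only on $[w]_{A_{p_{0}}}$, $\sigma_{1}$, $\sigma_{2}$, $p_{0}$ and $\A$. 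Next, for $1<p<\infty$ and $1\leq q<\infty$, the Lorentz space $\lebe^{p,q}(\R^{n})$ is a rearrangement invariant Banach function space (using the equivalent norm based on $f^{**}$ for the degenerate case $q<p$) whose lower and upper Boyd indices both equal $p$, so that $1<p_{\mathbb{X}}=q_{\mathbb{X}}=p<\infty$; this standard fact can be cited from Bennett--Sharpley. Hence Lemma~\ref{lem:extrapolation}\ref{item:extrapol1} applies to $\mathcal{F}_{1}$ and $\mathcal{F}_{2}$, yielding the desired Lorentz-norm inequalities \eqref{eq:LorentzKorn} for all $u\in\hold^{\infty}(\overline{\Omega};V)$.

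To pass from smooth functions to arbitrary $u\in\sobo^{\A}\lebe^{p,q}(\Omega)$, I would invoke a standard density/approximation argument: since $\Omega$ is a John domain and in particular satisfies the emanating chain condition, smooth functions are dense in $\sobo^{\A}\lebe_{\locc}^{1}(\Omega)$ with respect to convergence in $\lebe^{1}(\omega;V)$ and $\lebe^{1}(\omega;W)$ for the corresponding $\A u$ on all $\omega\Subset\Omega$, via mollification, and then one exhausts $\Omega$ by relatively compact subdomains. Combining this with Fatou for the Lorentz quasi-norm (applied to the left-hand sides of \eqref{eq:LorentzKorn}) and the dominated convergence of the right-hand sides then transfers both inequalities to the full space $\sobo^{\A}\lebe^{p,q}(\Omega)$.

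The only genuinely delicate point I foresee is the density step for $q<p$, where $\lebe^{p,q}$ is not fully ``nice'' from the mollification viewpoint, and some care is needed to avoid relying on absolute continuity of the norm in a situation where it may fail; fortunately the desired inequalities are stable under pointwise a.e.\ approximation together with lower semicontinuity of the Lorentz functional, so this is a technicality rather than a substantive obstacle. The heart of the argument is really the triple \emph{Korn for John $\Rightarrow$ weighted Korn $\Rightarrow$ rearrangement invariant Korn}, the last step being a black-box application of Lemma~\ref{lem:extrapolation}\ref{item:extrapol1} once the Boyd indices of $\lebe^{p,q}$ are recognised.
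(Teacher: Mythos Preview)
Your extrapolation strategy is exactly the paper's, and the identification of the Boyd indices of $\lebe^{p,q}$ is correct. The difference lies in the choice of the extrapolation family and, correspondingly, in how one passes to general $u\in\sobo^{\A}\lebe^{p,q}(\Omega)$. The paper does \emph{not} index $\mathcal{F}$ by $u\in\hold^{\infty}(\overline{\Omega};V)$; instead it takes $p_{0}:=\tfrac{1}{2}(p+1)<p$ and sets
\[
\mathcal{F}:=\bigl\{\bigl(\mathbbm{1}_{\Omega}\mathbbm{1}_{\{|D^{k}(u-\Pi_{\A}^{\ball}u)|\leq j\}}|D^{k}(u-\Pi_{\A}^{\ball}u)|,\ \mathbbm{1}_{\Omega}|\A u|\bigr)\colon u\in\sobo^{\A,p_{0}}(\Omega),\ j\in\mathbb{N}\bigr\}.
\]
The truncation guarantees that each first component is bounded and supported in the bounded set $\Omega$, so the left-hand side of \eqref{eq:MucBound1} is finite for every $w\in A_{p_{0}}$ and Theorem~\ref{thm:KornMain} applies. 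After extrapolating one removes the truncation by Fatou in $j$. Crucially, since $1<p_{0}<p$ and $\Omega$ is bounded, $\lebe^{p,q}(\Omega)\hookrightarrow\lebe^{p_{0}}(\Omega)$, hence $\sobo^{\A}\lebe^{p,q}(\Omega)\subset\sobo^{\A,p_{0}}(\Omega)$, and every $u$ in the target space already appears in $\mathcal{F}$. No density argument is needed. The second inequality then follows from the first by inverse estimates on $\Pi_{\A}^{\ball}u$.

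Your density route has two genuine gaps. First, for a general John domain (think of slit domains) there is no reason for $\hold^{\infty}(\overline{\Omega};V)$ to be dense in $\sobo^{\A}\lebe^{p,q}(\Omega)$; the only globally available approximants are in $\hold^{\infty}(\Omega;V)$, which are not the functions your families $\mathcal{F}_{1},\mathcal{F}_{2}$ are built from. Second, and more seriously, your Fatou step on the left-hand side presupposes that $D^{k}u$ exists as a measurable function and that $D^{k}u_{j}\to D^{k}u$ pointwise a.e.; but for $u$ merely in $\sobo^{\A}\lebe^{p,q}(\Omega)$ you do not yet know that $D^{k}u$ exists at all --- establishing this is essentially the content of the corollary. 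Likewise, local $\lebe^{1}$-convergence of $\A u_{j}$ does not give convergence of $\|\A u_{j}\|_{\lebe^{p,q}(\Omega)}$, so ``dominated convergence of the right-hand sides'' is not justified as stated. The paper's choice $p_{0}<p$ breaks this circularity: the unweighted Korn inequality (Theorem~\ref{thm:KornMain} with $w\equiv 1$) already yields $D^{k}u\in\lebe^{p_{0}}(\Omega)$ for every $u\in\sobo^{\A,p_{0}}(\Omega)$, so the truncated pairs are well defined and Fatou applies to the truncation parameter, not to an approximating sequence.
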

\begin{proof} 
For $p_{0}:=\frac{1}{2}(p+1)$, put 
\begin{align*}
\mathcal{F}:=\{(\mathbbm{1}_{\Omega}\mathbbm{1}_{\{|D^{k}(u-\Pi_{\A}^{\ball}u)|\leq j\}}|D^{k}(u-\Pi_{\A}^{\ball}u)|,\mathbbm{1}_{\Omega}|\A u|)\colon\;u\in\sobo^{\A,p_{0}}(\Omega),j\in\mathbb{N}\}
\end{align*}
so that $\mathcal{F}$ meets the requirements of Lemma~\ref{lem:extrapolation} by virtue of Theorem~\ref{thm:KornMain}. The Lorentz space $\lebe^{p,q}(\R^{n})$ is rearrangement invariant and by our assumptions on $(p,q)$, $p_{\mathbb{X}}=q_{\mathbb{X}}=p\in (1,\infty)$ (cf.~\cite[p.~70]{CruMarPer11}). Therefore, \eqref{eq:MucBound2} and Fatou's lemma yield \eqref{eq:LorentzKorn} for all $u\in\sobo^{\A,p_{0}}(\Omega)$. Since $\sobo^{\A}\lebe^{p,q}(\Omega)\subset\sobo^{\A,p_{0}}(\Omega)$ as $1<p_{0}<p$ and $\Omega$ is bounded, this implies  $\eqref{eq:LorentzKorn}_{1}$ for all $u\in\sobo^{\A}\lebe^{p,q}(\Omega)$, and $\eqref{eq:LorentzKorn}_{2}$ follows by inverse estimates. 
\end{proof} 
\begin{remark}[$\lebe^{\infty}$-estimates]
If~ $\Omega\subset\R^{n}$ is as in Corollary~\ref{cor:Lorentz} and moreover is such that the  embedding $\sobo^{1}\lebe^{n,1}(\Omega;V)\hookrightarrow \hold(\overline{\Omega};V)$ holds (cf.~\cite{CianchiPick98,KKM99,SteinLorentz}), then Corollary~\ref{cor:Lorentz} immediately yields the stronger embedding $\sobo^{\A}\lebe^{n,1}(\Omega)\hookrightarrow\hold(\overline{\Omega};V)$ for any first order $\mathbb{C}$-elliptic differential operator $\A$. 
\end{remark}
In an analogous way as for Corollary~\ref{cor:Lorentz}, we obtain the following
\begin{corollary}[Korn for Orlicz]\label{cor:Orlicz}
Let $\Omega\subset\R^{n}$ and $\A$ be as in Corollary~\ref{cor:Lorentz} and let $\varphi\colon\R_{\geq 0}\to\R_{\geq 0}$ be an N-function that satisfies the $\Delta_{2}$- and $\nabla_{2}$-conditions. Then there exists $c=c(\sigma_{1},\sigma_{2},\Delta_{2}(\varphi),\nabla_{2}(\varphi),\A)>0$ such that for all $u\in\sobo^{\A}\lebe^{\varphi}(\Omega)$ there holds
\begin{align}\label{eq:OrliczKorn}
\begin{split}
&\int_{\Omega}\varphi(|D^{k}(u-\Pi_{\A}^{\ball}u)|)\dif x \leq c\int_{\Omega}\varphi(|\A u|)\dif x, \\ 
& \int_{\Omega}\varphi(|D^{k}u|)\dif x \leq c \mathrm{diam}(\Omega)^{n}\varphi\Big(\mathrm{diam}(\Omega)^{-k}\dashint_{\Omega}|u|\dif x\Big) + c\int_{\Omega}\varphi(|\A u|)\dif x.
\end{split}
\end{align}
\end{corollary}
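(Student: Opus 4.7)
The plan is to deduce Corollary~\ref{cor:Orlicz} from the weighted Korn inequality in Theorem~\ref{thm:KornMain} by invoking the Orlicz branch of the \textsc{Rubio de Francia}-type extrapolation result, Lemma~\ref{lem:extrapolation}~\ref{item:extrapol2}. This mirrors the strategy used for Corollary~\ref{cor:Lorentz}, but replaces part~\ref{item:extrapol1} (rearrangement-invariant spaces) by part~\ref{item:extrapol2} (Orlicz spaces).

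Concretely, I would fix an auxiliary exponent $p_{0}\in(1,\infty)$ (say $p_{0}=2$) and consider the truncated family
\[
\mathcal{F}:=\Big\{\bigl(\mathbbm{1}_{\Omega}\mathbbm{1}_{\{|D^{k}(u-\Pi_{\A}^{\ball}u)|\leq j\}}|D^{k}(u-\Pi_{\A}^{\ball}u)|,\;\mathbbm{1}_{\Omega}|\A u|\bigr)\colon u\in\sobo^{\A,p_{0}}(\Omega),\;j\in\mathbb{N}\Big\}.
\]
Truncation ensures finiteness of the left-hand side of \eqref{eq:MucBound1}, and $\eqref{eq:KornWeightedMain}_{1}$ with $p=p_{0}$ and any $w\in A_{p_{0}}$ provides the required weighted inequality with constant depending only on $(p_{0},[w]_{A_{p_{0}}},\sigma_{1},\sigma_{2},\A)$. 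Lemma~\ref{lem:extrapolation}~\ref{item:extrapol2} then yields
\[
\int_{\Omega}\varphi\bigl(\mathbbm{1}_{\{|D^{k}(u-\Pi_{\A}^{\ball}u)|\leq j\}}|D^{k}(u-\Pi_{\A}^{\ball}u)|\bigr)\dif x\leq C\int_{\Omega}\varphi(c|\A u|)\dif x
\]
for constants $C,c$ depending only on $\Delta_{2}(\varphi),\nabla_{2}(\varphi),p_{0}$. Finitely many iterations of the $\Delta_{2}$-condition absorb $c$ into the outer constant, and monotone convergence as $j\to\infty$ establishes $\eqref{eq:OrliczKorn}_{1}$ on $\sobo^{\A,p_{0}}(\Omega)$. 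To extend this to $\sobo^{\A}\lebe^{\varphi}(\Omega)$, I would use that the $\nabla_{2}$-condition forces $\varphi(t)\gtrsim t^{p_{1}}$ for large $t$ and some $p_{1}>1$, so on the bounded domain $\Omega$ one has the embedding $\lebe^{\varphi}(\Omega)\hookrightarrow\lebe^{p_{1}}(\Omega)$; choosing $p_{0}\in(1,p_{1}]$ makes every $u\in\sobo^{\A}\lebe^{\varphi}(\Omega)$ admissible for the preceding step.

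For the second inequality in \eqref{eq:OrliczKorn}, I would split $D^{k}u=D^{k}(u-\Pi_{\A}^{\ball}u)+D^{k}\Pi_{\A}^{\ball}u$ and, using the $\Delta_{2}$-property of $\varphi$, estimate
\[
\int_{\Omega}\varphi(|D^{k}u|)\dif x\leq C\int_{\Omega}\varphi(|D^{k}(u-\Pi_{\A}^{\ball}u)|)\dif x + C\int_{\Omega}\varphi(|D^{k}\Pi_{\A}^{\ball}u|)\dif x.
\]
The first term is controlled by $\int_{\Omega}\varphi(|\A u|)\dif x$ via the inequality just proved. For the second term, Lemma~\ref{lem:PiB-estimate} combined with the inverse inequality for polynomials in $\ker(\A)$ and $\diam(\Omega)\leq\sigma_{2}\diam(\ball)$ from \eqref{eq:mutualradiusECCbound} yields
\[
\|D^{k}\Pi_{\A}^{\ball}u\|_{\lebe^{\infty}(\Omega)}\leq c\diam(\Omega)^{-k}\dashint_{\Omega}|u|\dif x,
\]
so that monotonicity of $\varphi$ and one further application of $\Delta_{2}$ give the required bound $c\diam(\Omega)^{n}\varphi(\diam(\Omega)^{-k}\dashint_{\Omega}|u|\dif x)$.

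No serious obstacle is anticipated: the entire argument reduces to verifying that the weighted Korn estimate of Theorem~\ref{thm:KornMain} matches the Muckenhoupt hypothesis of Lemma~\ref{lem:extrapolation}~\ref{item:extrapol2}, and that $\Delta_{2}$ and $\nabla_{2}$ are precisely the conditions that allow one, respectively, to absorb multiplicative constants inside $\varphi$ and to embed $\lebe^{\varphi}(\Omega)$ into an auxiliary $\lebe^{p_{0}}(\Omega)$ on the bounded domain. The only bookkeeping care is tracking that all constants depend solely on $\sigma_{1},\sigma_{2},\Delta_{2}(\varphi),\nabla_{2}(\varphi)$ and $\A$, as asserted in the statement.
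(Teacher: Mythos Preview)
Your proposal is correct and follows exactly the route the paper intends: the paper does not spell out a proof here but simply states ``In an analogous way as for Corollary~\ref{cor:Lorentz}'', and your argument is precisely that analogue, replacing Lemma~\ref{lem:extrapolation}~\ref{item:extrapol1} by~\ref{item:extrapol2} and handling the ensuing multiplicative constant via $\Delta_{2}$. One small presentational point: you first write ``say $p_{0}=2$'' and later ``choosing $p_{0}\in(1,p_{1}]$''; since $p_{1}$ depends on $\nabla_{2}(\varphi)$ and may well be smaller than $2$, you should fix $p_{1}$ first and only then pick $p_{0}$, but this is cosmetic and does not affect the validity of the argument.
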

\begin{remark}
In Theorem~\ref{thm:KornMain} and Corollaries~\ref{cor:Lorentz},~\ref{cor:Orlicz}, the main focus is on functions that do not vanish close to $\partial\Omega$, as otherwise the weaker ellipticity allows the use of global singular integral estimates (cf.~\cite{ConGme20,Kal93}). We also wish to point out that, whereas $\eqref{eq:KornWeightedMain}_{2},\eqref{eq:LorentzKorn}_{2}$ and $\eqref{eq:OrliczKorn}_{2}$ are in fact equivalent to the $\mathbb{C}$-ellipticity of $\A$, this is \emph{not} the case for inequalities  $\eqref{eq:KornWeightedMain}_{1},\eqref{eq:LorentzKorn}_{1}$ and $\eqref{eq:OrliczKorn}_{1}$, cf.~\textsc{Fuchs} \cite{Fuc11}. This can be seen by the operator $\varepsilon^{D}$ in $n=2$ dimensions, cf.~Example~\ref{ex:FuchsSeregin}, and the classical Cauchy-Pompeiu-formula. 
\end{remark}
\section{Appendix}\label{sec:appendix}
 
\subsection{$\A$-strict continuity of the trace operator on $\bv^{\A}$}\label{sec:BVAstrict} 
The approximation argument underlying the proof of  Lemma~\ref{lem:GaussGreen} to conclude the existence of a continuous linear trace operator $\trace_{\partial\Omega}\colon\sobo^{\A,p}(\Omega)\to{\besov}{_{p,p}^{k-1/p}}(\partial\Omega;V)$ cannot be applied to $\bv^{\A}(\Omega)$. This is due to the non-density of $\hold^{\infty}(\Omega;V)\cap\bv^{\A}(\Omega)$ in $\bv^{\A}(\Omega)$ for the norm topology. In the well-known case of $\bv(\Omega)$ (cf.~ \textsc{Ambrosio} et al. \cite{AmbFusPal00}), this issue is resolved by passing to the \emph{strict metric}. The suitable substitute here is the $\A$-strict metric, see Section~\ref{sec:repr-form}, but now we have to keep track of the intermediate derivatives as well. To this end, we require a multiplicative inequality as follows:  
\begin{lemma}[Intermediate derivatives]\label{lem:intermediate}
Let $k\geq 1$ and let $\A$ be a $k$-th order $\mathbb{C}$-elliptic differential operator of the form \eqref{eq:form}. Then there exists a constant $c=c(\A)>0$ such that for all $l\in\{1,...,k-1\}$ there holds 
\begin{align}\label{eq:interpol}
\|D^{l}u\|_{\lebe^{1}(\R^{n})}\leq c\|u\|_{\lebe^{1}(\R^{n})}^{1-\frac{l}{k}}\|\A u\|_{\lebe^{1}(\R^{n})}^{\frac{l}{k}}
\end{align}
for all $u\in\sobo^{\A,1}(\R^{n})$ with compact support.
\end{lemma}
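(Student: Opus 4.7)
The plan is to derive a local estimate on a ball of a free radius $r>0$ through the representation formula of Proposition~\ref{prop:decomposition}, sum the estimate over a bounded-overlap covering of $\R^{n}$, and then optimise in $r$ to obtain the multiplicative bound. The $\mathbb{C}$-ellipticity hypothesis enters decisively through the finite-dimensionality of $\ker(\A)$, which legitimises the polynomial inverse estimates of Lemma~\ref{lem:PiB-estimate} for the projection $\Pi_{\A}^{\ball}u$ and is precisely what renders Ornstein's Non-Inequality invisible on $\ker(\A)$, in the spirit of Remark~\ref{rem:keyidea2ndapproach}.

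On any ball $\ball\subset\R^{n}$ of radius $r$, which is star-shaped with respect to itself, Corollary~\ref{cor:repr-estimate} combined with Fubini's theorem and the elementary bound $\int_{\ball}|x-y|^{-n+k-l}\dif y\le c\,r^{k-l}$ (valid since $1\le l\le k-1$ ensures $k-l\ge 1$) produces the Riesz-type estimate
\[
\|D^{l}(u-\Pi_{\A}^{\ball}u)\|_{\lebe^{1}(\ball)}\le c\,r^{k-l}\|\A u\|_{\lebe^{1}(\ball)},
\]
whereas inequality $\eqref{eq:inverseFull}_{2}$ of Lemma~\ref{lem:PiB-estimate}, applied with $p=1$ and $|\alpha|=l$, yields
\[
\|D^{l}\Pi_{\A}^{\ball}u\|_{\lebe^{1}(\ball)}\le c\,r^{-l}\|u\|_{\lebe^{1}(\ball)}.
\]
The triangle inequality then combines these into a uniform local estimate $\|D^{l}u\|_{\lebe^{1}(\ball)}\le c(r^{-l}\|u\|_{\lebe^{1}(\ball)}+r^{k-l}\|\A u\|_{\lebe^{1}(\ball)})$. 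I subsequently choose a covering $(\ball_{i})_{i\in\N}$ of $\R^{n}$ by balls of the common radius $r$ with uniformly finite overlap; since $u$ has compact support, only finitely many $\ball_{i}$ contribute, and summing the local bound yields
\[
\|D^{l}u\|_{\lebe^{1}(\R^{n})}\le c\bigl(r^{-l}\|u\|_{\lebe^{1}(\R^{n})}+r^{k-l}\|\A u\|_{\lebe^{1}(\R^{n})}\bigr)\qquad\text{for every }r>0.
\]

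Finally, I optimise in $r$. If $\|\A u\|_{\lebe^{1}(\R^{n})}=0$, $\mathbb{C}$-ellipticity forces $u\in\ker(\A)\subset\mathscr{P}(\R^{n};V)$, and compact support then gives $u\equiv 0$, so the inequality is trivial. Otherwise the choice $r^{k}=\|u\|_{\lebe^{1}(\R^{n})}/\|\A u\|_{\lebe^{1}(\R^{n})}$ balances the two terms on the right-hand side and delivers the advertised interpolation bound $\|D^{l}u\|_{\lebe^{1}(\R^{n})}\le c\,\|u\|_{\lebe^{1}(\R^{n})}^{1-l/k}\|\A u\|_{\lebe^{1}(\R^{n})}^{l/k}$. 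I do not anticipate any technical obstacle beyond this; the only subtle point is conceptual, namely that the polynomial inverse estimate for $\Pi_{\A}^{\ball}u$, and hence the entire argument, fundamentally requires the finite-dimensionality of $\ker(\A)$ supplied by $\mathbb{C}$-ellipticity rather than any harmonic-analysis machinery that would collapse at $p=1$.
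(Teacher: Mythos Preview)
Your proof is correct and in fact more direct than the paper's. The paper follows an adaptation of \textsc{Maz'ya}'s argument: for each $x\in\spt(u)$ it selects a \emph{variable} radius $r_{x}$ (chosen so that the two competing terms $r_{x}^{-l}\|u\|_{\lebe^{1}(\ball(x,r_{x}))}$ and $r_{x}^{k-l}\|\A u\|_{\lebe^{1}(\ball(x,r_{x}))}$ roughly balance locally, via an intermediate-value step and an auxiliary parameter $r_{0}$), then applies a Besicovitch-type covering lemma to sum the family $(\ball(x,r_{x}))_{x}$, and finally sends $r_{0}\searrow 0$. You instead work with a \emph{fixed} radius $r$, use a simple lattice-type covering with uniformly finite overlap, and optimise in $r$ only at the very end after summing. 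This sidesteps the Besicovitch argument and the case distinction entirely, at the cost of nothing in generality for the present statement; the paper's variable-scale route would become advantageous if one sought a localised or weighted version where the optimal scale genuinely depends on position, but for the global $\lebe^{1}$-estimate on $\R^{n}$ your approach is the cleaner one. Both proofs draw the underlying local estimate from the same source, namely the Poincar\'{e}-type bound on balls stemming from the representation formula, so the distinction is purely in the covering-and-summing step.
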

\begin{proof}
The proof is an adaptation of an argument to be found, e.g., in \textsc{Maz'ya} \cite[Thm.~1.4.7]{Maz85}. Let $u\in\hold_{c}^{\infty}(\R^{n};V)\setminus\{0\}$ be supported in $\ball(x_{0},R_{0})$ and $l\in\{1,...,k-1\}$ be given.  Let $x\in\spt(u)$, so that because of $u\in\hold^{\infty}(\R^{n};V)$, $\|u\|_{\lebe^{1}(\ball(x,R))}>0$ for all $R>0$. Fix $r_{0}>0$. There are two options: If $\|u\|_{\lebe^{1}(\ball(x,r_{0}))}<r_{0}^{k}\|\A u\|_{\lebe^{1}(\ball(x,r_{0}))}$, then we have
\begin{align}\label{eq:semi1}
r_{0}^{-l}<\Big(\frac{\|\A u\|_{\lebe^{1}(\ball(x,r_{0}))}}{\|u\|_{\lebe^{1}(\ball(x,r_{0}))}}\Big)^{\frac{l}{k}},
\end{align}
and define $r_{x}=r_{0}$ in this case. Then, by virtue of the Poincar\'{e} inequality from Theorem~\ref{thm:poincare-john},
\begin{align}\label{eq:semi2}
\begin{split}
\|D^{l}u\|_{\lebe^{1}(\ball(x,r_{x}))}& \stackrel{\eqref{eq:poincare-star2}}{\leq} c r_{x}^{-l}\|u\|_{\lebe^{1}(\ball(x,r_{x}))} + cr_{x}^{k-l}\|\A u\|_{\lebe^{1}(\ball(x,r_{x}))} \\ 
& \!\!\!\!\!\!\!\!\stackrel{\eqref{eq:semi1}}{\leq} c\|u\|_{\lebe^{1}(\ball(x,r_{x}))}^{1-\frac{l}{k}}\|\A u\|_{\lebe^{1}(\ball(x,r_{x}))}^{\frac{l}{k}}+cr_{0}^{k-l}\|\A u\|_{\lebe^{1}(\ball(x,r_{x}))}.
\end{split}
\end{align}
If $\|u\|_{\lebe^{1}(\ball(x,r_{0}))}\geq r_{0}^{k}\|\A u\|_{\lebe^{1}(\ball(x,r_{0}))}$, we then find $r_{x}\geq r_{0}$ such that $\|u\|_{\lebe^{1}(\ball(x,r_{x}))}= r_{x}^{k}\|\A u\|_{\lebe^{1}(\ball(x,r_{x}))}$. This follows from $u$ being compactly supported and the fact that $\|u\|_{\lebe^{1}(\ball(x,R))}$ will eventually be constant for large values of $R$. Fix this $r_{x}$, so that 
\begin{align}\label{eq:semi3} 
\begin{split}
\|D^{l}u\|_{\lebe^{1}(\ball(x,r_{x}))} \stackrel{\eqref{eq:poincare-star2}}{\leq} c\|u\|_{\lebe^{1}(\ball(x,r_{x}))}^{1-\frac{l}{k}}\|\A u\|_{\lebe^{1}(\ball(x,r_{x}))}^{\frac{l}{k}}
\end{split}
\end{align}
Adding \eqref{eq:semi2} and \eqref{eq:semi3}, in each of the cases there holds 
\begin{align}\label{eq:interinter1}
\begin{split}
\|D^{l}u\|_{\lebe^{1}(\ball(x,r_{x}))} 
& \leq c(r_{0}^{k-l}\|\A u\|_{\lebe^{1}(\ball(x,r_{x}))}+\|u\|_{\lebe^{1}(\ball(x,r_{x}))}^{1-\frac{l}{k}}\|\A u\|_{\lebe^{1}(\ball(x,r_{x}))}^{\frac{l}{k}} ).
\end{split}
\end{align}
Now, $\mathscr{B}:=(\ball(x,r_{x}))_{x\in\spt(u)}$ is a  covering of $\spt(u)$ by open balls such that $r_{x}\leq C<\infty$ and the sequence of radii of any sequence of mutually disjoint balls in $\mathscr{B}$ tends to zero. By the Besicovitch-type covering lemma in the form as given in \textsc{Maz'ya} \cite[Chpt.~1.2.1, Thm.~1]{Maz85}, there exists a sequence of balls $\ball_{m}\in\mathscr{B}$ such that $\spt(u)\subset\bigcup_{m}\ball_{m}$, the balls $\tfrac{1}{3}\ball_{m}$ are mutually disjoint and $\bigcup_{B\in\mathscr{B}}B\subset \bigcup_{m}4B_{m}$. Moreover, there exists a constant $N=N(n)\in\mathbb{N}$ such that every point belongs to at most $N$ balls contained in $\mathscr{B}$. We then conclude by \eqref{eq:interinter1} that
\begin{align*}
\|D^{l}u\|_{\lebe^{1}(\spt(u))} & \leq \sum_{m} \|D^{l}u\|_{\lebe^{1}(\ball_{m})} \\ 
& \leq c\sum_{m}\Big(r_{0}^{k-l}\|\A u\|_{\lebe^{1}(\ball_{m})}+\|u\|_{\lebe^{1}(\ball_{m})}^{1-\frac{l}{k}}\|\A u\|_{\lebe^{1}(\ball_{m})}^{\frac{l}{k}}  \Big) \\ 
& \leq cr_{0}^{k-l}\|\A u\|_{\lebe^{1}(\R^{n})}+c\|u\|_{\lebe^{1}(\R^{n})}^{1-\frac{l}{k}}\|\A u\|_{\lebe^{1}(\R^{n})}^{\frac{l}{k}}, 
\end{align*}
where $c=c(n,\A)>0$. Now it suffices to send $r_{0}\searrow 0$ to conclude \eqref{eq:interpol} for $u\in\hold_{c}^{\infty}(\R^{n};V)$; the general case of compactly supported $u\in\sobo^{\A,1}(\R^{n})$ follows by smooth approximation. The proof is complete. 
\end{proof}
We now turn to the remaining part of the 
\begin{proof}[Proof of Theorem~\ref{rem:tracestrict}]
We start by noting that the existence of a norm-continuous trace operator $\trace_{\partial\Omega}\colon{\sobo}{^{\A,1}}(\Omega)\to{\besov}{_{1,1}^{k-1}}(\partial\Omega;V)$ follows by smooth approximation along the same lines as for the case $1<p<\infty$ (cf.~Theorem~\ref{thm:mainLp}, direction '\ref{item:mainALp}$\Rightarrow$\ref{item:mainBLp}$\Rightarrow$\ref{item:mainCLp}' and Lemma~\ref{lem:GaussGreen}, now using Theorem~\ref{thm:main}), the reason being the density of $\widetilde{\sobo}{^{\A,1}}(\Omega)\cap\hold^{\infty}(\Omega;V)$ in $\sobo^{\A,1}(\Omega)$ for the norm topology on $\sobo^{\A,1}(\Omega)$; also note that $\sobo^{\A,1}(\Omega)={\widetilde{\sobo}}{^{\A,1}}(\Omega)$ by Theorem~\ref{thm:poincare-john}.

By Theorem~\ref{thm:poincare-john}, $\bv^{\A}(\Omega)=\sobo^{k-1,1}(\Omega;V)\cap\bv^{\A}(\Omega)$. To define an $\A$-strictly continuous trace operator by means of smooth approximation, the localisation argument below requires $d(u,u_{j}):=\|u_{j}-u\|_{\sobo^{k-1,1}(\Omega)}+|\,|\A u_{j}|(\Omega)-|\A u|(\Omega)|\to 0$ whenever $d_{\A}(u,u_{j})=\|u_{j}-u\|_{\lebe^{1}(\Omega)}+||\A u_{j}|(\Omega)+|\A u|(\Omega)|\to 0$ as $j\to\infty$. 
The Poincar\'{e} inequality of Theorem~\ref{thm:poincare-john} does not directly allow for this conclusion.  Also note that, if we define a  trace operator as the continuous extension for the metric $d$ as above, then it is not clear that this trace operator will be continuous for $d_{\A}$. It is here where we require the multiplicative inequality of Lemma~\ref{lem:intermediate}.

Let $u\in\bv^{\A}(\Omega)=\sobo^{k-1,1}(\Omega;V)\cap\bv^{\A}(\Omega)$ and choose $(u_{j})\subset \hold^{\infty}(\Omega;V)\cap\bv^{\A}(\Omega)$ such that $d_{\A}(u_{j},u)\to 0$ as $j\to\infty$. We then have $L:=\sup_{j}\|u_{j}\|_{\sobo^{\A,1}(\Omega)}<\infty$. Since $\A$ is $\mathbb{C}$-elliptic, the Jones-type extension operator $\widetilde{E}$ from \cite[Sec.~4.1]{GR19} maps $\widetilde{E}\colon \widetilde{\sobo}{^{\A,1}}(\Omega)\to\widetilde{\sobo}{^{\A,1}}(\R^{n})$ boundedly; clearly, we may assume that all maps $\widetilde{E}u$ are compactly supported in a fixed ball $\ball(0,R)$. This operator moreover satisfies $\|\widetilde{E}u\|_{\lebe^{1}(\R^{n})}\leq c\|u\|_{\lebe^{1}(\Omega)}$. Thus, for all $\ell\in\{0,...,k-1\}$,
\begin{align}\label{eq:intermedCauchy}
\begin{split}
\|D^{\ell}(u_{j}-u_{m})\|_{\lebe^{1}(\Omega)} & \leq \|D^{\ell}(\widetilde{E}(u_{j}-u_{m}))\|_{\lebe^{1}(\R^{n})} \\ & \!\!\!\!\stackrel{\eqref{eq:interpol}}{\leq } c \|\widetilde{E}(u_{j}-u_{m})\|_{\lebe^{1}(\R^{n})}^{1-\frac{\ell}{k}}\|\A(\widetilde{E}(u_{j}-u_{m}))\|_{\lebe^{1}(\R^{n})}^{\frac{\ell}{k}}\\
& \leq c \|u_{j}-u_{m}\|_{\lebe^{1}(\Omega)}^{1-\frac{\ell}{k}}\|u_{j}-u_{m}\|_{\sobo^{\A,1}(\Omega)}^{\frac{\ell}{k}}\\ & \leq c L^{\frac{\ell}{k}}\|u_{j}-u_{m}\|_{\lebe^{1}(\Omega)}^{1-\frac{\ell}{k}}\to 0
\end{split}
\end{align}
as $j,m\to\infty$, and so with $d$ as defined above, $d(u_{j},u)\to\infty$. We pick a sequence $(\rho_{l})\subset\hold^{\infty}(\overline{\Omega};[0,1])$ such that $\rho_{l}=1$ in a $\tfrac{1}{l}$-neighbourhood of $\partial\Omega$ and $\rho_{l}=0$ in $\Omega\setminus \Omega'_{l}:=\{x\in\Omega\colon\dista(x,\partial\Omega)>\frac{2}{l}\}$.  We may then assume that $|\nabla^{\ell}\rho_{l}|\leq cl^{\ell}$ for some $c>0$ and all $\ell\in\{1,...,k\}$. For each $l\in\mathbb{N}$, we find a set $\Omega_{l}$ with $\spt(\rho_{l})\subset\Omega_{l}\subset\Omega'_{l}$  and of smooth boundary such that $|\A u|(\partial\Omega_{l})=0$. Denoting $\trace_{\partial\Omega}$ the trace operator on ${\sobo}{^{\A,1}}(\Omega)$ that is already our disposal, we then estimate 
\begin{align*}
\|\trace_{\partial\Omega}(u_{j})-\trace_{\partial\Omega}(u_{m})\|_{{\besov}{_{1,1}^{k-1}}(\partial\Omega)} & \leq c (\|\rho_{l}(u_{j}-u_{m})\|_{\lebe^{1}(\Omega)}+\|\A (\rho_{j}(u_{l}-u_{m}))\|_{\lebe^{1}(\Omega)}) \\ 
& \leq c\Big(\sum_{\substack{|\alpha|+|\beta|\leq k,\\ |\beta|\leq k-1}}\|\partial^{\alpha}\rho_{l}\partial^{\beta}(u_{j}-u_{m})\|_{\lebe^{1}(\Omega)}\Big)\\
& \;\;\;\;\;+ c\|\rho_{l}(\A u_{j}-\A u_{m})\|_{\lebe^{1}(\Omega)} =:\mathrm{I}_{l,j,m}+\mathrm{II}_{l,j,m}. 
\end{align*} 
For an arbitrary $l\in\mathbb{N}$, \eqref{eq:intermedCauchy} implies that $\mathrm{I}_{l,j,m}\to 0$ as $j,m\to\infty$. Therefore, sending $j,m\to\infty$ first and then letting $l\to\infty$, 
\begin{align*}
\lim_{j,m\to\infty} \|\trace_{\partial\Omega}(u_{j})-\trace_{\partial\Omega}(u_{m})\|_{{\besov}{_{1,1}^{k-1}}(\partial\Omega)} & \leq c|\A u|(\Omega_{l})  \to 0
\end{align*}
as $l\to\infty$; note that $|\A u|$ is a finite Radon measure and $\bigcap_{l}\Omega_{l}=\emptyset$. Hence $(\trace_{\partial\Omega}(u_{j}))$ is Cauchy in ${\besov}{_{1,1}^{k-1}}(\partial\Omega;V)$, too. Working from here, the claimed existence of a trace operator with the requisite properties is routine, and the proof thus is complete. 
\end{proof} 
\subsection{Oscillation characterisation of ${\dot{\besov}}{_{p,q}^{s}}$}\label{sec:oscchar}
 To the best of our knowledge, the oscillation chracterisation of Besov spaces is only available in the inhomogeneous situation, cf.~\textsc{Triebel} \cite[Chpt.~3.5]{Trie2}. For completeness, we briefly explain how to arrive at the part of Lemma~\ref{lem:Besovosc} that is required for Section~\ref{sec:proofhalfspace}. 

Let $M> \lfloor s \rfloor$ and choose $2N>M$. Let $k\in\hold_{c}^{\infty}(\R^{n})$ be such that $\spt(k)\subset\ball(0,1)$ and consider, for $f\in\mathscr{S}(\R^{n})$, the means 
\begin{align*}
k(t,f)(x):=\int_{\R^{n}}k(y)f(x+ty)\dif y,\qquad t>0,\;x\in\R^{n}. 
\end{align*}
Given some $k^{0}\in\hold_{c}^{\infty}(\R^{n})$ with $\spt(k^{0})\subset\ball(0,1)$ and $\widehat{k^{0}}(0)\neq 0$, we put $
k:=\Delta^{N}k^{0}$, where $\Delta^{N}$ stands for the $N$-fold application of the Laplacean. We now define $\varphi:=k^{\vee}$. It is not too difficult to verify that $\varphi$ satisfies nonsubstantial  modifications of \cite[(62),(78),(79)]{Trie88a} (also see \cite[Proof of Thm.~2.4.6]{Trie2}) and so, by a combination of \cite[Rem.~17, Rem.~18]{Trie88a},  
\begin{align}\label{eq:kernelrewrite0}
\|f\|_{{\dot\besov}{_{p,q}^{s}}(\R^{n})}^{\sim} := \Big(\int_{0}^{\infty}\|\mathscr{F}^{-1}[\varphi(t\cdot)\mathscr{F}f]\|_{\lebe^{p}(\R^{n})}^{q}\frac{\dif t}{t^{1+sq}}\Big)^{\frac{1}{q}}
\end{align}
is an equivalent norm on ${\dot\besov}{_{p,q}^{s}}(\R^{n})$. By definition of $\varphi$, we have 
\begin{align}\label{eq:kernelrewrite}
\|f\|_{{\dot\besov}{_{p,q}^{s}}(\R^{n})}^{\sim}=c(n)\Big(\int_{0}^{\infty}\|k(t,f)\|_{\lebe^{p}(\R^{n})}^{q}\frac{\dif t}{t^{1+sq}}\Big)^{\frac{1}{q}}.
\end{align}
Since $2N>M$, we have for any $\pi\in\mathscr{P}_{M}(\R^{n})$ that $\Delta^{N}\pi=0$ and so, integrating by parts and Jensen's inequality imply
\begin{align*}
|k(t,f)(x)|\leq\int_{\R^{n}}|k(y)(f-\pi)(x+ty)|\dif y \leq c(n,N,k^{0}) \Big(\dashint_{\ball(x,t)}|f-\pi|^{u}\dif y\Big)^{\frac{1}{u}}
\end{align*}
and infimising the very right hand side over $\pi\in\mathscr{P}_{M}(\R^{n})$ yields 
\begin{align}\label{eq:osckernel}
|k(t,f)(x)|\leq c(n) \osc_{u}^{M}f(x,t).
\end{align}
Combining \eqref{eq:osckernel} with \eqref{eq:kernelrewrite0} and \eqref{eq:kernelrewrite} consequently yields the existence of a constant $c=c(n,N,s,p,q,k^{0})>0$ such that 
\begin{align*}
\|f\|_{{\dot\besov}{_{p,q}^{s}}(\R^{n})}\leq c \Big(\int_{0}^{\infty}\|\osc_{u}^{M}f(\cdot,t)\|_{\lebe^{p}(\R^{n})}^{q}\frac{\dif t}{t^{1+sq}}\Big)^{\frac{1}{q}}\qquad\text{for all}\;f\in\mathscr{S}(\R^{n}).
\end{align*}
The statement for ${\dot\besov}{_{p,q}^{s}}(\R^{n};V)$ then follows by componentwise application.

\bibliographystyle{plain}
\bibliography{biblio}

\begin{thebibliography}{10}

\bibitem{AdoPip98}
V.~Adolfsson and J.~Pipher.
\newblock The inhomogeneous {D}irichlet problem for {$\Delta^{2}$} in
  {L}ipschitz domains.
\newblock {\em J.Funct. Anal.}, 159(1):137--190, 1998.

\bibitem{Aik12}
H.~Aikawa.
\newblock Potential analysis on nonsmooth domains---{M}artin boundary and
  boundary {H}arnack principle.
\newblock In {\em Complex analysis and potential theory}, volume~55 of {\em CRM
  Proc. Lecture Notes}, pages 235--253. Amer. Math. Soc., Providence, RI, 2012.

\bibitem{AmbFusPal00}
L.~Ambrosio, N.~Fusco, and D.~Pallara.
\newblock {\em Functions of bounded variation and free discontinuity problems}.
\newblock Oxford Mathematical Monographs. The Clarendon Press Oxford University
  Press, New York, 2000.

\bibitem{Aro54}
N.~Aronszajn.
\newblock On coercive integro-differential quadratic forms.
\newblock {\em Conference on Partial Differential Equations, University of
  Kansas, Technical Report}, (No. 14):94--106, 1954.

\bibitem{MR58:2349}
J.~Bergh and J.~L{\"o}fstr{\"o}m.
\newblock {\em Interpolation spaces. {A}n introduction}.
\newblock Springer-Verlag, Berlin, 1976.
\newblock Grundlehren der Mathematischen Wissenschaften, No. 223.

\bibitem{Boman82}
J.~Boman.
\newblock $\lebe^{p}$-estimates for very strongly elliptic systems.
\newblock {\em Technical Report 29, Technical Report 29, Department of
  Mathematics, University of Stockholm, Sweden, 1982}.

\bibitem{BouBre04}
J.~Bourgain and H.~Brezis.
\newblock New estimates for the {L}aplacian, the div-curl, and related {H}odge
  systems.
\newblock {\em C. R. Math. Acad. Sci. Paris}, 338(7):539--543, 2004.

\bibitem{BouBre07}
J.~Bourgain and H.~Brezis.
\newblock New estimates for elliptic equations and {H}odge type systems.
\newblock {\em J. Eur. Math. Soc. (JEMS)}, 9(2):277--315, 2007.

\bibitem{VS14b}
P.~Bousquet and J.~Van~Schaftingen.
\newblock Hardy–{S}obolev inequalities for vector fields and canceling linear
  differential operators.
\newblock {\em Indiana Univ. Math.}, 63(5):1419--1445, 2014.

\bibitem{BreDie12}
D.~Breit and L.~Diening.
\newblock Sharp conditions for {K}orn inequalities in {O}rlicz spaces.
\newblock {\em J. Math. Fluid Mech.}, 14:565--573, 2012.

\bibitem{BreDieGme20}
D.~Breit, L.~Diening, and F.~Gmeineder.
\newblock On the trace operator for functions of bounded {$\Bbb A$}-variation.
\newblock {\em Anal. PDE}, 13(2):559--594, 2020.

\bibitem{BreSco08}
S.C. Brenner and L.R. Scott.
\newblock {\em The mathematical theory of finite element methods}, volume~15 of
  {\em Texts in Applied Mathematics}.
\newblock Springer, New York, third edition, 2008.

\bibitem{BrezisPonce}
H.~Brezis and A.C. Ponce.
\newblock Kato's inequality up to the boundary.
\newblock {\em Commun. Contemp. Math.}, 210(6):1217--1241, 2008.

\bibitem{BucKosLu96}
S.~Buckley, P.~Koskela, and G.~Lu.
\newblock Boman equals {J}ohn.
\newblock {\em XVIth Rolf Nevanlinna Colloquium (Joensuu, 1995), de Gruyter,
  Berlin}, pages 91--99, 1996.

\bibitem{CZ56}
A.P. Calder\'on and A.~Zygmund.
\newblock On singular integrals.
\newblock {\em Amer. J. Math.}, 78:289--309, 1956.

\bibitem{Cianchi14}
A.~Cianchi.
\newblock Korn type inequalities in {O}rlicz spaces.
\newblock {\em J. Funct. Anal.}, 267:2313--2352, 2014.

\bibitem{CianchiPick98}
A.~Cianchi and L.~Pick.
\newblock Sobolev embeddings into {BMO}, {VMO}, and $\lebe^{\infty}$.
\newblock {\em Ark. Mat.}, 36:317--340, 1998.

\bibitem{CoiFef74}
R.~Coifman and C.~Fefferman.
\newblock Weighted norm inequalities for maximal functions and singular
  integrals.
\newblock {\em Studia Math.}, 51(3):241--250, 1974.

\bibitem{ConGme20}
S.~Conti and F.~Gmeineder.
\newblock $\mathscr{A}$-quasiconvexity and partial regularity.
\newblock {\em \href{https://arxiv. org/abs/2009.13820}{https://arxiv.
  org/abs/2009.13820}}.

\bibitem{CruMarPer11}
D.V. Cruz-Uribe, J.M. Martell, and C.~P\'{e}rez.
\newblock {\em Weights, extrapolation and the theory of {R}ubio de {F}rancia},
  volume 215 of {\em Operator Theory: Advances and Applications}.
\newblock Birkh\"{a}user/Springer Basel AG, Basel, 2011.

\bibitem{DieGme20}
L.~Diening and F.~Gmeineder.
\newblock Continuity points via riesz potentials for $\mathbb{C}$-elliptic
  operators.
\newblock {\em The Quarterly Journal of Mathematics}, 71(4):1201--1218, 2020.

\bibitem{DieHarHasRuz11}
L.~Diening, P.~Harjulehto, P.~H\"{a}st\"{o}, and M.~R{\r u}\v{z}i\v{c}ka.
\newblock {\em Lebesgue and {S}obolev spaces with variable exponents}, volume
  2017 of {\em Lecture Notes in Mathematics}.
\newblock Springer, Heidelberg, 2011.

\bibitem{DieRuzSch10}
L.~Diening, M.~R{\r u}\v{z}i\v{c}ka, and K.~Schumacher.
\newblock A decomposition technique for {J}ohn domains.
\newblock {\em Ann. Acad. Sci. Fenn. Math.}, 35(1):87--114, 2010.

\bibitem{Dor85}
J.R. Dorronsoro.
\newblock Mean oscillation and {B}esov spaces.
\newblock {\em Canad. Math. Bull.}, 28(3), 1985.

\bibitem{Duo00}
J.~Duoandikoetxea.
\newblock {\em Fourier Analysis \emph{(translated and revised by David
  Cruz-Uribe, SFO)}}, volume~29 of {\em Graduate Studies in Mathematics}.
\newblock American Mathematica Society, 2001.

\bibitem{Eva98}
L.C. Evans.
\newblock {\em Partial differential equations}, volume~19 of {\em Graduate
  Studies in Mathematics}.
\newblock American Mathematical Society, 1998.

\bibitem{Fuc11}
M.~Fuchs.
\newblock An estimate for the distance of a complex valued {S}obolev function
  defined on the unit disc to the class of holomorphic functions.
\newblock {\em J. Applied Analysis}, 17(1):131--135, 2011.

\bibitem{FuchsSeregin}
M.~Fuchs and G.~Seregin.
\newblock Variational methods for problems from plasticity theory and for
  generalized {N}ewtonian fluids.
\newblock {\em Ann. Univ. Sarav. Ser. Math.}, 10(1):iv+283, 1999.

\bibitem{Gal57}
E.~Gagliardo.
\newblock Caratterizzazione delle sulla frontiera relative ad alcune tracce
  classi di funzioni in $n$ variabili.
\newblock {\em Rend. Sem. Mat. Univ. Padova}, 27:284--305, 1957.

\bibitem{GR19}
F.~Gmeineder and B.~Raita.
\newblock Embeddings for $\mathbb{A}$-weakly differentiable functions on
  domains.
\newblock {\em J. Func. Anal.}, 277(12):108278, 2019.

\bibitem{GRVS}
F.~Gmeineder, B.~Raita, and J.~Van~Schaftingen.
\newblock On limiting trace inequalities for vectorial differential operators.
\newblock {\em To appear at Indiana Math. Univ. J.}

\bibitem{HeiKilMar}
J.~Heinonen, T.~Kilpel\"{a}inen, and O.~Martio.
\newblock {\em Nonlinear {P}otential {T}heory of {D}egenerate {E}lliptic
  {E}quations}.
\newblock Dover Books on Mathematics. Dover Publications, 2006.
\newblock Reprint of the 1993 edition published by Oxford University Press.

\bibitem{HofMitTay10}
S.~Hofmann, M.~Mitrea, and M.~Taylor.
\newblock Singular integrals and elliptic boundary problems on regular
  {S}emmes-{K}enig-{T}oro domains.
\newblock {\em Int. Math. Res. Not. IMRN}, (14):2567--2865, 2010.

\bibitem{Hoermander1}
L.~H\"{o}rmander.
\newblock Differentiability properties of solutions of systems of differential
  equations.
\newblock {\em Ark. Mat.}, 3:527--535, 1958.

\bibitem{Hurri88}
R.~Hurri.
\newblock {P}oincar\'{e} domains in $\mathbb{R}^{n}$.
\newblock {\em Ann. Acad. Sci. Fenn. Ser. A I}, 71:1--42, 1988.

\bibitem{Iwa82}
T.~Iwaniec.
\newblock On {$L^{p}$}-integrability in {PDE}s and quasiregular mappings for
  large exponents.
\newblock {\em Ann. Acad. Sci. Fenn. Ser. A I Math.}, 7(2):301--322, 1982.

\bibitem{IwaNol85}
T.~Iwaniec and C.~A. Nolder.
\newblock Hardy-{L}ittlewood inequality for quasiregular mappings in certain
  domains in {${\bf R}^n$}.
\newblock {\em Ann. Acad. Sci. Fenn. Ser. A I Math.}, 10:267--282, 1985.

\bibitem{JerisonKenig}
D.~Jerison and C.~Kenig.
\newblock Boundary behaviour of harmonic functions in non-tangentially
  accessible domains.
\newblock {\em Adv. Math.}, 47(1):80--147, 1982.

\bibitem{Jiang17}
R.~Jiang and A.~Kauranen.
\newblock Korn's inequality and {J}ohn domains.
\newblock {\em J. Calc. Var.}, 56:109, 2017.

\bibitem{John61}
F.~John.
\newblock Rotation and strain.
\newblock {\em Comm. Pure Appl. Math.}, 14:391--413, 1961.

\bibitem{Kal93}
A.~Ka{\l}amajska.
\newblock Coercive inequalities on weighted {S}obolev spaces.
\newblock {\em Colloq. Math.}, LXVI(2):309--318, 1993.

\bibitem{Kal94}
A.~Ka{\l}amajska.
\newblock Pointwise multiplicative inequalities and {N}irenberg type estimates
  in weighted {S}obolev spaces.
\newblock {\em Studia Math.}, 108(3):275--290, 1994.

\bibitem{KKM99}
J.~Kauhanen, P.~Koskela, and J.~Mal\'{y}.
\newblock On functions with derivatives in a {L}orentz space.
\newblock {\em Manuscripta Math.}, 100:87--101, 1999.

\bibitem{KSW17}
Stolyarov~D.M. Kazaniecki, K. and M.~Wojciechowski.
\newblock Anisotropic {O}rnstein non- inequalities.
\newblock {\em Analysis \& PDE}, 10(2), 2017.

\bibitem{KiKr16}
B.~Kirchheim and J.~Kristensen.
\newblock On rank one convex functions that are homogeneous of degree one.
\newblock {\em Arch. Ration. Mech. Anal.}, 221(1), 1985.

\bibitem{Korn09}
A.~Korn.
\newblock \"{U}ber einige {U}ngleichungen, welche in der {T}heorie der
  elastischen und elektrischen {S}chwingungen eine {R}olle spielen.
\newblock {\em Classe des {S}ciences {M}ath\'{e}matiques et {N}aturels}, (9.
  Novembre):705--724, 1909.

\bibitem{LeoniTice}
G.~Leoni and I.~Tice.
\newblock Traces for homogeneous {S}obolev spaces in infinite strip-like
  domains.
\newblock {\em J. Funct. Anal.}, 277(7):2288--2380, 2019.

\bibitem{MarSar78}
O.~Martio and J.~Sarvas.
\newblock Injectivity theorems in plane and space.
\newblock {\em Ann. Acad. Sci. Fenn. Ser. A I Mth.}, 4:383–401, 1978--79.

\bibitem{Maz85}
V.G. Maz'ya.
\newblock {\em Sobolev spaces}.
\newblock Springer Series in Soviet Mathematics. Springer-Verlag, Berlin, 1985.
\newblock Translated from the Russian by T. O. Shaposhnikova.

\bibitem{MazMitSha05}
V.G. Maz'ya, M.~Mitrea, and T.~Shaposhnikova.
\newblock The {D}irichlet problem in {L}ipschitz domains with boundary data in
  {B}esov spaces for higher order elliptic systems with rough coefficients.
\newblock {\em arXiv.math.AP/0505372}.

\bibitem{Miller82}
N.~Miller.
\newblock Weighted {S}obolev spaces and pseudodifferential operators with
  smooth symbols.
\newblock {\em Trans. Amer. Math. Soc.}, 269(1):91--109, 1982.

\bibitem{MirRus15}
P.~Mironescu and E.~Russ.
\newblock Traces of weighted {S}obolev spaces. {O}ld and new.
\newblock {\em Nonlinear Analysis TMA}, 119:354--381, 2015.

\bibitem{Ornstein}
D.~Ornstein.
\newblock A non-equality for differential operators in the {$L_{1}$}-norm.
\newblock {\em Arch. Rational Mech. Anal.}, 11:40--49, 1962.

\bibitem{Raita19}
B.~Raita.
\newblock Critical $\lebe^{p}$-differentiability of $\bv^{\A}$-maps and
  canceling operators.
\newblock {\em Transactions of the American Mathematical Society},
  372(10):7297--7326, 2019.

\bibitem{Resh70}
Yu.~G. Reshetnyak.
\newblock Estimates for certain differential operators with finite-dimensional
  kernel. ({R}ussian).
\newblock {\em Sibirsk. Mat. Z.}, 11:414--428, 1970.

\bibitem{Smith70}
K.~T. Smith.
\newblock Formulas to represent functions by their derivatives.
\newblock {\em Math. Ann.}, 188:53--77, 1970.

\bibitem{Spencer}
D.~C. Spencer.
\newblock Overdetermined systems of linear partial differential equations.
\newblock {\em Bull. Amer. Math. Soc.}, 75:179--239, 1969.

\bibitem{SteinLorentz}
E.M. Stein.
\newblock Editor's note: {T}he differentiability of functions in
  $\mathbb{R}^{n}$.
\newblock {\em Annals of Mathematics}, 113:383--385, 1981.

\bibitem{Ste93}
E.M. Stein.
\newblock {\em Harmonic analysis: real-variable methods, orthogonality, and
  oscillatory integrals}, volume~43 of {\em Princeton Mathematical Series}.
\newblock Princeton University Press, Princeton, NJ, 1993.
\newblock With the assistance of Timothy S. Murphy, Monographs in Harmonic
  Analysis, III.

\bibitem{Tar07}
L.~Tartar.
\newblock {\em An {I}ntroduction to {S}obolev {S}paces and {I}nterpolation
  {S}paces}, volume~3 of {\em Lecture Notes of the Unione Matematica Italiana}.
\newblock Springer Science \& Business Media, 2007.

\bibitem{Trie1}
H.~Triebel.
\newblock {\em Theory of function spaces}, volume~78 of {\em Monographs in
  Mathematics}.
\newblock Birkh\"auser Verlag, Basel, 1983.

\bibitem{Trie88a}
H.~Triebel.
\newblock Characterizations of {B}esov-{H}ardy-{S}obolev spaces: {A} unified
  approach.
\newblock {\em J. Approx. Theory}, 52:162--203, 1988.

\bibitem{Trie2}
H.~Triebel.
\newblock {\em Theory of function spaces. {II}}, volume~84 of {\em Monographs
  in Mathematics}.
\newblock Birkh\"auser Verlag, Basel, 1992.

\bibitem{Uspenskii}
S.V. Uspenski\u{\i}.
\newblock Imbedding theorems for classes with weights.
\newblock {\em Tr. Mat. Inst. Steklova}, 60:282--303, 1961.
\newblock Am. Math. Soc. Transl. 87 (1970) 121--145. English translation.

\bibitem{Vai94}
J.~V\"{a}is\"{a}l\"{a}.
\newblock Exhaustion of {J}ohn domains.
\newblock {\em Ann. Acad. Sci. Fenn., Series A. I. Mathematica}, 19:47--57,
  1994.

\bibitem{VS04}
J.~Van~Schaftingen.
\newblock Estimates for $\lebe^{1}$-vector fields.
\newblock {\em C. R. Math. Acad. Sci. Paris}, 339, 2004.

\bibitem{Van13}
J.~Van~Schaftingen.
\newblock Limiting {S}obolev inequalities for vector fields and canceling
  linear differential operators.
\newblock {\em J. Eur. Math. Soc. (JEMS)}, 15(3):877--921, 2013.

\bibitem{VS14a}
J.~Van~Schaftingen.
\newblock Limiting {B}ourgain–{B}rezis estimates for systems of linear
  differential equations: {T}heme and variations.
\newblock {\em J. Fixed Point Theory Appl.}, 15, 2014.

\end{thebibliography}

\end{document}